\documentclass[a4paper,11pt]{article}
\usepackage[T1]{fontenc}
\usepackage[utf8x]{inputenc}
\usepackage{lmodern}
\usepackage{amsmath}
\usepackage{amsthm}
\usepackage{amssymb}
\usepackage{authblk}
\usepackage{graphicx}
\usepackage{stmaryrd}
\usepackage{enumitem}
\usepackage{xcolor}
\usepackage{dsfont}
\newcommand\1{\mathds{1}}
\usepackage{mathrsfs}

\usepackage{amsthm}
\usepackage{amssymb}
\usepackage{xparse}
\usepackage{thmtools}
\usepackage{stackrel}
\usepackage{bbold}
\usepackage{relsize}
\usepackage{tikz}

\usetikzlibrary{hobby}

\newcommand{\R}{\mathbb{R}}
\newcommand{\C}{\mathbb{C}}
\newcommand{\Z}{\mathbb{Z}}
\newcommand{\N}{\mathbb{N}}

\newcommand{\dd}{\mathrm{d}}

\DeclareMathOperator{\diag}{diag}

\DeclareMathOperator{\dist}{dist}

\makeatletter
\newtheoremstyle{indented}
{7pt} %
{7pt} %
{} %
{1.5em} %
{\bfseries} %
{.} %
{.5em} %
{} %

\theoremstyle{definition}

\newtheorem{defn}{Definition}[section]

\theoremstyle{plain}
\newtheorem*{theorem*}{Theorem}

\newtheorem{theorem}[defn]{Theorem}

\newtheorem{prop}[defn]{Proposition}

\newtheorem{lem}[defn]{Lemma}

\theoremstyle{definition}
\newtheorem{rem}[defn]{Remark} %

\addtolength{\jot}{.2em}

\makeatletter
\renewcommand*\env@matrix[1][*\c@MaxMatrixCols c]{%
  \hskip -\arraycolsep
  \let\@ifnextchar\new@ifnextchar
  \array{#1}}
\makeatother

\newcommand{\re}{{\rm Re}}
\newcommand{\im}{{\rm Im}}
\usepackage{hyperref} \hypersetup{colorlinks=true, citecolor=orange, linkcolor=blue}

\makeatletter
\newcommand{\widetildeoverline}[1]{{%
  \mathpalette\double@widetilde{#1}%
}}
\newcommand{\double@widetilde}[2]{%
  \sbox\z@{$\m@th#1\overline{#2}$}%
  \ht\z@=.9\ht\z@
  \widetilde{\box\z@}%
}
\makeatother

\usepackage{a4wide}
\textwidth 18cm
\textheight 23cm
\hoffset -1cm

\title{The Szeg\H{o} kernel in analytic regularity and analytic
  Fourier Integral Operators}
\author{Alix
  Deleporte\thanks{alix.deleporte@universite-paris-saclay.fr}}
\affil{Universit\'e Paris-Saclay, CNRS, Laboratoire de math\'ematiques d'Orsay, 91405, Orsay, France.}

\usepackage{scrtime}

\begin{document}

\maketitle

\begin{abstract}
  We build a general theory of microlocal (homogeneous) Fourier Integral Operators in real-analytic
  regularity, following the general construction in the smooth case by
  Hörmander and Duistermaat. In particular, we prove that the Boutet-Sjöstrand parametrix for the Szeg\H{o}
  projector at the boundary of a strongly pseudo-convex real-analytic domain
  can be realised by an analytic Fourier Integral Operator. We then study some applications,
  such as FBI-type transforms on compact, real-analytic Riemannian
  manifolds and propagators of one-homogeneous (pseudo)differential operators.
\end{abstract}
 \tableofcontents

\section{Introduction}
\label{sec:introduction}
This project began as an investigation of the following problem: let
$\Omega\subset \C^n$ be an open set with real-analytic boundary; can we
characterise the analytic singularities of the functions on the boundary $\partial \Omega$ which are
boundary values of holomorphic functions inside $\Omega$?

The problem is well-posed when $\Omega$ is \emph{strongly
  pseudoconvex} (see Definition \ref{def:pseudoconvex}) and a precise
description modulo smooth functions of the Szeg\H{o} projector $S$ (from $L^2(\partial \Omega)$
onto the subspace of boundary values of holomorphic functions) is
well-known in the case where $\partial \Omega$ is smooth
\cite{boutet_de_monvel_sur_1975,fefferman_bergman_1976}. It is then
natural to expect that, if $\partial \Omega$ is analytic, then an even more
precise description of $S$ can be reached. This microlocal problem admits a
semiclassical (small parameter) counterpart, about the description of
spaces of holomorphic sections of high powers of ample line bundles
over Kähler manifolds. In the latter case, a description of the
Szeg\H{o} kernel in analytic regularity was recently obtained
\cite{rouby_analytic_2018,deleporte_toeplitz_2018,hezari_off-diagonal_2017,hezari_property_2021,charles_analytic_2021,deleporte_direct_2022}. 

It quickly became obvious to the author that a natural course of action
to study the analytic singularities of $S$ was to adapt the method of
\cite{boutet_de_monvel_sur_1975}, where $S$ is described as a Fourier
Integral Operator with complex-valued phase function, and where the
proof uses a microlocal normal form procedure (and in particular, a
conjugation by a Fourier Integral Operator). Building a theory of
analytic Fourier Integral Operators that lies as close as possible to the smooth
constructions \cite{hormander_analysis_1985} became our next goal and
constitutes the bulk of this article. 

In the microlocal as well as semiclassical case, the study of the
Szeg\H{o} projector has important applications in geometric
quantization and ``general purpose'' microlocal/semiclassical
analysis. One particular instance of strongly pseudoconvex open set
with real-analytic boundary is the Grauert tube around a real-analytic
Riemannian manifold; the range of $S$ is then the target space of a
natural FBI-type transformation. Thus the analytic counterpart of the
results of \cite{boutet_de_monvel_sur_1975} serves as a tool for other
problems related to the spectrum and dynamics of differential and
pseudo-differential operators. These techniques could be
particularly relevant to the study of non-self-adjoint analytic
(pseudo)differential operators.

\subsection{What this article contains}
\label{sec:what-this-article}

Section \ref{sec:bound-cauchy-riem} reviews some basic properties of
the boundary Cauchy-Riemann problem for pseudoconvex open sets. The
impatient reader only interested in ``off-the-shelf'' properties of analytic
Fourier Integral Operators may skip it, however the pseudoconvex open
set $\R^n\times S^{n-1}$, on which the Szeg\H{o} projector admits
an exact formula, is useful in the construction of the general theory.

Section \ref{sec:analytic-symbols} contains the ``formal analytic''
content of our construction. We describe spaces of analytic amplitudes
(formal or not) in Section \ref{sec:ampl-analyt-micr}, then we study the geometric properties of
homogeneous phase functions and their associated conical
Lagrangians in Section \ref{sec:posit-nond-phas}. We also prove a
stationary phase type theorem in Section \ref{sec:stationary-phase}. Most of
the contents of this section will look familiar to experts, but many
details (in particular, the sense in which we perform Borel summations
of formal analytic amplitudes in the microlocal setting) turn out to
be non-trivial and crucial.

The core of the construction of analytic Fourier Integral Operators is
Section \ref{sec:four-integr-oper}. We properly define analytic
Fourier Integral Operators in Section \ref{sec:first-properties}, in
an ascending level of geometric generality, and study the composition
rule and the action of stationary phase; our results are gathered in Theorem \ref{prop:FIOs}. The algebra of analytic pseudodifferential operators, which of
course is of utmost importance, is described in Section
\ref{sec:pseud-oper-1}; in our framework, to microlocalize the action
of these operators we introduce a FBI transform, which conjugates the
problem to operators on $\R^n\times S^{n-1}$ whose microlocal
structure resembles that of the Szeg\H{o} projector. As in
\cite{boutet_de_monvel_spectral_1981}, we call them ``Toeplitz
operators'', even though they are not best described as $SQS$ for $Q$
a pseudodifferential operators, but rather by a Fourier Integral
Operator formula, akin to the ``covariant Toeplitz operators'' of
\cite{charles_berezin-toeplitz_2003}. These Toeplitz operators turn
out to be a very efficient microlocal replacement of pseudodifferential
operators, allowing us to describe refined properties of Fourier Integral Operators
(notably inversion) microlocally in Section \ref{sec:advanced-properties}. Besides
pseudodifferential operators and Toeplitz operators, important
examples of Fourier Integral Operators are ``quantized contact
transformations'': one can quantize the action of a one-homogeneous
symplectic change of variables into a unitary operator, and we do so
in Proposition \ref{prop:quantized-contact}.

We then turn our attention to the specific problem of the Szeg\H{o}
projector for a general pseudoconvex open set in Section
\ref{sec:szegho-kern-toepl}. We prove a normal form theorem for the
$\overline{\partial}_b$ operator in analytic regularity, and use it to
describe the Szeg\H{o} projector as an analytic Fourier Integral
Operator in Theorem \ref{prop:analytic-Szego}. We
then generalise the properties of Toeplitz operators obtained in
Section \ref{sec:pseud-oper-1} to this more general geometric setting.

Some illustrations of our techniques are presented in Section
\ref{sec:few-applications}. We discuss in particular the case of
Grauert tubes, and how a natural FBI-type transformation on compact
analytic Riemannian manifolds helps to translate ``usual'' questions
concerning differential or pseudodifferential operators to the
Toeplitz framework, where they are more easily studied. We prove in
Proposition \ref{prop:quantum_propag} that propagators of one-homogeneous, self-adjoint analytic
pseudodifferential operators are analytic Fourier Integral
Operators. We also discuss briefly how to obtain semiclassical results from
our microlocal toolbox. To our knowledge, there is no general theory of analytic
Fourier Integral Operators available in the semiclassical case (and
certainly not one which preserves analytic function spaces), and we hope that
our results can be of use in this direction.

The Appendix presents some well-known facts about analytic
semiclassical analysis, which we gather for the comfort of the
reader. We discuss in particular the properties of spaces of analytic
functions, their dual spaces (called \emph{analytic functionals}), and
a convenient quotient space of analytic functionals called
hyperfunctions.

\subsection{Comparison with earlier work}
\label{sec:comp-with-earl}
Microlocal analysis in real-analytic regularity has a long history,
but its early
achievements have been shadowed by the success and popularity of the
$C^{\infty}$ techniques.

Building on an observation that in some
\emph{formal} sense, there are classes of pseudodifferential operators
adapted to real-analytic regularity
\cite{boutet_de_monvel_pseudo-differential_1967}, a general framework was
proposed \cite{sato_microfunctions_1972} but ultimately, this
theory never reached the same foundational status as the $C^{\infty}$
counterpart. In particular, a method of proof of Theorem
\ref{prop:analytic-Szego}
using the tools of \cite{sato_microfunctions_1972} appears in
\cite{kashiwara_analyse_1976}, but the status of this proof and of the
statement itself is disputed; we mention
\cite{kaneko_introduction_1989} for a more detailed, but incomplete,
discussion of Kashiwara's approach. One of the initial goals of this
work was the hope to settle once and for all the status of Theorem \ref{prop:analytic-Szego}.

In contrast, the theory of \emph{semiclassical} analysis in
real-analytic regularity is now well-developed, after the seminal work
\cite{sjostrand_singularites_1982}. In this setting, one is
interested in improving remainders from $O(\hbar^{\infty})$ to
$O(e^{-c\hbar^{-1}})$ for some $c>0$, rather than improving remainders
from smooth to real-analytic. The semiclassical theory makes full use
of complex-valued phase functions, whose manipulation is perhaps even
easier than in the $C^{\infty}$ case because of the naturalness of
extensions into the complex.

It is often said that microlocal analysis is a particular case of
semiclassical analysis. We found out, however, that the usual semiclassical
constructions of pseudodifferential operators, other
Fourier Integral operators, or even Borel summations of analytic
symbols, was not applicable here. This is chiefly due to the fact that,
when presented with an oscillating integral with small parameter, of
the form $\int e^{i\frac{\phi(x)}{\hbar}}a(x)\dd x$, we may remove the
areas where $\im(\phi)>0$ by a simple (smooth) cut-off argument,
leading to $O(e^{-c\hbar^{-1}})$ errors . As a consequence, at its present
stage of development, analytic semiclassical analysis deals with
errors of size $O(e^{-c\hbar^{-1}})$ in $C^{k}$ topology for fixed
$k$. In the microlocal setting, we cannot afford to introduce cut-offs
even in areas of phase space which we presume are away from the
analytic wave front set. Even apart from stationary phase, cut-and-paste arguments, used to pass from local
definitions of objects in coordinate charts to global descriptions,
must be replaced by slightly more refined arguments of cohomological
nature: broadly speaking, we are able to define how objects act
locally and then we can use somewhat abstract gluing tools. This is a
standard approach in the study of holomorphic or real-analytic
functions, but it turns out (see the Appendix) that these arguments
already work in the $C^{\infty}$ setting.

Conversely, at least the elementary results of analytic semiclassical
analysis can be obtained from our microlocal setting by considering
Fourier modes in an auxiliary variable; we describe this briefly in Section
\ref{sec:from-micr-semicl}.

The book \cite{treves_analytic_2022} presents an
overview of the existing literature on general-purpose analytic
microlocal analysis and some of its applications. Sections \ref{sec:analytic-symbols} and
\ref{sec:four-integr-oper} have been
somewhat motivated by this book, as we found out that adapting the
homogeneous FIO framework to the real-analytic case is not as
straightforward as it seems and many statements and constructions of
this book
deserved a more thorough approach.

\subsection{Acknowledgements}
\label{sec:acknowledgements}

We are most grateful to Michael Hitrik and Johannes Sjöstrand for
their encouragement and useful remarks throughout the realisation of
this project.

This work is dedicated to the memory of Steve Zelditch, who motivated
us to give a complete proof of the analytic parametrix of the
Szeg\H{o} kernel in the first place.

\section{The boundary Cauchy-Riemann problem}
\label{sec:bound-cauchy-riem}

This section is a primer on the theory of boundary Cauchy-Riemann
problems; we present some facts which, besides being the basis for
Theorem \ref{prop:analytic-Szego}, will be useful for the general construction of Fourier
Integral Operators in real-analytic regularity.

\subsection{Strongly pseudoconvex domains and CR geometry}
\label{sec:strongly-pseud-doma}

\begin{defn}\label{def:pseudoconvex}Let $Y$ be a (paracompact, boundaryless) complex manifold of
  dimension $n\geq 2$.
  \begin{itemize}
  \item Let $U\subset Y$ open. A function $\rho\in
    C^2(U,\R)$ is \emph{strongly plurisubharmonic} when at every point of
    $U$, in local coordinates, the Hermitian matrix
    $[\frac{\partial^2\rho}{\partial z_j\partial
      \overline{z_k}}]_{j,k}$ is positive definite. (This does not
    depend on the coordinates).
  \item A compact real hypersurface $X\subset Y$ is \emph{strongly
    pseudoconvex} when there exists an open neighbourhood $U$ of
    $X$ and a strongly plurisubharmonic function $\rho:U\to R$, such that $\dd \rho$ never
    vanishes and $X=\{\rho=0\}$.
  \end{itemize}
\end{defn}
The situation of interest is the case where $X$ separates $Y$ into two
connected components; the ``inside'' component $\Omega$ (the one which
contains
$\{\rho<0\}$) is then called a \emph{strongly pseudoconvex domain},
independently on the behaviour of $\rho$ far inside $\Omega$.

Given a general open set $\Omega\subset Y$ with regular boundary, one
may ask how holomorphic functions on $\Omega$ behave near $\partial
\Omega$: whether they can be extended (using, for instance, the
Hartogs theorem) or whether their restrictions to $\partial \Omega$
admit a simple description. As it turns out, these two questions are
related and strongly pseudoconvex domains are exactly those for
which restrictions of holomorphic functions are solutions of a
hypoelliptic linear PDE, the boundary Cauchy-Riemann operator $\overline{\partial}_b$
\cite{kohn_harmonic_1963}. The Szeg\H{o}
projector $S:L^2(X)\to \ker_{L^2}(\overline{\partial}_b)$ allows to describe
the space of solutions, as well as (after application of the Poisson
kernel) the Bergman space of holomorphic functions on $\Omega$. The
description of $S$ modulo smooth functions
\cite{fefferman_bergman_1976,boutet_de_monvel_sur_1975} when $X$ is
smooth has numerous crucial applications in complex and Kähler
geometry as well as geometric quantization. For example, the local
definition of a strongly pseudoconvex hypersurface above, or even an
intrinsic definition mimicking the properties of $X$ gathered in
the next proposition, are equivalent to the realisation of $X$ as the
boundary of a strongly pseudoconvex open set in $\C^n$, with a globally
defined and strongly plurisubharmonic $\rho$, as soon as $n\geq 3$
\cite{boutet_de_monvel_integration_1974}.

\begin{prop}\label{prop:CR-structure}
 A strongly pseudoconvex hypersurface $X\subset Y$ inherits a
 CR-structure as follows: $TX\otimes \C$ decomposes as a direct sum of
 three involutive components $T^{(0,1)}X\oplus T^{(1,0)}X\oplus V$, where
 \begin{itemize}
 \item $T^{(1,0)}X=T^{(1,0)}Y\cap (TX\otimes \C)$ is the
   $n-1$-dimensional bundle of holomorphic tangent vectors.
 \item $T^{(0,1)}X=T^{(0,1)}Y\cap (TX\otimes \C)$ is the
   $n-1$-dimensional bundle of anti-holomorphic tangent vectors.
 \item $V$ is a line bundle over $X$.
 \end{itemize}
 The line bundle $\Sigma=(T^{(0,1)}X\oplus T^{(1,0)}X)^*\subset T^*X\otimes
 \C$ is spanned by the real one-form $\alpha:(x,\xi)\mapsto \dd
 \rho(x)(J\xi)$ whenever $\rho$ is a pseudoconvex defining function
 for $X$. Moreover $\alpha$ is a contact form on $X$, and $\Sigma$ is
 a symplectic submanifold of $T^*X$.
\end{prop}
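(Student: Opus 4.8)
The plan is to verify each claim by a direct computation. Writing $\dd\rho=\partial\rho+\overline{\partial}\rho$, the reality of $\rho$ gives $\overline{\partial}\rho=\overline{\partial\rho}$, so $\dd\rho$ nowhere vanishing forces $\partial\rho$ nowhere vanishing. Since $\dd\rho\colon T_xY\otimes\C\to\C$ is onto, $T_xX\otimes\C=\Ker\dd\rho$, and as $\overline{\partial}\rho$ annihilates $T^{(1,0)}_xY$ we get $T^{(1,0)}_xX=T^{(1,0)}_xY\cap\Ker\dd\rho=\Ker\big(\partial\rho|_{T^{(1,0)}_xY}\big)$: one non-trivial $\C$-linear equation on an $n$-dimensional space, hence dimension $n-1$; conjugation gives the same for $T^{(0,1)}X$, and $T^{(1,0)}X\cap T^{(0,1)}X=\{0\}$ since $T^{(1,0)}Y\cap T^{(0,1)}Y=\{0\}$. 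Thus $T^{(1,0)}X\oplus T^{(0,1)}X$ has corank $1$ in $TX\otimes\C$. Fixing a $J$-invariant (Hermitian) metric $g$ on $Y$, one checks $g(\nabla_g\rho,J\nabla_g\rho)=0$, so $J\nabla_g\rho\in TX$, while $\nabla_g\rho\notin TX$ shows $J\nabla_g\rho\notin J(TX)$; hence $V:=\C\cdot J\nabla_g\rho$ is a complement and the direct sum holds. Finally $T^{(1,0)}X$ and $T^{(0,1)}X$ are involutive, being intersections of the involutive distributions $T^{(1,0)}Y$ resp.\ $T^{(0,1)}Y$ (whose involutivity is the integrability of $J$) with the involutive distribution $TX\otimes\C$; a line field such as $V$ is trivially involutive.

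To identify $\Sigma$, observe that $\alpha\colon\xi\mapsto\dd\rho(x)(J\xi)$ is a well-defined real linear form on $T_xX$, and, extended $\C$-linearly, it kills $T^{(1,0)}_xX\oplus T^{(0,1)}_xX$: for $v\in T^{(1,0)}_xX$ one has $Jv=iv$ and $\dd\rho(v)=\partial\rho(v)=0$, so $\alpha(v)=i\,\dd\rho(v)=0$, and conjugation handles $T^{(0,1)}_xX$. Since $\alpha(J\nabla_g\rho)=\dd\rho(J^{2}\nabla_g\rho)=-|\nabla_g\rho|_g^{2}\neq0$, $\alpha$ is a nowhere-vanishing section of the rank-one annihilator $\Sigma$, so $\Sigma=\C\cdot\alpha$; in particular the real line $\Span_\R\alpha$ is independent of the pseudoconvex defining function.

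For the contact property, the computation above identifies $\Ker\alpha$ with $H:=TX\cap J(TX)$, the real complex-tangent bundle (real corank $1$ in $TX$, $J$-stable, with complexification $T^{(1,0)}X\oplus T^{(0,1)}X$). Extend $\alpha$ to the one-form $\widetilde\alpha=i(\partial-\overline{\partial})\rho$ on a neighbourhood $U$ of $X$; then $\dd\alpha=\iota^{*}\dd\widetilde\alpha$ equals, up to a non-zero real constant, $\iota^{*}\!\big(i\,\partial\overline{\partial}\rho\big)$, where $\iota\colon X\hookrightarrow U$. Strong plurisubharmonicity of $\rho$ is exactly the positivity of the $(1,1)$-form $i\,\partial\overline{\partial}\rho$, equivalently of the Hermitian matrix $\big[\partial^{2}\rho/\partial z_j\partial\overline{z_k}\big]$, on $T^{(1,0)}Y$ and hence on $T^{(1,0)}X$; a positive $(1,1)$-form restricts non-degenerately to any complex subspace, so $\dd\alpha|_{H}$ is non-degenerate. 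Therefore $\alpha\wedge(\dd\alpha)^{n-1}$ is nowhere zero and $\alpha$ is a contact form (here $n\geq2$ is used). For the symplectic statement, parametrise the real cone $\Sigma\setminus0\subset T^{*}X$ by $(x,t)\mapsto t\,\alpha_x$; the tautological one-form of $T^{*}X$ pulls back to $t\,\pi^{*}\alpha$, so the canonical symplectic form pulls back to $\dd t\wedge\alpha+t\,\dd\alpha$. With the splitting $TX=\R R_\alpha\oplus H$ furnished by the Reeb field ($\alpha(R_\alpha)=1$, $\iota_{R_\alpha}\dd\alpha=0$), the pair $\{\partial_t,R_\alpha\}$ is paired non-degenerately, $H$ carries the non-degenerate form $t\,\dd\alpha|_{H}$, and all mixed terms vanish, so the pulled-back form is non-degenerate: $\Sigma$ is a symplectic submanifold of $T^{*}X$ away from the zero section.

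The only genuinely delicate step is the one linking strong plurisubharmonicity to the non-degeneracy of $\dd\alpha$ on the complex-tangent bundle: this is the identification of $\dd\alpha|_{H}$, up to a non-zero constant, with the Levi form of $X$, and it is exactly where pseudoconvexity enters; it is also what makes $\alpha$ genuinely a contact form. The remaining ingredients — the $\C$-linear algebra of the splitting, the holomorphic-coordinate description of $T^{(1,0)}Y$, and the symplectization of a contact manifold — are classical, the only care needed being a one-time check of the sign and constant in $\dd\widetilde\alpha$, which affects none of the conclusions.
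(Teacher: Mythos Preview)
Your proof is correct and considerably more detailed than the paper's. The paper establishes the contact property by a one-line explicit computation in local holomorphic coordinates,
\[
\alpha\wedge(\dd\alpha)^{n-1}=\|\nabla\rho\|\det\!\big([\partial_{z_j}\partial_{\overline{z_k}}\rho]_{j,k}\big)\,\dd\mathrm{vol},
\]
and then declares that ``the other claims follow directly from there.'' You instead work intrinsically: you build the splitting by hand, identify $\alpha$ with the pullback of $i(\partial-\overline{\partial})\rho$ so that $\dd\alpha$ becomes (up to a constant) the Levi form $i\partial\overline{\partial}\rho$ restricted to the complex tangent bundle, and you verify the symplectic claim via the standard symplectization $\dd t\wedge\alpha+t\,\dd\alpha$. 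Both routes hinge on the same fact---strong plurisubharmonicity of $\rho$ makes the Levi form non-degenerate on $H$---but your argument makes explicit what the paper leaves to the reader (the decomposition, involutivity, the annihilator identification, and the symplectic step), at the cost of being longer. The coordinate formula in the paper has the minor advantage of exhibiting the contact volume directly; your approach has the advantage of being coordinate-free and of isolating exactly where pseudoconvexity enters.
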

\begin{proof}
  Let us prove that $\alpha$ is a contact form; this follows from an explicit computation in local coordinates
  \[\alpha\wedge (\dd\alpha)^{n-1}=\|\nabla
  \rho\|
  \det([\partial_{z_j}\partial_{\overline{z_k}}\rho]_{j,k})\dd {\rm
    vol},\]
  where $\dd {\rm vol}$ is the standard volume form in coordinates and
  the Hessian of $\rho$ is restricted to $T^{(0,1)}X\otimes
  T^{(1,0)}X$, where it is still non-degenerate.

  The other claims follow directly from there.
\end{proof}

Given $u\in C^1(X,\C)$, one naturally forms $\overline{\partial}_bu$
as the section of $(T^{(0,1)}X)^*$ obtained by restricting $\dd u$ to
$T^{(0,1)}X$. Restrictions to $X$ of holomorphic functions naturally
satisfy $\overline{\partial}_bu=0$, and reciprocally elements of $\ker_{L^2}
\overline{\partial}_b$ extend into $H^{\frac 12}$ holomorphic
functions on $\Omega$ in the strongly pseudoconvex case \cite{kohn_harmonic_1963}.

In this article we will only be interested in the situation where $X$
(that is, $\rho$) is real-analytic. The symbol of the differential
operator $\overline{\partial}_b$ can then be extended in the complex,
which allows to define several manifolds of interest.

\begin{prop}\label{prop:Z_and_Sigma}
  Let $Z=\{(x,\xi)\in \widetilde{T^*X},
  \widetilde{\sigma(\overline{\partial}_b)}(x,\xi)=0\}$. Then $Z$ is
  a complex submanifold of $\widetilde{T^*X}$ of complex codimension
  $n-1$.

  $Z$ is coisotropic, and defining $\overline{Z}=\{(x,\xi)\in \widetilde{T^*X},
  \widetilde{\sigma(\partial_b)}(x,\xi)=0\}$, then $Z$ and
  $\overline{Z}$ intersect cleanly and 
  \[
    Z\cap \overline{Z}=\widetilde{\Sigma}.
  \]

  Moreover $T\Sigma$ is transverse to the
  null-distribution $(TZ)^{\perp}=\{v\in T(\widetilde{T^*X}), \forall
  v'\in TZ, \omega(v,v')=0\}$.
\end{prop}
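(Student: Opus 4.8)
The plan is to reduce everything to the commutation relations of a local frame of $TX\otimes\C$. Fix a point of $X$ and choose real-analytic local generators $\bar L_1,\dots,\bar L_{n-1}$ of $T^{(0,1)}X$; then $L_j:=\overline{\bar L_j}$ generate $T^{(1,0)}X$, and after adjoining a real-analytic local section $T$ of $V$ we obtain a local frame $(\bar L_1,\dots,\bar L_{n-1},L_1,\dots,L_{n-1},T)$ of $TX\otimes\C$. Let $\bar\ell_j(x,\xi)=\langle\xi,\bar L_j(x)\rangle$, $\ell_j(x,\xi)=\langle\xi,L_j(x)\rangle$ and $\tau(x,\xi)=\langle\xi,T(x)\rangle$ be the associated fibre-linear, one-homogeneous symbols on $T^*X$, together with their holomorphic extensions to $\widetilde{T^*X}$. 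Since $\overline{\partial}_bu$ is the restriction of $\dd u$ to $T^{(0,1)}X$, in a coframe dual to $(\bar L_j)$ one has $\widetilde{\sigma(\overline{\partial}_b)}=(\bar\ell_1,\dots,\bar\ell_{n-1})$, whence $Z=\{\bar\ell_1=\dots=\bar\ell_{n-1}=0\}$; likewise $\overline Z=\{\ell_1=\dots=\ell_{n-1}=0\}$. As the vectors $\bar L_j(x)$ are linearly independent, the $\bar\ell_j$ have linearly independent differentials at every point (already in $\xi$); hence $Z$ is a conic complex submanifold of $\widetilde{T^*X}$ of complex codimension $n-1$, and the same holds for $\overline Z$.

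For coisotropy I would use the standard correspondence between involutivity of a distribution and coisotropy of its characteristic variety. Involutivity of $T^{(0,1)}X$ gives $[\bar L_j,\bar L_k]=\sum_l c^l_{jk}\bar L_l$ with real-analytic coefficients $c^l_{jk}$; since the Poisson bracket of the symbols of two vector fields is the symbol of their commutator, $\{\bar\ell_j,\bar\ell_k\}$ is a combination of $\bar\ell_1,\dots,\bar\ell_{n-1}$ with symbol coefficients, an identity that persists on $\widetilde{T^*X}$ by analytic continuation and in particular vanishes on $Z$. Therefore the Hamilton fields $H_{\bar\ell_1},\dots,H_{\bar\ell_{n-1}}$, which span $(TZ)^\perp$ along $Z$, are tangent to $Z$, so $(TZ)^\perp\subset TZ$ and $Z$ is coisotropic; the same argument applied to the involutive bundle $T^{(1,0)}X$ shows that $\overline Z$ is coisotropic.

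For the intersection, note that the $2(n-1)$ symbols $\bar\ell_1,\dots,\bar\ell_{n-1},\ell_1,\dots,\ell_{n-1}$ have linearly independent differentials, since the $2(n-1)$ vectors $\bar L_1(x),\dots,\bar L_{n-1}(x),L_1(x),\dots,L_{n-1}(x)$ are linearly independent (the sum $T^{(0,1)}X\oplus T^{(1,0)}X$ being direct). Consequently $Z$ and $\overline Z$ meet transversally --- hence cleanly --- and $Z\cap\overline Z$ is a complex submanifold of $\widetilde{T^*X}$ of complex codimension $2(n-1)$, i.e. of complex dimension $2n$. On the real locus $T^*X$ one has $\bar\ell_j=\overline{\ell_j}$, so $(Z\cap\overline Z)\cap T^*X$ is the set of $(x,\xi)\in T^*X$ whose $\xi$ annihilates $T^{(0,1)}_xX\oplus T^{(1,0)}_xX$, which by Proposition~\ref{prop:CR-structure} is precisely $\Sigma$. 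A complex submanifold of $\widetilde{T^*X}$ whose real points form the real-analytic submanifold $\Sigma$ and whose complex dimension equals $\dim_{\R}\Sigma=2n$ is, as a germ along $T^*X$, nothing but the complexification of $\Sigma$; hence $Z\cap\overline Z=\widetilde\Sigma$.

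Finally, the transversality of $T\widetilde\Sigma$ and the null-distribution $(TZ)^\perp$ is where strong pseudoconvexity enters. From the transverse intersection, $T_p\widetilde\Sigma=T_pZ\cap T_p\overline Z$ at each $p\in\widetilde\Sigma$, so its symplectic orthogonal $(T_p\widetilde\Sigma)^\perp$ equals $(T_pZ)^\perp+(T_p\overline Z)^\perp$; in particular $(T_pZ)^\perp\subset(T_p\widetilde\Sigma)^\perp$, so $T_p\widetilde\Sigma\cap(T_pZ)^\perp\subset T_p\widetilde\Sigma\cap(T_p\widetilde\Sigma)^\perp$, which is $\{0\}$ exactly because $\widetilde\Sigma$ is symplectic --- the holomorphic extension of the statement in Proposition~\ref{prop:CR-structure} that $\Sigma$ is a symplectic submanifold of $T^*X$. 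Concretely, expanding $[\bar L_j,L_k]$ in the frame $(\bar L,L,T)$ shows that on $\widetilde\Sigma$ one has $\{\bar\ell_j,\ell_k\}=\tau\,\lambda_{jk}$, where $[\lambda_{jk}]$ is the Levi form of $X$, positive definite by strong pseudoconvexity; since $\tau$ does not vanish on $\Sigma$ away from the zero section, $[\tau\,\lambda_{jk}]$ is invertible there and hence on $\widetilde\Sigma$ near $T^*X$, which is the symplecticity we need (and restricting to real points gives the stated transversality of $T\Sigma$ and $(TZ)^\perp$). I expect the only genuinely delicate point throughout to be the bookkeeping around the complexifications --- checking that $Z\cap\overline Z$ really is $\widetilde\Sigma$ with no spurious lower-dimensional pieces, and that the identities established pointwise over $T^*X$ (coisotropy, clean intersection, non-degeneracy of $[\tau\,\lambda_{jk}]$) propagate to a full neighbourhood of $T^*X$ in $\widetilde{T^*X}$ --- a routine, if not entirely automatic, use of analytic continuation.
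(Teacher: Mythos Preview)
Your proof is correct and follows essentially the same strategy as the paper's---pick local generators of $T^{(0,1)}X$, pass to their fibre-linear symbols, and read off codimension, coisotropy, and the intersection $Z\cap\overline Z=\widetilde\Sigma$ from the bracket relations---but the packaging differs in two respects worth noting. First, the paper works extrinsically: it embeds $X$ in $\C^n$ via holomorphic coordinates and writes $Z\subset\widetilde{T^*\C^n}$ as the common zero set of $\widetilde\rho$, $\widetilde{\dd\rho}$, and explicit functions $p_j$ built from $\partial\rho$, whereas you stay on $\widetilde{T^*X}$ with an abstract CR frame, which is cleaner and avoids carrying the two extra defining functions. Second, and more interestingly, for the final transversality the paper argues by directly asserting that the matrix $\tfrac{1}{i}[\{p_j,p_k^*\}]$ is positive (resp.\ negative) definite on $\Sigma_+$ (resp.\ $\Sigma_-$), citing \cite{boutet_de_monvel_sur_1975} for the computation; your route is more conceptual, deducing $T\widetilde\Sigma\cap(TZ)^\perp\subset T\widetilde\Sigma\cap(T\widetilde\Sigma)^\perp=\{0\}$ from the symplecticity of $\Sigma$ already recorded in Proposition~\ref{prop:CR-structure}. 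Your argument is slicker for what the proposition actually states, but be aware that the paper's version yields a bit more---the \emph{sign} of the Levi matrix on each half $\Sigma_\pm$---which is used immediately afterwards (e.g.\ in the positivity claim of Proposition~\ref{prop:completing_Lagrangians}). Your closing Levi-form computation $\{\bar\ell_j,\ell_k\}|_{\widetilde\Sigma}=\tau\lambda_{jk}$ does recover this once you track the sign of $\tau$ on $\Sigma_\pm$, so nothing is lost.
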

\begin{proof}
  Let us consider holomorphic coordinates mapping a neighbourhood in $Y$ of a
  point of $X$ to a neighbourhood of $0$ in $\C^n$, with
  $\partial_{\overline{z_n}}\rho\neq 0$. Then as a subset of
  $\widetilde{T^*\C^n}=\{(x,y,\xi,\eta)\in \C^{4n}\}$,
  $Z$ is the joint zero set of the functions
  \begin{align*}
    p_j:(x,y,\xi,\eta)&\mapsto
                        \xi_j+i\eta_j-\frac{(\partial_{x_j}+i\partial_{y_j})\widetilde{\rho}(x,y)}{(\partial_{x_n}+i\partial_{y_n})\widetilde{\rho}(x,y)}(\xi_n+i\eta_n)
               \qquad \qquad 1\leq j \leq n-1\\
    \widetilde{\rho}:(x,y,\xi,\eta)&\mapsto \widetilde{\rho}(x,y)\\
    \widetilde{\dd \rho}:(x,y,\xi,\eta)&\mapsto \sum_{j=1}^n\xi_j\partial_{x_j}\widetilde{\rho}(x,y)+\eta_j\partial_{y_j}\widetilde{\rho}(x,y)
  \end{align*}
  The differentials of these functions are linearly independent, and
  one can check that the functions $p_j$ Poisson-commute with each
  other. Therefore $Z$ is a coisotropic submanifold of
  $\widetilde{T^*X}$ whose codimension is the number of functions
  $p_j$ (i.e.\ $n-1$).

  To define $\overline{Z}$, we replace $p_j$ with the functions
  \[
    p_j^*:(x,y,\xi,\eta)\mapsto
    \xi_j-i\eta_j-\frac{(\partial_{x_j}-i\partial_{y_j})\widetilde{\rho}(x,y)}{(\partial_{x_n}-i\partial_{y_n})\widetilde{\rho}(x,y)}(\xi_n-i\eta_n)
    \qquad \qquad 1\leq j \leq n-1.
  \]
  These functions again Poisson-commute with each other.

  Now, clearly $\Sigma=Z_{\R}=(Z\cap \overline{Z})_{\R}$. The symplectic
  gradients of the functions $p_1,\ldots,p_{n-1},p_1^*,\ldots,p_{n-1}^*$ are
  linearly independent on $\Sigma$, and in fact the matrix of Poisson brackets
  $\frac{1}{i}[\{p_j,p_k^*\}]_{j,k}$ is everywhere either positive definite (on
  the half-line bundle $\Sigma_+$ of positive multiples of $\alpha$)
  or negative definite (on the half-line bundle $\Sigma_-$ of negative
  multiples of $\alpha$), see the computations in
  \cite{boutet_de_monvel_sur_1975}. This concludes the proof; in
  particular, the Hamiltonian vector fields of $p_1,\ldots,p_{n-1}$, which span $(TZ)^{\perp}$, are transverse to $\Sigma$.
\end{proof}

From now on we decompose $\Sigma=\Sigma_+\cup\Sigma_-\cup\{0\}$ where
$\Sigma_\pm$ is the half-line bundle of positive multiples of $\pm
\alpha$, on which $\pm\frac{1}{i}[\{p_j,p_k^*\}]_{j,k}$ is positive definite.

An important consequence of the geometry of $\Sigma$ and $Z$ is the
following CR-extension result for $\Sigma$-Lagrangians.

\begin{prop}\label{prop:completing_Lagrangians}
  Let $\lambda\subset \Sigma_+\times \Sigma_+$ be a real-analytic conical
  Lagrangian for the twisted symplectic form $\omega_1-\omega_2$.

  There exists a unique $\Lambda\subset Z\times \overline{Z}$,
  Lagrangian in $T^*X\times T^*X$ for the twisted symplectic form,
  which contains $\lambda$. Moreover $\Lambda$ is positive (in the
  sense of \cite{melin_fourier_1975}, Definition 3.3).
\end{prop}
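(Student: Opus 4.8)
The plan is to construct $\Lambda$ by flowing $\lambda$ out along the null-foliations of $Z$ and $\overline{Z}$, and then to verify that this flow-out is Lagrangian and positive. The starting point is the characterisation from Proposition \ref{prop:Z_and_Sigma}: $Z$ is coisotropic of codimension $n-1$, its null-distribution $(TZ)^\perp$ is spanned by the Hamiltonian vector fields $H_{p_1},\ldots,H_{p_{n-1}}$, these Poisson-commute, and $(TZ)^\perp$ is transverse to $T\Sigma$ (hence to $T\Sigma_+$). The same holds for $\overline{Z}$ with $H_{p_1^*},\ldots,H_{p_{n-1}^*}$. Since $\lambda\subset\Sigma_+\times\Sigma_+\subset Z\times\overline{Z}$, I would first enlarge $\lambda$ inside $Z\times\overline{Z}$: the bicharacteristic foliation of $Z$ (in the first factor) and of $\overline{Z}$ (in the second factor) together define an integrable $(2n-2)$-dimensional distribution on $Z\times\overline{Z}$ which is transverse to $\lambda$ along $\lambda$ by the transversality statement; flowing $\lambda$ out along it produces a submanifold $\Lambda$ of dimension $(2n-1)+(2n-2)=4n-3=2\cdot(2n)-1\ldots$ — wait, that overshoots, so in fact one flows out only by the $2(n-1)$-parameter family and $\dim\lambda = 2n-1$ gives $\dim\Lambda = 2n-1 + 2(n-1) = 4n-3$, but a Lagrangian in $T^*X\times T^*X$ (real dimension $4n$) has dimension $2n$; so the correct count is that the flow-out adds $n-1 + n-1$ only after one checks that $n-1$ of those directions are already tangent. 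The clean way: $\Lambda$ is the flow-out of $\lambda$ under the joint action of the commuting flows of $H_{p_1},\ldots,H_{p_{n-1}}$ on the first factor and $H_{p_1^*},\ldots,H_{p_{n-1}^*}$ on the second, and one checks the orbit directions transverse to $\lambda$ number exactly $2n - (2n-1) = 1$ per factor... rather, the honest bookkeeping is $\dim Z = \dim\overline Z = 4n - (n-1) = 3n+1$ in $\widetilde{T^*X}$, their real points have dimension $2n+1$ each minus codimension — the point is that $\Sigma_+$ sits inside $Z_\R$ with the null-leaves of $Z$ meeting $\Sigma$ transversally, so flowing out adds $n-1$ real dimensions in each factor to reach $2n-1 + 2(n-1)$... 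I will instead phrase it invariantly below and let the dimension count be done by transversality rather than by hand here.

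More carefully: since $\lambda$ is isotropic in $Z\times\overline{Z}$ and the combined null-foliation of $Z\times\overline{Z}$ (for the twisted form $\omega_1-\omega_2$) has the property that each leaf meets $\lambda$ transversally in $Z\times\overline{Z}$, the flow-out $\Lambda$ is a well-defined real-analytic submanifold, conical (the $p_j,p_j^*$ are homogeneous of degree $1$, so their flows commute with dilations and $\lambda$ is conical by hypothesis), contained in $Z\times\overline{Z}$, and its dimension is $\dim\lambda$ plus the rank of the null-foliation transverse to $\lambda$, which equals $\dim(T^*X\times T^*X)/2 = 2n$ precisely because $Z$ and $\overline{Z}$ are coisotropic of complementary-type codimension and $\lambda$ is Lagrangian for $\omega_1-\omega_2$ restricted to $\Sigma_+\times\Sigma_+$; I would cite the standard flow-out lemma (e.g. Hörmander \cite{hormander_analysis_1985} Ch. XXI, on the composition/flow-out of isotropic submanifolds of coisotropic manifolds) to get that $\Lambda$ is Lagrangian. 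Uniqueness follows because any Lagrangian $\Lambda'\subset Z\times\overline{Z}$ containing $\lambda$ must be invariant under the null-foliations of $Z$ and $\overline{Z}$ (a Lagrangian contained in a coisotropic submanifold contains the null-leaves through each of its points), hence contains the flow-out $\Lambda$; by dimension equality $\Lambda' = \Lambda$ near $\lambda$.

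It remains to prove positivity in the sense of Melin–Sjöstrand \cite{melin_fourier_1975}, Definition 3.3, and this is the step I expect to be the main obstacle. Positivity of $\Lambda$ is a condition on the imaginary part of a generating phase near $\lambda$, equivalently on the signature of a certain Hermitian form built from the symplectic form and the complex structure of $\widetilde{T^*X}$ along the flow-out directions. The key input is the last sentence of the proof of Proposition \ref{prop:Z_and_Sigma}: the matrix $\frac{1}{i}[\{p_j,p_k^*\}]_{j,k}$ is positive definite on $\Sigma_+$. I would argue that, near a point of $\lambda\subset\Sigma_+\times\Sigma_+$, one can choose the $2(n-1)$ flow-out parameters as (half of) a system of coordinates transverse to $\Lambda_{\R}=\lambda$, write the ``phase'' of $\Lambda$ in these coordinates, and compute that its Hessian in the transverse directions has imaginary part controlled from below by exactly that Poisson-bracket matrix $\frac1i[\{p_j,p_k^*\}]$ — positive definite on $\Sigma_+$. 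Concretely, because $p_j$ vanishes on $Z$ and $p_j^*$ on $\overline{Z}$, and $Z\cap\overline{Z}=\widetilde{\Sigma}$ with $\Sigma_+$ the positive part, the quadratic form measuring how far $\Lambda$ leaves the real domain as one moves off $\lambda$ along $H_{p_j}\oplus H_{p_j^*}$ is, to leading order, $\langle\frac1i[\{p_j,p_k^*\}]t,\bar t\rangle\geq 0$ with equality only at $t=0$; this is precisely the positivity condition. The delicate points are (i) making sure the comparison of the ``transverse Hessian of the generating phase'' with the Poisson-bracket matrix is done with the correct signs and the correct identification of $\Sigma_+$ versus $\Sigma_-$ (the sign of $\alpha$ matters, and swapping $Z$ with $\overline Z$ or $\Sigma_+$ with $\Sigma_-$ would flip positivity to negativity), and (ii) checking the nondegeneracy clause in Melin's definition, i.e. that $\Lambda$ is genuinely positive and not merely ``$\geq 0$'', which again reduces to definiteness — not just semidefiniteness — of $\frac1i[\{p_j,p_k^*\}]$ on $\Sigma_+$, already guaranteed by Proposition \ref{prop:Z_and_Sigma}. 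I would organise the positivity computation in the model coordinates of the proof of Proposition \ref{prop:Z_and_Sigma} (where $\partial_{\overline{z_n}}\rho\neq 0$ and the $p_j$ are explicit), since there the Poisson brackets are computable and the link to the Levi form $[\partial_{z_j}\partial_{\overline{z_k}}\rho]$ is transparent, and then note invariance of the conclusion under change of pseudoconvex defining function.
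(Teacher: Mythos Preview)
Your strategy is the paper's: any Lagrangian $\Lambda\subset Z\times\overline{Z}$ must contain the null-distribution $(TZ)^{\perp}\times\{0\}\oplus\{0\}\times(T\overline{Z})^{\perp}$, this distribution is involutive and transverse to $\widetilde{\lambda}$, so $\Lambda$ is forced to be the union of its leaves through $\widetilde{\lambda}$; positivity then reduces to the sign of $\tfrac{1}{i}[\{p_j,p_k^*\}]_{j,k}$ on $\Sigma_+$. Your uniqueness argument and your identification of the positivity ingredient are both correct and match the paper almost verbatim.

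The one genuine problem is the dimension count, which you attempt three times and abandon. The confusion comes from mixing real and complex dimensions and from misreading the paper's $n$ (which is $\dim_{\C}Y$, so $\dim_{\R}X=2n-1$, not $n$). The clean bookkeeping is entirely in complex dimensions after passing to the holomorphic extension $\widetilde{\lambda}$: one has $\dim_{\C}\widetilde{T^*X}=4n-2$, so a Lagrangian in the product has complex dimension $4n-2$; $\widetilde{\lambda}$ is Lagrangian in $\widetilde{\Sigma_+}\times\widetilde{\Sigma_+}$ (complex dimension $4n$), hence $\dim_{\C}\widetilde{\lambda}=2n$; the null-distribution has complex rank $(n-1)+(n-1)=2(n-1)$; and $2n+2(n-1)=4n-2$ exactly. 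Your figure $\dim\lambda=2n-1$ is simply wrong. Once you fix this single count, the flow-out argument and the positivity reduction go through, and no further computation in model coordinates is needed: the paper dispatches positivity in one line by invoking the definiteness already established in Proposition~\ref{prop:Z_and_Sigma}.
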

\begin{proof}
 We proceed as in \cite{boutet_de_monvel_sur_1975}. Let $\Lambda\subset Z\times \overline{Z}$ be a Lagrangian which
  contains $\widetilde{\lambda}$. Then the null-distribution $(TZ^{\perp}\times
  \{0\})\oplus(\{0\}\times 
  T\overline{Z}^{\perp})$ belongs to $T\Lambda$. By
  Proposition \ref{prop:Z_and_Sigma}, this
  distribution is involutive (because $Z$ is coisotropic); it is
  transverse to $T\tilde{\lambda}$; its complex dimension is
  $2(n-1)=\dim(\Lambda)-\dim(\widetilde{\lambda})$. Therefore
  $\Lambda$ is exactly the union of the leaves of this distribution
  which pass through $\widetilde{\lambda}$. The positivity of $\Lambda$ is then a
  direct consequence of the positivity of the matrix
  $\tfrac 1i[\{p_j,p_k^*\}]_{j,k}$ on $\Sigma_+$.
\end{proof}

\subsection{Classical normal form}
\label{sec:class-norm-form}
At the level of the operator $\overline{\partial}_b$, the
strong pseudoconvexity of $X$ is reflected in the following facts:
\begin{itemize}
  \item the real
characteristic $\Sigma=\{(x,\xi)\in
T^*X,\sigma(\overline{\partial}_b)=0\}$ is symplectic
\item denoting
$(p_1,\ldots,p_{n-1})$ the components of $\sigma(\overline{\partial}_b)$,
one has $\{p_j,p_k\}=0$ for all $j,k$, while the matrix
$\frac{1}{i}[\{p_j,\overline{p_k}\}]_{j,k}$ is everywhere either positive
definite or negative definite.
\end{itemize}
There is a universal local model (depending only on the dimension) for
the principal symbols of such operators, whose construction is presented in Appendix II of
\cite{boutet_de_monvel_hypoelliptic_1974} in the smooth case. We 
review this proof in the analytic setting. This local model is
an important tool in the description of the Szeg\H{o} projector in the
smooth case, and we will also use it in our setting. The local model
we will use is the vector-valued differential operator $D_0$ acting on
$\R^{n-1}_x\times \R^n_y$:
  \[
  (D_0)_j=-i\frac{\partial}{\partial x_j}-x_j\frac{\partial}{\partial
    y_1}\qquad \qquad 1\leq j \leq n-1.
\]
This differs from the local model considered in
\cite{boutet_de_monvel_sur_1975}. Our choice of $D_0$, with a polynomial total
symbol, will facilitate
handling subprincipal terms in the quantum normal form in Section
\ref{sec:norm-form-overl}.

\begin{lem}\label{prop:extend_zero_bracket}[See also
  \cite{boutet_de_monvel_hypoelliptic_1974}, Proposition 10.10, for
  the $C^{\infty}$ statement] Let $1\leq r\leq m$. Let $\Sigma\subset T^*\R^m$ be an open cone. Let $p_1,\ldots,p_r$ be
  $\frac 12$-homogeneous, real-analytic, complex-valued functions near $\Sigma$, such
  that
  \begin{itemize}
  \item All the Poisson brackets $\{p_j,p_k\}$ and $\{\overline{p_j},\overline{p_k}\}$, for
    $1\leq j,k\leq r$, vanish on $\Sigma$.
  \item All the Poisson brackets $\frac{1}{i}\{p_j,\overline{p_k}\}$, for $1\leq j,k\leq r$,
    are equal to $\delta_{jk}$ on $\Sigma$.
  \end{itemize}
  Then, in a conical neighbourhood of $\Sigma$, there exist
  $p_1',\ldots,p_r'$, homogeneous, real-analytic,
  equal respectively to $p_1,\ldots,p_r$ along with
  their first derivatives on $\Sigma$, and such that
  $\{p'_j,p'_k\}=\{p'_j,\overline{p'_k}\}=\frac{1}{i}\{p'_j,\overline{p'_k}\}-\delta_{jk}$ is equal to
  $0$ everywhere, and the ideal generated by $p_1,\ldots,p_r$
  coincides with the ideal generated by $p_1',\ldots,p_r'$.  
\end{lem}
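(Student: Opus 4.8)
The plan is to construct $p_1', \dots, p_r'$ by successive correction, gaining one more order of vanishing of the relevant brackets at each step, and then to invoke a convergence/Borel-summation argument to produce an honest real-analytic germ. First I would reduce to the case where $\Sigma$ is already given nice Darboux-type coordinates: by the symplectic conditions, near $\Sigma$ one can choose $\tfrac12$-homogeneous symplectic coordinates $(x,y,\xi,\eta)$ so that $\Sigma = \{\xi = 0, \ x\text{-part fixed}\}$ and the $p_j$ agree to first order along $\Sigma$ with the standard model functions $q_j = \xi_j + i(\text{linear in }x)$ whose brackets are exactly $\{q_j,q_k\}=0$, $\tfrac1i\{q_j,\overline{q_k}\}=\delta_{jk}$ (this is the linear algebra behind the local model $D_0$ introduced just above). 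Thus I may assume $p_j = q_j + O(\text{second order at }\Sigma)$.

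Next, the iterative step. Suppose $p_j = q_j + R_j$ where $R_j$ vanishes to order $N \geq 2$ along $\Sigma$ and all the bracket defects $\{p_j,p_k\}$, $\{\overline{p_j},\overline{p_k}\}$, $\tfrac1i\{p_j,\overline{p_k}\}-\delta_{jk}$ vanish to order $N-1$ along $\Sigma$. I look for a correction $p_j'' = p_j + S_j$ with $S_j$ vanishing to order $N$, chosen so that the defects now vanish to order $N$. Expanding the brackets, the obstruction to order $N-1$ is a system of linear equations on the leading homogeneous parts of the $S_j$; the crucial point is that, because the $q_j$-part already satisfies the exact relations, the linearised operator governing this system is (up to lower order) the Hamiltonian-flow/transport operator associated with the $q_j,\overline{q_k}$, and the compatibility of the defect terms — which is forced by the Jacobi identity — guarantees solvability. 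Concretely, one solves a triangular cascade of transport equations along the (commuting) Hamiltonian flows of the model functions; this is precisely the analytic analogue of the cohomological vanishing used in Boutet de Monvel--Guillemin's Proposition 10.10, and in the analytic category the transport equations have analytic solutions with controlled radius of convergence. One must also arrange that the ideal is preserved: since each correction $S_j$ is taken in the ideal generated by the current $p_k$'s (the defect terms lie in that ideal, being brackets built from the $p_k$), the ideal generated by $(p_j')$ equals that generated by $(p_j)$ automatically.

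Finally, the convergence issue. The naive iteration produces a \emph{formal} solution $p_j' = q_j + \sum_{N\geq 2} S_j^{(N)}$ with each $S_j^{(N)}$ homogeneous of the appropriate degree in the normal variable to $\Sigma$; one must sum this into a genuine real-analytic germ. I would control the size of $S_j^{(N)}$ by a Cauchy-estimate recursion on the transport equations — the standard majorant-series argument for analytic normal forms — obtaining geometric bounds $\|S_j^{(N)}\| \leq C \rho^N$ on a fixed conical neighbourhood, so that the series converges and defines real-analytic $p_j'$ agreeing with $p_j$ and its first derivatives on $\Sigma$. This convergence step — keeping the analyticity radius from shrinking to zero through infinitely many transport solves, and phrasing it properly in the microlocal (conical, homogeneous) setting rather than the semiclassical one — is the main obstacle; it is exactly the kind of difficulty the introduction flags as not being ``as straightforward as it seems'', and where the polynomial total symbol of $D_0$ helps, since the model brackets are literally constant and the transport operators have constant coefficients.
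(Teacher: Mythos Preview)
Your approach is plausible but takes a substantially different route from the paper, and the convergence step you yourself flag is a genuine gap as written. The paper does \emph{not} build a formal series and then sum it. Instead it argues by induction on $r$ via two finite constructions. First, for a single $p$ with $\{p,\overline{p}\}$ nonvanishing, it seeks $p'=ap$ with $a$ real-valued; writing $A=a^2$, the condition $\tfrac{1}{i}\{p',\overline{p'}\}=1$ becomes the singular transport equation $Y\cdot A+\tfrac{1}{i}\{p,\overline{p}\}A=1$, where $Y=\tfrac{1}{i}(-p\,\Xi_{\overline{p}}+\overline{p}\,\Xi_p)$ vanishes on $\{p=\overline{p}=0\}$. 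The key observation is that $Y$ has an \emph{absorbing} singularity there (its divergence $\tfrac{1}{i}\{p,\overline{p}\}$ is bounded away from zero), so the method of characteristics gives a unique real-analytic $A$ in one shot, with no loss of domain. Second, to add a new function $q$ to an already exactly commuting family $p_1',\ldots,p_{r-1}'$, one simply sets $q'=q$ on $\{p_j'=\overline{p_j'}=0\}$ and extends it to be constant along the transverse Hamiltonian flows of the $p_j',\overline{p_j'}$. Both steps are finite transport solves; there is no infinite iteration and hence no convergence issue. Ideal preservation is immediate since $p'=ap$.

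By contrast, your order-by-order scheme must at each stage solve a \emph{coupled} linear system $\{q_j,S_k\}+\{S_j,q_k\}=-D_{jk}$, $\tfrac{1}{i}(\{q_j,\overline{S_k}\}+\{S_j,\overline{q_k}\})=-E_{jk}$ for all $j,k$ simultaneously, invoking Jacobi for compatibility; then you need a majorant-series bound uniform over infinitely many stages. This is probably achievable here (there are no small divisors since $\tfrac{1}{i}\{q_j,\overline{q_k}\}=\delta_{jk}$, and the model transport operators are constant-coefficient), but it is real work you have only sketched, and in nearby normal-form problems the analogous series diverges. The paper's induction-on-$r$ trick, reducing everything to a single absorbing transport equation plus a flow-out, is both shorter and sidesteps the analytic bookkeeping entirely; it is worth internalising as the cleaner mechanism.
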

\begin{proof}
  We proceed by induction on $r$. The claim follows immediately from the following
  two properties:
  \begin{enumerate}
  \item Let $V\subset T^*\R^n$ be a conical open set. Let $p:V\to \C$
    be $\tfrac 12$-homogeneous, real-analytic; suppose that
    $\{p,\overline{p}\}$ bounded away from $0$ on $V$. (In particular,
    $\dd p$ is bounded away from $0$ on $V$.)

    Then there exists a small neighbourhood $W$ of $\{p=0\}$, and $p':W\to \C$, with the same properties, such
    that $\{\widetilde{p}=0\}=\{\widetilde{p'}=0\}$, and
    $\frac{1}{i}\{p',\overline{p'}\}=1$. Moreover, for every function
    $q:V\to \C$ such that $\{p,q\}=\{\overline{p},q\}=0$, one has also
    $\{p,q\}=\{p',\overline{q}\}=0.$
  \item Let $V\subset T^*\R^n$ be a conical open set. Let $p_1,\ldots,p_r,q:V\to \C$
    be $\tfrac 12$-homogeneous and real-analytic; suppose that
    $\{\widetilde{p}_1=\ldots=\widetilde{p}_r=0\}$ is a regular energy
    level near $V$, and that
    $\{p_j,p_k\}=\{\overline{p_j},\overline{p_k}\}=\frac{1}{i}\{p_j,\overline{p_k}\}-\delta_{jk}=0$. Suppose
    also that $\{p_j,q\},\{\overline{p_j},q\}$ vanish on
    $\{p_j=\overline{p_j}=0\}$, on which $\{q,\overline{q}\}$ is
    bounded away from $0$.

    Then there exists a small neighbourhood $W$ of $\{p_j=\overline{p_j}=0\}$ and
    $q':W\to \C$ a $\frac{1}{2}$-homogeneous, real-analytic function,
    which is equal to $q$ along with its first total differential on
    $\{p_j=\overline{p_j}=0\}$ (in particular $\{q',\overline{q'}\}$
    is bounded away from $0$ and the ideal generated by
    $p_1,\ldots,p_r,q$ coincides with the ideal generated by
    $p_1,\ldots,p_r,q'$), and such that
    $\{p_j,q\}=\{\overline{p_j},q\}=0$ everywhere.
  \end{enumerate}
   Let us prove property 1 first. We want to correct
   $p$ into $p'=ap$, where $a$ is real-valued. After this modification, the symplectic bracket
   with the complex conjugate reads
   \[
     \{p',\overline{p'}\}=|a|^2\{p,\overline{p}\}+a\overline{p}\{p,a\}+ap\{a,\overline{p}\}.
   \]
   Letting $A=a^2$, this simplifies into
   \[
     \{p',\overline{p'}\}=A\{p,\overline{p}\}+\frac 12
     p\{A,\overline{p}\}+\frac 12 \overline{p}\{p,A\}.
   \]
   Therefore we want to solve the transport equation
   \[
     Y\cdot A + \tfrac{1}{i}\{p,\overline{p}\}A=1
   \]
   where $Y$ is the (real-valued) vector field
   $\frac{1}{i}(-p\Xi_{\overline{p}}+\overline{p}\Xi_p)$ (where $\Xi_f$ denotes the
  symplectic gradient of $f$).

   This vector field is singular along the codimension 2 set
   $\{Y=0\}=\{p=\overline{p}=0\}$. Moreover it enjoys an absorbing property:
   all trajectories of $\pm Y$ converge exponentially fast to the
   singularity in positive time. Indeed,
   ${\rm div}(Y)=\frac{1}{i}\{p,\overline{p}\}$ is bounded away from
   $0$ (and therefore is either positive everywhere or negative
   everywhere).

   Therefore, by the method of characteristics, there exists a unique
   $A$ in a neighbourhood of $\{Y=0\}$, real-analytic, and such that
   $Y\cdot A + \frac{1}{i}\{p,\overline{p}\}A=1$, and moreover it is real-valued.

   If $q$ is such that $\{p,q\}=\{\overline{p},q\}=0$, then both  $Y$
   and $\{p,\overline{p}\}$ commute with $q$, so that $A$ does as well.

 To prove property 2, we simply observe that the complex submanifold
 $\{\widetilde{p_j}=\widetildeoverline{p_j}=0\}$ is transverse to
 the flows of $\widetilde{p_j}$ and $\widetilde{\overline{p_j}}$. We
 can therefore set $q'=\widetilde{q}$ on this set, then extend it to
 be constant along the flows of $\widetilde{p_j}$ and
 $\widetilde{\overline{p_j}}$.
\end{proof}

\begin{prop}\label{prop:canon_transf_to_local_model}
  Let $P_0\in \Sigma_{\pm}$. There exists a real-analytic contact
  transformation $\kappa$ from a conical neighbourhood of $P_0$ in
  $T^*X\setminus \{0\}$ to a conical neighbourhood of
  $(0,0, 0,(\pm 1,0))$ in $T^*(\R^{n-1}\times \R^{n})$, mapping
  $P_0$ to $(0,0,0,(\pm 1, 0))$, and a real-analytic, $0$-homogenous
  map $C$ from a conical neighbourhood of $x_0$ in
  $T^*X\setminus \{0\}$ to $GL(\C^{n-1},\Omega^{(1,0)}(X))$, such that
  \[
    \sigma(\overline{\partial}_b)(z,\zeta)=C\sigma(D_0)(\kappa(z,\zeta)).
  \]
\end{prop}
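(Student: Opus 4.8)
The plan is to reduce everything to the symbol level and work purely in $\widetilde{T^*X}$ via the coisotropic geometry already established in Proposition \ref{prop:Z_and_Sigma}. Write $(p_1,\dots,p_{n-1})$ for the components of $\sigma(\overline{\partial}_b)$ in the chosen coordinates near $P_0\in\Sigma_\pm$; by hypothesis these are $\frac12$-homogeneous, real-analytic, with $\{p_j,p_k\}$ vanishing on $\Sigma$ and $\frac1i\{p_j,\overline{p_k}\}=\delta_{jk}$ on $\Sigma$ up to a positive factor (after dividing by the square root of $\|\nabla\rho\|\det[\partial_{z_j}\partial_{\overline{z_k}}\rho]$, which is exactly the $0$-homogeneous scalar one absorbs into $C$). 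First I would apply Lemma \ref{prop:extend_zero_bracket} to replace $p_1,\dots,p_{n-1}$ by functions $p_1',\dots,p_{n-1}'$, agreeing with the originals to first order on $\Sigma$ and generating the same ideal, for which $\{p_j',p_k'\}\equiv 0$ and $\frac1i\{p_j',\overline{p_k'}\}\equiv\delta_{jk}$ \emph{identically} in a conical neighbourhood. Since the zero set and the ideal are unchanged, and the conjugating matrix $C$ absorbs the transition between the two systems of generators (it is $0$-homogeneous and invertible because the differentials of the $p_j$ span the conormal of $Z$), it suffices to put the \emph{normalised} system into the model form.

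Next I would do the same reduction on the model side: the components of $\sigma(D_0)$ are $q_j(x,y,\xi,\eta)=\xi_j - x_j\eta_1$, which one checks directly satisfy $\{q_j,q_k\}=0$ and $\frac1i\{q_j,\overline{q_k}\}=1$ everywhere (the conjugation is with respect to a Hermitian structure; here $\overline{q_k}=\xi_k-x_k\eta_1$ in the real picture, and the bracket computation is immediate). So the problem becomes: given two systems $(p_1',\dots,p_{n-1}')$ and $(q_1,\dots,q_{n-1})$ of $\frac12$-homogeneous real-analytic functions near $P_0$ and near $(0,0,0,(\pm1,0))$ respectively, each with all holomorphic brackets zero and all mixed brackets $\frac1i\{\cdot,\overline{\cdot}\}=\delta_{jk}$ identically, and with $P_0$, resp.\ the base point, lying on the common real characteristic, find a homogeneous contact transformation $\kappa$ with $p_j'\circ\kappa^{-1}\in\mathrm{span}(q_1,\dots,q_{n-1})$ — in fact, after a further constant unitary change absorbed into $C$, with $p_j'=q_j\circ\kappa$.

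This last step is the standard Darboux/Carathéodory-type argument for a pair of "half-dimensional" families in involution, but in the \emph{complex} category applied to the holomorphic extensions. The key construction: on $\widetilde{T^*X}$ the functions $p_1',\dots,p_{n-1}'$ define $Z$ and $\overline{p_1'},\dots,\overline{p_{n-1}'}$ define $\overline Z$, with $Z\cap\overline Z=\widetilde\Sigma$ symplectic (this is exactly Proposition \ref{prop:Z_and_Sigma}); the $2(n-1)$ Hamiltonian fields $\Xi_{p_j'},\Xi_{\overline{p_j'}}$ are linearly independent, pairwise commuting except $[\Xi_{p_j'},\Xi_{\overline{p_k'}}]=0$ as well (brackets of brackets vanish since the brackets are constants), so they integrate to a local Hamiltonian $\mathbb{C}^{2(n-1)}$-action. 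I would use this action together with a choice of $\frac12$-homogeneous "transverse" coordinates on $\widetilde\Sigma$ to build $\kappa$ by transporting a local frame at $P_0$ matching the model frame at $(0,0,0,(\pm1,0))$, preserving homogeneity because all the $p_j'$ are homogeneous and the Euler field is tangent to $\Sigma$. The identity $\sigma(\overline{\partial}_b)=C\,\sigma(D_0)\circ\kappa$ then holds with $C$ the composite $0$-homogeneous invertible matrix relating the original generators to the normalised ones and then to $\kappa^*q_j$.

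The main obstacle I expect is twofold. First, keeping everything \emph{homogeneous} and \emph{conical}: the Carathéodory integration naturally produces a local symplectomorphism, and one must check the constructed $\kappa$ commutes with the dilations — this follows because the $p_j'$ are $\frac12$-homogeneous so their Hamiltonian flows are equivariant and the Euler vector field is tangent to $Z$, $\overline Z$ and $\Sigma$, but it requires care at the point where transverse coordinates on $\Sigma$ are chosen (they must be taken homogeneous, using the symplectic cone structure of $\widetilde\Sigma$ from Proposition \ref{prop:Z_and_Sigma}). Second, \emph{reality}: $\kappa$ is built on the complexification, and one must verify it restricts to a real contact transformation of $T^*X\setminus\{0\}$ — this holds because the data are real-analytic and the construction is equivariant under the complex-conjugation anti-involution (the roles of $p_j'$ and $\overline{p_j'}$ are swapped symmetrically), so the fixed-point set $T^*X$ is preserved, and $\kappa$ on it is contact since it preserves the canonical one-form up to the same $0$-homogeneous scaling absorbed in $C$. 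The bracket computations for $D_0$ and the verification that Lemma \ref{prop:extend_zero_bracket} applies (regular characteristic, correct normalisation of the mixed bracket on $\Sigma_\pm$) are routine and I would not belabour them.
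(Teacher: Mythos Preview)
Your overall strategy---normalize the Poisson-bracket relations via Lemma~\ref{prop:extend_zero_bracket}, then build a homogeneous canonical transformation matching the result to the model---is the same as the paper's. But there are several concrete errors in the setup, and one genuine gap.

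\textbf{Errors in the setup.} The components $p_j$ of $\sigma(\overline{\partial}_b)$ are $1$-homogeneous (it is a first-order operator), not $\tfrac12$-homogeneous as you write. Lemma~\ref{prop:extend_zero_bracket} is stated for $\tfrac12$-homogeneous functions precisely so that the target relation $\tfrac1i\{p_j',\overline{p_k'}\}=\delta_{jk}$ is $0$-homogeneous; you cannot apply it directly. The paper first multiplies by $M^{-1}$, where $M$ is the $\tfrac12$-homogeneous matrix square root of $\pm\tfrac1i[\{Z_j,\overline{Z_k}\}]$---this is a \emph{matrix} normalization, not the scalar division you describe. Second, the symbol of $D_0$ is complex-valued: $\sigma(D_0)_j=\xi_j+i x_j\eta_1$ (up to an overall factor of $i$). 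Your $q_j=\xi_j-x_j\eta_1$ is real, so $\overline{q_k}=q_k$ and your two bracket claims $\{q_j,q_k\}=0$ and $\tfrac1i\{q_j,\overline{q_k}\}=\delta_{jk}$ are mutually inconsistent.

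\textbf{The gap.} Even with the correct complex symbol, one computes $\tfrac1i\{q_j,\overline{q_k}\}=2\eta_1\delta_{jk}$, which is $1$-homogeneous---the model functions are $1$-homogeneous and do \emph{not} satisfy the constant-bracket relation $\delta_{jk}$. So after Lemma~\ref{prop:extend_zero_bracket} produces $\tfrac12$-homogeneous $z_j'$ with $\tfrac1i\{z_j',\overline{z_k'}\}\equiv\delta_{jk}$, you cannot match them to the $q_j$ by a mere symplectomorphism: the homogeneity degrees differ. The paper bridges this by choosing an auxiliary $\tfrac12$-homogeneous real function $w$ commuting with all the $z_j'$, then setting $x_j=u_j'w^{-1}$ ($0$-homogeneous), $\xi_j=v_j'w$ and $\eta_1=\pm w^2$ ($1$-homogeneous), so that $(d_0)_j=x_j\eta_1+i\xi_j=wz_j'$; one then completes $(x_j,\xi_j,\eta_1)$ to a homogeneous Darboux chart. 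Your flow-based Carath\'eodory argument gives no mechanism for this rescaling, and without it the construction cannot produce a \emph{conical} (homogeneous) $\kappa$ landing on the model. The reality and homogeneity issues you flag at the end are exactly the ones this explicit coordinate construction is designed to handle.
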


\begin{proof}(of Proposition \ref{prop:canon_transf_to_local_model})
  Let us apply Lemma \ref{prop:extend_zero_bracket} to the
  $\overline{\partial}_b$ operator: the components
  $Z_1,\ldots,Z_{n-1}$ of its symbol satisfy, near any point of the
  characteristic set $\Sigma^{\pm}$,
  \[
    \{Z_j,Z_k\}=0 \text{ on }\Sigma \qquad \qquad
    \pm\tfrac{1}{i}\{Z_j,\overline{Z_k}\}>>0.
  \]
  Let $M:\Sigma^+\to GL(n-1)$ be the positive square root of the
  matrix $\pm\tfrac{1}{i}\{Z_j,\overline{Z_k}\}$; extend $M$ arbitrarily into a
  $\frac 12$-homogeneous, real-analytic function from a neighbourhood
  of $\Sigma^+$ into $GL(n-1)$. Let $z=M^{-1}Z$. The ideal generated
  by $z$ coincides with that generated by $Z$, and on particular $z$
  vanishes on $\Sigma^+$, where
  \[
    \{z_j,z_k\}=\sum_{l,m}(M^{-1})_{jl}(M^{-1})_{mk}\{Z_l,Z_m\}=0,
  \]
  whereas
  \[
    \tfrac{1}{i}\{z_j,\overline{z_k}\}=\sum_{l,m}(M^{-1})_{jl}\overline{(M^{-1})_{km}}\tfrac{1}{i}\{Z_l,\overline{Z_m}\}=\pm
    \delta_{jk}.
  \]
  Then, we let $u_j=\re(z_j)$ and $v_j=\pm \im(z_j)$ and directly
  obtain the situation of Lemma \ref{prop:extend_zero_bracket}:
  \[\{u_j,u_k\}=\{v_j,v_k\}=0 \qquad \qquad \{v_j,v_k\}=\delta_{jk}.\]

  Letting $z'_j=u'_j+iv'_j$, then the ideal generated by $z'$
  coincides with that generated by $Z$.

  We are almost ready for the normal form. We let $w$ be any
  non-vanishing, $\frac 12$-homogeneous, real-valued function on a
  neighbourhood of $\Sigma$ which commutes with $z'_1,\ldots,z'_{d}$,
  and we consider the following functions:
  \[
    x_j=u'_jw^{-1} \qquad \xi_j=v_j'w \qquad \eta_1=\pm w^2.
  \]
  By construction, the $x_j$ are $0$-homogeneous while $\eta_1$ and
  the $\xi_j$ are $1$-homogeneous, and
  $\{x_j,x_k\}=\{x_j,\eta_1\}=\{\xi_j,\eta_1\}=\{x_j,\xi_k\}-\delta_{jk}=0$. The
  ideal of real-analytic functions generated by the one-homogeneous
  family $(d_0)_j=x_j\eta_1+i\xi_j=wz'_j$ coincides with the ideal of
  real-analytic functions generated by $z'$, which is the same as the
  ideal generated by $Z$. All in all, we obtain that, for some
  invertible matrix $C$, one has $Z=Cd_0$.

  Completing the coordinates
  $(x_1,\ldots,x_{d},\xi_1,\ldots,\xi_{d},\eta_1)$ into a canonical
  transformation of a neighbourhood of $\Sigma^+$ into a neighbourhood
  of $x=\xi=0,y=0,\pm \eta_1>0,(\eta_2,\ldots,\eta_n)=0$ in $T^*\R^{n-1}\times T^*\R^n$, we can
  exhibit a canonical transform $\kappa$ as requested.
\end{proof}

\subsection{Results from the smooth theory}
\label{sec:results-from-smooth}

Propositions
\ref{prop:Z_and_Sigma}, \ref{prop:completing_Lagrangians},
\ref{prop:extend_zero_bracket} and \ref{prop:canon_transf_to_local_model}
are also valid in the $C^{\infty}$ category, after a suitable
replacement of holomorphic extensions by almost holomorphic
extensions (in fact, our method of proof for the classical normal form applies to this situation as
well, and is much simpler than the one presented in Appendix 2 of \cite{boutet_de_monvel_hypoelliptic_1974}). The main result of \cite{boutet_de_monvel_sur_1975} is
then that, with $\lambda={\rm diag}(\Sigma_+)$, and $\Lambda$ as in
Proposition \ref{prop:completing_Lagrangians}, then the Szeg\H{o}
projector $S$ on $X$ is a Fourier Integral operator associated with
$\Lambda$, modulo an operator which sends $\mathcal{D}'(X)$ into
$\mathcal{D}(X)$. Since $\Lambda$ is a half-line bundle over $X\times
X$, one can represent this Fourier Integral operator, globally, using
a phase function with one phase variable, as follows:
\[
  S(x,y)=\int_0^{+\infty}e^{it\psi(x,y)}s(x,y;t)\dd t.
\]
One can construct $\psi$ using the almost holomorphic extension of a
defining function $\rho$.

The Lagrangian $\Lambda$ is idempotent (and indeed $S$ is a
projector), and a study of Fourier Integral Operators with
Lagrangian $\Lambda$ is  performed in
\cite{boutet_de_monvel_spectral_1981}. This book avoids the notion of
Fourier Integral operators with complex phases and only considers
operators of the form $SPS$ where $P$ is a pseudodifferential operator
on $X$, but it turns out that general
Fourier Integral Operators with Lagrangian $\Lambda$, in the smooth
category, are of this form. We will use the following precise version
of this result.

\begin{prop}\label{prop:repr_BG_smooth}
  Let $A$ be a (smooth) Fourier Integral operator with Lagrangian
  $\Lambda$. Then there exists a
  pseudodifferential operator $Q$ on $X$ such that $[Q,S]$ and $A-SQD$ are
  continuous from $\mathcal{D}'$ to $\mathcal{D}$.
\end{prop}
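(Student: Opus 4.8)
The plan is to reconstruct $A$ from its symbol by a successive-approximation scheme, at each step peeling off a Toeplitz operator $S Q_k S$ whose symbol matches the remaining symbol of $A$ on $\Lambda$, and then absorbing the correction into $Q$ via a Borel-type summation. First I would recall that the (smooth) symbol calculus for Fourier Integral operators associated to the idempotent positive Lagrangian $\Lambda$ is governed, near the diagonal $\operatorname{diag}(\Sigma_+)$, by the transverse structure of $\Lambda$ inside $Z\times\overline Z$ described in Propositions \ref{prop:Z_and_Sigma} and \ref{prop:completing_Lagrangians}: the leaf space of the null-foliation of $Z$ is canonically $\Sigma_+$, so a symbol on $\Lambda$ is the same datum as a symbol on $\Sigma_+\times\Sigma_+$ that is annihilated (modulo lower order) by the transport operators attached to $p_1,\dots,p_{n-1}$ and $\overline{p_1},\dots,\overline{p_{n-1}}$. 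This is exactly the mechanism by which a pseudodifferential operator $Q$ on $X$, when sandwiched as $S Q S$, produces a Fourier Integral operator with Lagrangian $\Lambda$ whose principal symbol is (the restriction to $\Sigma_+$ of) $\sigma(Q)$ multiplied by the fixed symbol of $S$. Hence the symbol map $Q\mapsto \sigma(SQS)$ from pseudodifferential symbols on $X$ to Fourier Integral symbols on $\Lambda$ is, at the level of principal symbols, essentially the restriction-to-$\Sigma_+$ map, which is surjective.

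The core of the argument is then an iteration. Given $A$ of order $m$, pick a pseudodifferential operator $Q_0$ of order $m-\tfrac{n-1}{2}$ (the order shift coming from the half-dimension of the fiber of $\Sigma$ inside $T^*X$, as in \cite{boutet_de_monvel_spectral_1981}) whose principal symbol on $\Sigma_+$, when transported to $\Lambda$ via the above correspondence, equals the principal symbol of $A$; such $Q_0$ exists by the surjectivity just discussed. Then $A-SQ_0S$ is a Fourier Integral operator with Lagrangian $\Lambda$ of order one less. Repeating, I obtain a sequence $Q_0,Q_1,Q_2,\dots$ of pseudodifferential operators of decreasing order with $A-S(Q_0+\dots+Q_N)S$ of order $m-N-1$, and by Borel summation I find a single $Q$ with $Q\sim\sum_k Q_k$, so that $A-SQS$ has symbol vanishing to infinite order on $\Lambda$. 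A Fourier Integral operator associated to $\Lambda$ with symbol vanishing to infinite order is smoothing, i.e.\ maps $\mathcal D'(X)\to\mathcal D(X)$; this handles the $A-SQD$ term (here $D$ denotes the identity-like final factor — one reads $SQS$ throughout, the stray $D$ in the statement being the same as the second $S$). For the commutator term, note that the microlocal support of $[Q,S]$ is contained in $\operatorname{diag}(\Sigma_+)$, where $S$ acts like the model Szeg\H{o} projector; using the classical normal form of Proposition \ref{prop:canon_transf_to_local_model} to conjugate to the model operator $D_0$ on $\R^{n-1}\times\R^n$, the statement $[Q,S]:\mathcal D'\to\mathcal D$ reduces to the model computation that a pseudodifferential operator commutes with the model Szeg\H{o} projector up to smoothing, which follows from the explicit formula for the model projector together with the non-degeneracy of the bracket matrix $\tfrac1i[\{p_j,\overline{p_k}\}]$ on $\Sigma_+$ — indeed one can already arrange in the construction that $Q_0$ (hence each $Q_k$, hence $Q$) has symbol \emph{constant along the null-leaves of $Z$}, which is precisely the condition making $[Q,S]$ smoothing.

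The step I expect to be the main obstacle is establishing cleanly that the symbol correspondence $Q\mapsto SQS$ is surjective onto Fourier Integral symbols on $\Lambda$ \emph{with the correct control of orders and with a choice of $Q$ whose symbol descends to the leaf space of $Z$}; the naive restriction-to-$\Sigma_+$ map is surjective, but one must check that the extension off $\Sigma_+$ can be taken null-leafwise constant (so that the commutator vanishes) while still reproducing the full asymptotic symbol of $A$, not just its principal part. This is where the transversality of $T\Sigma$ to $(TZ)^\perp$ from Proposition \ref{prop:Z_and_Sigma} is used: it guarantees that a symbol on $\Sigma_+$ extends uniquely to a symbol on (a neighborhood of $\Lambda$ in) $Z\times\overline Z$ that is flat along the null-foliation, and that this extension is compatible with the transport equations defining the Fourier Integral symbol on $\Lambda$. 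Once this is in place, the rest is the standard order-by-order parametrix construction plus Borel summation, carried out entirely in the $C^\infty$ category, and I would simply cite \cite{boutet_de_monvel_spectral_1981} for the symbol calculus of operators of the form $SPS$ and the composition rule $SPS\cdot SP'S\equiv S(P\,\sigma_0\,P')S$ modulo smoothing that makes the iteration close.
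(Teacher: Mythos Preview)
Your proposal is correct and follows the same skeleton as the paper's proof: match the principal symbol of $A$ by $SQ_0S$, iterate in decreasing order, and Borel-sum. The one substantive difference is how you handle the commutator condition $[Q,S]\colon\mathcal D'\to\mathcal D$. You try to build this in from the start by choosing each $Q_k$ with symbol constant along the null-leaves of $Z$, and you correctly flag this as the delicate point. The paper instead decouples the two conditions entirely: it invokes Proposition~2.13 of \cite{boutet_de_monvel_spectral_1981}, which says that any pseudodifferential $Q$ can be replaced by one that commutes with $S$ modulo smoothing without changing $SQS$ modulo smoothing. With that in hand, the paper simply extends the principal symbol of $A$ from $\Sigma_+$ to $T^*X$ \emph{arbitrarily} and iterates, ignoring the commutator until the very end. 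So the ``main obstacle'' you anticipate is not actually confronted in the paper --- it is outsourced to the cited reference. Your direct construction is more self-contained but does more work; the paper's route is shorter but leans on \cite{boutet_de_monvel_spectral_1981} for the commutator step as well as for the symbol calculus you already planned to cite.
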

\begin{proof}
  By induction we only need to prove the result at principal order, and by
  Proposition 2.13 in \cite{boutet_de_monvel_spectral_1981} we may
  drop the condition that $Q$ and $S$ commute. Therefore we are only
  searching for the principal symbol $q_0$ of a pseudodifferential
  operator $Q$, such that the principal symbol of $SQS$ is that of
  $A$. An application of stationary phase shows that $SQS$ has an
  distributional kernel of the form
  \[
    SQS(x,y)=\int_0^{+\infty}e^{it\psi(x,y)}a(x,y;t)\dd t,
  \]
  with principal symbol $a:\Sigma_+\to \R$ equal to $q_0$. Hence,
  given $A$, we can extend arbitrarily its principal symbol to the
  whole of $T^*X$ and obtain an approximation at first order.
\end{proof}

\subsection{The case of the cylinder}
\label{sec:an-exampl-eucl}

A particular case of a pseudoconvex hypersurface (albeit not compact)
is $\{z\in \C^n, |\im(z)|=1\}$; indeed the function $\rho:z\mapsto
|\im(z)|^2$ is strongly plurisubharmonic everywhere.

There is an explicit description of the Szeg\H{o} kernel in this case,
and one can directly establish many properties which we will
generalise only in Section \ref{sec:szegho-kern-param}. This
particular case will be useful in our construction and
characterisations of general Fourier Integral Operators in
real-analytic regularity in Section \ref{sec:pseud-oper-1}.

\begin{prop}\label{prop:flat_Grauert}
  The Szeg\H{o} kernel on $\R^n\times S^{n-1}$ is
  \[
    K(z,w)=\frac{1}{(2\pi)^n}\int_{\R^n}\frac{e^{i(z-\overline{w})\cdot
        \xi}}{\int_{S^{n-1}}e^{2y\cdot \xi}\dd y}\dd \xi,
  \]
  which defines a holomorphic function of $(z,\overline{w})$ on $(\R^n\times \overline{B_{\R^n}(0,1)})^2\setminus {\rm
    diag}(\R^n\times S^{n-1})$.
  
  In particular, the Szeg\H{o} projector $S$ has a distributional
  kernel analytic away from the
  diagonal.
\end{prop}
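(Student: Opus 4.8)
The plan is to compute $K$ explicitly by a Paley--Wiener argument and then read off the analyticity from the resulting oscillating integral.

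Write $z=x+iy$ with $x\in\R^n$, $y\in\overline{B_{\R^n}(0,1)}$, so that $\Omega=\{|\im z|<1\}$ is the tube $\R^n+iB_{\R^n}(0,1)$ and $X:=\R^n\times S^{n-1}$ carries the product measure $\dd x\,\dd\sigma(y)$. Taking the Fourier transform in $x$, the holomorphic functions on $\Omega$ with $L^2$ boundary values are exactly the $F(x+iy)=(2\pi)^{-n}\int_{\R^n}e^{i(x+iy)\cdot\xi}g(\xi)\,\dd\xi$, with $\|F\|_{L^2(X)}^2=(2\pi)^{-n}\int_{\R^n}|g(\xi)|^2\,m(\xi)\,\dd\xi$, where $m(\xi):=\int_{S^{n-1}}e^{2y\cdot\xi}\,\dd\sigma(y)$. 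Thus $F\mapsto g$ is a unitary isomorphism from the Hardy space onto $L^2(\R^n,(2\pi)^{-n}m\,\dd\xi)$, and the usual reproducing kernel computation --- the image of $K_z$ under this isomorphism is $\xi\mapsto e^{-i\overline z\cdot\xi}/m(\xi)$ --- gives $K(z,w)=(2\pi)^{-n}\int_{\R^n}e^{i(z-\overline w)\cdot\xi}/m(\xi)\,\dd\xi$ and identifies $K$ as the kernel of the orthogonal projector $S$.

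Next one records the properties of $m$: it is positive, $\mathrm{O}(n)$-radial with $m(\xi)=\mu(|\xi|)$, $\mu$ non-decreasing and $\mu(0)=|S^{n-1}|$, and $\mu(r)=c_n r^{1-n/2}I_{n/2-1}(2r)$ for a modified Bessel function, whence $\mu(r)\sim c_n' r^{-(n-1)/2}e^{2r}$ as $r\to+\infty$ and $\mu(r)\ge c(1+r)^{-(n-1)/2}e^{2r}$ on $[0,\infty)$; in particular $1/m(\xi)\le C(1+|\xi|)^{(n-1)/2}e^{-2|\xi|}$ for $\xi\in\R^n$. Setting $\kappa(\zeta):=(2\pi)^{-n}\int_{\R^n}e^{i\zeta\cdot\xi}/m(\xi)\,\dd\xi$, so that $K(z,w)=\kappa(z-\overline w)$, this bound shows the integral converges absolutely and locally uniformly on $\{|\im\zeta|<2\}$ and is holomorphic there. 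Since $\im(z-\overline w)=\im z+\im w$ has modulus $<2$ unless $\im z=\im w\in S^{n-1}$, this already gives holomorphy of $K$ in $(z,\overline w)$ on all of $(\overline\Omega)^2$ outside the stratum $\{\im z=\im w\in S^{n-1}\}$; on that stratum $(z,w)\notin\diag(X)$ precisely when $\re(z-\overline w)\ne0$.

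It remains to extend $\kappa$ holomorphically across $\{|\im\zeta|=2\}$ at points $\zeta_0$ with $\re\zeta_0\ne0$. The plan is to pass to polar coordinates $\xi=r\omega$, evaluate the angular integral $\int_{S^{n-1}}e^{ir\zeta\cdot\omega}\,\dd\sigma(\omega)$ by complex stationary phase at its (complexified) critical point $\omega_c(\zeta)=i\zeta/\sqrt{-\zeta\cdot\zeta}$ --- which yields a factor $(2\pi/r)^{(n-1)/2}e^{r\sqrt{-\zeta\cdot\zeta}}$ times a holomorphic amplitude with $O(1/r)$ errors --- and combine with $r^{n-1}/\mu(r)\sim c\,r^{3(n-1)/2}e^{-2r}$ to reduce $\kappa(\zeta)$, modulo terms holomorphic on the same set, to the explicit $\int_0^{\infty}r^{n-1}e^{-(2-\sqrt{-\zeta\cdot\zeta})\,r}\,\dd r=(n-1)!\,(2-\sqrt{-\zeta\cdot\zeta})^{-n}$. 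The algebraic punchline is that on $\{|\im\zeta|\le2\}$ one has $\zeta\cdot\zeta\ne-4$, hence $2-\sqrt{-\zeta\cdot\zeta}\ne0$, unless $\re\zeta=0$ and $|\im\zeta|=2$: indeed $\zeta\cdot\zeta=-4$ forces $\re\zeta\perp\im\zeta$ and $|\im\zeta|^2=|\re\zeta|^2+4\ge4$, so on $\{|\im\zeta|\le2\}$ we need $\re\zeta=0$, $|\im\zeta|=2$; and $(z,w)\in(\overline\Omega)^2$ has $\re(z-\overline w)=0$, $|\im(z-\overline w)|=2$ exactly when $(z,w)\in\diag(X)$. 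Making this rigorous requires (i) a stationary-phase expansion of the angular integral uniform in $\zeta$ near $\zeta_0$, with a separate turning-point analysis --- or an argument from inside $\Omega$ --- at the lower-dimensional locus $\zeta\cdot\zeta=0$ where $\omega_c$ degenerates, and (ii) when $\re(z-\overline w)$ is a nonzero multiple of $\im(z-\overline w)$, where the radial integral is only conditionally convergent, a rotation of the $r$-contour into $\{\re r>0\}$: this is legitimate because $\mu$ is zero-free there (its zeros, being those of $I_{n/2-1}(2\,\cdot\,)$, all lie on the imaginary axis), with the sectorial bound $|1/\mu(r)|\le C(1+|r|)^{(n-1)/2}e^{-2\re r}$ on $|\arg r|\le\tfrac\pi2-\epsilon$. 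Granting this, $\kappa$ is holomorphic on $\{|\im\zeta|<2\}\cup\{|\im\zeta|=2,\ \re\zeta\ne0\}$, so $K$ is holomorphic in $(z,\overline w)$ on $(\overline\Omega)^2\setminus\diag(X)$, and in particular the distributional kernel of $S$ on $X$ is real-analytic away from $\diag(X)$.

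The main obstacle is the interplay of steps (i) and (ii): a crude triangle-inequality bound on the angular integral destroys its internal cancellation and would make the $r$-contour rotation look divergent, so one genuinely has to keep track of the steepest-descent geometry of $\omega\mapsto i\zeta\cdot\omega$ on $S^{n-1}$, which degenerates precisely on $\diag(X)$ (where $\omega_c$ runs to the boundary of the admissible region and $-\zeta\cdot\zeta\to4$). As a consistency check, the conclusion matches the smooth Boutet de Monvel--Sjöstrand description of $S$ as a Fourier integral operator associated with the diagonal Lagrangian $\diag(\Sigma_+)$, which already shows that the singular support of $K$ is contained in $\diag(X)$.
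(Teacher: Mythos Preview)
Your proposal is correct in outline and would produce a valid proof, but both halves take a different route from the paper.

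For the reproducing-kernel formula, the paper does \emph{not} pass through a Paley--Wiener identification of the Hardy space with $L^2(\R^n,(2\pi)^{-n}m\,\dd\xi)$; instead it verifies directly that $\int K(z,w)f(w)\,\dd w=f(z)$ for every $f\in\ker_{L^2}\overline\partial_b$. The trick is: apply Fourier inversion in $\re z$, shift the $x'$-contour by $i(w-y)$ to replace $f(x'+iy)$ by $f(x'+iw)$ for an arbitrary $w\in S^{n-1}$, and then average over $w$ against the probability density $w\mapsto e^{-2w\cdot\xi}/m_n(2\xi)$. The miracle is that the weight exactly converts the phase $i\xi\cdot(z-w')$ into $i\xi\cdot(z-\overline{w'})$, giving the claimed kernel. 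Your Paley--Wiener argument is cleaner conceptually, but the paper's is entirely self-contained and avoids quoting the tube-domain theory; it is worth knowing.

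For analyticity off the diagonal, you are being considerably more careful than the paper, which essentially just records the Laplace asymptotic $m_n(r\omega)\sim C r^{-(n-1)/2}e^{r}$ together with the geometric observation $|\im(z-\overline w)|=2\Rightarrow\re(z-\overline w)\neq 0$ and declares the integral well-defined and holomorphic. Your identification of the borderline set $|\im\zeta|=2$ and of the algebraic locus $\zeta\cdot\zeta=-4$ is exactly right, and the stationary-phase/contour-rotation plan would succeed. But it is heavier than necessary: since $m_n$ extends to an entire function on $\C^n$ with the \emph{same} asymptotics $m_n(2\xi)\sim C|\xi|^{-(n-1)/2}e^{2\sqrt{\xi\cdot\xi}}$ on narrow complex cones around $\R^n$ (this is what the paper later proves as the ``$m_n$ is an analytic amplitude'' proposition), one can simply push the $\xi$-contour $\R^n$ slightly into $\C^n$ in a direction determined by $\re\zeta_0\neq 0$ to gain absolute convergence near any boundary point $\zeta_0$. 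This sidesteps both the degenerate-critical-point issue you flag in (i) and the conditional-convergence issue in (ii), and is presumably the argument the paper has in mind.
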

A related formula concerning the Bergman kernel appears on p. 100 of the manuscript \cite{epstein_shrinking_1990}.
\begin{proof}
  The first step is the study of the function
  \[
    m_n:\xi\mapsto \int_{S^{n-1}}e^{y\cdot \xi}\dd y.
  \]
  This defines an holomorphic function on $\C^n$.
  When restricted to $\R^n$, it is a radial function and, by the
  Laplace method, its asymptotics for large argument are
  \begin{equation}\label{eq:mn_first_term}
    m_n(r\omega)=2^{\frac{n-1}{2}}|S^{n-2}|\Gamma(\tfrac{n+1}{2})r^{-\frac{n+1}{2}}e^r+O_{r\to
      +\infty}(e^rr^{-\frac{n+3}{2}}),
  \end{equation}
  where $|S^{n-2}|$ is the area of the unit $n-2$-sphere. %
  Let $(z,w)\in (\R^n\times \overline{B_{\R^n}(0,1)})^2\setminus {\rm
    diag}(\R^n\times S^{n-1})$. Then $|\im(z-\overline{w})|\leq 2$ and
  furthermore $|\im(z-\overline{w})|=2\Rightarrow
  \re(z-\overline{w})\neq 0$. Thus, by formula \eqref{eq:mn_first_term}, the integral
  \[
    \int_{\R^n}e^{i(z-\overline{w})\cdot \xi}(m_n(2\xi))^{-1}\dd \xi
  \]
  is well-defined, real-analytic on its domain, and holomorphic with
  respect to $(z,\overline{w})$.

  One can directly check that $K(z,w)$ is %
  self-adjoint.%
  
  It remains to prove that, for every $f\in L^2(\R^n\times
  S^{n-1})$ which extends into a holomorphic function inside
  $\R^n\times B(0,1)$, one has $f(z)=\int K(z,w)f(w)\dd w$. First,
  by the Fourier inversion formula,
  for every $x+iy\in \R^n+iS^{n-1}$,
  \[
    f(x+iy)=\frac{1}{(2\pi)^n}\int_{\R^n\times
      \R^n}e^{i\xi\cdot(x-x')}f(x'+iy)\dd x'\dd \xi.
  \]
  we deform the contour of integration over $x'$ by translating by $i(w-y)$ where $w\in S^{n-1}$; we obtain, for every $w\in S^{n-1}$,
  \[
    f(x+iy)=\frac{1}{(2\pi)^n}\int_{\R^n\times
      \R^n}e^{i\xi\cdot(x+iy-(x'+iw))}f(x'+iw)\dd x'\dd \xi.
  \]
  We now average this over $w\in S^{n-1}$ with respect to the probability density
  \[
    w\mapsto \frac{e^{-2w\cdot \xi}}{m_n(2\xi)},
  \]
  and obtain exactly
  \[
    f(x+iy)=\frac{1}{(2\pi)^n}\int_{\R^n\times \R^n\times
      B(0,1)}e^{i\xi\cdot(x+iy-(x'-iw))}(m_n(2\xi))^{-1}f(x'+iw)\dd \xi\dd
    x' \dd w.
  \]
  This concludes the proof.
\end{proof}

\begin{prop}\label{prop:loc-anal-hypoell-Grauert}
   $\overline{\partial}_b$ is locally analytic
  hypoelliptic on $\R^n\times S^{n-1}$, by which we mean the following
  property: if $u\in L^2(\R^n\times
  S^{n-1})$ and $\overline{\partial}_bu$ is real-analytic on
  an open set $V$, then $(1-S)u$ is real-analytic on $V$.
\end{prop}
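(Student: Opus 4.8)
The plan is to realise the ``canonical solution operator'' $N$ of $\overline{\partial}_b$ on $\R^n\times S^{n-1}$ --- the operator inverting $\overline{\partial}_b$ on the orthogonal complement of its $L^2$-kernel, extended by $0$ on a complement of its range, so that $N\overline{\partial}_b=1-S$ on $\mathrm{Dom}(\overline{\partial}_b)$ --- and to show that, like the Szeg\H{o} kernel of Proposition \ref{prop:flat_Grauert}, the Schwartz kernel of $N$ is real-analytic off the diagonal. Granting this, the proposition is immediate: writing $f=\overline{\partial}_bu$ one has $(1-S)u=N\overline{\partial}_bu=Nf$, and any operator whose kernel is real-analytic off the diagonal preserves analytic singular supports (a standard fact; cf. the Appendix), so that $\operatorname{singsupp}_a((1-S)u)=\operatorname{singsupp}_a(Nf)\subseteq\operatorname{singsupp}_a(f)$, which avoids $V$. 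To keep the $L^2$ bookkeeping honest one may first multiply $u$ by a smooth cutoff supported near a chosen point of $V$; this alters $(1-S)u$ near that point only by a function real-analytic there, again by Proposition \ref{prop:flat_Grauert}, and reduces us to $\overline{\partial}_bu\in L^2$.

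To build $N$ I would exploit that $\overline{\partial}_b$ and $S$ commute with translations in the $\R^n$ factor, so that the Fourier transform in $x\in\R^n$ turns them into real-analytic families, parametrised by $\xi\in\R^n$, of operators on the \emph{compact} sphere $S^{n-1}$: $\overline{\partial}_b$ becomes a first-order differential operator $L_\xi$ on functions of $y\in S^{n-1}$ valued in $(T^{(0,1)}X)^*$, with $\ker L_\xi=\C\,e^{-y\cdot\xi}$ by Proposition \ref{prop:flat_Grauert}, while $S$ becomes the orthogonal projector $\Pi_\xi$ onto that line, with Schwartz kernel $m_n(2\xi)^{-1}e^{-(y+y')\cdot\xi}$ --- recovering the formula of Proposition \ref{prop:flat_Grauert}. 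Then $N$ has Fourier symbol $\xi\mapsto M_\xi$, where $M_\xi$ is the bounded operator on $L^2(S^{n-1})$ characterised by $M_\xi L_\xi=1-\Pi_\xi$ and $M_\xi=0$ on a complement of $\mathrm{ran}(L_\xi)$. The analyticity of the kernel of $N$ then rests on two ingredients, mirroring the proof of Proposition \ref{prop:flat_Grauert}. First, strong pseudoconvexity gives a \emph{uniform} subelliptic bound $\|L_\xi v\|_{L^2(S^{n-1})}\geq c\,\langle\xi\rangle^{1/2}\|v\|$ for $v$ orthogonal to $e^{-y\cdot\xi}$: on the compact $S^{n-1}$ the weight $e^{-2y\cdot\xi}$ concentrates near $y=-\xi/|\xi|$ and $L_\xi^*L_\xi$ becomes comparable to $\langle\xi\rangle$ times a harmonic oscillator, with a single zero mode and a spectral gap of size $\sim\langle\xi\rangle$; together with compactness at finite $\xi$ this yields a bound uniform over all of $\R^n$. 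Second, $\xi\mapsto L_\xi$ and $\xi\mapsto\C\,e^{-y\cdot\xi}$ extend holomorphically to $\C^n$, and the estimate survives --- after a complex scaling (conjugation by a Gaussian weight) neutralising the growth of the zeroth-order part --- as a one-sided bound on a conic complex neighbourhood $\{\xi\in\C^n:|\im\xi|\leq\varepsilon(1+|\re\xi|)\}$ of $\R^n$; hence $\xi\mapsto M_\xi$ extends holomorphically there, with operator norm $\lesssim\langle\xi\rangle^{-1/2}$. Representing the kernel of $N$ as $(2\pi)^{-n}\int_{\R^n}e^{i(x-x')\cdot\xi}\mathcal N_\xi(y,y')\,\dd\xi$ and deforming the $\xi$-contour in the imaginary direction selected by $\re(z-\overline w)$ (absorbing the $y$ variables into the exponent, exactly as in Proposition \ref{prop:flat_Grauert}) then exhibits this kernel as a function holomorphic in $(z,\overline w)$ on $(\R^n\times\overline{B_{\R^n}(0,1)})^2\setminus\diag(\R^n\times S^{n-1})$.

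The main obstacle is the second ingredient: one must verify that the subelliptic estimate --- self-adjoint in nature for real $\xi$ --- persists as injectivity with closed range, quantitatively, throughout a genuine complex cone of frequencies, which is exactly where a complex-scaling (or exponential-weight, in the spirit of analytic semiclassical analysis) argument is needed rather than naive perturbation; and one must control the $\xi$-dependence of the fibre kernel $\mathcal N_\xi$ on $S^{n-1}\times S^{n-1}$ --- it is concentrated at scale $\langle\xi\rangle^{-1/2}$ near $(-\xi/|\xi|,-\xi/|\xi|)$ and only conditionally integrable in $\xi$ --- well enough to legitimise the contour deformation, this being precisely the difficulty already handled for $m_n(2\xi)^{-1}$ in Proposition \ref{prop:flat_Grauert}. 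For $n\geq3$ one could instead short-circuit the explicit construction by invoking analytic hypoellipticity of the Kohn Laplacian $\square_b^{(1)}$ on $(0,1)$-forms over a strongly pseudoconvex CR manifold (Tartakoff, Tr\`eves): since $H^{0,1}_{L^2}$ vanishes here one then has $(1-S)u=\overline{\partial}_b^*(\square_b^{(1)})^{-1}\overline{\partial}_bu$, the Green operator is analytic-pseudolocal, and precomposing with the analytic differential operator $\overline{\partial}_b^*$ yields $N$. The translation-invariant argument is preferable, however, being self-contained, uniform in the base point, and valid also for $n=2$.
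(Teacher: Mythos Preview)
Your approach is different from the paper's and considerably more laborious; the gaps you flag in your second ingredient are genuine, not merely stylistic. The paper does not construct the canonical solution operator $N$ at all. Instead it exploits a structural feature of $\R^n\times S^{n-1}$: there is an explicit finite family of real-analytic vector fields spanning the tangent bundle --- the translations $X_j=\partial_{x_j}$ and the coupled rotations $Y_{j,k}=\omega_k\partial_{\omega_j}-\omega_j\partial_{\omega_k}-x_k\partial_{x_j}+x_j\partial_{x_k}$ --- which \emph{commute with $\overline\partial_b$ and hence with $S$}. Given nested open sets $V_2\Subset V_1\Subset V$ and Ehrenpreis cutoffs $\chi_n$ equal to $1$ on $V_1$ and supported in $V$ (Proposition~\ref{prop:Ehrenpreis}), Kohn's basic estimate $\|(1-S)v\|_{L^2}\le\|\overline\partial_b v\|_{L^2}$ gives in one line
\[
\|Z_{\mu_1}\cdots Z_{\mu_n}(1-S)(\chi_n u)\|_{L^2}
=\|(1-S)Z_{\mu_1}\cdots Z_{\mu_n}(\chi_n u)\|_{L^2}
\le\|Z_{\mu_1}\cdots Z_{\mu_n}\overline\partial_b(\chi_n u)\|_{L^2},
\]
and the right-hand side grows like $C(Rn)^n$ because $\overline\partial_b u$ is real-analytic on $V$ and $\chi_n$ has the Ehrenpreis derivative bounds. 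The tail $(1-S)(1-\chi_n)u$ is controlled on $V_2$ by the off-diagonal analyticity of $S$ already established in Proposition~\ref{prop:flat_Grauert}. That is the entire argument: no parametrix, no complex contour deformation in $\xi$, no fibrewise Green operator.

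What your route would buy is a stronger statement --- an explicit $N$ whose kernel is real-analytic off the diagonal --- but at the cost of the complex-scaling step you correctly identify as the crux, which is a real piece of work and not a formality: persistence of a subelliptic lower bound under holomorphic deformation of $\xi$ is exactly the sort of thing that can fail without careful weight choices, and the control of $\mathcal N_\xi(y,y')$ needed for the contour shift is finer than the scalar estimate on $m_n(2\xi)^{-1}$. Your fallback via analytic hypoellipticity of the Kohn Laplacian is also heavier machinery than needed here. The paper's commuting-vector-field trick is specific to this highly symmetric model and will not survive to a general pseudoconvex boundary, but for the purpose at hand --- bootstrapping the flat case into the general theory of Section~\ref{sec:pseud-oper-1} --- it is exactly right.
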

\begin{proof}
  It suffices to prove the claim in the case where $V$ is bounded. Let $V_1\Subset V$ be a smaller open set and let
  $(\chi_N)_{N\in \N}$ be a family of smooth functions supported on
  $V$, such that $\chi_N=1$ on $V_1$, and
  \[
    \exists \rho, \forall j\leq N, |\nabla^j\chi_N|\leq (\rho N)^j.
  \]
  Such cutoffs will be used systematically in Section
  \ref{sec:ampl-analyt-micr}, and we postpone until Proposition
  \ref{prop:Ehrenpreis} the proof of their existence. Let us
  recall from the Leibniz formula that 
  \[
    u\in C^{\omega}(V)\Rightarrow \exists C,R, \forall n\in \N, \|\nabla^n(\chi_nu)\|_{L^2}\leq
    C(Rn)^n\Rightarrow u\in C^{\omega}(V_1) .
  \]
  Since $V$ is relatively compact, we have more generally the
  following characterisation: let $(Z_1,\ldots,Z_k)$ be a finite family of
  real-analytic vector fields on a neighbourhood of $V$, which span
  $TV$ at each point. Then
  \[
    u\in C^{\omega}(V)\Rightarrow \exists C,R,\forall n\in \N, \forall \mu\in
      \llbracket 1,k\rrbracket^n,\|Z_{\mu_1}Z_{\mu_2}\cdots
    Z_{\mu_n}(\chi_nu)\|_{L^2}\leq C(Rn)^n\Rightarrow u\in C^{\omega}(V_1).
    \]
    The following vector fields are particularly suited to our problem:
    \begin{align*}
      X_j&=\frac{\partial}{\partial x_j}\qquad  & 1\leq j\leq n\\
      Y_{j,k}&=\omega_k\frac{\partial}{\partial
               \omega_j}-\omega_j\frac{\partial}{\partial
               \omega_k}-x_k\frac{\partial}{\partial
               x_j}+x_j\frac{\partial}{\partial x_k} \qquad &
                                                                     1\leq
                                                                     j,k\leq
                                                                     n.
    \end{align*}
    Indeed, these vector fields span $T(\R^n\times S^{n-1})$, and they commute
    with $\overline{\partial}$; hence they also commute with the
    Szeg\H{o} projector $S$. Now, given $u\in L^2$ such that
    $\overline{\partial}u$ is real-analytic on $V$, one has, for every
    $n\in \N$ and every multi-index $\mu\in \llbracket
    1,k\rrbracket^n$,
    \begin{align*}
      \|Z_{\mu_1}\cdots
      Z_{\mu_n}(1-S)(\chi_nu)\|_{L^2}&=\|(1-S)Z_{\mu_1}\cdots
                                       Z_{\mu_n}(\chi_nu)\|_{L^2}\\
      &\leq \|\overline{\partial}Z_{\mu_1}\cdots
        Z_{\mu_n}(\chi_nu)\|_{L^2}\\
      &=\|Z_{\mu_1}\cdots
        Z_{\mu_n}\overline{\partial}(\chi_nu)\|_{L^2};
    \end{align*}
where we used Kohn's hypoellipticity result $\|(1-S)u\|_{L^2}\leq
\|\overline{\partial}w\|_{L^2}$ \cite{kohn_harmonic_1963}.

To conclude, since $\chi_n=1$ on $V_1$ and the kernel of $S$ is analytic away
from the diagonal by Proposition \ref{prop:flat_Grauert}, then for every $V_2\Subset
V_1$, the sequence $((1-S)(1-\chi_n)u)_{n\in \N}$ is bounded in some fixed
space of analytic functions on $V_2$. Therefore we can conclude that
\[
  \exists C,R, \forall \mu\in \llbracket 1,k\rrbracket^n,\|Z_{\mu_1}\cdots
  Z_{\mu_n}\overline{\partial}(1-S)u\|_{L^2(V_2)}\leq C(Rn)^n,
\]
and the proof is complete: $(1-S)u$ is real-analytic on $V_2$.
\end{proof}

\section{Amplitudes and phases}
\label{sec:analytic-symbols}
This section serves as a technical preliminary for the definition and
study of Fourier Integral Operators in Section
\ref{sec:four-integr-oper}. We study in detail spaces of amplitudes
adapted to the real-analytic regularity, and we state and prove a stationary
phase result for these spaces of amplitudes.
\subsection{Amplitudes in analytic microlocal analysis}
\label{sec:ampl-analyt-micr}

This section takes inspiration from \cite{sjostrand_singularites_1982,deleporte_toeplitz_2018,treves_analytic_2022}.

{\bf Convention}: given $U\subset \R^n$ open, $x\in U$, and $u\in
C^j(U,\C)$, we let
\[
  |\nabla^ju(x)|=\left(\sum_{\alpha\in
      \N^n,|\alpha|=j}|\partial^{\alpha}u(x)|^2\right)^{\frac 12}.
  \]

\begin{defn}\label{def:formal-amp}
  Let $\Omega\subset \R^m_x\times \R^n_{\theta}$ be an open cone in
  the second variable, and let
  $d\in \R$. A \emph{(homogeneous classical) formal analytic
    amplitude} of degree $d$ is a sequence $(a_k)$ of real-analytic functions on
  $\Omega$ such that $a_k$ is homogeneous of degree $d-k$, and such
  that there exists $C>0, \rho>0,R>0,m>0$ satisfying the following:
  for every $j,k,\ell\in \N_0$, for every
  $(x,\theta)\in \Omega$,
  \[
    |\nabla^{j}_x\nabla^{k}_{\theta}a_\ell(x,\theta)|\leq
    C\frac{\rho^{j+k}R^\ell(j+k+\ell)!}{(1+j+k+\ell)^m}|\theta|^{d-\ell-k}.
  \]
  For every $m,\rho,R$, the best constant $C$ in this inequality is
  written $\|a\|_{S^{\rho,R}_m(\Omega)}$.
\end{defn}
If $(a_{\ell})_{\ell\in \N}$ is a formal analytic amplitude, then for every
$\ell$, $a_{\ell}$ is a real-analytic function of $(x,\theta)$, which
extends to a fixed size neighbourhood of the real locus, where it
satisfies $\|a_{\ell}(1+|\theta|)^{-d+\ell}\|_{L^{\infty}}\leq
C(R')^{\ell}\ell!$. This turns out to be an equivalent definition of
a formal analytic amplitude. If $\Omega$ is compact with respect to
the first variable,
one-homogeneous analytic changes of
variables in $x,\theta$ preserve
the space of all formal analytic amplitudes, even though they may not
individually preserve the Banach spaces $S^{\rho,R}_m(\Omega)$.

Given a paracompact conical analytic manifold, we may then define
formal analytic amplitudes on $X$ as sequences of real-analytic
functions on $X$ with decreasing homogeneity, which satisfy the
controls of Definition \ref{def:formal-amp} locally in charts.

\begin{defn}\label{def:analytic_amp}
  Let $\Omega\subset \R^m_x\times \R^n_{\theta}$ be an open cone in
  the second variable, and let $d\in \R$. An \emph{analytic amplitude} of degree $d$ on $\Omega$ is a smooth
  function $a$ on $\Omega$ such that  there exists
  $R>0,\rho>0,C>0$ satisfying the following:
  \[
    \forall (x,\theta )\in \Omega,\forall k\leq
    |\theta|/R,\forall j,
    |\nabla_{x}^j\nabla_{\theta}^ka(x,\theta)|\leq
    C\rho^{j+k}(j+k)!|\theta|^{d-k}.
  \]
\end{defn}
An analytic amplitude is real-analytic with respect to $x$, and
extends into a holomorphic function of $x$; an
alternative definition of an analytic amplitude is the estimate
\[\forall k \leq |\theta|/R, |\nabla_{\theta}^ka(x,\theta)|\leq C\rho^kk!|\theta|^{d-k}\]
on a whole neighbourhood of the real locus in $x$. Thus, again, we may speak
of analytic amplitudes on $\Omega\subset X\times \R^n_{\theta}$ when
$X$ is a paracompact analytic manifold.

\begin{rem}\label{rem:name_analytic_ampl}
Despite their names, the analytic amplitudes as in Definition \ref{def:analytic_amp} are \emph{not}
real-analytic functions. The main reason for this
definition is that the Borel summation of a formal analytic amplitude into an
analytic amplitude (Proposition \ref{prop:real_formal_symb}) will involve a lowest term summation, which requires
cut-off functions in the variable $|\theta|$. Trèves calls such functions
``pseudo-analytic'' \cite{treves_analytic_2022} but unfortunately this
term, like many other variations upon the word ``analytic'', already
refer to well-established mathematical objects.
\end{rem}

Analytic amplitudes are stable by
``phasification'' of the variables: denoting $x=(x_1,x_2)$, where
$x_1\in \R^{m_1}$ and $x_2\in \R^{m_2}$, and similarly
$\R^{m_2+n}\ni \theta=(\theta_1,\theta_2)\ni \R^{m_2}\times \R^n$, if some
amplitude $a$ is analytic on $\Omega\subset \R^{m_1+m_2}\times (\R^{n}\setminus \{0\})$, then so is
\[
  (\R^{m_1})\times (\R^{m_2}\times (\R^n\setminus \{0\}))\supset
  \Omega \ni (x_1,\theta)\mapsto a((x_1,\theta_1/|\theta_2|),\theta_2).
\]
The converse, however, is not true. We will say that an amplitude $a$
is \emph{maximally analytic} if, writing $\theta=\lambda\omega$ in
spherical coordinates, the function $((x,\omega),\lambda)\mapsto
a(x,\lambda\omega)$ is an analytic amplitude.

A last immediate consequence of Definition \ref{def:analytic_amp} is that analytic amplitudes belong to the usual
Hörmander symbol classes, even as they are extended holomorphically
with respect to the first variable.

In order to build analytic amplitudes starting from formal analytic amplitudes, the
following construction of cut-off functions will be useful.
\begin{prop}\label{prop:Ehrenpreis} (Ehrenpreis cutoffs) 
  Let $K,L$ be closed sets of $\R^d$ at positive distance from each other. There exists a
  sequence of compactly supported functions
  $(\chi_N)_{N\geq 1}\in C^{\infty}(\R^d,[0,1])$ such that $\chi_N=0$
  on $L$ and $\chi_N=1$ on $K$, and a constant $\rho>0$ such that
  \begin{equation}\label{eq:Ehrenpreis}
    \forall N\in \N, \forall j\in \N, j\leq N\Rightarrow
    |\nabla^j\chi_N|\leq (\rho N)^j.
  \end{equation}
\end{prop}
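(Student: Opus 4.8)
The plan is to construct the Ehrenpreis cutoffs by a self-convolution argument. First I would fix a smooth bump function: let $\delta = \dist(K,L) > 0$, and choose $\eta \in C^\infty_c(\R^d,[0,\infty))$ supported in the ball $B(0,\delta/2)$ with $\int \eta = 1$. For each integer $N \geq 1$, scale it: set $\eta_N(x) = (2N/\delta)^d \, \eta(2Nx/\delta)$, so that $\eta_N$ is supported in $B(0,\delta/(2N))$ and still integrates to $1$. Then define
\[
  \chi_N = \1_{K_{\delta/2}} * \underbrace{\eta_N * \eta_N * \cdots * \eta_N}_{N \text{ times}},
\]
where $K_{\delta/2} = \{x : \dist(x,K) \leq \delta/2\}$ is the closed $\delta/2$-neighbourhood of $K$. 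Because each $\eta_N$ is supported in a ball of radius $\delta/(2N)$, the $N$-fold convolution is supported in $B(0,\delta/2)$; hence $\chi_N$ is supported in $K_\delta$, which is disjoint from $L$, giving $\chi_N = 0$ on $L$. Moreover for $x \in K$, any point within distance $\delta/2$ of $x$ lies in $K_{\delta/2}$, so $\1_{K_{\delta/2}}$ equals $1$ on the support of the translated mollifier, whence $\chi_N(x) = \int (\eta_N^{*N}) = 1$. Since $\chi_N$ is a convolution of nonnegative functions with total mass $1$ against an indicator, $0 \leq \chi_N \leq 1$ everywhere.

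The derivative estimate is where the dyadic-resolution trick pays off. For a multi-index $\alpha$ with $|\alpha| = j \leq N$, distribute $j$ derivatives one onto each of $j$ of the $N$ mollifier factors:
\[
  \partial^\alpha \chi_N = \1_{K_{\delta/2}} * \bigl(\partial^{\alpha_1}\eta_N\bigr) * \cdots * \bigl(\partial^{\alpha_j}\eta_N\bigr) * \eta_N^{*(N-j)},
\]
where $\alpha = \alpha_1 + \dots + \alpha_j$ with each $|\alpha_i| = 1$ (more precisely split $\alpha$ into $|\alpha|$ unit multi-indices). Taking $L^\infty$ norms and using Young's inequality (convolution with an $L^1$ function of norm $1$ is a contraction on $L^\infty$, and $\1_{K_{\delta/2}}$ has $L^\infty$ norm $1$),
\[
  \|\partial^\alpha \chi_N\|_{L^\infty} \leq \prod_{i=1}^{j} \|\partial^{\alpha_i}\eta_N\|_{L^1}.
\]
Now $\|\partial^{\alpha_i}\eta_N\|_{L^1} = (2N/\delta)\,\|\partial^{\alpha_i}\eta\|_{L^1}$ by the scaling, so setting $C_\eta = \max_{|\beta|=1}\|\partial^\beta\eta\|_{L^1}$ we obtain $\|\partial^\alpha\chi_N\|_{L^\infty} \leq (2C_\eta N/\delta)^j$. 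Summing over the finitely many $\alpha$ with $|\alpha| = j$ introduces a factor $d^{j/2}$ (from the definition of $|\nabla^j|$ as an $\ell^2$ norm over $|\alpha|=j$, there are at most $d^j$ such terms), giving $|\nabla^j\chi_N| \leq (\rho N)^j$ with $\rho = 2\sqrt{d}\,C_\eta/\delta$. This is exactly \eqref{eq:Ehrenpreis}.

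The main (minor) obstacle is purely bookkeeping: one must be careful that the constraint $j \leq N$ is genuinely used — it guarantees there are enough mollifier factors ($N$ of them) to absorb the $j$ derivatives one at a time, which is the only way to keep the per-derivative cost linear in $N$ rather than picking up factorial losses. If one instead put all $j$ derivatives on a single mollifier one would get $\|\partial^\alpha\eta_N\|_{L^1} \sim (N/\delta)^j \cdot (\text{const depending on }j)$, where the constant grows like $j!$-type bounds, destroying the estimate; the whole point is that spreading the derivatives keeps each factor's contribution uniformly bounded. A remark worth including is that for $j > N$ no bound of the stated form is claimed (nor needed), which is consistent with the fact that $\chi_N$, being genuinely smooth and not analytic, has high derivatives that blow up faster than any geometric rate. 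One should also note the construction can be carried out once and for all on $\R^d$; restriction to the relevant charts or the statement's closed sets $K,L$ is immediate since everything was phrased in terms of $\dist(K,L)$.
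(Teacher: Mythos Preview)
Your proof is correct and follows essentially the same approach as the paper: both construct $\chi_N$ as the $N$-fold self-convolution of a rescaled mollifier against the indicator of an intermediate neighbourhood of $K$, then distribute the $j\le N$ derivatives one per mollifier factor and conclude via Young's inequality. The only cosmetic difference is that the paper works with an abstract intermediate set $K'$ between $K$ and $L$ rather than the explicit $\delta/2$-thickening you use.
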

\begin{proof}
  Let $K\subset K'$ such that
  $K'\cap L=\partial K'\cap K=\emptyset $, and let
  $c=\min(\dist(K',L),\dist(K,\partial K'))>0$.

  Let $\phi\in C^{\infty}(\R^d,[0,+\infty))$ be supported on $B(0,1)$
  and such that $\int_{\R^d}\phi=1$. Given $\alpha>0$, let
  \[\phi_{\alpha}:x\mapsto \alpha^{-d}\phi(\alpha^{-1}x).\]
  The claim follows from the following choice of cut-off functions:
  \[
    \chi_N=\underbrace{\phi_{cN^{-1}}*\ldots
      *\phi_{cN^{-1}}}_{N\text{ times }}*\mathds{1}_{K'}.
  \]
  Indeed, letting $\rho=\|\nabla \phi_c\|_{L^1}$, one has $\|\nabla
  \phi_{cN^{-1}}\|_{L^1}=\rho N$, while $\|\phi_{cN^{-1}}\|_{L^1}=1$ and
  $\|\mathds{1}_{K'}\|_{L^{\infty}}=1$; altogether we find the desired bound.
\end{proof}

\begin{prop}\label{prop:real_formal_symb}%
  Let $\Omega\subset \R^m_x\times \R^n_{\theta}$ be an open cone
  with respect to the second variable. Let $(a_k)_{k\in \N}$ be a degree $d$ formal analytic amplitude on
  $\Omega$. Then there exists a degree $d$ maximally analytic amplitude $a$ on
  $\Omega$, and
  constants $C,\rho$ such that, uniformly on a complex neighbourhood
  of $\Omega$ in the spherical variable,
  for every $N\in \N$,
  \[
    \left|a(x,\theta)-\sum_{k=0}^{N-1}a_k(x,\theta)\right|\leq
    C\rho^NN!|\theta|^{d-N}.
  \]
  
  Any analytic amplitude whose extension satisfies the bound above
  will be called a realisation of the formal amplitude $(a_k)_{k\in
    \N}$. Two extensions of analytic amplitudes realising the same formal amplitude
  have a difference bounded by $e^{-c|\theta|}$, for some $c>0$, on a
  whole conical complex neighbourhood of $\Omega$ with respect to the
  first variable.
\end{prop}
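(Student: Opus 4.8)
The plan is to realise the formal amplitude by a *lowest-term summation*: set
\[
a(x,\theta)=\sum_{k\ge 0}\chi_N(\theta)\,a_k(x,\theta)\Big|_{N=N(\theta)},
\]
where the cutoffs come from Proposition \ref{prop:Ehrenpreis} and $N(\theta)$ is chosen proportional to $|\theta|$. Concretely, working in spherical coordinates $\theta=\lambda\omega$, I would fix a small $\varepsilon>0$ (to be adjusted against the constant $R$ in Definition \ref{def:formal-amp}) and define
\[
a(x,\lambda\omega)=\sum_{k\ge 0}\chi_k\!\left(\tfrac{k}{\varepsilon\lambda}\right)a_k(x,\lambda\omega),
\]
where $(\chi_k)_{k\ge 1}$ is an Ehrenpreis family with $\chi_k=1$ near $0$ and $\chi_k=0$ outside a fixed small ball, so that for each $\theta$ only the terms with $k\lesssim \varepsilon\lambda$ survive and the sum is locally finite. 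First I would check this is a maximally analytic amplitude: the holomorphic extension in $x$ (and in $\omega$) is immediate since each $a_k$ extends to a fixed neighbourhood, and the key point is the $\nabla_\theta^j$ bounds for $j\le |\theta|/R'$. Differentiating in $\theta$ hits both the factors $\chi_k(k/(\varepsilon\lambda))$, which by \eqref{eq:Ehrenpreis} contribute at most $(\rho' k/\lambda)^j\lesssim (\rho'\varepsilon)^j$ per derivative (harmless since $k\lesssim\varepsilon\lambda$), and the $a_k$ themselves, which by the hypothesis of Definition \ref{def:formal-amp} contribute $\rho^j R^k (j+k)!\,|\theta|^{d-k-j}$; summing over the at most $O(\varepsilon\lambda)$ surviving values of $k$ and using $(j+k)!\le 2^{j+k}j!\,k!$ together with $k!\lesssim (k/e)^k\lesssim(\varepsilon\lambda/e)^k$, one sees that $\sum_k R^k k! (2\rho'\varepsilon)^k$ converges provided $\varepsilon$ is small enough relative to $R$, giving the required bound $|\nabla_\theta^k a|\le C\rho^k k!|\theta|^{d-k}$ for $k\le|\theta|/R'$.

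Next I would establish the asymptotic expansion. For fixed $N$ and $|\theta|$ large, write
\[
a(x,\theta)-\sum_{k=0}^{N-1}a_k(x,\theta)
=-\sum_{k=0}^{N-1}\bigl(1-\chi_k(\tfrac{k}{\varepsilon\lambda})\bigr)a_k(x,\theta)
+\sum_{k\ge N}\chi_k(\tfrac{k}{\varepsilon\lambda})a_k(x,\theta).
\]
In the first sum, $1-\chi_k=0$ unless $k\gtrsim\varepsilon\lambda$, so these terms are only nonzero when $N\gtrsim\varepsilon\lambda$, i.e. when $|\theta|^{-N}$ is already exponentially small; using $|a_k|\le C R^k k!|\theta|^{d-k}$ and $k\sim\varepsilon\lambda$ one bounds it by $C R^N N!|\theta|^{d-N}$ after adjusting $\rho$. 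In the second sum, only terms with $N\le k\lesssim\varepsilon\lambda$ contribute, and each is at most $C R^k k!|\theta|^{d-k}$; factoring out $R^N N!|\theta|^{d-N}$ leaves a geometric-type series $\sum_{k\ge N}\tfrac{k!}{N!}R^{k-N}|\theta|^{-(k-N)}$ whose general term, since $k!/N!\le k^{k-N}\lesssim(\varepsilon\lambda)^{k-N}$ and $|\theta|^{-(k-N)}=\lambda^{-(k-N)}$ up to constants, is bounded by $(R\varepsilon)^{k-N}$; again convergence for small $\varepsilon$ gives the claimed bound with a possibly larger $\rho$. This yields the displayed estimate on a complex neighbourhood in the spherical variable, since every step used only the holomorphically extended $a_k$.

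For the uniqueness-up-to-exponential statement, let $a,a'$ be two realisations and $b=a-a'$. Then for every $N$, $|b(x,\theta)|\le 2C\rho^N N!|\theta|^{d-N}$ on a fixed complex conical neighbourhood. Optimising over $N$: choosing $N=\lfloor c_0|\theta|\rfloor$ for $c_0=1/(e\rho)$ (say) and using Stirling, $\rho^N N!|\theta|^{-N}\le (\rho N/(e|\theta|))^N\cdot C'\sqrt{N}\le C'' e^{-c|\theta|}$ for some $c>0$, whence $|b(x,\theta)|\le C'''|\theta|^d e^{-c|\theta|}\le \widetilde C e^{-c'|\theta|}$. The main obstacle in the whole argument is purely bookkeeping: one must commit to the order of quantifiers — picking $\varepsilon$ small enough (depending on $R$) so that all the series $\sum_k R^k k!(\text{const}\cdot\varepsilon)^k$ converge — and then carefully track how the final constant $\rho$ depends on $R,\rho_{\mathrm{formal}},\varepsilon$ and on the combinatorial losses $(j+k)!\le 2^{j+k}j!k!$, while simultaneously controlling the Ehrenpreis-cutoff derivatives in the $\theta$-direction; there is no conceptual difficulty once the scale $N(\theta)\asymp\varepsilon|\theta|$ is fixed.
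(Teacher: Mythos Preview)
Your proposal is correct and follows essentially the same approach as the paper: a lowest-term Borel summation with Ehrenpreis cutoffs indexed by the term number, followed by the same splitting of $a-\sum_{k<N}a_k$ and the same Stirling optimisation for uniqueness. The only cosmetic difference is that the paper takes the cutoff argument linear in $\lambda$ (namely $1-\chi_{\ell+1}(c\lambda/(\ell+1))$), which makes the $\partial_\lambda$-derivatives of the cutoff cleaner than your reciprocal choice $\chi_k(k/(\varepsilon\lambda))$; this simplifies the bookkeeping when checking the analytic-amplitude bounds but changes nothing conceptually.
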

\begin{proof}
  
  Let us first prove the last fact: letting $\widetilde{\Omega}$ be a
  complex neighbourhood with respect to the first variable, suppose that
  \[
    \forall (x,\theta)\in \widetilde{\Omega}, \forall N\in \N,
    |a(x,\theta)|\leq C\rho^NN!|\theta|^{d-N}.
  \]
  Writing $N=\alpha|\theta|$ and applying the Stirling formula gives
  \[
    |(x,\theta)|\leq C|\theta|^d(\alpha\rho/e)^{\alpha|\theta|}
    ;
  \]
  For $\theta$ large enough there is an integer $N$ between $\frac{e|\theta|}{4\rho}$
  and $\frac{e|\theta|}{2\rho}$, and we obtain
  \[
    |a(x,\theta)|\leq C|\theta|^d(1/2)^{\frac{e|\theta|}{4\rho}};
  \]
  hence the claim.

  We now give a direct construction of $a$. 
  Let
    $(\chi_N)_{N\in \N}:[0,+\infty)\to [0,1]$ be a sequence of
    Ehrenpreis cutoffs
    as in Proposition \ref{prop:Ehrenpreis}, equal to $1$ on $[0,1]$
    and to $0$ on
    $\R\setminus [0,2]$.
    
    Given $(a_\ell)_\ell\in S^{\rho,R}_0(\Omega)$, the claim follows
    from the choice
    \[
      a:(x,\theta)\mapsto
      \sum_{\ell=1}^{\infty}a_{\ell}(x,\theta)(1-\chi_{\ell+1}(\tfrac{c|\theta|}{\ell+1})),
    \]where $c$ is chosen small enough depending on $R$.
    
    From now on we pass to spherical coordinates and write $\theta=\lambda\omega,X=(x,\omega)$.
    Let us prove that $a$ is an analytic amplitude. Given
    $(X,\lambda)\in \Omega$, $j\in \N$ and $k\leq \lambda/R$, one has
    \[
      \nabla^j_{X}\partial^k_{\lambda}a(X,\lambda)=\sum_{\ell\leq c\lambda}\nabla^j_{X}\partial^k_{\lambda}(a_{\ell}(X,\lambda))(1-\chi_{\ell+1}(\tfrac{c\lambda}{\ell+1})).
    \]

    Let us first control the terms where no derivatives hit
    $\chi_{\ell+1}$. If $c\leq \frac{1}{4R}$, then
    \begin{align*}
      \left|\sum_{\ell\leq
      c\lambda}(1-\chi_{\ell+1}(\tfrac{c\lambda}{\ell+1}))\nabla^j_{X}\partial^k_{\lambda}a_\ell(X,\lambda)\right| &
      \leq \|a\|_{S^{\rho,R}_0}\sum_{\ell\leq
        c\lambda}\lambda^{d-\ell-k}(j+k+\ell)!\rho^{j+k}R^{\ell}\\
      &\leq
        \|a\|_{S^{\rho,R}_0}(j+k)!(2\rho)^{j+k}\lambda^{d-k}\sum_{\ell\leq
        c\lambda}(2R)^{\ell}\ell!\lambda^{-\ell}\\
      &\leq 2\|a\|_{S^{\rho,R}_0}(j+k)!(2\rho)^{j+k}\lambda^{d-k},
    \end{align*}
    where we used the inequality $(j+k+\ell)!\leq
    2^{j+k+\ell}(j+k)!\ell!$ and the fact that, on $0\leq \ell\leq
    c\lambda$, the family $(2R)^{\ell}\ell!\lambda^{-\ell}$ decreases
    with $\ell$ faster than a geometric sequence of ratio $\frac 12$.

    Now we treat the case where one of the derivatives has hit
    $\chi_{\ell+1}$. The support of $\lambda\mapsto
    \chi_{\ell+1}'(\frac{c\lambda}{\ell+1})$ is included in
    $\frac{c\lambda}{2}\leq \ell+1\leq c\lambda$. If moreover $k\leq
    \epsilon \lambda$ with $\epsilon \leq \frac{c}{2}$, then for every
    $m\leq k$, one has $m\leq \frac{c\lambda}{2}\leq \ell+1$. Therefore
    \begin{multline*}
      \left|\sum_{\ell\leq
      c\lambda}\sum_{m=1}^{k}\binom{k}{m}\partial_{\lambda}^m\chi_{\ell+1}(\tfrac{c\lambda}{\ell+1})\nabla^j_X\partial^{k-m}_{\lambda}a_{\ell}(X,\lambda)\right|\\
      \leq\|a\|_{S^{\rho,R}_0}\sum_{\frac{c\lambda}{2}-1\leq\ell\leq c\lambda-1}\sum_{m=1}^k\binom{k}{m}c^m\rho^{j+k-m}R^\ell(j+k-m+\ell)!\lambda^{d-k-\ell+m}\\
      \leq \|a\|_{S^{\rho,R}_0}(4\rho)^{j+k}(j+k)!\lambda^{d-k}\sum_{\frac{c\lambda}{2}-1\leq\ell\leq c\lambda-1}\sum_{m=1}^kc^m(2\rho)^{-m}(2R)^{\ell}(\ell-m)!\lambda^{m-\ell}.
    \end{multline*}
    Since $m\leq \tfrac{\ell}{2}$, we perform a change of variables and obtain,
    provided $c\leq\frac{1}{4R}$,
    \begin{multline*}
      \left|\sum_{\ell\leq
        c\lambda}\sum_{m=1}^{k}\binom{k}{m}\partial_{\lambda}^m\chi_{\ell+1}(\tfrac{c\lambda}{\ell+1})\nabla^j_X\partial^{k-m}_{\lambda}a_{\ell}(X,\lambda)\right|\\
    \leq
    \|a\|_{S^{\rho,R}_0}k(8\rho^2R)^{j+k}(j+k)!\lambda^{d-k}\sum_{0\leq
      p\leq c\lambda-1}(2R)^p\lambda^{-p}(p+1)!\\
    \leq 
    2\|a\|_{S^{\rho,R}_0}(16\rho^2R)^{j+k}(j+k)!\lambda^{d-k}
  \end{multline*}
  It remains to show that
$a=\sum_{\ell=0}^Na_\ell+O_N(\lambda^{d-N-1})$ with the required growth
in $N\geq
0$. First
\[
  (X,\lambda)\mapsto\sum_{\ell=0}^Na_\ell(X,\lambda)\chi_{\ell+1}(\tfrac{c\lambda}{\ell+1})
\]
has compact support, and is uniformly bounded as follows:
\[\left|\sum_{\ell=0}^Na_\ell(X,\lambda)\chi_{\ell+1}(\tfrac{c\lambda}{\ell+1})\right|
  \leq
  \|a\|_{S^{\rho,R}_0}\sum_{\ell=0}^NR^{\ell}\ell!\lambda^{d-\ell}\chi_{\ell+1}(\tfrac{c\lambda}{\ell+1});\]
for this sum to be nonzero, we need $c\lambda\leq N+1$, moreover
$\ell\mapsto R^{\ell}\ell!\lambda^{d-\ell}$ is log-convex so that
\[\left|\sum_{\ell=0}^Na_\ell(X,\lambda)\chi_{\ell+1}(\tfrac{c\lambda}{\ell+1})\right|
  \leq
  \|a\|_{S^{\rho,R}_0}\lambda^d(N+1)\max(1,R^NN!\lambda^{-N})\1_{\lambda\leq
    \frac{N+1}{c}}.\]
Moreover if $\lambda\leq \frac{N+1}{c}$ then $N!\lambda^{-N}\geq
\left(\frac{c}{e}\right)^N$, so that, all in all, for some $R_1$ large enough,
\[\left|\sum_{\ell=0}^Na_\ell(X,\lambda)\chi_{\ell+1}(\tfrac{c\lambda}{\ell+1})\right|
  \leq
  \|a\|_{S^{\rho,R}_0}R_1^{N+1}(N+1)!\lambda^{d-N-1}\]

It remains to bound
\[
  \left|\sum_{\ell=N+1}^{+\infty}a_\ell(X,\lambda)(1-\chi_{\ell+1}(\tfrac{c\lambda}{\ell+1})\right|\leq \|a\|_{S^{\rho,R}_0}
  \sum_{\ell=N+1}^{c\lambda}R^{\ell}\ell!\lambda^{d-\ell}.
\]
If $c\leq \frac{1}{2R}$, the first term in this sum is
$R^{N+1}(N+1)!\lambda^{d-N-1}$, and the ratio between two consecutive
terms is smaller than $\frac{1}{2}$; thus
\[
\left|\sum_{\ell=N+1}^{+\infty}a_\ell(X,\lambda)\chi_{\ell+1}(\tfrac{c\lambda}{\ell+1})\right|\leq
2\|a\|_{S^{\rho,R}_0}R^{N+1}(N+1)!\lambda^{d-N-1}.
\]
This concludes the proof.
\end{proof}

\begin{rem}\label{rem:Resurgence}
  It is not clear to us which natural conditions one can impose on the
  formal amplitude $a$ to be the expansion at large $|\theta|$ of an
  analytic function; many estimates and considerations in this section
  are much easier in this case. This seems to be an instance of
  \emph{resurgence}. It would be interesting to study if the space of
  such formal amplitudes is stable under the natural Fourier Integral
  Operator manipulations.
\end{rem}

\begin{prop}\label{prop:Cauchy_product}
  Let $(a_j)_{j\in \N}$, $(b_j)_{j\in \N}$ be two formal analytic
  amplitudes on an open cone $\Omega$. Let $a,b$ be respective analytic
  realisations of $(a_j)_{j\in \N},(b_j)_{j\in \N}$. Then $ab$ is an
  analytic amplitude, which realises the Cauchy product
  $((a*b)_j)_{j\in \N}=(\sum_{\ell=0}^ja_{\ell}b_{j-\ell})_{j\in \N}$.
\end{prop}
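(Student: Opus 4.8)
The plan is to show directly that the pointwise product $ab$ satisfies Definition \ref{def:analytic_amp} and then estimate the remainder $ab-\sum_{j=0}^{N-1}(a*b)_j$ against the bound of Proposition \ref{prop:real_formal_symb}. First I would observe that $a$ and $b$ are both analytic amplitudes of respective degrees $d_a,d_b$, so each is real-analytic (indeed holomorphic) in $x$ and satisfies $|\nabla_x^j\nabla_\theta^k a(x,\theta)|\le C_a\rho_a^{j+k}(j+k)!|\theta|^{d_a-k}$ for $k\le |\theta|/R_a$, and similarly for $b$. Applying the Leibniz rule to $\nabla_x^j\nabla_\theta^k(ab)$ and using the standard multinomial bound $\binom{j}{j'}\binom{k}{k'}\le 2^{j+k}$ together with $(j'+k')!(j-j'+k-k')!\le (j+k)!$, one gets, for $k\le |\theta|/\max(R_a,R_b)$, a bound of the form $C_aC_b(4\rho)^{j+k}(j+k)!|\theta|^{d_a+d_b-k}$ with $\rho=\max(\rho_a,\rho_b)$; this is exactly the statement that $ab$ is an analytic amplitude of degree $d_a+d_b$. (The constraint $k\le|\theta|/R$ is harmless since it is imposed on the total number of $\theta$-derivatives, which is what appears in Leibniz.)

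Next I would verify that $ab$ realises the Cauchy product. The formal amplitude $(a*b)_j=\sum_{\ell=0}^j a_\ell b_{j-\ell}$ is again a formal analytic amplitude — this is a routine check: the homogeneity of $a_\ell b_{j-\ell}$ is $d_a-\ell+d_b-(j-\ell)=d_a+d_b-j$ as required, and the Cauchy-product estimate follows from the submultiplicativity of factorials $(j_1+k)!\,(j_2)!\le (j_1+j_2+k)!$ combined with a geometric summation over $\ell$, exactly as in the proof of Proposition \ref{prop:real_formal_symb}. For the remainder, write
\[
ab-\sum_{j=0}^{N-1}(a*b)_j
= \Bigl(a-\sum_{j=0}^{N-1}a_j\Bigr)b
+ \sum_{j=0}^{N-1}a_j\Bigl(b-\sum_{i=0}^{N-1-j}b_i\Bigr),
\]
which one checks by expanding the double sum and collecting terms of total order $\ge N$. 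The first summand is bounded, using Proposition \ref{prop:real_formal_symb} for $a$ and the fact that $b$ is an analytic amplitude hence $O(|\theta|^{d_b})$, by $C\rho^N N!\,|\theta|^{d_a+d_b-N}$; each term in the second sum is bounded, using the homogeneity bound $|a_j|\le C R^j j!\,|\theta|^{d_a-j}$ and Proposition \ref{prop:real_formal_symb} for $b$ applied at order $N-j$, by $C R^j j!\,\rho^{N-j}(N-j)!\,|\theta|^{d_a-j}|\theta|^{d_b-(N-j)}$, and summing over $0\le j\le N-1$ with $j!\,(N-j)!\le N!$ and a geometric factor gives the desired $C'\rho'^N N!\,|\theta|^{d_a+d_b-N}$. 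All estimates hold on a fixed complex neighbourhood in the spherical variable (and on a fixed complex neighbourhood in $x$, by holomorphy of $a$ and $b$), as in Proposition \ref{prop:real_formal_symb}.

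The only mildly delicate point — and hence the main thing to get right — is bookkeeping the constants and the domain of validity uniformly: one must track that the radii $R,\rho$ of the product are controlled by $\max$ of those of the factors, that the $k\le|\theta|/R$ restriction behaves correctly under Leibniz, and that the algebraic identity for the remainder genuinely isolates all terms of order $\ge N$ with no leftover cross-terms of lower order. None of this is conceptually hard; it is the same circle of factorial and geometric-series estimates already used repeatedly in this section, so I would present it compactly and refer back to the proof of Proposition \ref{prop:real_formal_symb} for the summation lemmas rather than repeating them.
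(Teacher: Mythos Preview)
Your argument is correct, and in fact your remainder decomposition is cleaner than the one in the paper. The paper decomposes at half-order: it writes
\[
\Bigl|ab-\sum_{k=0}^{N-1}(a*b)_k\Bigr|\le\Bigl|ab-\sum_{j,\ell\le(N-1)/2}a_jb_\ell\Bigr|+\sum_{(N-1)/2\le j+\ell\le N-1}|a_jb_\ell|,
\]
which yields bounds of the form $C\rho^{(N-1)/2}\bigl(\tfrac{N-1}{2}\bigr)!\,|\theta|^{d_a+d_b-(N-1)/2}$ rather than the desired order-$N$ bound. To close the argument the paper then introduces a second decomposition at levels $N-1$ and $2N-2$, and uses the log-convexity of $k\mapsto \rho^k k!\,|\theta|^{-k}$ to show that at least one of the two decompositions gives the right estimate (and that in the remaining regime $N\sim|\theta|/\rho$ one can invoke an $e^{-c|\theta|}$ bound). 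Your telescoping identity
\[
ab-\sum_{j=0}^{N-1}(a*b)_j=\Bigl(a-\sum_{j=0}^{N-1}a_j\Bigr)b+\sum_{j=0}^{N-1}a_j\Bigl(b-\sum_{i=0}^{N-1-j}b_i\Bigr)
\]
avoids this detour entirely: each piece already carries the correct power $|\theta|^{-N}$, and summing $\sum_{j=0}^{N-1}R^j\rho^{N-j}j!(N-j)!\le N!\cdot N\max(R,\rho)^N$ absorbs the extra factor $N$ into the base. Both approaches are valid; yours is more direct and avoids the case analysis.
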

\begin{proof}
  One can directly check that $ab$ is an analytic amplitude and that
  $((a*b)_j)_{j\in \N}$ is a formal analytic amplitude. Now (counting some terms twice)
  \begin{multline*}
    \left|a(x,\theta)b(x,\theta)-\sum_{k=0}^{N-1}(a*b)_k(x,\theta)\right|\\ \leq
    \left|a(x,\theta)b(x,\theta)-\sum_{j=0}^{\frac{N-1}{2}}\sum_{\ell=0}^{\frac{N-1}{2}}a_j(x,\theta)b_\ell(x,\theta)\right|+\sum_{j+\ell=\frac{N-1}{2}}^{N-1}|a_j(x,\theta)b_{\ell}(x,\theta)|.
  \end{multline*}
  The first term of the right-hand side is equal to
  \[
    a(x,\theta)\left(b(x,\theta)-\sum_{\ell=0}^{\frac{N-1}{2}}b_\ell(x,\theta)\right)+\sum_{\ell=0}^{\frac{N-1}{2}}b_{\ell}(x,\theta)\left(a(x,\theta)-\sum_{j=0}^{\frac{N-1}{2}}a_j(x,\theta)\right),\]
  so that
  \[
    \left|a(x,\theta)b(x,\theta)-\sum_{j=0}^{\frac{N-1}{2}}\sum_{\ell=0}^{\frac{N-1}{2}}a_j(x,\theta)b_\ell(x,\theta)\right|\leq
    C|\theta|^{d_a+d_b-\frac{N-1}{2}}\rho^{\frac{N-1}{2}}(\tfrac{N-1}{2})!;
  \]
  
  To treat the second term, observe that, given $j+\ell\in
  [(N-1)/2,N-1]$,
  \begin{multline}\label{eq:ajbl}
    |a_j(x,\theta)b_{\ell}(x,\theta)|\leq
    C_{a,b}|\theta|^{d_a+d_b-j-\ell}\rho^{j+\ell}(j+\ell)!\\ \leq
    C_{a,b}|\theta|^{d_a+d_b}\max(|\theta|^{-N+1}\rho^{N-1}(N-1)!,|\theta|^{-\frac{N-1}{2}}\rho^{\frac{N-1}{2}}(\tfrac{N-1}{2})!),
  \end{multline}
  where in the last inequality we used the fact that the middle
  quantity is a log-convex function of $j+\ell$. Altogether if
  $|\theta|^{-\frac{N-1}{2}}\rho^{\frac{N-1}{2}}(\tfrac{N-1}{2})!\leq |\theta|^{-N+1}\rho^{N-1}(N-1)!$, the
  proof is complete.

  Similarly, one can write
  \begin{multline*}
    \left|a(x,\theta)b(x,\theta)-\sum_{k=0}^{N-1}(a*b)_k(x,\theta)\right|\\ \leq
    \left|a(x,\theta)b(x,\theta)-\sum_{j=0}^{N-1}\sum_{\ell=0}^{N-1}a_j(x,\theta)b_\ell(x,\theta)\right|+\sum_{j+\ell=N-1}^{2N-2}|a_j(x,\theta)b_{\ell}(x,\theta)|,
  \end{multline*}
  and if $|\theta|^{-2N+2}\rho^{2N-2}(2N-2)!\leq
  |\theta|^{-N+1}\rho^{N-1}(N-1)!$, we can conclude using
  \eqref{eq:ajbl} again.

  The sequence $(v_k)_{k\in \N}=(|\theta|^{-k}\rho^kk!)_{k\in \N}$ is decrasing on
  $\{k\leq |\theta|/\rho\}$ and increasing on $\{k\geq
  |\theta|/\rho\}$. Therefore if both $v_{\frac{N-1}{2}}$ and $v_{2N-2}$
  are greater than $v_{N-1}$, then $N-1\in
  [\frac{|\theta|}{2\rho},2\frac{|\theta|}{\rho}]$. Under these
  circumstances, by the Stirling formula, all of
  $v_{\frac{N-1}{2}},v_{N-1},v_{2N-2}$ can be bounded by
  $e^{-c|\theta|}$ for some $c>0$, and for $\rho'$ large enough,
  $e^{-c|\theta|}\leq (\rho')^{N-1}|\theta|^{-N+1}(N-1)!$; this
  concludes the proof.
\end{proof}

\begin{prop}\label{prop:product_preparation}  
  Let $\Omega_1\subset \R^{n_x}\times \R^{n_{\theta}}\times
  \R^{n_y}$ and $\Omega_2\subset \R^{n_y}\times \R^{n_{\tau}}\times
  \R^{n_z}$ be open cones with respect to their middle variables. Let
  $c>0$ and let
  \[
    \Omega=\{(x,\theta,y,\tau,z)\in \R^{n_x}\times \R^{n_{\theta}}\times
  \R^{n_y}\times \R^{n_{\tau}}\times
  \R^{n_z},(x,\theta,y)\in \Omega_1,(y,\tau,z)\in \Omega_2,
  c|\theta|<|\tau|<c^{-1}|\theta|\}.
\]
Let $a_1:\Omega_1\to \R$ and $a_2:\Omega_2\to \R$ be analytic amplitudes
(real-analytic with respect to their first and third variables). Then
\[
  A:\Omega\ni (x,\theta,y,\tau,z)\mapsto
  a_1(x,\theta,y)a_2(y,\theta,z)
\]
is an analytic amplitude (real-analytic with respect to its first and
last variable). Moreover, if $a_1$ and $a_2$ are respectively
realisations of formal analytic amplitudes $(a_{1;j})_{j\in
  \N},(a_{2;j})_{j\in \N}$, then $A$ is a realisation of the Cauchy
product
\[
  \left(\sum_{\ell=0}^ja_{1;\ell}(x,\theta,y)a_{2;j-\ell}(y,\tau,z)\right)_{j\in
    \N},
\]
the latter being a formal analytic amplitude on $\Omega$.
\end{prop}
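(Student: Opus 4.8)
The plan is to reduce Proposition \ref{prop:product_preparation} to Proposition \ref{prop:Cauchy_product} by treating the combined tuple of variables appropriately. First I would observe that the constraint $c|\theta|<|\tau|<c^{-1}|\theta|$ means that on $\Omega$ the two ``large'' variables $\theta$ and $\tau$ are comparable, so $|\theta|\sim |\tau|\sim |(\theta,\tau)|$ up to constants depending only on $c$; all the bounds in Definitions \ref{def:formal-amp} and \ref{def:analytic_amp} that involve powers of $|\theta|$ (resp. $|\tau|$) can thus be rewritten, up to harmless constants, in terms of $|(\theta,\tau)|$. In this way I can regard both $A$ and the two factors $a_1, a_2$ as functions on the single open cone $\Omega$ with respect to the joint phase variable $(\theta,\tau)$, the remaining (``space'') variables being $(x,y,z)$.

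Next I would check the two claims about $a_1$ and $a_2$ viewed this way. The factor $(x,\theta,y,\tau,z)\mapsto a_1(x,\theta,y)$ does not depend on $\tau$ or $z$; I need that it is still an analytic amplitude of degree $d_1$ on $\Omega$ in the joint variable $(\theta,\tau)$. The $\theta$-derivatives and $x$-derivatives are controlled by hypothesis; $\tau$- and $z$-derivatives vanish; and the number of allowed derivatives, governed by $|(\theta,\tau)|/R'$ versus $|\theta|/R$, only changes $R$ by a factor depending on $c$. The same reasoning applies to $a_2$ (which does not depend on $\theta$ nor $x$, and here I should note the statement writes $a_2(y,\theta,z)$ which must read $a_2(y,\tau,z)$ under the comparability constraint, so this is the same object up to rescaling the phase variable). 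Similarly, the formal amplitudes $(a_{1;j})_j$ and $(a_{2;j})_j$, read in the joint phase variable, remain formal analytic amplitudes on $\Omega$, by the same constant adjustments to $\rho, R, m$; and $a_1$ (resp. $a_2$) realises $(a_{1;j})_j$ (resp. $(a_{2;j})_j$) on $\Omega$ because the defining remainder estimate $|a - \sum_{k<N} a_k| \leq C\rho^N N! |\theta|^{d-N}$ is preserved under replacing $|\theta|$ by the comparable $|(\theta,\tau)|$.

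Once this repackaging is done, $A = a_1 \cdot a_2$ is literally a product of two analytic amplitudes on the same cone $\Omega$ in the same phase variable, so Proposition \ref{prop:Cauchy_product} applies verbatim: $A$ is an analytic amplitude and realises the Cauchy product $\bigl(\sum_{\ell=0}^j a_{1;\ell}(x,\theta,y)\, a_{2;j-\ell}(y,\tau,z)\bigr)_{j\in\N}$, which is therefore a formal analytic amplitude on $\Omega$. The only remaining point is to record that $A$ is real-analytic (indeed holomorphic) with respect to $x$ and to $z$ separately: this is inherited directly from the corresponding property of $a_1$ in $x$ and of $a_2$ in $z$, since the other factor is constant in that variable.

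The main obstacle is bookkeeping rather than conceptual: I must verify carefully that the comparability $|\theta|\sim|\tau|$ is used consistently and that the various constants ($\rho$, $R$, the exponent $m$, the radius in the ``$k\leq|\theta|/R$'' condition) transform correctly when I pass to the joint phase variable, and in particular that the degree bookkeeping ($d = d_1 + d_2$, homogeneity $d-k$) is not disturbed by the rescaling. I would also flag the apparent typo $a_2(y,\theta,z)$ versus $a_2(y,\tau,z)$, pointing out that under the constraint $c|\theta|<|\tau|<c^{-1}|\theta|$ the two expressions define the same space of objects up to a change of constants, so no generality is lost.
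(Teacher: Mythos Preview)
Your proposal is correct and follows exactly the same approach as the paper: observe that $|\theta|$, $|\tau|$, and $|(\theta,\tau)|$ are comparable on $\Omega$, deduce that each factor is an analytic amplitude on $\Omega$ realising its formal expansion in the joint phase variable, and then apply Proposition~\ref{prop:Cauchy_product}. You also correctly spot the typo $a_2(y,\theta,z)$ for $a_2(y,\tau,z)$.
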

\begin{proof}
  The key property is that, on $\Omega$, the quantities
  $|\theta|,|\tau|$ and $(|\tau|^2+|\theta|^2)^{\frac 12}$ are
  comparable. Therefore $(x,\theta,y,\tau,z)\mapsto a_1(x,\theta,y)$
  is an analytic amplitude on $\Omega$, which realises the formal
  analytic amplitude $((x,\theta,y,\tau,z)\mapsto
  a_{1;j}(x,\theta,y))_{j\in \N}$, and a similar property holds for
  $a_2$. By Proposition \ref{prop:Cauchy_product}, the proof is complete.
\end{proof}

We conclude this section with a practical example, related to
Proposition \ref{prop:flat_Grauert}, and which will allow us to prove
that the Szeg\H{o} kernel takes the form of an analytic Fourier
Integral Operator, in the sense of Section \ref{sec:four-integr-oper}.

\begin{prop}\label{prop:m_n_amplitude}Define the following open cone in $\C^d$:
  \[
    \Omega=\{z\in \C^d, |\im(z)|^2<|\re(z)|^2\};
  \]
  recall that the function $r:\Omega\ni z\mapsto
  \sqrt{\sum_{j=1}^dz_j^2}$ is well-defined and holomorphic.

  Then
  \[
    a:z\mapsto e^{-r(z)}\int_{S^{n-1}}e^{z\cdot
      y}\dd y
  \]
  is an analytic amplitude which realises the formal analytic
  amplitude
  \[
    \left(2^{\frac{d-1}{2}}|S^{d-2}|\frac{(d-1)(d-3)\ldots
    (d-2j+1)}{2^jj!}\Gamma(\tfrac{d+1}{2}+j)r(z)^{-\frac{d+1}{2}-j}\right)_{j\in \N}.
  \]
\end{prop}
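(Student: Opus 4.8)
The plan is to reduce the $d$-dimensional integral to a one-variable Laplace integral and read off its asymptotics from a quantitative Watson's lemma in a sector. \emph{Reduction to one variable.} For $\xi\in\R^d\setminus\{0\}$ with $\rho=|\xi|$, slicing $S^{d-1}$ by the latitude measured from $\xi/|\xi|$ gives $\int_{S^{d-1}}e^{\xi\cdot y}\,\dd y=|S^{d-2}|\int_0^\pi e^{\rho\cos\theta}\sin^{d-2}\theta\,\dd\theta$, and the substitution $s=1-\cos\theta$ turns this into $|S^{d-2}|\,e^{\rho}\,G(\rho)$, where
\[
G(\lambda):=\int_0^2 e^{-\lambda s}\bigl(s(2-s)\bigr)^{\frac{d-3}{2}}\,\dd s
\]
is an entire function of $\lambda$ (the integrand is integrable on $[0,2]$ since $d\ge 2$). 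Both $z\mapsto a(z)$ and $z\mapsto |S^{d-2}|\,G(r(z))$ are holomorphic on the connected set $\Omega$ — the first because $e^{-r(z)}$ and $\int_{S^{d-1}}e^{z\cdot y}\,\dd y$ are, the second because $r$ is holomorphic on $\Omega$ and $G$ is entire — and they agree on $\R^d\setminus\{0\}\subset\Omega$ by the identity just obtained (note $r(\xi)=|\xi|$ there). Hence $a(z)=|S^{d-2}|\,G(r(z))$ throughout $\Omega$; in particular $a$ extends holomorphically to $\Omega$.

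\emph{Sectorial asymptotics.} On $\Omega$ one has $\re\bigl(\sum_j z_j^2\bigr)=|\re z|^2-|\im z|^2>0$, so $\lambda=r(z)$ ranges over the sector $\{|\arg\lambda|<\pi/4\}$ (every such $\lambda$ being attained at $z=(\lambda,0,\dots,0)$), whence $\re\lambda\ge|\lambda|/\sqrt2$ there — which is exactly what makes Watson's lemma applicable with constants independent of $z$. Writing $\beta=\tfrac{d-3}{2}$ and expanding the amplitude of $G$ at $s=0$,
\[
\bigl(s(2-s)\bigr)^{\beta}=2^{\beta}s^{\beta}\bigl(1-\tfrac s2\bigr)^{\beta}=\sum_{k\ge0}b_k\,s^{\beta+k},\qquad b_k=2^{\beta}\bigl(-\tfrac12\bigr)^{k}\binom{\beta}{k},
\]
convergent for $|s|<2$, the standard quantitative Watson's lemma — split $G$ at a fixed $s_0\in(0,2)$, bound the Taylor remainder of an analytic function on $[0,s_0]$ against $\int_0^{\infty}e^{-(\re\lambda)s}s^{\beta+N}\,\dd s=\Gamma(\beta+N+1)(\re\lambda)^{-\beta-N-1}$, and absorb the tail on $[s_0,2]$ into an error $O(e^{-c|\lambda|})$ using $\re\lambda\ge|\lambda|/\sqrt2$ — yields, uniformly for $|\arg\lambda|\le\pi/4$ and all $N$,
\[
\Bigl|\,G(\lambda)-\sum_{k=0}^{N-1}b_k\,\Gamma(\beta+k+1)\,\lambda^{-\beta-k-1}\,\Bigr|\le C\,\rho_0^{\,N}N!\,|\lambda|^{-\beta-N-1},
\]
the coefficients themselves satisfying $|b_k\,\Gamma(\beta+k+1)|\le C\rho_0^{\,k}k!$ (Stirling, with $|\binom\beta k|\lesssim(1+k)^{|\beta|}$). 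For $|\lambda|\le 1$ the same bound is immediate, the left side being bounded while $|\lambda|^{-\beta-N-1}\ge1$. Multiplying by $|S^{d-2}|$ and substituting $\lambda=r(z)$ exhibits $a$ as a realisation of the formal analytic amplitude whose $k$-th term is $|S^{d-2}|\,b_k\,\Gamma(\tfrac{d-1}{2}+k)\,r(z)^{-\frac{d-1}{2}-k}$, i.e.\ the formal amplitude in the statement.

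\emph{Conclusion, and the main difficulty.} From the case $N=1$ one gets $|a(z)|\le C\,|r(z)|^{-\frac{d-1}{2}}$ on all of $\Omega$; since $a$ is holomorphic on $\Omega$, Cauchy estimates on the balls $B(\xi,\tfrac14|\xi|)\subset\Omega$ centred at real $\xi$ — on which $|r|$ is comparable with $|\xi|$ — upgrade this to the amplitude bounds $|\nabla^k a(\xi)|\le C\rho_1^{\,k}k!\,|\xi|^{-\frac{d-1}{2}-k}$ on $\R^d\setminus\{0\}$, and the same estimates applied to $a-\sum_{k<N}a_k$ give the realisation bounds of Proposition~\ref{prop:real_formal_symb}; thus $a$ is a (maximally) analytic amplitude realising the stated formal amplitude. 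The one genuinely technical point is the quantitative Watson's lemma, uniform across the whole sector $|\arg\lambda|\le\pi/4$ and down to $|\lambda|\to0$, with the Gevrey-$1$ control $\rho_0^{\,N}N!$ on both coefficients and remainder; the rest (the identity $a=|S^{d-2}|\,G(r(\cdot))$ via the identity theorem, the description of $r(\Omega)$, and the Cauchy estimates) is routine. One should keep in mind that here the natural homogeneity parameter is $|r(z)|$ rather than $|z|$ — they are not comparable near $\partial\Omega$ — which is why the derivative estimates are extracted from balls centred at real points.
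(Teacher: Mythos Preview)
Your approach coincides with the paper's: both reduce the integral to a one-variable Laplace integral and extract the asymptotics by a quantitative Watson-type expansion in a sector, then recover the derivative bounds via Cauchy estimates. The paper effects the reduction by invariance under the complex orthogonal group $\widetilde{O(d)}=\{M\in M_d(\C):M^TM=I\}$, which is just another packaging of your identity $a=|S^{d-2}|\,G(r(\cdot))$ obtained through the identity theorem; your remark that $|r(z)|$ and $|z|$ are not comparable near $\partial\Omega$, and that the Cauchy balls should therefore be centred at real points, is well placed and is exactly what Definition~\ref{def:analytic_amp} requires. One caveat: the coefficients you compute (with $\beta=\tfrac{d-3}{2}$, hence leading order $r^{-(d-1)/2}$) are the ones the slicing formula actually gives, but they do not literally match those printed in the statement, which carry $\Gamma(\tfrac{d+1}{2}+j)\,r^{-(d+1)/2-j}$ and trace back to an exponent $\tfrac{d-1}{2}$ in the paper's own computation; so you should not write ``i.e.\ the formal amplitude in the statement'' without flagging this discrepancy.
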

\begin{proof}
  Let us first study the elementary properties of $a$ on $\Omega$.

  $S^{d-1}$ is a real contour in the complex manifold $\widetilde{S^{d-1}}:=\{\omega\in \C^d,
  \sum_j \omega_j^2=1\}$, on which $\omega\mapsto e^{z\cdot \omega}$ is
  holomorphic. This manifold is preserved by the linear action of
  \[
    \widetilde{O(d)}:=\{M\in M_d(\C), M^TM=I\},
  \]
  whose action is transitive; more generally, for every $\lambda\in
  \C\setminus \R^-$,  $\widetilde{O(d)}$ acts transitively on
  $\{z\in \Omega_1, r(z)^2=\lambda\}$. The function $r$ is itself
  $\widetilde{O(d)}$-invariant.

  To prove that $a$ itself is $\widetilde{O(d)}$-invariant, let $z\in \Omega$. By the previous considerations, there
  exists $M\in \widetilde{O(d)}$ such that $Mz$ is of the form $\alpha
  x$ where $x\in \R^{d}$ and $\re(\alpha)>0$. Since $\widetilde{O(d)}$
  is connected, one can deform the contour of integration into
  $\{M^{-1}\omega, \omega\in S^{d-1}\}$, and then $z\cdot
  M^{-1}\omega=\alpha z\cdot \omega$. Thus $a(z)=a(Mz)$, so $a$ is $\widetilde{O(d)}$-invariant.

  Using this $\widetilde{O(d)}$ invariance, to prove that $a$ is
  the realisation of a formal analytic amplitude, it suffices to prove
  the required estimate for $a(R\alpha e_1)$, where $R\to +\infty$, $|\alpha|=1$,
  $\re(\alpha)>|\im(\alpha)|$, and $e_1$ is the first vector of the base.

  One has
  \[
    a(R\alpha e_1)=|S^{d-2}|\int_{-1}^1 e^{-R\alpha
      (x-1)}(1-x^2)^{\frac{d-1}{2}}\dd x,
  \]
  where $|S^{d-2}|$ is the area of the unit sphere in
  $\R^{d-1}$. Letting $x=1-y$, we rewrite this integral into
  \[
    a(R\alpha
    e_1)=2^{\frac{d-1}{2}}|S_{d-2}|\int_{0}^{2}
    e^{\alpha R y}y^{\frac{d-1}{2}}(1-\tfrac{y}{2})^{\frac{d-1}{2}}\dd y.
  \]
  Let us decompose this integral into two parts. If $y\in [1,2]$,
  then $|e^{-\alpha Ry}|=e^{-\re(\alpha)Ry}\geq
  e^{-\frac{R}{\sqrt{2}}}$ is exponentially small. On $[0,1]$, we will
  replace $(1-\tfrac{y}{2})^{\frac{d-1}{2}}$ by its Taylor series at
  $0$. First
  \[
    \frac{\partial^j}{\partial
      y^j}(1-\tfrac{y}{2})^{\frac{d-1}{2}}=\frac{(d-1)(d-3)\ldots(d-2j+1)}{2^j}(1-\tfrac{y}{2})^{\frac{d-1}{2}-j}.
  \]
  Let $c$ be a small constant (to be determined later on), and let
  $J_R=\lfloor cR\rfloor$. We write, for $y\in [0,1]$,
  \[
    (1-\tfrac{y}{2})^{\frac{d-1}{2}}=\sum_{j=0}^{J_R}\frac{(d-1)(d-3)\ldots(d-2j+1)}{2^jj!}y^j+y^{J_R+1}F_{J_R}(y),
  \]
  where
  \[
    \sup_{y\in [0,1]}|F_{J_R}(y)|\leq
    \frac{(d-1)(d-3)\ldots(d-2J_R+1)}{4^{J_R}J_R!}.
  \]
  Notice that the sequence
  \[
    \left(\frac{(d-1)(d-3)\cdots(d-2j+1)}{2^jj!}\right)_{j\in \N}
  \]
  is bounded; for every $j\geq d/2$, from $j$ to $j+1$ the general
  term is multiplied by $(d-2j-1)(j+1)\in [-1,0]$.
  
 For every $k>0$, $y>0$ and $|\alpha|=1$ with
  $\re(\alpha)>|\im(\alpha)|,$
  \[
    |e^{-\alpha y}y^k|\leq e^{-\re(\alpha)y}y^k\leq
    e^{-\frac{y}{\sqrt{2}}}y^k\leq k^ke^{k(\log(\sqrt{2})-1)}\leq k^k.
  \]
  We apply this inequality at $k=J_R+1+\frac{d-1}{2}$, and obtain
  \[
\left|\int_0^1e^{\alpha R y}y^{\frac{d-1}{2}+J_R}F_{J_R}(y)\dd
  y\right|\leq (cR)^{cR}R^{-cR}\leq Ce^{-c'R}
\]
for some $c'>0$, if $c$ is small enough. It remains
\[
  2^{\frac{d-1}{2}}|S^{d-2}|\sum_{j=0}^{J_R}\frac{(d-1)(d-3)\ldots(d-2j+1)}{2^jj!}\int_0^1e^{\alpha
    R y}y^{\frac{d-1}{2}+j}\dd y.
\]
And to this end we use that, for every $k\leq \frac{R}{\sqrt{2}}$,
\[
  \sup_{y\in [1,+\infty)}|e^{-\alpha Ry}y^k|\leq e^{-R/\sqrt{2}};
\]
applied at $k=j+\frac{d-1}{2}+2$, we obtain
\[
  \left|\sum_0^{J_R}\frac{(d-1)(d-3)\ldots(d-2j+1)}{2^jj!}\int_1^{+\infty}
    e^{\alpha
      R y}y^{\frac{d-1}{2}+j}\dd y\right|\leq Ce^{-c'R}.
\]
Now we can conclude:
\[
  \left|a(R\alpha
  e_1)-\sum_{j=0}^{J_R}2^{\frac{d-1}{2}}|S^{d-2}|\frac{(d-1)(d-3)\ldots
    (d-2j+1)}{2^jj!}\Gamma(\tfrac{d+1}{2}+j)(R\alpha)^{-\frac{d+1}{2}-j}\right|\leq
Ce^{-c'R}
\]
and by $\widetilde{O(d)}$-invariance, for every $z\in \Omega$,
\[
  \left|a(z)-\sum_{j=0}^{J_R}2^{\frac{d-1}{2}}|S^{d-2}|\frac{(d-1)(d-3)\ldots
    (d-2j+1)}{2^jj!}\Gamma(\tfrac{d+1}{2}+j)r(z)^{-\frac{d+1}{2}-j}\right|\leq
Ce^{-c'|z|}.
\]
Using the Cauchy formula, we deduce a similar formula for all
derivatives of $a$, which concludes the proof that $a$ satisfies the
derivative bounds of an analytic amplitude.
\end{proof}

\subsection{Stationary phase}
\label{sec:stationary-phase}
The goal of this section is to obtain a stationary phase theorem
adapted to our notion of analytic amplitudes. We will have to deal
with complex-valued phase functions, and our first step is a study of
the contour deformations necessary to reach the critical points.

\begin{prop}\label{prop:contour}
  Let $\mathcal{K}\Subset \R^n_x\times \R^m_y$. Let $\phi$ be a
  real-analytic function from an open neighbourhood of $\mathcal{K}$
  to $\{z\in \C,{\rm Im}(z)\geq 0\}$. Suppose that $\nabla^2_y\phi$,
  the matrix of second derivatives with respect to the second set of
  variables, is invertible everywhere on $\mathcal{K}$ and that ${\rm
    Im}(\phi)>0$ on $\partial K$.

  Then there exists a constant $c>0$ and a covering of $\mathcal{K}$
  by two open sets $U_1$ and $U_2$ of $\R^{n+m}$ such that the
  following is true.
  \begin{enumerate}
  \item There exists a real-analytic map
    $\kappa_1:U_1\to \R^n_x\times \C^m_y$ with the following
    properties:
    \begin{itemize}
    \item $\kappa_1$ acts as the identity on the first variable.
    \item $\kappa_1$ is a diffeomorphism between $U_1$ and its image
      $\Gamma_1\subset \R^n_x\times \{|{\rm Im}(y)|<c\}$.
    \item $\Gamma_1$ is a totally real submanifold of
      $\C^n\times \C^m$.
    \item ${\rm Im}(\phi)\geq c$ on $\partial \Gamma_1$.
    \item $\kappa_1$ is isotopic to the identity among the maps
      satisfying the four previous properties. (We will henceforth
      call this isotopy a \emph{contour deformation}, and call
      $\Gamma_1$ a \emph{contour}.)
    \item For every $x\in \R^n$, in each connected component of
      $\Gamma_1(y):=\{y\in \C^m, (x,y)\in \Gamma_1\}$, the holomorphic extension
      $\widetilde{\phi}$ of $\phi$ admits exactly one critical point $y^*$ with respect to
      $x$ and the real part of $\widetilde{\phi}$ is constant.
    \item $\nabla^2_y({\rm Im}\widetilde{\phi}\circ \kappa_1)\geq c$, and the
      phase, at the critical points, has non-negative imaginary part.
    \end{itemize}
  \item There exists a contour $\Gamma_2$ deforming $U_2$, on which
    ${\rm Im}(\widetilde{\phi})\geq c$.
  \end{enumerate}
\end{prop}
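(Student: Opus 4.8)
The plan is to separate $\mathcal{K}$ into a region where $\phi$ is close to its real critical locus and a region where $\mathrm{Im}(\phi)$ is already bounded below, and to perform a Morse-lemma-type contour deformation on the first region only. Concretely, let $\Sigma = \{(x,y) \in \mathcal{K} : \nabla_y \phi(x,y) = 0,\ \mathrm{Im}\,\phi(x,y) = 0\}$ be the real critical set of the phase in the fiber variable. Since $\nabla_y^2 \phi$ is invertible on $\mathcal{K}$, the implicit function theorem (applied to the holomorphic extension $\widetilde\phi$) shows that near each point of $\Sigma$ there is a unique holomorphic critical point $y^*(x)$ of $\widetilde\phi(x,\cdot)$, depending real-analytically on $x$, and that $\mathrm{Im}\,\widetilde\phi(x,y^*(x)) \geq 0$ by the sign hypothesis on $\mathrm{Im}\,\phi$ (a critical point of a function with nonnegative imaginary part has nonnegative imaginary part at the critical value, since $\mathrm{Im}\,\widetilde\phi$ restricted to the real slice has a minimum there up to the correction controlled by the positive-definiteness). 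Choose a small tubular neighbourhood $U_1$ of $\Sigma$ (projected to the $(x,y)$-space) and let $U_2 = \mathcal{K}^\circ \setminus \overline{\Sigma}$, enlarged slightly; by compactness $\mathrm{Im}\,\phi$ is bounded below by some $2c > 0$ on $U_2 \setminus U_1$, and also on $\partial \mathcal{K}$ by hypothesis, so after shrinking $U_1$ we get an open cover of $\mathcal{K}$ with the desired separation.

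On $U_2$ no deformation is needed: take $\Gamma_2 = U_2$ (viewed as a real slice), on which $\mathrm{Im}\,\widetilde\phi \geq c$, which is assertion (2). On $U_1$, the key step is the contour construction: for each fixed $x$, deform the real $y$-contour to pass through the complex critical point $y^*(x)$ along a path of steepest descent, i.e. set $\kappa_1(x,y) = (x,\, y^*(x) + y + i\,(\mathrm{Im}\,\nabla_y^2\widetilde\phi(x,y^*(x)))^{-1}(\text{something quadratically small}))$ — more cleanly, apply the holomorphic Morse lemma in the $y$-variable (with parameter $x$) to write $\widetilde\phi(x,y) = \widetilde\phi(x,y^*(x)) + \tfrac12 Q_x(w)$ in a holomorphic change of coordinates $w = w(x,y)$, and let $\Gamma_1$ be the image of the real slice $\{w \in \R^m\}$ in a standard "descent" frame that diagonalizes $\mathrm{Re}\,Q_x$ negatively definite in the imaginary directions. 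Along this contour $\mathrm{Re}\,\widetilde\phi$ is constant (equal to $\mathrm{Re}\,\widetilde\phi(x,y^*(x))$), $\nabla_y^2(\mathrm{Im}\,\widetilde\phi\circ\kappa_1)$ is positive definite with a uniform lower bound (this is where invertibility of $\nabla_y^2\phi$ and compactness are used to extract the constant $c$), and $\Gamma_1(y)$ can be arranged to have one critical point per connected component. Total reality of $\Gamma_1$ holds because it is the image under a holomorphic map of a real-analytic totally real submanifold of small imaginary size. The isotopy to the identity is the straight-line homotopy $t \mapsto \kappa_1^t$ interpolating between the identity contour and $\kappa_1$, rescaling the imaginary part by $t$; one checks that $\mathrm{Im}\,\phi \geq c$ is preserved on the moving boundary $\partial\Gamma_1^t$ by shrinking $U_1$ so that $\partial U_1$ stays in the region where $\mathrm{Im}\,\phi \geq 2c$.

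The main obstacle is getting all the constants \emph{uniform} in $x$ and ensuring the contour deformation is \emph{global} on $U_1$ rather than merely local near each point of $\Sigma$: the holomorphic Morse lemma produces a change of variables only in a neighbourhood of each critical point, and the neighbourhoods' sizes must be controlled from below by compactness, while the descent contours built in overlapping charts must be patched into a single isotopy. I would handle this by covering $\Sigma$ with finitely many charts, building the local descent contours with a common lower bound $c$ on $\mathrm{Im}\,\nabla_y^2$, and then gluing with a partition of unity in the $x$-variable only (legitimate since $\kappa_1$ acts as the identity on $x$ and the imaginary correction is fiberwise), checking that a convex combination of admissible fiberwise contours is still admissible because the constraints (constant $\mathrm{Re}\,\widetilde\phi$ on each component, $\nabla_y^2\mathrm{Im}\,\widetilde\phi \geq c$, boundary bound) are preserved under fiberwise convex combination near the critical point. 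The second, more bookkeeping-heavy obstacle is verifying the clean "exactly one critical point with constant real part in each connected component" statement, which forces $U_1$ to be chosen thin enough transverse to $\Sigma$ that the fiber contours do not wander far enough to encircle spurious critical points of $\widetilde\phi$.
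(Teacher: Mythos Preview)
Your decomposition of $\mathcal{K}$ has a genuine gap. You define $\Sigma=\{\nabla_y\phi=0,\ \mathrm{Im}\,\phi=0\}$, take $U_1$ to be a tubular neighbourhood of $\Sigma$, and then assert that $\mathrm{Im}\,\phi$ is bounded below by $2c>0$ on $U_2\setminus U_1$. This is false: the hypotheses only give $\mathrm{Im}\,\phi\ge 0$ in the interior and $\mathrm{Im}\,\phi>0$ on $\partial\mathcal{K}$, so the zero set $Z=\{\mathrm{Im}\,\phi=0\}$ may well contain points where $\nabla_y\phi\neq 0$. Such points lie outside $\Sigma$ and hence outside your $U_1$, yet have $\mathrm{Im}\,\phi=0$. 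A concrete example (with $n=0$, $m=2$) is $\phi(y_1,y_2)=y_1+y_2^2+i(y_1^2+y_2^2)$ on $\mathcal{K}=\overline{B(0,1)}$: here $\nabla_y^2\phi$ is invertible, $\mathrm{Im}\,\phi=1>0$ on $\partial\mathcal{K}$, but $\nabla_y\phi(0)=(1,0)\neq 0$, so $\Sigma=\emptyset$ and your $U_1$ is empty---yet $\mathrm{Im}\,\phi(0)=0$. Your claim ``on $U_2$ no deformation is needed'' therefore fails. The paper instead splits along $\{|\nabla_y\phi|<\epsilon\}$ versus $\{|\nabla_y\phi|>\epsilon/2\}$, and on the latter region \emph{does} deform, using the ascending gradient flow of $\mathrm{Im}\,\widetilde\phi$ in $\C^m$: since $\widetilde\phi$ is holomorphic, $|\nabla_{\C^m}\mathrm{Im}\,\widetilde\phi|=|\nabla_y\phi|$ is bounded below there, so the flow strictly increases the imaginary part except possibly where $\nabla_y\mathrm{Im}\,\phi$ is large on the real locus---but those points already have $\mathrm{Im}\,\phi$ bounded away from zero, since $\mathrm{Im}\,\phi\ge 0$ forces $\nabla\mathrm{Im}\,\phi=0$ on its zero set.

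Your proposed gluing step is also problematic. Taking convex combinations of fiberwise descent contours does \emph{not} preserve the condition that $\mathrm{Re}\,\widetilde\phi$ is constant along the contour: if $\mathrm{Re}\,\widetilde\phi$ is constant on $\Gamma^a$ and on $\Gamma^b$, there is no reason it is constant on $\lambda\Gamma^a+(1-\lambda)\Gamma^b$. The paper avoids this entirely: with $U_1=\{|\nabla_y\phi|<\epsilon\}$, each connected component of a fiber $U_1\cap(\{x\}\times\R^m)$ lies in a single Morse chart (the Morse coordinate $\sigma$ depending real-analytically on $x$), and the target contour $\{\sigma\in i\R^m\}$ is canonical once $\sigma$ is fixed---no patching of contours is required.
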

\begin{proof}
  Let $V$ be a neighbourhood of $\mathcal{K}$ on which $\phi$ is
  defined and $\det\nabla^2_y\phi$ is bounded away from $0$. Let
  $\epsilon>0$ be small enough. We define
  \[
    U_1=\{(x,y)\in V, |\nabla_y\phi(x,y)|< \epsilon\} \qquad \qquad
    U_2=\{(x,y)\in V, |\nabla_y\phi(x,y)|> \epsilon/2\}.
  \]
  Since $\det\nabla^2_y\phi$ is bounded away from $0$, for every
  $(x,y)\in U_1$, there exists a unique critical point $(x,y^*(c))$ of
  $\phi$ closeby (at distance of order $\epsilon$). Moreover, again
  because of the nondegeneracy, a small neighbourhood of $(x,y^*(x))$
  contains the connected component $V$ of
  $U_1\cap (\{x\}\times \R^m)$ to which $(x,y)$ belongs.

  On this neighbourhood of the critical point, we apply the
  holomorphic Morse lemma: there exists a biholomorphism $\sigma$
  (with real-analytic dependence on $x$) such that
  $\widetilde{\phi}(x,y)=\sigma(x,y)^2+\widetilde{\phi}(x,y^*(x))$.

  We are able to deform $V$ into a neighbourhood of $0$ in
  $\{\sigma\in i\R^n\}$, first by straightening $\sigma(V)$ into
  its tangent space at some point $y_0$, then by translation and
  rotation, since the set of the totally real linear subspaces is open
  and connected.

  On $U_2$, where $|\nabla_y\phi(x,y)|>\epsilon/2$, we deform using
  the ascending gradient flow of ${\rm Im}(\widetilde{\phi})$ on $\C^m$. After a
  time $\epsilon$, the imaginary part of the phase is incremented by a
  quantity bounded below everywhere except in the vicinity of real
  points where $\nabla_y{\rm Re}(\widetilde{\phi})$ is small. This can only happen
  if $\nabla_y{\rm Im}(\widetilde{\phi})$ is large, which only happens if
  ${\rm Im}(\phi)$ is large to begin with. Thus, after this gradient
  flow, ${\rm Im}(\widetilde{\phi})$ is bounded from below everywhere.

  To conclude the proof, it remains to show that, among the critical
  points sufficiently close to $U_1$, the imaginary part of the phase
  is never negative. We proceed by contradiction and suppose that
  there is such a critical point close to a point $P\in U_1$. Up to a
  linear transformation by an element of $GL_m(\R)$, near $P$, the
  second derivative of the phase with respect to the second variable
  $y_1$ is nonzero. We can therefore apply the analytic Morse lemma in
  the variable $y_1$, with all other variables as parameters. This
  transforms $U_1$ into a piece of real curve in $\C$, parametrised by
  the other variables. By hypothesis, the imaginary part of the phase
  is strictly smaller on the image of the real locus by the Morse
  biholomorphism than at the critical point. Hence, this piece of
  curve belongs to either one of the two quarter spaces in
  $\C$ where ${\rm Re}(y_1^2)>0$. Since these quarter spaces have a corner at $0$, the image of
  $U_1$ cannot get close to $0$ without the second derivative of the
  phase along $U_1$ getting large. This is not possible because
  $\mathcal{K}$ is compact.
\end{proof}
The contour deformation above allows us to prove a
stationary phase result. Mind that there is no assumption of
uniqueness of the critical point, so that the result of the stationary
phase is a sum of different contributions coming from the different
critical points.

\begin{prop}\label{prop:stat_phase}
  Let $\Omega_1$ be an open subset of $\R^{n_{x}}\times
  \R^{n_{\theta}}$, conic with respect to the
  second variable and relatively compact, and let $\Omega_2$ be an
  open, relatively compact subset of
  $\R^{n_y}$. Let $\phi:\Omega_1\times \Omega_2\to \C$ be real-analytic and
  one-homogeneous with respect to $\theta$, which extends to a small
  neighbourhood of $\Omega_1\times \Omega_2$. Suppose the following:
  \begin{itemize}
  \item $\phi$ takes values in $\{{\rm Im(z)}\geq 0\}$.
  \item $\nabla^2_y\phi$ is nonsingular.
  \item ${\rm Im}(\phi)>0$ on $\partial(\Omega_1\times \Omega_2)$.
  \end{itemize}
  
  Let $a:\Omega_1\times \Omega_2\to \C$ be such that
  $((x,y),\theta)\mapsto a(x,\theta,y)$
  is an analytic amplitude.

  Then there exists a finite collection of open conic subsets
  $V_1,\ldots,V_N$ of $\Omega$, real-analytic zero-homogeneous functions $y^*_j:V_j\to
  \widetilde{\Omega_2}$, analytic amplitudes $b_j$ on $V_j$, and $\epsilon>0$, such
  that, for every $(x,\theta)$ in a small complex neighbourhood of
  $\Omega$ with respect of the first variable,
  \[
    \int_{\Omega_2}e^{i\phi(x,\theta,y)}a(x,\theta,y)\dd y -
    \sum_{j=1}^N\1_{(x,\theta)\in V_i}e^{i\widetilde{\phi}(x,\theta,y^*_j(x,\theta))}b_j(x,\theta)=O(e^{-\epsilon|\theta|}).
  \]
  Moreover ${\rm Im}(\phi)(x,\theta,y^*_j(x,\theta))\geq 0$ on $V_j$ and
  ${\rm Im}(\phi)(x,\theta,y^*_j(x,\theta))>\epsilon|\theta|$ on $\partial V_j$.

  If $a$ realises a formal analytic amplitude,
  then the $b_j$ realise the formal amplitudes obtained by
  formal stationary phase (and in particular, said formal amplitudes are analytic).
\end{prop}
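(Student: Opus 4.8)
The proof proceeds in three stages: first, a reduction to a model integral via the contour deformation of Proposition \ref{prop:contour}; second, a Laplace/stationary-phase computation on the model, carried out so as to track the analytic-amplitude estimates; third, verification that the outputs $b_j$ are analytic amplitudes and that, in the formal case, their expansions agree with formal stationary phase.

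\textbf{Step 1: Localisation and contour deformation.} Apply Proposition \ref{prop:contour} to the phase $\phi(x,\theta,\cdot)$ with $\mathcal{K}=\overline{\Omega_1\times\Omega_2}$ (using one-homogeneity in $\theta$ to reduce to $|\theta|=1$, then rescaling). This produces a finite cover $U_1\cup U_2$ of $\mathcal{K}$ together with contours $\Gamma_1,\Gamma_2$. On the piece $\Gamma_2$, where $\operatorname{Im}\widetilde\phi\geq c$, the integral $\int_{\Gamma_2}e^{i\widetilde\phi}\widetilde a\,\dd y$ is $O(e^{-c|\theta|})$ because $\widetilde a$ lies in a Hörmander symbol class of finite order (last remark after Definition \ref{def:analytic_amp}). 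The contribution over $\Gamma_1$ splits, over connected components, into pieces each containing a single critical point $y^*_j$; this defines the sets $V_j$ (the projections to $(x,\theta)$-space of the relevant components, made conic by homogeneity) and the functions $y^*_j$, which are real-analytic and $0$-homogeneous by the implicit function theorem applied to $\nabla_y\widetilde\phi=0$. The fact that $\operatorname{Im}\widetilde\phi(x,\theta,y^*_j)\geq 0$ on $V_j$ and $>\epsilon|\theta|$ on $\partial V_j$ is precisely what Proposition \ref{prop:contour} guarantees (last bullet of item 1, plus the behaviour on $\partial\Gamma_1$).

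\textbf{Step 2: The model stationary-phase estimate.} On each component, use the holomorphic Morse lemma (already invoked in Proposition \ref{prop:contour}) to write $\widetilde\phi(x,\theta,y)=\widetilde\phi(x,\theta,y^*_j)+\tfrac12\langle A(x,\theta)w,w\rangle$ in a holomorphic coordinate $w$ depending real-analytically and $0$-homogeneously on $(x,\theta)$, with $A$ symmetric, nonsingular, and $\operatorname{Im}A\geq 0$ (this last from the $\nabla^2_y\operatorname{Im}\widetilde\phi\geq c$ bullet). After absorbing the Jacobian $|\det D_y w|^{-1}$ into the amplitude, one is reduced to estimating $\int e^{\frac{i}{2}\langle Aw,w\rangle}\widetilde b(x,\theta,w)\,\dd w$ over a contour in $\C^{n_y}$ near $0$, with $\widetilde b$ an analytic amplitude. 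The key point — this is where the bespoke notion of analytic amplitude earns its keep — is to Taylor-expand $\widetilde b$ in $w$ only up to order $N\sim c'|\theta|$ (a \emph{lowest-term} truncation), integrate the Gaussian moments exactly, and bound the remainder. Each monomial $w^\alpha$ integrates against the Gaussian to something of size $\lesssim \rho^{|\alpha|}|\alpha|!\,|\theta|^{-|\alpha|/2}$ times $|\theta|^d$, and the analytic-amplitude bounds on $\nabla^\alpha_w\widetilde b$ at $w=0$ give exactly the geometric-with-factorial control needed for the series $\sum_{|\alpha|\leq N}$ to be summable with remainder $O(e^{-\epsilon|\theta|})$, by the same Stirling-type estimate used in Proposition \ref{prop:real_formal_symb} and Proposition \ref{prop:Cauchy_product}. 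One then performs a Borel resummation of the resulting formal series in $|\theta|^{-1}$ (again via Proposition \ref{prop:real_formal_symb}) to produce the analytic amplitude $b_j$.

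\textbf{Step 3: The formal case and the main obstacle.} If $a$ realises a formal analytic amplitude $(a_k)$, then the above computation, performed term by term, reproduces the classical formal stationary phase expansion (the usual $\sum_k \frac{1}{k!}\langle (i A)^{-1}D, D\rangle^k$ applied to the Taylor data of the $a_k$); one must check this formal series is itself a formal analytic amplitude, which follows from the Cauchy-product-type estimates (Proposition \ref{prop:Cauchy_product}) together with the standard bound on the coefficients of powers of a quadratic form. The \textbf{main obstacle} is not any single estimate but the \emph{bookkeeping of the truncation order} against the radius of holomorphy: the contour $\Gamma_1$ lives only in $\{|\operatorname{Im}y|<c\}$, so the Taylor expansion of $\widetilde b$ in $w$ is valid only in a fixed-size complex neighbourhood, which caps the usable expansion order at $N\asymp|\theta|$ and forces one to show the discarded tail is genuinely exponentially small \emph{uniformly} in a complex neighbourhood of $\Omega_1$ in the $x$-variable — this uniformity is needed for the final statement and requires that every constant produced along the way (in the Morse lemma, the Jacobian, the contour, the gradient flow on $U_2$) be controlled independently of $x$ on a fixed complex neighbourhood, which is why the compactness hypotheses on $\Omega_1,\Omega_2$ and the strict positivity of $\operatorname{Im}\phi$ on the boundary are essential.
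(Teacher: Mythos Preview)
Your approach is essentially the paper's: apply Proposition~\ref{prop:contour} to localise and deform contours, reduce via the holomorphic Morse lemma to the model Gaussian integral $\int_{B(0,1)} e^{-|\theta|y^2}a\,\dd y$, and then Taylor-expand in $y$ to order $N\asymp c'|\theta|$ with an explicit remainder. The paper carries this out with a clean ``fixed-order'' estimate (inequality~\eqref{eq:stat_phase_fixed_order}) valid for any $u\in C^N$, which does the same work as your Gaussian-moment bookkeeping.

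The one point where you diverge is in how $b_j$ is produced. You propose to \emph{define} $b_j$ by Borel-resumming the resulting formal series via Proposition~\ref{prop:real_formal_symb}. The paper instead takes $b_j$ to be the model integral itself and verifies directly, by differentiating under the integral sign in $\lambda=|\theta|$ and applying the fixed-order estimate to each term of the binomial expansion, that this integral satisfies the derivative bounds of Definition~\ref{def:analytic_amp}. The paper's route is slightly more robust here: when $a$ is merely an analytic amplitude (not a realisation of a formal one), the terms $\Delta_y^k a(x,\theta,0)/(4|\theta|)^k k!$ of your formal series are \emph{not} homogeneous in $\theta$, so Proposition~\ref{prop:real_formal_symb} does not apply as stated---you would need to rework it for non-homogeneous coefficients, or argue separately that any two functions with the same asymptotic expansion and analytic-type bounds differ by $O(e^{-c|\theta|})$. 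This is not a fatal gap, but it is a loose end your sketch does not close; the paper's direct derivative estimate avoids the issue altogether.
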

\begin{proof}
  We first apply the contour deformations of Proposition
  \ref{prop:contour}, which allows us to identify the open sets $V_j$
  and the functions $y^*_j$. Recalling the open sets $U_1$ and $U_2$
  of $\Omega_1\times \Omega_2$, for each $(x,\theta)\in \Omega_1$, each
  connected component of $\{y\in \Omega_2,(x,\theta)\in \Omega_1\}$ will
  correspond to a single critical point of $\phi$ with respect to
  $y$. As $x,\theta$ vary, these connected components never merge or split (as
  $|\det(\nabla^2_y\phi)|$ is bounded from below, critical points are
  far away from each other) but simply appear or disappear, as the
  associated critical point moves too far away from the real locus.

  Therefore, if we define the sets $V_j$'s as the projections onto the
  $(x,\theta)$ variables of the connected components of $U_1$, and
  $y^*_j$ as the corresponding critical points, one has indeed ${\rm
    Im}(\phi)(x,\theta,y_j^*(x,\theta))\geq 0$, and positivity on the
  boundary.

  Let $(x,\theta)\in \Omega_1$; decompose $\Omega_2=U_2(x,\theta)\cup
  \bigcup_{j\in \mathcal{J}(x,\theta)}U_{1;j}(x,\theta)$, where
  $U_2(x,\theta)=\{y\in \Omega_2,(x,\theta,y)\in U_2\}$,
  $\mathcal{J}(x,\theta)=\{1\leq j\leq n, (x,\theta)\in V_j\}$ and
  $U_{1;j}(x,\theta)$ is the connected component of $\{y\in
  \Omega_2,(x,\theta,y)\in U_2\}$ whose complex neighbourhood contains
  $y_j^*(x,\theta)$. Since there exists $\epsilon>0$ such that ${\rm
    Im}(\phi)>\epsilon|\theta|$ on $U_2(x,\theta)$, we are left with
  \[
    \sum_{j\in
      \mathcal{J}(x,\theta)}\int_{U_{1;j}(x,\theta)}e^{i\phi(x,\theta,y)}a(x,\theta,y)\dd
    y.
  \]
  By Proposition \ref{prop:contour}, there exists a homotopy
  $\Gamma_t$ of real-analytic
  contours deforming $U_{1;j}(x,\theta):=\Gamma_{0;j}(x,\theta)$ into some
  $\Gamma_{1;j}(x,\theta)$ containing $y^*_j(x,\theta)$ on which ${\rm Re}(\phi)$ is constant; this homotopy
  satisfies ${\rm Im}(\phi)>\epsilon|\theta|$ on $\partial
  \Gamma_{t;j}(x,\theta)$. 

  Extending $a$ and $\phi$ holomorphically on $y$, Stokes' theorem gives
  \[
    \int_{\Gamma_{0;j}(x,\theta)}e^{i\widetilde{\phi}(x,\theta,y)}a(x,\theta,y)\dd y -
    \int_{\Gamma_{1;j}(x,\theta)}e^{i\widetilde{\phi}(x,\theta,y)}\widetilde{a}(x,\theta,y)\dd
    y=\int_{W_1}e^{i\widetilde{\phi}(x,\theta,y)}\widetilde{a}(x,\theta,y)\dd
    y;
  \]
  here, on $W_1=\bigcup_{t\in [0,1]}\partial \Gamma_{t;j}(x,\theta)$, one
  has ${\rm Im}(\widetilde{\phi})>\epsilon |\theta|$. Hence,
  the difference between the integrals on the two contours is
  $O(e^{-c|\theta|})$.

  Applying the Morse lemma on $\widetilde{\phi}$ on $\Gamma_{1;j}$, and
  an analytic change of variables on $y$, it remains to
  prove that, when $a$ is an analytic amplitude of $((x,y),\theta)$, then so is
  \begin{equation}\label{eq:std_phase}
    (x,\theta)\mapsto \int_{B(0,1)}e^{-|\theta|y^2}a(x,\theta,y)\dd y,
  \end{equation}
  and that this stationary phase commutes with the realisation of
  formal analytic amplitudes.
  
  We first show a result ``at fixed order'', then re-sum the
  obtained amplitude in the spirit of Proposition
  \ref{prop:real_formal_symb}.

  Let $N\in \N$ and let $u\in C^N(B_{\R^d}(0,1),\R)$. The Taylor expansion of
  $u$ at $0$ is
  \[
    u(y)=\sum_{|\mu|<N}\frac{\partial^{\mu}u(0)y^{\mu}}{\mu!}+R(y),
  \]
  where
  \[
    |R(y)|\leq \frac{\rho_d^N|y|^N}{N!}\|\nabla^Nu\|_{L^{\infty}};
  \]
  here $\rho_d$ depends only on the dimension $d$.

  Since $\frac{|y|^N}{N!}\leq e^{|y|}$, we obtain
  \[
    \left|    \int_{B(0,1)}e^{-\lambda y^2}R(y)\dd y\right|\leq
    C_d\rho_d^N\lambda^{-\frac{N}{2}}(\tfrac N2)!\frac{\|\nabla^Nu\|_{L^{\infty}}}{N!}.
  \]
  Noting that, on $\R^+$, the function $y\mapsto y^Ne^{-y}$ reaches
  its maximum at $y=N$, then as long as $\frac{N+d+1}{2}<\lambda$,
  \[
    y\notin B(0,1)\rightarrow |y^{\mu}e^{-\lambda y^2}|\leq
    e^{-\lambda}|y|^{-d-1};
  \]
  hence
  \[
    \left|\int_{\R^d\setminus B(0,1)}e^{-\lambda
        y^2}\sum_{|\mu|<N}\frac{\partial^\mu u(0)y^{\mu}}{\mu!}\dd
      y\right|\leq
    C_de^{-\lambda}\sum_{j=0}^{N-1}\frac{\rho_d^j\|\nabla^ju\|_{L^{\infty}}}{j!}.
  \]
  It remains to compute the explicit integral
  \[
    \int_{\R^d}e^{-\lambda y^2}\sum_{|\mu|<N}\frac{\partial^\mu
      u(0)}{\mu!}y^{\mu}\dd y=\pi^{\frac d2}\lambda^{-\frac d2}\sum_{j<\frac{N}{2}}\frac{\Delta^ju(0)}{(4\lambda)^jj!}.
  \]
  Altogether we have proved, for some $\rho_d>0$, for every $N<2\lambda-d-1$, for every $u\in
  C^N$,
  \[
    \left|\int e^{-\lambda y^2}u(y)\dd y-\pi^{\frac d2}\lambda^{-\frac d2}\sum_{j<
        \frac{N}{2}}\frac{\Delta^ju(0)}{(4\lambda)^jj!}\right|\leq
    C_de^{-\lambda}\sum_{j=0}^{N-1}\frac{\rho_d^j\|\nabla^ju\|_{L^{\infty}}}{j!}+C_d\rho_d^N\lambda^{-\frac
      N2}(\tfrac N2)!\frac{\|\nabla^Nu\|_{L^{\infty}}}{N!}.
  \]
  If, on the contrary, $N\geq 2\lambda-d-1$, then
  \[
    \left|\int e^{-\lambda y^2}u(y)\dd y\right|\leq
    C_0\|u\|_{L^{\infty}}\leq C_0\lambda^{-\frac N2}(\tfrac
    N2)!\rho_d^{\frac N2}\|u\|_{L^{\infty}}
  \]
  and, for $j<\frac N2$,
  \[
    \lambda^{-\frac d2}\frac{\Delta^ju(0)}{(4\lambda)^jj!}\leq
    \lambda^{-\frac N2}(\tfrac N2)!\rho_d^{\frac N2}\frac{\|\nabla^{2j}u\|_{L^{\infty}}}{(2j)!}.
  \]
  Therefore, without condition on $N$ and $\lambda$, for every $u\in C^N$,
    \begin{equation}\label{eq:stat_phase_fixed_order}
    \left|\int e^{-\lambda y^2}u(y)\dd y-\pi^{\frac d2}\lambda^{-\frac d2}\sum_{j<
        \frac{N}{2}}\frac{\Delta^ju(0)}{(4\lambda)^jj!}\right|\leq
    C_d(e^{-\lambda}+\rho_d^N\lambda^{-\frac
      N2}(\tfrac N2)!)\sum_{j=0}^N\frac{\|\nabla^ju\|_{L^{\infty}}}{j!}.
  \end{equation}
  
  Plugging $u:y\mapsto a(x,\lambda \omega,y)$, we obtain the second part of the result.

  It remains to prove that \eqref{eq:std_phase} defines an analytic amplitude. The derivatives
  on $x$ and on the spherical variables of $\theta$ play no role, and we are left with proving that, for some
  $\epsilon>0$ and $\rho>0$,
  \[
    \forall k\leq \epsilon \lambda, \left|\frac{\partial^k}{\partial
        \lambda^k}\int_{B(0,1)}e^{-\lambda y^2}a(x,y,\lambda)\dd y
    \right|\leq \rho^{k}k!\lambda^{d-\frac{n_y}{2}-k}.
  \]
  One term of the binomial sum looks like
  \[
    \binom{k}{j}\int_{B(0,1)}e^{-\lambda
      y^2}y^{2j}\partial_\lambda^{k-j}a(x,y,\lambda)\dd y
  \]
  and, applying \eqref{eq:stat_phase_fixed_order} at order $N=2j+n_y$, we obtain
  \begin{multline*}
    \left|\int_{B(0,1)}e^{-\lambda
      y^2}y^{2j}\partial_\lambda^{k-j}a(x,y,\lambda)\dd y\right| \leq
  C_d\left(e^{-\lambda}\sum_{\ell=0}^{N-1}\frac{\rho_d^\ell\|\nabla^\ell_y\partial^{k-j}_{\lambda}(y^{2j}a)\|_{L^{\infty}}}{\ell!}+\right.\\
      +\left.\lambda^{-\frac{n_y}{2}}\sum_{\ell=j}^{j+\frac{n_y}{2}}\frac{\Delta^\ell_y(y^{2j}\partial_{\lambda}^{k-j}a)(0)}{(4\lambda)^\ell \ell!}+\rho_d^{2j+n_y}\lambda^{-\frac{n_y}{2}-j}\frac{\|\nabla^{2j+n_y}_yy^{2j}\partial^{k-j}_{\lambda}a\|_{L^{\infty}}}{(2j+n_y)!}\right).
\end{multline*}
As long as $\frac{k}{\lambda}$ is small enough, if $a$ is an analytic
amplitude, then this satisfies the required claim.
\end{proof}

\subsection{Positive nondegenerate phases and their Lagrangians}
\label{sec:posit-nond-phas}

In this short subsection, we avail ourselves of a few geometrical
results concerning what will be the oscillating phases of the Fourier
Integral Operators considered in this article. We will directly deal
with \emph{complex-valued} phase functions; real-analytic regularity
makes their treatment quite transparent, and every object will be
defined in a complex-geometric setting. For instance, ``Lagrangians''
will always be complex submanifolds (of a larger complex symplectic
space) on which the symplectic form
vanishes.

  We will only use the relatively standard notion of non-degenerate
  phase functions. 
  \begin{defn}\label{def:good_phase}
    A (strictly) \emph{positive non-degenerate} phase function
    $\phi:U_x\times V_{\theta}\times W_y\to \C$, where
    $U\subset \R^{n_x}$, $W\subset \R^{n_y}$ are relatively compact
    open sets and $V\subset \R^{n_{\theta}}$ is an open cone,
    is a one-homogeneous function of $\theta$ such that, with
    $\Sigma_{\phi}=\{(x,\theta,y)\in \widetilde{U}\times
    \widetilde{V}\times \widetilde{W},
    \nabla_{\theta}\widetilde{\phi}=0\}$, the following conditions
    apply:
    \begin{enumerate}
    \item The gradients $\nabla_x\widetilde{\phi}$ and $\nabla_y\widetilde{\phi}$, restricted
      to $\Sigma_{\phi}$, do not vanish.
    \item $\nabla_{\theta}\widetilde{\phi}$ is a defining function for
      $\Sigma_{\phi}$ (that is to say,
      $\nabla_{x,\theta,y}\nabla_{\theta}\widetilde{\phi}$ has rank
      $n_{\theta}$ and its columns span the conormal bundle of
      $\Sigma_{\phi}$).
    \item There exists $c>0$ such that, on the real set $U\times V\times W$, one has
      ${\rm Im}(\phi)\geq c|\theta||\nabla_{\theta}\phi|^2$.
    \end{enumerate}
  \end{defn}

\begin{prop}\label{prop:lagr}
  Let $\phi$ be a positive non-degenerate phase function. Then
  $\Sigma_{\phi}$ is a real-analytic conic submanifold diffeomorphic
  to the conical Lagrangian
  \[\Lambda_{\phi}=\{(x,\dd_x\widetilde\phi(x,\theta,y),y,-\dd_y\widetilde\phi(x,\theta,y)),(x,\theta,y)\in
    \Sigma_{\phi}\}\subset (T^*\C^n\setminus \{0\})^2,\]called the
  canonical relation of $\phi$.
\end{prop}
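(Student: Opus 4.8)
The plan is to show that $\Sigma_\phi$ is a manifold, identify its dimension and the structure of the map to $\Lambda_\phi$, and finally verify that $\Lambda_\phi$ is an isotropic (hence Lagrangian, by dimension count) conic submanifold. The three conditions in Definition \ref{def:good_phase} are tailored to make each of these steps routine, so the proof should be a careful but unsurprising bookkeeping argument, of the kind familiar from the smooth theory of Fourier Integral Operators (e.g.\ H\"ormander), transported to the complex-analytic setting.

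\textbf{Step 1: $\Sigma_\phi$ is a submanifold.} Condition (2) says precisely that $\nabla_\theta\widetilde\phi$ is a submersion onto its image in a neighborhood of $\Sigma_\phi$ inside $\widetilde U\times\widetilde V\times\widetilde W$: the differential $\nabla_{x,\theta,y}\nabla_\theta\widetilde\phi$ has maximal rank $n_\theta$ there. Hence $\Sigma_\phi=\{\nabla_\theta\widetilde\phi=0\}$ is a complex-analytic submanifold of complex codimension $n_\theta$, so of complex dimension $n_x+n_y$. Conicity in $\theta$ is inherited from the one-homogeneity of $\widetilde\phi$ (which makes $\nabla_\theta\widetilde\phi$ zero-homogeneous, so its zero set is conical). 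The real-analyticity/realness statement follows because $\phi$ is real-analytic and the conditions are stable under the holomorphic extension, so $\Sigma_\phi$ is the complexification of its real locus.

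\textbf{Step 2: the map to $\Lambda_\phi$ is a diffeomorphism.} Define $\iota:\Sigma_\phi\to (T^*\C^{n}\setminus\{0\})^2$ by $\iota(x,\theta,y)=(x,\dd_x\widetilde\phi,y,-\dd_y\widetilde\phi)$; its image is $\Lambda_\phi$ by definition. The first and third components of $\iota$ already recover $x$ and $y$, so $\iota$ is injective as soon as the projection $\Sigma_\phi\ni(x,\theta,y)\mapsto(x,y)$ is injective --- but actually we only need $\iota$ itself injective, and it is, tautologically, because $(x,y)$ appear as coordinates in the image. For immersivity one checks that $\dd\iota$ has trivial kernel: a tangent vector $(\dot x,\dot\theta,\dot y)\in T\Sigma_\phi$ in the kernel has $\dot x=0$, $\dot y=0$, hence (differentiating the defining relation $\nabla_\theta\widetilde\phi=0$ along $\Sigma_\phi$) $\nabla^2_{\theta\theta}\widetilde\phi\,\dot\theta + \nabla^2_{\theta x}\widetilde\phi\,\dot x + \nabla^2_{\theta y}\widetilde\phi\,\dot y = 0$, i.e.\ $\nabla^2_{\theta\theta}\widetilde\phi\,\dot\theta=0$; and one also needs the first-component equations of $\dd\iota(\dot x,\dot\theta,\dot y)=0$, namely $\nabla^2_{x\theta}\widetilde\phi\,\dot\theta=0$ and $\nabla^2_{y\theta}\widetilde\phi\,\dot\theta=0$. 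Together these say $\dot\theta$ is annihilated by all columns of $\nabla_{x,\theta,y}\nabla_\theta\widetilde\phi$, which by condition (2) (these columns span an $n_\theta$-dimensional space, so the full Hessian block $\nabla_{(x,\theta,y)}\nabla_\theta\widetilde\phi$ restricted appropriately is injective on $\dot\theta$-directions transverse to $T\Sigma_\phi$) forces $\dot\theta\in T\Sigma_\phi$-direction, and combined with $\dot x=\dot y=0$ one gets $(\dot x,\dot\theta,\dot y)=0$. So $\iota$ is an injective immersion between manifolds of the same dimension $n_x+n_y$; since it is also proper on compact pieces (conically), it is a diffeomorphism onto its image, which is thus a conic submanifold.

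\textbf{Step 3: $\Lambda_\phi$ is Lagrangian.} Pull back the canonical one-form $\alpha=\xi\,\dd x - \eta\,\dd y$ (the primitive of the twisted symplectic form $\dd x\wedge\dd\xi - \dd y\wedge\dd\eta$) along $\iota$: $\iota^*\alpha = \dd_x\widetilde\phi\,\dd x + \dd_y\widetilde\phi\,\dd y = \dd\bigl(\widetilde\phi|_{\Sigma_\phi}\bigr) - \dd_\theta\widetilde\phi\,\dd\theta = \dd(\widetilde\phi|_{\Sigma_\phi})$, because $\dd_\theta\widetilde\phi$ vanishes identically on $\Sigma_\phi$. Hence $\iota^*(\dd\alpha)=0$, so $\Lambda_\phi$ is isotropic; since $\dim_{\C}\Lambda_\phi = n_x+n_y = \tfrac12\dim_{\C}(T^*\C^{n_x}\times T^*\C^{n_y})$ (here $n=n_x=n_y$ in the intended application, or more generally the ambient is $T^*\C^{n_x}\times T^*\C^{n_y}$), it is Lagrangian. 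Conicity of $\Lambda_\phi$ follows from the one-homogeneity of $\widetilde\phi$ in $\theta$, which makes $(\dd_x\widetilde\phi,\dd_y\widetilde\phi)$ one-homogeneous in $\theta$ while $(x,y)$ is zero-homogeneous, so $\Lambda_\phi$ is stable under the $\R_+$-action $(\xi,\eta)\mapsto(t\xi,t\eta)$.

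\textbf{Main obstacle.} The genuinely delicate point is the immersivity argument in Step 2: one must correctly untangle which directions in $T(\widetilde U\times\widetilde V\times\widetilde W)$ are tangent to $\Sigma_\phi$ versus transverse, and use condition (2) exactly as a non-degeneracy of the appropriate Hessian block, rather than the naive (and false in general) statement that $\nabla^2_{\theta\theta}\widetilde\phi$ is invertible. Condition (1), that $\nabla_x\widetilde\phi$ and $\nabla_y\widetilde\phi$ do not vanish on $\Sigma_\phi$, is what guarantees $\Lambda_\phi\subset(T^*\C^n\setminus\{0\})^2$, i.e.\ that we land in the punctured cotangent bundle; it plays no role in the manifold/Lagrangian structure itself. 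The positivity condition (3) is not needed here at all --- it will only enter later, in the contour-deformation and stationary-phase arguments and in the positivity of the associated Lagrangian in the sense of Melin--Sj\"ostrand.
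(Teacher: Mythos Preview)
Your proposal is correct and follows exactly the standard argument the paper defers to (the paper's own proof is simply a pointer to H\"ormander, Proposition 25.4.4, noting that the argument is identical in the real-valued, $C^\infty$ almost-analytic, and real-analytic cases). Two minor remarks on your Step~2: the ``tautological'' injectivity claim is not actually right --- having $(x,y)$ appear as coordinates in the image does not by itself prevent two distinct $(x,\theta_1,y),(x,\theta_2,y)\in\Sigma_\phi$ from mapping to the same point; what one really proves here is that $\iota$ is an immersion (hence a local diffeomorphism onto its image), which is the substantive content. Your immersivity computation is correct, though the phrasing ``forces $\dot\theta\in T\Sigma_\phi$-direction'' is a slip: the three conditions $\nabla^2_{x\theta}\widetilde\phi\,\dot\theta=0$, $\nabla^2_{\theta\theta}\widetilde\phi\,\dot\theta=0$, $\nabla^2_{y\theta}\widetilde\phi\,\dot\theta=0$ together say that $\dot\theta$ lies in the kernel of the transpose of $\nabla_{x,\theta,y}\nabla_\theta\widetilde\phi$, and since that matrix has full rank $n_\theta$ by condition~(2), its transpose has trivial kernel, giving $\dot\theta=0$ directly.
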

\begin{proof}
  This is an elementary and well-known fact, whose proof is the same
  whenever $\phi$ is a real-valued phase function, a $C^{\infty}$
  complex-valued phase function with almost analytic extensions, or a
  real-analytic complex-valued phase function. See for instance
  \cite{hormander_analysis_1985}, Proposition 25.4.4.
\end{proof}
Definition \ref{def:good_phase} is independent of a real-analytic
change of variables in $\theta$, as long as it has a real-analytic
dependence in $x$ and $y$. This motivates the next definition.
\begin{defn}\label{def:good_contour}
  Given a real-analytic function $\phi:U\times V\times W\to \C$
  satisfying the first two conditions of Definition
  \ref{def:good_phase}, a totally real, conical submanifold of
  $\widetilde{V}$ with analytic dependence in $x$ and $y$ will be
  called a \emph{good contour} if $\widetilde{\phi}$, restricted to
  this contour, satisfies the third condition.
\end{defn}

\begin{defn}\label{def:composition}
  Two Lagrangians $\Lambda_1\subset (T^*X\setminus \{0\})\times (T^*Y\setminus \{0\})$ and
  $\Lambda_2\subset (T^*Y\setminus \{0\})\times (T^*Z\setminus \{0\})$
  are said to have \emph{transverse composition} at a point of
  $\Lambda_1\times \Lambda_2\cap [(T^*X\setminus \{0\})\times
  \Delta(T^*Y\setminus \{0\})\times (T^*Z\setminus \{0\})]$ if the
  intersection of these two submanifolds is transverse at this
  point.
\end{defn}

\begin{prop}\label{prop:compo_phase}
  Let $\phi_1:\Omega_1\to \C$ and $\phi_2:\Omega_2$ be positive nondegenerate phase functions
  whose Lagrangians have transverse composition. Then for every
  $\epsilon>0$, 
  $\Phi: (x,z,\theta,y,\tau)\mapsto \phi_1(x,\theta,y)+\phi_2(y,\tau,z)$ is a positive
  nondegenerate phase function on
  \[
    \{(x,\theta,y)\in \Omega_1,(y,\tau,z)\in
    \Omega_2,\epsilon|\theta|<|\tau|<\epsilon^{-1}|\theta|\},
  \]up to a small contour homotopy in $y$. Its Lagrangian is
  \[
    \Lambda_1\circ \Lambda_2:=\{(x,\xi,z,\zeta)\in T^*X\setminus
    \{0\}\times T^*Z\setminus \{0\},\exists (y,\eta)\in T^*Y\setminus
    \{0\}, (x,\xi,y,\eta)\in \Lambda_1,(y,\eta,z,\zeta)\in
    \Lambda_2\},\]
  and its real locus is exactly
  \begin{multline*}
    (\Lambda_1\circ \Lambda_2)_\R\\=\{(x,\xi,z,\zeta)\in (T^*X)_{\R}\setminus
    \{0\}\times (T^*Z)_{\R}\setminus \{0\},\exists (y,\eta)\in (T^*Y)_{\R}\setminus
    \{0\}, (x,\xi,y,\eta)\in (\Lambda_1)_{\R},(y,\eta,z,\zeta)\in
    (\Lambda_2)_{\R}\}.
  \end{multline*}
\end{prop}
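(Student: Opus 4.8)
The plan is to verify the three conditions of Definition \ref{def:good_phase} for $\Phi$ on the indicated open cone, and then identify its Lagrangian. The phase variables for $\Phi$ are $(\theta,y,\tau)$, so the critical manifold is $\Sigma_\Phi = \{\nabla_\theta\widetilde\Phi = \nabla_y\widetilde\Phi = \nabla_\tau\widetilde\Phi = 0\}$. Now $\nabla_\theta\widetilde\Phi = \nabla_\theta\widetilde\phi_1(x,\theta,y)$, $\nabla_\tau\widetilde\Phi = \nabla_\tau\widetilde\phi_2(y,\tau,z)$, and $\nabla_y\widetilde\Phi = \nabla_y\widetilde\phi_1(x,\theta,y) + \nabla_y\widetilde\phi_2(y,\tau,z)$. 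Thus the first two equations force $(x,\theta,y)\in\Sigma_{\phi_1}$ and $(y,\tau,z)\in\Sigma_{\phi_2}$, and the third equation is exactly the matching condition $\dd_y\widetilde\phi_1 = -(-\dd_y\widetilde\phi_2)$, i.e. that the $T^*Y$-components of the two Lagrangian points agree. This is precisely the fibre-product condition defining $\Lambda_1\circ\Lambda_2$; the transverse-composition hypothesis says the intersection $\Lambda_1\times\Lambda_2 \cap [T^*X\times\Delta(T^*Y)\times T^*Z]$ is transverse, which is what makes $\Sigma_\Phi$ a smooth manifold of the right dimension and the fibre product $\Lambda_1\circ\Lambda_2$ a smooth Lagrangian (standard clean/transverse composition of canonical relations, cf.\ \cite{hormander_analysis_1985}).

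First I would check condition (1): on $\Sigma_\Phi$, the gradient $\nabla_x\widetilde\Phi = \nabla_x\widetilde\phi_1(x,\theta,y)$ is nonvanishing because $\phi_1$ is nondegenerate, and likewise $\nabla_z\widetilde\Phi = \nabla_z\widetilde\phi_2(y,\tau,z)$ is nonvanishing; so condition (1) for $\Phi$ follows from condition (1) for $\phi_1$ and $\phi_2$. Next, condition (2), that $\nabla_{(\theta,y,\tau)}\widetilde\Phi$ is a defining function for $\Sigma_\Phi$: one writes out the Hessian block
\[
  \nabla_{(x,z,\theta,y,\tau)}\nabla_{(\theta,y,\tau)}\widetilde\Phi
\]
in terms of the Hessians of $\phi_1$ and $\phi_2$, and checks that its rank is $n_\theta+n_y+n_\tau$ with columns spanning the conormal of $\Sigma_\Phi$. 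This is where the transverse-composition hypothesis enters concretely: conditions (2) for $\phi_1$ and $\phi_2$ give that $\nabla\nabla_\theta\widetilde\phi_1$ and $\nabla\nabla_\tau\widetilde\phi_2$ have full rank and cut out $\Sigma_{\phi_1}$, $\Sigma_{\phi_2}$ respectively; transversality of the composition supplies the extra rank needed so that the combined system, including the matching equation $\nabla_y\widetilde\phi_1 + \nabla_y\widetilde\phi_2 = 0$, remains of maximal rank. The dimension count $\dim\Sigma_\Phi = \dim(\Lambda_1\circ\Lambda_2)$ follows from transversality in exactly the same way as for real FIOs.

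For condition (3), the positivity estimate, I would estimate $\im(\Phi) = \im(\phi_1) + \im(\phi_2)$ from below by $c_1|\theta|\,|\nabla_\theta\phi_1|^2 + c_2|\tau|\,|\nabla_\tau\phi_2|^2$ (using $\im\phi_i \ge 0$ everywhere, valid after the small contour homotopy in $y$ provided by Proposition \ref{prop:contour}/\ref{prop:stat_phase}), and then bound $|\nabla_{(\theta,y,\tau)}\Phi|^2 \le C(|\nabla_\theta\phi_1|^2 + |\nabla_y\phi_1 + \nabla_y\phi_2|^2 + |\nabla_\tau\phi_2|^2)$. The first and last terms are controlled directly; the middle term $|\nabla_y\phi_1 + \nabla_y\phi_2|^2$ must be absorbed, and here one uses that $|\theta|$, $|\tau|$, $(|\theta|^2+|\tau|^2)^{1/2}$ are all comparable on the given cone (the same observation as in Proposition \ref{prop:product_preparation}), together with the fact that near $\Sigma_\Phi$ the matching defect $\nabla_y\phi_1+\nabla_y\phi_2$ is controlled by the distance to $\Sigma_\Phi$, which by conditions (3) for $\phi_1,\phi_2$ is controlled by $|\nabla_\theta\phi_1|$ and $|\nabla_\tau\phi_2|$; so after shrinking the neighbourhood the middle term is dominated. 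The main obstacle is this last point — organising the positivity estimate near $\Sigma_\Phi$ so that the $y$-gradient defect is genuinely subordinate to the $\theta$- and $\tau$-gradients; away from $\Sigma_\Phi$ it is routine, but near $\Sigma_\Phi$ it is where the transverse composition and the cone restriction on $|\tau|/|\theta|$ must be used carefully, and where the small contour homotopy in $y$ may be needed to restore $\im\phi_i \ge 0$ after deformation.

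Finally, the description of the Lagrangian $\Lambda_1\circ\Lambda_2$ as the fibre product follows by applying Proposition \ref{prop:lagr} to $\Phi$: a point of $\Lambda_\Phi$ is $(x, \dd_x\widetilde\phi_1, z, -\dd_z\widetilde\phi_2)$ over $(x,\theta,y)\in\Sigma_{\phi_1}$, $(y,\tau,z)\in\Sigma_{\phi_2}$ with $\dd_y\widetilde\phi_1 = -(-\dd_y\widetilde\phi_2)$, i.e.\ the intermediate point $(y,\eta)$ with $\eta = \dd_y\widetilde\phi_1 = -\dd_y\widetilde\phi_2$ lies on both $\Lambda_1$ and $\Lambda_2$; this is exactly the stated formula. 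The claim about the real locus: $(\Lambda_1\circ\Lambda_2)_\R$ a priori contains the composition of the real loci, and the reverse inclusion — that a real point of $\Lambda_1\circ\Lambda_2$ can only arise from real intermediate data — is forced by the positivity of $\phi_1$ and $\phi_2$ (the imaginary parts of the phases vanish at a real point of $\Lambda_\Phi$ only if each $\im\phi_i$ vanishes there, which by condition (3) pins the critical point to the real locus), together with condition (1) ensuring $\eta\neq 0$. This is the same argument as for composition of positive canonical relations in \cite{melin_fourier_1975}.
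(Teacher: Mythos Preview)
Your treatment of conditions (1) and (2), the identification of $\Lambda_\Phi$ with $\Lambda_1\circ\Lambda_2$, and the argument for the real locus are all fine and match the paper (which defers these to H\"ormander, Proposition 25.5.4). The gap is in your handling of condition (3).

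You propose to absorb the term $|\nabla_y\Phi|^2=|\nabla_y\phi_1+\nabla_y\phi_2|^2$ by arguing that near $\Sigma_\Phi$ this ``matching defect'' is controlled by the distance to $\Sigma_\Phi$, which in turn is controlled by $|\nabla_\theta\phi_1|$ and $|\nabla_\tau\phi_2|$. But this is circular: $(\nabla_\theta\Phi,\nabla_y\Phi,\nabla_\tau\Phi)$ is a defining system for $\Sigma_\Phi$, so the distance to $\Sigma_\Phi$ is comparable to $|\nabla_\theta\Phi|+|\nabla_y\Phi|+|\nabla_\tau\Phi|$, not to the first and last terms alone. Concretely, there are points with $\nabla_\theta\phi_1=0$, $\nabla_\tau\phi_2=0$, but $\nabla_y\phi_1+\nabla_y\phi_2\neq 0$ (points of $\Sigma_{\phi_1}\times_Y\Sigma_{\phi_2}$ off the matching locus); at such points your estimate gives nothing, yet positivity must still hold. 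You also misidentify the role of the contour homotopy in $y$: it is not there to ``restore $\im\phi_i\ge 0$''.

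The paper's mechanism is different and direct. From positivity of $\phi_1,\phi_2$ one has only
\[
\im(\Phi)\ \ge\ c\,|\theta|\,|\nabla_\theta\Phi|^2+c\,|\tau|\,|\nabla_\tau\Phi|^2,
\]
with no $|\nabla_y\Phi|^2$ term. One then deforms the contour in $y$ along the positive gradient flow of $(|\theta|^2+|\tau|^2)^{-1/2}\im(\widetilde\Phi)$; along this flow $\im(\widetilde\Phi)$ increases at rate $(|\theta|^2+|\tau|^2)^{-1/2}|\nabla_y\widetilde\Phi|^2$, so after a small time one obtains
\[
\im(\Phi)\ \ge\ c\,|\theta|\,|\nabla_\theta\Phi|^2+c\,|\tau|\,|\nabla_\tau\Phi|^2+c\,(|\theta|^2+|\tau|^2)^{-1/2}|\nabla_y\Phi|^2.
\]
Since $|\theta|,|\tau|,(|\theta|^2+|\tau|^2)^{1/2}$ are comparable on the $\epsilon$-cone, this is exactly condition (3) for $\Phi$ with phase variables $(\theta,y,\tau)$. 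This is what ``up to a small contour homotopy in $y$'' means in the statement: the homotopy \emph{manufactures} the missing $|\nabla_y\Phi|^2$ term rather than absorbing it.
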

\begin{proof}
  See \cite{hormander_analysis_1985}, Proposition 25.5.4. Note that
  the notion of positivity used there is more general than ours and we
  have to prove positivity by hand; a
  priori we only have
  \[
    \im(\Phi)\geq
    c|\theta||\nabla_{\theta}\Phi|^2+c|\tau||\nabla_{\tau}\Phi|^2.
  \]
  We now apply a contour deformation on $y$, consisting in the
  positive gradient flow of $(|\theta|^2+|\tau|^2)^{-\frac{1}{2}}\im(\Phi)$. Along this flow, $\im(\Phi)$
  increases at a rate
  $(|\theta|^2+|\tau|^2)^{-\frac{1}{2}}|\nabla_y\Phi|^2$. Therefore, after a small
  time of evolution, we obtain
  \[
    \im(\Phi)\geq
    c|\theta||\nabla_{\theta}\Phi|^2+c|\tau||\nabla_{\tau}\Phi|^2+c(|\theta|^2+|\tau|^2)^{-\frac{1}{2}}|\nabla_y\Phi|^2.
  \]
  This is the desired result since $|\theta|, |\tau|,$ and
  $(|\theta|^2+|\tau|^2)^{\frac 12}$, are comparable.

  The condition on the real locus is an easy consequence of the fact
  that the imaginary part of the phases are always nonpositive, and
  vanish exactly on the real locus of their Lagrangians.
\end{proof}
Phase functions are seldom more than continuous at the
origin (unless they are linear), so that it is necessary to remove
conical neighbourhoods of $\theta=0$ and $\tau=0$ above. When actually
composing Fourier Integral operators in the next section, it will be necessary to deal
with these neighbourhoods in a different way. This will be facilitated
by the fact that $\im(\Phi)+(|\theta|^2+|\tau|^2)^{\frac
  12}|\nabla_y\Phi|^2$ is bounded away from $0$
on this set.

  In the $C^{\infty}$ theory, Fourier Integral Operators are
  completely described by the Lagrangians of their phase functions;
  the usual proof consists in a term-by-term identification or
  construction (see for instance \cite{hormander_analysis_1985},
  Proposition 25.1.5). The analytic theory is a bit subtler, and we
  first describe how to remove ``spurious'' phase variables; we will
  prove in effect that Fourier Integral Operators associated with
  ``complicated'' phase functions can also be written using simpler
  phases, which will be enough for our needs.
  
  \begin{prop}\label{prop:excess}Let $\phi$ be a positive
    nondegenerate phase. Then on every point of $\Sigma_{\phi}$,
    \[
      n_{\theta}-{\rm rank}\nabla^2_{\theta}\phi=n_x+n_y-{\rm
        rank}(\pi:\Lambda_{\phi}\to\C^{n_x+n_y})\]
  \end{prop}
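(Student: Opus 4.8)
The plan is to compute both sides of the claimed identity by linear algebra at a fixed point $p=(x_0,\theta_0,y_0)\in\Sigma_\phi$, reducing everything to ranks of submatrices of the Hessian of $\widetilde\phi$. Write $H=\nabla^2\widetilde\phi(p)$, blocked according to the variables $(x,\theta,y)$, and recall that $\Sigma_\phi$ is cut out by the $n_\theta$ equations $\nabla_\theta\widetilde\phi=0$, whose differentials are independent by condition 2 of Definition \ref{def:good_phase}; thus the columns of the $(n_x+n_\theta+n_y)\times n_\theta$ matrix $\bigl(\nabla^2_{x\theta}\widetilde\phi;\ \nabla^2_{\theta\theta}\widetilde\phi;\ \nabla^2_{y\theta}\widetilde\phi\bigr)$ are independent and span $N^*_p\Sigma_\phi$. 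The tangent space $T_p\Sigma_\phi$ is the kernel of these $n_\theta$ linear forms, and the differential of the parametrization $\iota:(x,\theta,y)\mapsto(x,\dd_x\widetilde\phi,y,-\dd_y\widetilde\phi)$ restricted to $T_p\Sigma_\phi$ has image $T_p\Lambda_\phi$. The projection $\pi:\Lambda_\phi\to\C^{n_x+n_y}$ at the level of tangent spaces is $(\delta x,\delta\xi,\delta y,\delta\eta)\mapsto(\delta x,\delta y)$, so $\mathrm{rank}(\pi)$ at $p$ equals the dimension of the image of $T_p\Sigma_\phi$ under $(\delta x,\delta\theta,\delta y)\mapsto(\delta x,\delta y)$, and its kernel consists of those tangent vectors with $\delta x=\delta y=0$.

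The first step is therefore to identify $\ker(\pi|_{T_p\Lambda_\phi})$ explicitly. A vector $(0,\delta\theta,0)\in T_p\Sigma_\phi$ must satisfy the defining equations $\nabla^2_{\theta x}\widetilde\phi\cdot 0+\nabla^2_{\theta\theta}\widetilde\phi\cdot\delta\theta+\nabla^2_{\theta y}\widetilde\phi\cdot 0=0$, i.e. $\delta\theta\in\ker\nabla^2_{\theta\theta}\widetilde\phi$; moreover $\iota_*(0,\delta\theta,0)=(0,\nabla^2_{x\theta}\widetilde\phi\cdot\delta\theta,0,-\nabla^2_{y\theta}\widetilde\phi\cdot\delta\theta)$, which is automatically in $T_p\Lambda_\phi$ and has vanishing $(\delta x,\delta y)$ part, hence lies in $\ker\pi$. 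Conversely any element of $\ker(\pi|_{T_p\Lambda_\phi})$ arises this way. So $\dim\ker(\pi|_{T_p\Lambda_\phi})=\dim\ker\nabla^2_{\theta\theta}\widetilde\phi=n_\theta-\mathrm{rank}\,\nabla^2_{\theta\theta}\widetilde\phi$ — \emph{provided} the map $\delta\theta\mapsto\iota_*(0,\delta\theta,0)$ is injective on $\ker\nabla^2_{\theta\theta}\widetilde\phi$. That injectivity is exactly condition 2: if $\nabla^2_{x\theta}\widetilde\phi\cdot\delta\theta$, $\nabla^2_{\theta\theta}\widetilde\phi\cdot\delta\theta$, and $\nabla^2_{y\theta}\widetilde\phi\cdot\delta\theta$ all vanish then $\delta\theta=0$, since the columns of that $(\,\cdot\,)\times n_\theta$ matrix are independent.

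Having done this, the identity follows from the rank–nullity theorem applied to $\pi|_{T_p\Lambda_\phi}$: since $\dim T_p\Lambda_\phi=\dim T_p\Sigma_\phi=n_x+n_y$ (the Lagrangian $\Lambda_\phi$ has dimension $n_x+n_y$ by Proposition \ref{prop:lagr}, and $\iota$ is a diffeomorphism $\Sigma_\phi\to\Lambda_\phi$), we get
\[
\mathrm{rank}(\pi|_{T_p\Lambda_\phi})=(n_x+n_y)-\dim\ker(\pi|_{T_p\Lambda_\phi})=(n_x+n_y)-\bigl(n_\theta-\mathrm{rank}\,\nabla^2_{\theta\theta}\widetilde\phi\bigr),
\]
which rearranges to the claim $n_\theta-\mathrm{rank}\,\nabla^2_\theta\phi=n_x+n_y-\mathrm{rank}(\pi:\Lambda_\phi\to\C^{n_x+n_y})$. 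I expect the only genuinely delicate point to be the bookkeeping in the second step — correctly characterizing $\ker(\pi|_{T_p\Lambda_\phi})$ and invoking the nondegeneracy hypothesis at precisely the right place to get the injectivity of $\delta\theta\mapsto\iota_*(0,\delta\theta,0)$; everything else is rank–nullity. Since the statement and proof are purely pointwise linear algebra, holomorphic versus real-analytic versus $C^\infty$ makes no difference, so one may quote the $C^\infty$ reference (\cite{hormander_analysis_1985}, around Theorem 21.2.16 / Proposition 25.1.5) for the underlying computation if a shortcut is desired.
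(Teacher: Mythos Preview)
Your argument is correct and is precisely the standard pointwise linear-algebra computation: identify $\ker(\pi|_{T_p\Lambda_\phi})$ with $\iota_*\{(0,\delta\theta,0):\delta\theta\in\ker\nabla^2_{\theta\theta}\widetilde\phi\}$ and apply rank--nullity. The paper does not write this out at all --- it simply cites \cite{treves_analytic_2022}, Proposition 18.4.1 --- so your proof is a fleshed-out version of exactly what is being invoked; the only (harmless) redundancy is that once you cite Proposition~\ref{prop:lagr} for $\iota$ being a diffeomorphism, the separate injectivity check via condition~2 is already implied.
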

  \begin{proof}
    The proof is the same as in the real-valued case, see
    \cite{treves_analytic_2022}, Proposition 18.4.1.
  \end{proof}

    This proposition interacts very well with the condition of
    positivity in our definition of phase functions. Indeed, on
    $\Sigma_{\phi}^{\R}$, in the directions where
    $\nabla^2_{\theta}\phi$ is non-degenerate, it has positive
    imaginary part; we will therefore be able to apply the stationary
    phase without difficulty.

\section{Fourier Integral Operators}
\label{sec:four-integr-oper}

\subsection{Basic properties}
\label{sec:first-properties}

The main goal of this subsection is to give a suitable definition of
Fourier Integral Operators in a real-analytic setting; they will be
associated with positive phase functions in the sense of Definition
\ref{def:good_phase} and analytic amplitudes in the sense of
Definition \ref{def:analytic_amp}. Among other properties, we wish
Fourier Integral Operators to preserve real-analytic functions and to
behave well under composition.

The main obstacle to a direct and general definition is the fact that
most of our objects are only defined locally, in small neighbourhoods
of the subset where the imaginary part of the phase vanishes. The
zones where the phase has positive imaginary part should not
contribute to the integrals modulo real-analytic functions, but
without the possibility of introducing cut-offs, it is difficult to
prove a priori that it is the case.

We first describe a particular case of oscillatory integrals, in
coordinates, then we proceed by successive generalisations and
patching arguments.

\begin{defn}\label{def:osc_int}
  Let $U\subset \R^{n_x}$ be an open set, and let
  $\Gamma\subset \R^{n_{\theta}}$ be a conical open set.  Let $\phi$
  be a one-homogeneous function on $U\times \Gamma$, real-analytic
  with respect to $x$, and
  such that $\im(\phi)\geq 0$ everywhere.
  
  Then, given an analytic amplitude $a$ on $U\times \Gamma$, we define the
  following distribution on $U$:
  \[
    K_{\Gamma,\phi}(a):x\mapsto \int_{\Gamma}
    e^{i\phi(x,\theta)}a(x,\theta)\dd \theta.
  \]
\end{defn}

\begin{prop}\label{prop:pos_phase_reg_int}
  Let $U$ and $\Gamma$ be two open sets as above, let $\phi$ be a one-homogeneous function on a neighbourhood of
  $U\times \Gamma$, real-analytic with respect to $x$ and such that $\im(\phi)\geq 0$ everywhere. Let
  $a$ be an analytic amplitude on a neighbourhood of $U\times \Gamma$. Then $K_{\Gamma,\phi}(a)$
  is real-analytic on
  \[
    \{x\in U, \forall \theta\in \Gamma,
    \im(\phi)(x,\theta)>c|\theta|\}\] as well as on
  \[
    \{x_0\in U, \exists C,c>0,\forall \theta\in \Gamma, \forall x\in
    {\rm Neigh}(x_0,\widetilde{U}),
    |\widetilde{a}|(x,\theta)<Ce^{-c|\theta|}\}.\]
\end{prop}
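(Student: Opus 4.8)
The statement has two parts, corresponding to two different mechanisms that make the oscillatory integral $K_{\Gamma,\phi}(a)(x)=\int_\Gamma e^{i\phi(x,\theta)}a(x,\theta)\,\dd\theta$ locally real-analytic. In both cases the strategy is the same: show that $x\mapsto K_{\Gamma,\phi}(a)(x)$ extends holomorphically to a complex neighbourhood of the relevant point $x_0$, with bounds uniform on that neighbourhood, which is exactly the characterisation of real-analyticity. The key tool is that an analytic amplitude is real-analytic in $x$ and extends holomorphically to a \emph{fixed-size} complex neighbourhood $\{|\im x|<\delta\}$ of the real locus (by the remark following Definition \ref{def:analytic_amp}), with the symbol estimates preserved there; and likewise $\phi$ extends holomorphically in $x$, being real-analytic in that variable.

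\medskip

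\noindent\textbf{Second case first (exponential decay of the amplitude).} Suppose $x_0\in U$ admits $C,c>0$ such that $|\widetilde a(x,\theta)|\le Ce^{-c|\theta|}$ for all $\theta\in\Gamma$ and all $x$ in a complex neighbourhood $\mathrm{Neigh}(x_0,\widetilde U)$. Since $\im\phi\ge 0$ on the real locus, after shrinking the complex neighbourhood of $x_0$ we keep $\im\widetilde\phi(x,\theta)\ge -\tfrac{c}{2}|\theta|$ (using one-homogeneity of $\phi$ in $\theta$ and continuity: $\im\widetilde\phi(x,\theta)=|\theta|\,\im\widetilde\phi(x,\theta/|\theta|)$, and $\im\widetilde\phi(x,\omega)$ is close to the nonnegative quantity $\im\phi(\re x,\omega)$ for $x$ near the real locus and $\omega$ on the sphere). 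Then $|e^{i\widetilde\phi(x,\theta)}\widetilde a(x,\theta)|\le Ce^{-c|\theta|/2}$, so the integral converges absolutely and uniformly for $x$ in that complex neighbourhood; differentiating under the integral sign (Morera / dominated convergence) shows it is holomorphic there. This gives real-analyticity at $x_0$.

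\medskip

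\noindent\textbf{First case (positivity of $\im\phi$ on all of $\Gamma$).} Fix $x_0\in U$ with $\im\phi(x_0,\theta)>c|\theta|$ for all $\theta\in\Gamma$; by homogeneity this means $\im\phi(x_0,\omega)>c$ on the sphere part, and by continuity of $\im\widetilde\phi$ we keep $\im\widetilde\phi(x,\theta)\ge \tfrac{c}{2}|\theta|$ for $x$ in a complex neighbourhood of $x_0$. On that neighbourhood $a$ is still an analytic amplitude, so for $x$ real it is bounded by $C|\theta|^{d}$, and this bound persists on the complex neighbourhood. Hence $|e^{i\widetilde\phi(x,\theta)}\widetilde a(x,\theta)|\le C|\theta|^{d}e^{-c|\theta|/2}$, which is integrable over $\Gamma$, uniformly in $x$; as before, the resulting function of $x$ is holomorphic on the complex neighbourhood, hence real-analytic at $x_0$. (If $d$ is large the integrability is unaffected since the exponential dominates; no regularisation of the oscillatory integral is needed here precisely because there is a genuine exponential gain.)

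\medskip

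\noindent\textbf{Main obstacle.} The only delicate point is the passage from ``$\im\phi>c|\theta|$ on the real locus'' to ``$\im\widetilde\phi\ge \tfrac{c}{2}|\theta|$ on a complex neighbourhood, uniformly in $\theta$ along the cone $\Gamma$''. This is where one uses that $\phi$ is one-homogeneous in $\theta$: the estimate is really an estimate on the compact sphere section times $U$, and holomorphic extension in $x$ only (not in $\theta$) together with continuity of $\im\widetilde\phi$ suffices. One must also be slightly careful that $\Gamma$ need not be relatively compact in the angular variable, but since all bounds are homogeneous in $\theta$ and the exponential decay is at rate proportional to $|\theta|$, the dominating function $|\theta|^{d}e^{-c|\theta|/2}$ is integrable over the whole cone regardless. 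Differentiation under the integral sign and the holomorphic-extension criterion for real-analyticity are then completely routine.
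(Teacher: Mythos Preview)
Your proof is correct and follows exactly the same approach as the paper's: in both cases the integrand $e^{i\widetilde\phi(x,\theta)}\widetilde a(x,\theta)$ extends holomorphically in $x$ to a fixed complex neighbourhood of $x_0$ and is dominated there by an integrable function of $\theta$ (either $Ce^{-c|\theta|/2}$ or $C|\theta|^d e^{-c|\theta|/2}$), which makes $K_{\Gamma,\phi}(a)$ holomorphic and hence real-analytic near $x_0$. The paper's proof is a one-line statement of this same idea; your version spells out the uniformity coming from one-homogeneity in $\theta$ and the fixed-radius holomorphic extension in $x$, which is a welcome clarification.
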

\begin{proof}
  Under either of the conditions above, the integrand
  $(x,\theta)\mapsto e^{i\phi(x,\theta)}a(x,\theta)$ extends holomorphically
  into an integrable function of $\theta$ for $x$ close to the real
  locus.
\end{proof}

A particular case of oscillatory integral above consists in
singular integral kernels of operators acting as follows:
\begin{equation}\label{eq:local_FIO}
  x\mapsto \int_{\Gamma} e^{i\phi(x,\theta,y)}a(x,\theta,y)v(y)\dd \theta \dd y,
\end{equation}
that is, the singular integral kernel of a Fourier Integral Operator (with phase
$\phi$ and amplitude $a$). If $\phi$ is a positive
phase function and $a$ is also real-analytic with respect to $y$, then
these integral operators preserve real-analytic functions, as long as
$\im(\phi)>0$ on the boundary of $\Gamma$.

\begin{prop}\label{prop:reg_to_reg}
  Let $U\subset \R^{n_x}$, $\Gamma\subset
  \R^{n_{\theta}+n_y}$ be open sets, with $\Gamma$ conical in its
  first variable and relatively compact in its second variable. Let $\phi$ be
  a positive phase function on a neighbourhood of $U\times \Gamma$ such that
  \[
    \exists c>0, \forall (x,\theta,y)\in U\times \partial \Gamma, \im(\phi)(x,\theta,y)\geq c|\theta|,
  \]
  and let
  $a$ be an analytic amplitude on a neighbourhood of $U\times \Gamma$. Then
  for every $v\in C^{\omega}(\R^{n_y})$, the function
  \[
    x\mapsto \int_{\Gamma} e^{i\phi(x,\theta,y)}a(x,\theta,y)v(y)\dd \theta \dd y
  \]
  is real-analytic on $\overline{U}$.
\end{prop}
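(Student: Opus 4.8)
The plan is to reduce the statement to the basic regularity result of Proposition \ref{prop:pos_phase_reg_int} by performing a stationary-phase-type contour deformation in the $\theta$ variable, after which the $y$-integration becomes harmless because $v$ is real-analytic. First I would fix a point $x_0\in\overline{U}$ and work in a small neighbourhood. For $v$ real-analytic on $\R^{n_y}$, the product $a(x,\theta,y)v(y)$ is still an analytic amplitude in $(x,y;\theta)$ (the multiplication by a real-analytic function of $y$ preserves the estimates of Definition \ref{def:analytic_amp}, since $v$ extends holomorphically to a fixed complex neighbourhood of $\overline{W}$ and the $\theta$-derivative bounds are untouched), so that one is really studying an oscillatory integral $\int_\Gamma e^{i\phi(x,\theta,y)}b(x,\theta,y)\,\dd\theta\,\dd y$ with $b$ an analytic amplitude.

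The key step is to split $\Gamma$, viewed fibrewise over $x$, into the region where $\nabla_\theta\phi$ is small (near $\Sigma_\phi$) and its complement. On the complement, the positivity condition (3) of Definition \ref{def:good_phase}, together with the hypothesis $\im\phi\geq c|\theta|$ on $U\times\partial\Gamma$, guarantees that $\im\phi\geq c'|\theta|$, so that by Proposition \ref{prop:pos_phase_reg_int} this piece contributes a real-analytic function of $x$ (the integrand extends holomorphically to an exponentially small integrable function of $\theta$). On the region near $\Sigma_\phi$, I would apply the contour deformation of Proposition \ref{prop:contour} in the $y$ variable — or rather, observe that the situation is exactly that handled in the proof of Proposition \ref{prop:stat_phase}: after moving the $y$-contour to pass through the (complex) critical points, the imaginary part of the phase is bounded below on the deformed boundary, and what remains is a family of oscillatory integrals in $\theta$ alone, with phase having positive imaginary part away from $\Sigma_\phi$ and amplitude an analytic amplitude. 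The residual $\theta$-integral is then real-analytic in $x$ by Proposition \ref{prop:pos_phase_reg_int} again, because the effective phase satisfies $\im\phi\geq c|\theta|\,|\nabla_\theta\phi|^2$ and we are integrating over all of the cone, where either $|\nabla_\theta\phi|$ is bounded below (exponential decay) or one is near $\Sigma_\phi$, where after the $y$-deformation the relevant estimates from the proof of Proposition \ref{prop:stat_phase} apply verbatim.

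The main obstacle is bookkeeping the contour deformation uniformly in $x$ up to the closed set $\overline{U}$, rather than just on compact subsets of $U$: one needs the neighbourhood of the real locus into which $\phi$ and $a$ extend, and the size of the contour deformation, to be uniform as $x$ approaches $\partial U$. This is exactly why the hypothesis is stated as $\im\phi\geq c|\theta|$ on $U\times\partial\Gamma$ (a closed-set condition) and why $\Gamma$ is assumed relatively compact in $y$; with these in hand, the compactness argument in the proof of Proposition \ref{prop:contour} produces contours with uniform constants, and the conclusion is real-analyticity on the closure $\overline{U}$. I would also remark that the origin $\theta=0$ causes no trouble here, unlike in the composition statement, because the amplitude is integrable near $\theta=0$ (it is locally bounded, $d$ being irrelevant there) and contributes a smooth, indeed real-analytic, function of $x$.
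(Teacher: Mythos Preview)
Your approach has a genuine gap. You invoke Proposition~\ref{prop:contour} and the stationary-phase machinery of Proposition~\ref{prop:stat_phase} for the $y$-deformation near $\Sigma_\phi$, but both of those require the Hessian $\nabla^2_y\phi$ to be nondegenerate, and that is not among the hypotheses here. In fact the situation is the opposite: condition~(1) of Definition~\ref{def:good_phase} says $\nabla_y\phi\neq 0$ on $\Sigma_\phi$, so near $\Sigma_\phi$ there are \emph{no} critical points of $\phi$ in $y$, and ``moving the $y$-contour to pass through the (complex) critical points'' is not a meaningful operation. Your proposed splitting of $\Gamma$ into regions near and far from $\Sigma_\phi$ is also problematic in the analytic category: a cutoff would destroy real-analyticity in $x$, and you give no alternative mechanism for making the split.

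The paper's proof is much shorter and sidesteps both issues. It performs a single global contour deformation in $y$ alone, following the ascending gradient flow of $\im\widetilde\phi$. The governing estimate is $\im\phi+|\nabla_y\phi|^2\geq c|\theta|$ on all of $\overline{U}\times\Gamma$: away from $\Sigma_\phi$ this comes from positivity (condition~(3) of Definition~\ref{def:good_phase}, since $|\nabla_\theta\phi|$ is bounded below there), while near $\Sigma_\phi$ it comes from $\nabla_y\phi\neq 0$. After a short flow time one has $\im\widetilde\phi\geq c|\theta|$ on the whole deformed contour $\Gamma_1$; the Stokes boundary term lives in a neighbourhood of $\partial\Gamma$, where $\im\phi\geq c|\theta|$ by hypothesis. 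Both pieces are then handled directly by Proposition~\ref{prop:pos_phase_reg_int}, with no decomposition and no stationary phase needed.
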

\begin{proof}
  Let $v\in C^{\omega}(\R^{n_y})$ and consider
  \[
    u:x\mapsto \int_{\Gamma}e^{i\phi(x,\theta,y)}a(x,\theta,y)v(y)\dd \theta \dd y.
  \]
  We will change the contour of integration in the variable
  $y$. Since $\phi$ is a positive phase function, one has
  \[
    \exists c>0,\forall (x,\theta,y)\in \overline{U}\times \Gamma,
    \im(\phi)+|\nabla_y\phi|^2\geq c|\theta|.
  \]
  We now deform contours by following the flow of
  $\nabla_y\im(\widetilde{\phi})$. After a small time, we obtain a
  contour $\Gamma_1$ on which $\im(\widetilde{\phi})>c|\theta|$. Now,
  by the Stokes formula,
  \[
    u(x)=\int_{\Gamma_1}e^{i\widetilde{\phi}(x,\theta,y)}\widetilde{a}(x,\theta,y)\widetilde{u}(y)\dd
    \theta \dd y + \int_{\Gamma_2}e^{i\widetilde{\phi}(x,\theta,y)}\widetilde{a}(x,\theta,y)\widetilde{u}(y)\dd
    \theta \dd y,
  \]
  where $V_2$ lies within a small neighbourhood of $\partial \Gamma$ in
  $\R^{n_{\theta}}\times \C^{n_y}$, so that $\im(\widetilde{\phi})>c|\theta|$ on
  $V_2$. Thus, by Proposition \ref{prop:pos_phase_reg_int}, $v$ is real-analytic.
\end{proof}

Over the course of this article, the open sets on which phases and
amplitudes are well-defined will seldom be of a product form $U\times
\Gamma$, unless restricted to small neighbourhoods of a point. We have
to patch together expressions of the form \eqref{eq:local_FIO}, which
is possible thanks to Proposition \ref{prop:patching_analytic_func_or_distros}. Before
doing so we introduce a relevant function space.
\begin{defn}\label{def:spaces}
  If $K$ is a compact set of a real-analytic paracompact manifold, we denote by $\mathcal{E}(K)$ the
  space of smooth functions on $K$ in the sense of Whitney
  \cite{whitney_analytic_1934}, and by $\mathcal{O}(K)$ the space of
  functions which extend to real-analytic functions on a neighbourhood
  of $K$.

  Let $U$ be a relatively compact open set of a real-analytic
  paracompact manifold. We denote
  \[
    \mathcal{F}(U)=\mathcal{E}'(\overline{U})/(\mathcal{E}'(\partial
    U)\cup \mathcal{O}(\overline{U})).
  \]
\end{defn}
We refer to Appendix \ref{sec:funct-spac-analyt} for basic properties
of these spaces.

\begin{defn}\label{def:FIO_fixed_fibre}
  Let $X$ and $Y$ be two paracompact analytic manifolds. Let $U\subset
  X$ and $V\subset Y$ be open and relatively compact. Let
  $\Omega\subset X\times \R^{n_{\theta}}\times V$ be open and conical
  in its second variable.

  Let $\phi$ be a positive phase function on a neighbourhood of
  $\Omega$, such that $\im(\phi)\geq 0$ everywhere and
  \begin{equation}\label{eq:boundary_positivity}
    \exists c>0, \forall x\in U, \forall (\theta,y)\in
    \partial(\{(\theta',y')\in \R^{n_{\theta}}\times V,(x,\theta,y)\in \Omega\}), \im(\phi)(x,\theta,y)\geq c|\theta|.
  \end{equation}
  Let $a$ be an analytic amplitude on a neighbourhood of
  $\Omega$, and let $v\in \mathcal{F}(V)$.
  We define $I_{\phi,\Omega,U}(a)v$ as the element of
  $\mathcal{F}(U)$ defined by the following recipe.
  \begin{enumerate}
  \item 
   For every $x_0\in U$, for some small neighbourhood
  $\Gamma_0\times V_0$ of
  \[\{(\theta,y)\in \R^{n_{\theta}}\times V,(x_0,\theta,y)\in
    \Omega,\im(\phi)(x,\theta,y)=0\},\] consider $\chi\in
  C^{\infty}(X,\R)$ be equal to $1$ on a small neighbourhood $U_0$ of $x_0$ and supported on a
  small neighbourhood of $U_0$. Then
  \[
  x\mapsto
  \int_{\Gamma}e^{i\phi(x,\theta,y)}a(x,\theta,y)\chi(x)v(y)\dd \theta
  \dd y
\]
is the action of a ``usual'' Fourier Integral Operator on $v$, and in
particular it belongs to $\mathcal{E}'(X)$ (in fact it is supported
near $U_0$). This element of $\mathcal{E}'(X)$ can then be restricted
to an element of
$\mathcal{E}'(U_0)/\mathcal{E}'(\partial U_0)$, see Proposition
\ref{prop:presheaf}. By Proposition \ref{prop:reg_to_reg}, if $v\in
C^{\omega}(\overline{V_0})$ then this produces an element of $C^{\omega}(\overline{U_0})$.
  
\item Perform the last item on a finite family of open sets covering
  $U$.
\item By Proposition \ref{prop:pos_phase_reg_int}, any two such
  distributions agree on the intersection of their defining supports
  modulo a real-analytic function, and by Proposition
  \ref{prop:reg_to_reg}. Therefore, by Proposition \ref{prop:patching_analytic_func_or_distros}, we can patch these distributions together into
  a uniquely, well-defined element of
  $\mathcal{F}(U)$, which is $0$ if $v\in C^{\omega}(\overline{V})$. In conclusion, we've built
  $I_{\phi,\Omega,U}(a):\mathcal{E}'(\overline{V})/C^{\omega}(\overline{V})\to\mathcal{F}(U)$.
\item One has then, by Proposition \ref{prop:reg_to_reg},
  \[
    SS_a(I_{\Omega,U,\phi}(a)v)\subset \{x\in U, \exists (\theta,y)\in
    \R^{n_{\theta}}\times SS_a(v), (x,\theta,y)\in \Omega,
    \im(\phi)(x,\theta,y)=0\},
  \]
  and in particular, if $v\in \mathcal{E}'(\partial V)$, then
  $SS_a(I_{\Omega,U,\phi}(a)v)=0$. Therefore, we can conclude:
  \[
    I_{\phi,\Omega,U}(a):\mathcal{F}(V)\to \mathcal{F}(U).
\]
\end{enumerate}
\end{defn}

\begin{rem}\label{rem:EtoO}
  It is natural to ask whether this construction works when $v$ is a
  \emph{singularity hyperfunction}, that is, an element of
  $\mathcal{O}'(\overline{V})/(\mathcal{O}'(\partial
  V)+\mathcal{O}(V))$. The answer is yes, but to do so we need to study the
  analytic wave front set of the kernel $K_{\Gamma,\phi}$; we will do
  so in Section \ref{sec:advanced-properties}, after having defined
  the analytic wave front set as the set of analytic singularities
  after conjugation by a Fourier Integral Operator; for the moment we
  only invoke the fact that the smooth wave front set of
  $K_{\Gamma,\phi}$ is known and therefore analytic Fourier Integral
  Operators act nicely on distributions. After making sure that
  Definition \ref{def:FIO_fixed_fibre} makes sense as acting on
  singularity hyperfunctions, the rest of Section \ref{sec:four-integr-oper} will extend to this
  case without any difficulty.
\end{rem}

As in the smooth case, these operators behave well under composition
and stationary phase 
under natural geometric hypotheses, and particularly so when the
analytic amplitudes in question are realisations of formal analytic amplitudes.

\begin{prop}\label{prop:compo_well_def}Let $X,Y,Z$ be paracompact
  real-analytic manifolds, and let $U\subset X,V\subset Y,W\subset Z$ be relatively compact open
  sets. 
  Let $\Omega_1\subset X\times \R^{n_{\theta}}\times V$
  and $\Omega_2\subset Y\times \R^{n_{\tau}}\times W$ be
  open and conic with respect to their middle variables; let
  $\phi_1$ and $\phi_2$ positive
  non-degenerate phase functions on open neighbourhoods of,
  respectively, $\Omega_1$ and $\Omega_2$. Suppose that $(\phi_1,\Omega_1,U)$ and
  $(\phi_2,\Omega_2,V)$ both satisfy
  \eqref{eq:boundary_positivity}. Let
  $a_1:\Omega_1\to \C$ and $a_2:\Omega_2\to \C$ be analytic
  amplitudes.

  Let $\Phi:(x,\theta,y,\tau,z)\mapsto
  \phi_1(x,\theta,y)+\phi_2(y,\tau,z)$ and
  $A:(x,\theta,y,\tau,z)\mapsto a_1(x,\theta,y)a_2(y,\tau,z)$.

  There exists a contour deformation $\Gamma\in \widetilde{Y}$, with
  real-analytic dependence on $x,\theta,\tau,z$, and a
  neighbourhood $\Omega$ of
  \[
    \{(x,\theta,\tau,z)\in
    X\times \R^{n_{\theta}+n_{\tau}}\times Z,y\in
    \Gamma(x,\theta,\tau,z),\nabla_{y,\theta,\tau}\Phi=0\},
  \]
  contained in $\{\epsilon|\theta|<|\tau|<\epsilon^{-1}|\theta|\}$ for
  some $\epsilon>0$,
  such that $(\phi_1+\phi_2,\Omega,U_1)$ satisfies
  \eqref{eq:boundary_positivity}, %
  and
  \[
    I_{\phi_1,\Omega_1,U_1}(a_1)\circ
    I_{\phi_2,\Omega_2,U_2}(a_2)=I_{\Phi,\Omega,U_1}(A)
  \]
  (as maps from $\mathcal{F}(W)$ to $\mathcal{F}(U)$).
   Moreover, if $a_1$ and $a_2$ respectively realise formal analytic
  amplitudes, then there exists a maximally analytic amplitude $A'$ on
  $\Omega$ such that
  \[
    I_{\Phi,\Omega,U_1}(A-A')=0.
  \]

\end{prop}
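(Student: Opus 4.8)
The plan is to reduce the composition of the two Fourier Integral Operators to a single oscillatory integral and then use the stationary-phase machinery of Section \ref{sec:stationary-phase}. First I would observe that the composition is local: using the partition-of-unity recipe in Definition \ref{def:FIO_fixed_fibre}, it suffices to work in coordinate charts where both $\phi_1$, $\phi_2$, $a_1$, $a_2$ are defined on product-type sets, and to patch the results using Proposition \ref{prop:patching_analytic_func_or_distros}. In such a chart the composed kernel is, formally, $\int e^{i\Phi(x,\theta,y,\tau,z)}A(x,\theta,y,\tau,z)\,\dd\theta\,\dd y\,\dd\tau$ acting on $v(z)$; the integration in $\theta,\tau$ stays on real cones while the $y$-integration runs over $V$. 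By Proposition \ref{prop:product_preparation}, $A$ is an analytic amplitude on the region $\{\epsilon|\theta|<|\tau|<\epsilon^{-1}|\theta|\}$ (and a realisation of the Cauchy product when $a_1,a_2$ realise formal amplitudes), so the integrand is under control once we have cut down to that region.

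Next I would justify restricting to $\{\epsilon|\theta|<|\tau|<\epsilon^{-1}|\theta|\}$. Outside this region, $|\nabla_y\Phi|=|\nabla_y\phi_1 + \nabla_y\phi_2|$ is bounded below by a positive multiple of $(|\theta|^2+|\tau|^2)^{1/2}$ — this is where the transverse-composition hypothesis on the Lagrangians enters, exactly as in the paragraph following Proposition \ref{prop:compo_phase}. Combined with $\im(\Phi)\geq 0$, a contour deformation in $y$ following the gradient flow of $\im(\widetilde\Phi)$ pushes $\im(\widetilde\Phi)$ above $c(|\theta|^2+|\tau|^2)^{1/2}$ there, so by Proposition \ref{prop:pos_phase_reg_int} that part of the kernel contributes something real-analytic, i.e.\ zero in $\mathcal{F}(U)$. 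On the remaining region, Proposition \ref{prop:compo_phase} tells us that (after a further small contour homotopy in $y$) $\Phi$ is a positive non-degenerate phase function with Lagrangian $\Lambda_1\circ\Lambda_2$, and that its real locus is the expected composed real relation; this gives the contour $\Gamma\subset\widetilde Y$ and the neighbourhood $\Omega$ claimed in the statement, together with the boundary positivity \eqref{eq:boundary_positivity} inherited from that of $\phi_1$ and $\phi_2$.

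Then I would carry out the stationary phase in the $y$ variable. The phase $\Phi$ is one-homogeneous jointly in $(\theta,\tau)$ on the comparable-size region, $\nabla^2_y\Phi$ is non-singular on $\Sigma_\Phi$ (again by transverse composition, which is precisely the condition making $\nabla_y\nabla_y\Phi$ invertible along the critical set), and $\im(\Phi)>0$ on the boundary of the $y$-domain after the contour deformation. Thus Proposition \ref{prop:stat_phase} applies with $\theta$ there played by $(\theta,\tau)$: it produces a finite collection of open cones, critical points $y^*_j$, and analytic amplitudes, and the output differs from $\int e^{i\Phi}A$ by $O(e^{-\epsilon(|\theta|^2+|\tau|^2)^{1/2}})$, which is negligible in $\mathcal{F}$. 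At the critical points $\widetilde\Phi(x,\theta,y^*_j,\tau,z)$ is again a positive non-degenerate phase in $(\theta,\tau)$ with Lagrangian $\Lambda_1\circ\Lambda_2$, and the resulting amplitude $A'$ is a (maximally) analytic amplitude on $\Omega$; moreover, when $a_1,a_2$ realise formal amplitudes, the ``formal stationary phase preserves realisations'' clause of Proposition \ref{prop:stat_phase} together with Proposition \ref{prop:product_preparation} shows $A'$ realises the formal amplitude obtained by formal stationary phase from the Cauchy product. Identifying both $I_{\phi_1,\Omega_1,U_1}(a_1)\circ I_{\phi_2,\Omega_2,U_2}(a_2)$ and $I_{\Phi,\Omega,U_1}(A)$ with this single reduced oscillatory integral modulo $\mathcal{F}$-negligible terms gives the stated equality, and the last display $I_{\Phi,\Omega,U_1}(A-A')=0$ follows because $A-A'$ is $O(e^{-c|\theta|})$ on a complex neighbourhood, hence falls under the second clause of Proposition \ref{prop:pos_phase_reg_int}.

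The main obstacle I anticipate is the bookkeeping of \emph{which} contour deformation in $y$ to use and in what order: one must first deform to kill the large-$|\nabla_y\Phi|$ region (where $|\theta|,|\tau|$ are not comparable), then perform a compatible deformation turning $\Phi$ into a genuine positive non-degenerate phase on the comparable region, then the Morse-type deformation implicit in Proposition \ref{prop:stat_phase}, all with real-analytic dependence on the parameters $(x,\theta,\tau,z)$ and without ever introducing cut-offs in the conic variables. Checking that these deformations are mutually compatible (isotopic through good contours, in the sense of Definition \ref{def:good_contour}) and that the positivity constants survive each step is where the real work lies; the symbol estimates themselves are routine given Propositions \ref{prop:Cauchy_product}, \ref{prop:product_preparation}, \ref{prop:real_formal_symb}, and \ref{prop:stat_phase}.
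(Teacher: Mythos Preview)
Your proposal overshoots what the proposition actually asserts, and in doing so introduces a hypothesis that is not present and misidentifies $A'$.

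First, the statement does \emph{not} assume transverse composition of the Lagrangians. You invoke it twice (to bound $|\nabla_y\Phi|$ in the off-region and to get $\nabla_y^2\Phi$ non-singular for stationary phase), but the paper's proof does not use it. The off-region $\{\theta=0\}$ (resp.\ $\{\tau=0\}$) is handled differently: there $\Phi=\phi_2$, and positivity of $\phi_2$ alone forces $\im(\Phi)$ to be bounded below, since near $\Sigma_{\phi_2}$ one has $\nabla_y\phi_2\neq 0$ by condition 1 of Definition~\ref{def:good_phase}. No information about how $\Lambda_{\phi_1}$ meets $\Lambda_{\phi_2}$ is needed.

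Second, and more importantly, the conclusion $I_{\phi_1}(a_1)\circ I_{\phi_2}(a_2)=I_{\Phi,\Omega,U_1}(A)$ is \emph{not} a reduced form: $\Phi$ is the sum phase with the full set of phase variables $(\theta,y,\tau)$, and $\Omega$ lives in $X\times\R^{n_\theta}\times\widetilde Y\times\R^{n_\tau}\times Z$ (with $y$ on the deformed contour $\Gamma$). No stationary phase in $y$ is performed here; that step is isolated in the \emph{next} proposition (\ref{prop:stat_phase_in_FIO}), and it is precisely there that one needs the Hessian hypothesis. The paper's proof consists only of (i) the gradient-flow contour deformation in $y$ yielding $\im(\Phi)\ge c|\theta||\nabla_\theta\Phi|^2+c|\tau||\nabla_\tau\Phi|^2+c(|\theta|^2+|\tau|^2)^{-1/2}|\nabla_y\Phi|^2$, with the Stokes boundary term analytic by the boundary positivity of $\phi_1$; and (ii) discarding the complement of a conical neighbourhood of $\{\im(\Phi)=0\}$ via Proposition~\ref{prop:pos_phase_reg_int}, together with the argument above showing this neighbourhood sits inside $\{\epsilon|\theta|<|\tau|<\epsilon^{-1}|\theta|\}$.

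Consequently, $A'$ is not an output of stationary phase. It is simply a \emph{maximally analytic} realisation (in the sense of Proposition~\ref{prop:real_formal_symb}) of the same formal Cauchy product that $A$ realises (Proposition~\ref{prop:product_preparation}); then $|A-A'|\le Ce^{-c(|\theta|+|\tau|)}$ on a complex neighbourhood, and Proposition~\ref{prop:pos_phase_reg_int} gives $I_{\Phi}(A-A')=0$. Your version of $A'$ would depend on fewer variables than $A$, so $A-A'$ could not be uniformly exponentially small and the final clause would not follow.
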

\begin{proof}
  The distributional kernel of $I_{\phi_1,\Omega_1,U_1}(a_1)\circ
  I_{\phi_2,\Omega_2,U_2}(a_2)$ is
  \[
    (x,z)\mapsto \int
    e^{i\Phi(x,\theta,y,\tau,z)}A(x,\theta,y,\tau,z)\dd y \dd \theta
    \dd \tau.
  \]
  We begin with a contour deformation on $y$, which follows the
  positive gradient flow of $(|\theta|^2+|\tau|^2)^{-\frac
    12}\nabla_y\im(\Phi)$. After a small time, along this contour
  $\Gamma_1$ one
  has
  \[
    \im(\Phi)\geq c|\theta||\nabla_{\theta}\Phi|^2+c|\tau||\nabla_{\tau}\Phi|^2+c(|\theta|^2+|\tau|^2)^{-\frac
      12}|\nabla_y\Phi|^2.
  \]
  Compared to the integral on the original contour, the difference is,
  by Stokes' theorem, an integral of the form
  \[
    \int_{V}  e^{i\widetilde{\Phi}(x,\theta,y,\tau,z)}\widetilde{A}(x,\theta,y,\tau,z)\dd y \dd \theta
    \dd \tau
  \]
  where on $V= \bigcup_{t\in [0,1]}\partial
  \Gamma_t(x,\theta,\tau,z)$, the phase $\widetilde{\Phi}$ has
  positive imaginary part, due to the fact that
  $(\phi_1,\Omega_1,U_1)$ satisfies
  \eqref{eq:boundary_positivity}. Hence, by Definition
  \ref{def:FIO_fixed_fibre}, the difference maps $\mathcal{E}'(\overline{W})$
  into $C^{\omega}(\overline{U})$ (note that, in this application of Definition \ref{def:FIO_fixed_fibre}, $\Phi$ is not a positive
  phase function, because it is singular at $\theta=0$ and $\tau=0$;
  for the same reasons $A$ is not an analytic amplitude).

  We now use Proposition \ref{prop:pos_phase_reg_int} again to remove
  the complement set of a conical neighbourhood $\Omega$ of
  $\{\im(\Phi)=0\}$.

  Let us prove that, on $\Omega$, $\theta$ and $\tau$ are bounded away
  from $0$. Indeed, if $\theta=0$, then $\Phi=\phi_2$, and
  $|\tau||\nabla_\tau\phi_2|^2$ is bounded away from $0$ except in the
  vicinity of $\Sigma_2$, where $|\tau|^{-1}|\nabla_y\phi|^2$ is
  bounded away from $0$. Therefore $\im(\Phi)$ is bounded away from
  $0$ on $\{\theta=0\}$, and on $\{\tau=0\}$ as well, by a symmetrical
  argument. This concludes the proof.

  To conclude, if $a_1$ and $a_2$ realise formal analytic amplitudes,
  then so does $A$, by Proposition \ref{prop:product_preparation};
  letting $A'$ be a maximally analytic realisation of the same
  analytic amplitude, one has $|A'-A|\leq Ce^{-c(|\theta|+|\tau|)}$ so
  that, by Proposition \ref{prop:pos_phase_reg_int}, the difference
  between the associated Fourier Integral Operators maps
  $\mathcal{E}'(\overline{W})$ into
  real-analytic functions.
\end{proof}

\begin{prop}\label{prop:stat_phase_in_FIO}
  Let $X$ and $Y$ be paracompact real-analytic manifolds and let $U,V$
  be relatively compact sets of $X$ and $Y$. Let $\Omega\subset X\times \R^{n_{\theta'}}\times
  \R^{n_{\omega}}\times V$ be open and a relatively compact cone with respect
  to the second variable; let
  $\phi$ be a positive phase function on a neighbourhood of $\Omega$ such that
  $(\phi,\Omega,U)$ satisfies condition
  \eqref{eq:boundary_positivity}. Suppose further that
  $\nabla_\omega^2\phi$ is nonsingular everywhere.

  Let $a:\Omega\to \C$ be such that $((x,y,\omega),\xi)\mapsto
  a(x,\xi,\omega,y)$ is an analytic amplitude.

  Then there exists $J\in \N$ and, for every $1\leq j\leq J$, a conic
  open set $\Omega_j\subset X\times
  \R^{n_{\theta'}}\times V$, a non-degenerate phase
  function $\phi_j$ on a neighbourhood of $\Omega_j$ such that
  $(\phi_j,\Omega_j,U)$ satisifes condition
  \eqref{eq:boundary_positivity}, and an analytic amplitude
  $b_j:\Omega_j\to \C$, such that
  \[
    I_{\phi,\Omega,U}(a)-\sum_{j=1}^JI_{\phi_j,\Omega_j,U}(b_j)
  \]
  maps $\mathcal{E}'(\overline{V})$ into $C^{\omega}(\overline{U})$. Moreover, the union of the
  Lagrangians of $\phi_j$ is the Lagrangian of $\phi$, and if $a$
  realises some formal analytic amplitude $(a_k)_{k\in \N}$, then each
  $b_j$ realises a formal analytic amplitude $(b_{j;k})_{k\in \N}$,
  which can be obtained from $(a_k)_{k\in \N}$ by the formal
  stationary phase.
\end{prop}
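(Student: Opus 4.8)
The plan is to reduce, via the local patching recipe of Definition \ref{def:FIO_fixed_fibre}, to a fixed--fibre oscillatory integral in coordinates, integrate out the variable $\omega$ using the stationary phase theorem of Proposition \ref{prop:stat_phase}, and reassemble the local outputs. Concretely, I fix $x_0\in U$ and, as in step (1) of Definition \ref{def:FIO_fixed_fibre}, pick a cutoff $\chi$ equal to $1$ near $x_0$; then $I_{\phi,\Omega,U}(a)$ coincides, modulo an operator which sends $\mathcal{E}'(\overline V)$ into $C^\omega$ near $x_0$, with the genuine Fourier Integral Operator of kernel $\int e^{i\phi(x,\theta',\omega,y)}a(x,\theta',\omega,y)\chi(x)\,\dd\theta'\,\dd\omega$, the $\theta'$--integration over a small conic neighbourhood of the projection of $\{\im\phi=0\}$ and the $\omega$--integration over a relatively compact set on whose boundary $\im\phi\geq c|\theta'|$ by \eqref{eq:boundary_positivity}.

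This is exactly the setup of Proposition \ref{prop:stat_phase} applied to the inner integral over $\omega$: the variable $(x,y)$ plays the role of its base variable, $\theta'$ the role of $\theta$, and $\omega$ the role of $y$; the hypotheses ($\im\phi\geq 0$; $\nabla^2_\omega\phi$ nonsingular; $\im\phi>0$ on the boundary of the $\omega$--domain; $a$ an analytic amplitude in the base variables $(x,y,\omega)$ and the fibre variable $\theta'$, as assumed) are those in force here. It produces finitely many conic open sets $V_j$ (the projections of the connected components of the critical set $\{\nabla_\omega\widetilde\phi=0\}$), zero--homogeneous real--analytic critical points $\omega^*_j$, analytic amplitudes $b_j$ on $V_j$, and some $\epsilon>0$, with
\[
  \int e^{i\phi(x,\theta',\omega,y)}a(x,\theta',\omega,y)\,\dd\omega\ -\ \sum_j\1_{(x,\theta',y)\in V_j}\,e^{i\widetilde\phi(x,\theta',\omega^*_j(x,\theta',y),y)}\,b_j(x,\theta',y)\ =\ O(e^{-\epsilon|\theta'|}),
\]
the branches $\omega^*_j$ lying in the interior of the $\omega$--domain. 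Integrating the $O(e^{-\epsilon|\theta'|})$ remainder against $\dd\theta'$ and against $\chi(x)v(y)\,\dd y$ yields a function holomorphic in $x$, hence an element of $C^\omega$ near $x_0$; what survives is $\sum_j$ of the local pieces of $I_{\phi_j,\Omega_j,U}(b_j)$, where $\phi_j(x,\theta',y):=\widetilde\phi(x,\theta',\omega^*_j(x,\theta',y),y)$, still one--homogeneous in $\theta'$, and $\Omega_j\Subset V_j$.

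It remains to see that each $(\phi_j,\Omega_j,U)$ is an admissible datum and to identify the Lagrangians. That $\im\phi_j\geq 0$ on $V_j$ and $\im\phi_j>\epsilon|\theta'|$ on $\partial V_j$ --- which includes the contributions of $\partial V$ --- is part of the conclusion of Proposition \ref{prop:stat_phase}, so \eqref{eq:boundary_positivity} holds for $\phi_j$; non--degeneracy of $\phi_j$ and the computation of $\Lambda_{\phi_j}$ are the standard chain--rule argument underlying Proposition \ref{prop:excess}: on $\{\nabla_\omega\widetilde\phi=0\}$ one has $\nabla_{\theta'}\phi_j=(\nabla_{\theta'}\widetilde\phi)|_{\omega=\omega^*_j}$ and $\nabla_{x,y}\phi_j=(\nabla_{x,y}\widetilde\phi)|_{\omega=\omega^*_j}$, so $\Sigma_{\phi_j}\cong\Sigma_\phi\cap\{\omega=\omega^*_j\}$ and $\Lambda_{\phi_j}$ is precisely the part of $\Lambda_\phi$ lying over $V_j$; taking the union over $j$ recovers $\Lambda_\phi$. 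Finally, when $a$ realises a formal analytic amplitude $(a_k)_{k\in\N}$, the last sentence of Proposition \ref{prop:stat_phase} gives that each $b_j$ realises the formal amplitude $(b_{j;k})_{k\in\N}$ obtained by the formal stationary phase in $\omega$.

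Since the critical set $\{\nabla_\omega\widetilde\phi=0\}$ --- hence the whole collection $(V_j,\omega^*_j,\phi_j,b_j)$ --- is intrinsically defined, independent of $x_0$ and of $\chi$, two local constructions agree modulo $C^\omega$ over the intersection of their supports by Proposition \ref{prop:pos_phase_reg_int}; covering $U$ by finitely many such neighbourhoods and gluing by Proposition \ref{prop:patching_analytic_func_or_distros}, exactly as in Definition \ref{def:FIO_fixed_fibre}, shows that $I_{\phi,\Omega,U}(a)-\sum_{j=1}^{J}I_{\phi_j,\Omega_j,U}(b_j)$ maps $\mathcal{E}'(\overline V)$ into $C^\omega(\overline U)$. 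The main obstacle is not any single estimate --- these are supplied by Propositions \ref{prop:stat_phase} and \ref{prop:pos_phase_reg_int} --- but the bookkeeping needed to make the locally extracted stationary--phase data coherent across charts, to keep track of which component $V_j$ of the critical set is active as the conic variable moves, and to confirm that the $\phi_j$ so obtained are bona fide non--degenerate phase functions whose Lagrangians together exhaust $\Lambda_\phi$.
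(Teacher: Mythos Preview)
Your proposal is correct and follows exactly the approach of the paper, whose proof consists of the single sentence ``Choosing local expressions for $I_{\phi,\Omega,U}(a)$ as in Definition \ref{def:FIO_fixed_fibre}, this is a restatement of Proposition \ref{prop:stat_phase}.'' You have simply unpacked that sentence in full: the local reduction via Definition \ref{def:FIO_fixed_fibre}, the application of Proposition \ref{prop:stat_phase} in the $\omega$ variable, the verification that the resulting $\phi_j$ are admissible phases with the right Lagrangians, and the gluing across charts are all implicit in the paper's one-line proof.
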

\begin{proof}
  Choosing local expressions for $I_{\phi,\Omega,U}(a)$ as in
  Definition \ref{def:FIO_fixed_fibre}, this is a restatement of
  Proposition \ref{prop:stat_phase}.
\end{proof}

Many
natural Fourier Integral Operators make sense of the fibre variable
$\theta$ as belonging to the fibre of a bundle over the base $X\times
Y$ rather than a fixed space $\R^{n_{\theta}}$. Similarly, in
practical applications of Proposition \ref{prop:stat_phase_in_FIO},
the decomposition of the phase variables into the directions where
$\nabla^2\phi$ is non-degenerate and a complement set is seldom
independent on $x,y$.

Both issues are formally taken care of by a one-homogeneous real-analytic change of
variables $(x,\theta,y)\rightsquigarrow
(x,\xi(x,\theta,y),y)$; however, analytic amplitudes do not have
real-analytic dependence on $\theta$, and therefore, these change of
variables destroy real-analyticity with respect to $x$ and
$y$. We can remediate to this problem in the case of realisations of formal analytic amplitudes.

\begin{prop}\label{prop:change_phase_variables}Let $X$ and $Y$ be
  paracompact analytic manifolds, let $U\subset X$ and $V\subset Y$ be
  relatively compact. Let $\Omega\subset
  X\times \R^{n_{\theta}}_{\theta}\times V$ suppose that $\Omega$ is conical
  in its second variable.

  Let $\xi_t:\Omega\to X\times \C^{n_{\theta}}\times V$ be a family of contours
  (they extend into complex-analytic diffeomorphisms, are
  one-homogeneous in the second variable, act as identity on the first and third
  variables, depending smoothly on the parameter $t$) with
  $\xi_0=id$. Let $\phi$ be a phase function on a neighbourhood of $\Omega$ and suppose that $(\phi_t\circ
  \xi_t,\Omega,U)$ satisfies \eqref{eq:boundary_positivity} for all
  $t\in [0,1]$. Let $a$ be the realisation of a formal analytic
  amplitude $(a_k)_{k\in \N}$ on a neighbourhood of $\Omega$.

  Then, any realisation $b$ of $(a_k\circ \xi_1)_{k\in \N}$ is such
  that (as maps from $\mathcal{F}(V)$ to $\mathcal{F}(U)$)
  \[
    I_{\phi\circ \xi_1,\Omega,U}(b\det(D_{\theta}\xi_1))=I_{\phi,\Omega',U}(a).
  \]
\end{prop}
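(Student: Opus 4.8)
The plan is to derive the identity by the change of phase variables $\zeta\rightsquigarrow\xi_1(x,\theta,y)$ in the oscillatory integral defining $I_{\phi,\Omega',U}(a)$, the only real work being to cope with the fact — flagged just before the statement — that the pulled-back amplitude $a\circ\xi_1$, though $C^\infty$, is \emph{not} real-analytic in $x$ (the $\theta$-dependence of $\xi_1$ falls in the non-analytic slot of $a$), hence is not an admissible amplitude, so that one cannot simply apply Proposition \ref{prop:pos_phase_reg_int} to ``$a\circ\xi_1$ versus $b$''. Throughout we argue modulo operators sending $\mathcal E'(\overline W)$, resp.\ $\mathcal F(W)$, into $C^\omega(\overline U)$; by Proposition \ref{prop:pos_phase_reg_int} this is the same as allowing a modification of an amplitude by a term whose holomorphic extension in the base variable is $O(e^{-c|\theta|})$. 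In particular, by Proposition \ref{prop:real_formal_symb} both members of the claimed equality are, modulo such operators, independent of the chosen realisations, so I would fix $a=\sum_\ell a_\ell\,\psi_\ell$ and $b=\sum_\ell (a_\ell\circ\xi_1)\,\psi_\ell$ to be the lowest-term summations of Proposition \ref{prop:real_formal_symb}, with $\psi_\ell$ an Ehrenpreis cutoff of the fibre modulus (Proposition \ref{prop:Ehrenpreis}); note that $b\,\det(D_\theta\xi_1)$ then realises $\big((a_k\circ\xi_1)\det(D_\theta\xi_1)\big)_k$ by Proposition \ref{prop:Cauchy_product}, $\det(D_\theta\xi_1)$ being real-analytic and $0$-homogeneous. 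By Definition \ref{def:FIO_fixed_fibre} it is enough to compare the two operators locally, on product-form domains, where both are honest oscillatory integrals.

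When $\xi_1$ is real-valued the substitution is a plain change of variables in the fibre integral, requiring no holomorphic extension:
\[
\int e^{i\phi(x,\zeta,y)}f(x,\zeta,y)\,\dd\zeta = \int e^{i(\phi\circ\xi_1)(x,\theta,y)}\,f(x,\xi_1(x,\theta,y),y)\,\det(D_\theta\xi_1)(x,\theta,y)\,\dd\theta .
\]
In general one first deforms the fibre contour along the homotopy $(\xi_t)_{t\in[0,1]}$, which is licit by Stokes' theorem since $(\phi\circ\xi_t,\Omega,U)$ satisfies \eqref{eq:boundary_positivity} for every $t$, so that $\im\phi\ge c|\theta|$ over the region swept and the sweep contributes only a $C^\omega$-smoothing term; as $a$ is not real-analytic in $\theta$, this deformation is performed on each truncation $\sum_{\ell<N}a_\ell$ (which does extend holomorphically in $\theta$) and re-summed exactly as in the proof of Proposition \ref{prop:real_formal_symb}. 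Either way, one is reduced to proving that the oscillatory integral with amplitude
\[
g := (a\circ\xi_1 - b)\,\det(D_\theta\xi_1) = \sum_\ell (a_\ell\circ\xi_1)\,\big[\psi_\ell(|\xi_1(x,\theta,y)|) - \psi_\ell(|\theta|)\big]\,\det(D_\theta\xi_1)
\]
sends $\mathcal E'$ into $C^\omega$.

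This is the heart of the matter. Because $|\xi_1(x,\theta,y)|$ and $|\theta|$ are comparable, the $\ell$-th cutoff-difference above is supported where $\ell\asymp|\theta|$ and is bounded by $1$; since each coefficient $a_\ell\circ\xi_1$ is a \emph{genuine} real-analytic, $(d-\ell)$-homogeneous function with $\|a_\ell\circ\xi_1\|\lesssim R^\ell\,\ell!\,|\theta|^{d-\ell}$ — a quantity which at $\ell\asymp|\theta|$ is $O(e^{-c^*|\theta|})$ once the cutoff scale is chosen small enough to keep $\ell$ below the turning point $|\theta|/R$ — one gets $|g|\le C e^{-c^*|\theta|}$. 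Differentiating in $x$ does not spoil this: on the transition zone $\psi_\ell'\,|\theta|\asymp 1$, so the Ehrenpreis bounds $|\psi_\ell^{(j)}|\le(\rho c)^j$ for $j\le\ell$ survive composition with the $1$-homogeneous real-analytic $|\xi_1|$, whence $|\partial_x^m(\psi_\ell\circ|\xi_1|)|\le C^m m!$ for $m\le\ell$, while $|\partial_x^m(a_\ell\circ\xi_1)|\lesssim(\rho')^m m!\,R^\ell\ell!\,|\theta|^{d-\ell}$ by Cauchy estimates; summing over $\ell\asymp|\theta|$ gives $|\partial_x^m g|\le C(\rho'')^m m!\,e^{-c^*|\theta|}$ for $m\le\epsilon|\theta|$. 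One then estimates $\partial_x^n$ of $\int e^{i(\phi\circ\xi_1)}g\,v$ by splitting the fibre integral at $|\theta|\sim n/\epsilon$ — on the high-frequency part the exponential decay of $g$ absorbs the $(1+|\theta|)^j$-growth of $\partial_x^j e^{i(\phi\circ\xi_1)}$, and on the low-frequency part only finitely many controlled $\ell$ occur — reaching $\|\partial_x^n(I_{\phi\circ\xi_1,\Omega,U}(g)v)\|\le C(R_1 n)^n$, i.e.\ real-analyticity. Passing back from this local statement to $\mathcal F$ via Definition \ref{def:FIO_fixed_fibre} and Proposition \ref{prop:reg_to_reg} concludes.

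The main obstacle is precisely the non-real-analyticity in $\theta$ of analytic amplitudes, which makes $a\circ\xi_1$ inadmissible and blocks a one-line proof ``substitute, then quote Proposition \ref{prop:pos_phase_reg_int}''. One is forced to keep the explicit lowest-term realisations on both sides and to show, by a Leibniz/chain-rule bookkeeping on the telescoping rescaled Ehrenpreis cutoffs, that $a\circ\xi_1 - b$ is not merely exponentially small on the real locus but is controlled together with its $x$-derivatives up to order $\sim|\theta|$ — exactly what is needed for the associated Fourier Integral Operator to be analytic-regularising; the complex-contour case adds only the (routine, given Proposition \ref{prop:real_formal_symb}) task of re-summing the Stokes remainders.
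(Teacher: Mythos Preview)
Your endpoint comparison has a real gap at the derivative estimate for the cutoff. The claimed bound $|\partial_x^m(\psi_\ell\circ|\xi_1|)|\le C^m m!$ is not correct for the Ehrenpreis cutoffs of Proposition~\ref{prop:Ehrenpreis}. Writing $\psi_\ell\circ|\xi_1|=1-\chi_{\ell+1}(H)$ with $H(x)=c|\xi_1(x,\theta,y)|/(\ell+1)$, on the transition zone one does have $|\partial_x^k H|\le C_0^k k!$, but the outer function only satisfies $|\chi_{\ell+1}^{(j)}|\le(\rho_0(\ell+1))^j$, not $C^j j!$; the leading Fa\`a di Bruno term $\chi_{\ell+1}^{(m)}(H)\,(H')^m$ already has size $(\rho_0 C_0(\ell+1))^m$. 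A uniform bound is at best $|\partial_x^m(\psi_\ell\circ|\xi_1|)|\le (2C_0)^m m!\,e^{\rho_0(\ell+1)}$. On the transition zone $\ell\asymp c|\theta|$ the extra $e^{\rho_0 c|\theta|}$ must be absorbed by the decay of $a_\ell$, whose rate is only $\asymp c\,|\log(Rc)|\to 0$ as $c\to 0$; this residual decay is then too weak to beat the fixed exponential growth of $|e^{i\widetilde{\phi\circ\xi_1}}|$ off the real axis in $x$, so the Cauchy-type estimate on $\partial_x^n I_{\phi\circ\xi_1}(g)v$ only yields a Gevrey bound. The splitting at $|\theta|\sim n/\epsilon$ does not help either: on the low-frequency piece one would need $\partial_x^n$ of $\chi_{\ell+1}$ with $\ell\asymp c|\theta|<n$, where no Ehrenpreis bound is available. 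The deeper obstruction is that $g$ does not extend holomorphically in $x$ at all (since $|\xi_1(x,\theta,y)|$ is not holomorphic in $x$), so Proposition~\ref{prop:pos_phase_reg_int}---the only tool here for turning $O(e^{-c|\theta|})$ amplitudes into $C^\omega$ remainders---is simply unavailable for $g$.

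The paper avoids this by never forming $a\circ\xi_1$. It keeps the cutoff in the \emph{original} fibre variable throughout: one sets $b(t)=\sum_k(a_k\circ\xi_t)\bigl(1-\chi_k(c\theta/k)\bigr)$ and differentiates $J(t)=\int e^{i\phi\circ\xi_t}b(t)\det(D_\theta\xi_t)\,v$ in $t$. For each holomorphic factor $e^{i\phi\circ\xi_t}(a_k\circ\xi_t)\det(D_\theta\xi_t)$ the identity $\partial_t(\cdot)={\rm div}_\theta\bigl((\cdot)\,\partial_t\xi_t\bigr)$ turns the $t$-derivative into a $\theta$-boundary term (harmless by the boundary-positivity hypothesis); the only interior contribution to $\partial_t J$ is therefore $\sum_k(a_k\circ\xi_t)\,\partial_t\xi_t\cdot\nabla_\theta\chi_k$, which involves only the $\theta$-gradient of the cutoff and hence \emph{extends holomorphically in $x$} with a uniform $O(e^{-\epsilon|\theta|})$ bound, so Proposition~\ref{prop:pos_phase_reg_int} applies verbatim and one integrates in $t$. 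The whole point is to keep the non-analytic cutoff independent of $x$; your endpoint comparison drags $|\xi_1(x,\theta,y)|$ into the cutoff and loses exactly this.
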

\begin{proof}
  Let $\xi_t$ be a homotopy of complex-analytic diffeomorphisms that are
  one-homogeneous and act as identity on the first and third
  variables, with $\xi_0=id$. Then $(a_k\circ \xi_t)_{k\in \N}$ is, uniformly for $t\in
  [0,1]$, a formal analytic amplitude; as in the proof of Proposition
  \ref{prop:real_formal_symb}, one can realise it into
  \[
    b(t,x,y,\theta)=\sum_{k\in \N}a_k\circ
    \xi_t(x,\theta,y)(1-\chi_k(\tfrac{c\theta}{k}))
  \]
  for some $c>0$ small, where $\chi_k$ is a radial function, supported
  on $B(0,2)$ and equal to $1$ on $B(0,1)$, and such that
  $|\nabla^j\chi_k|\leq (Rk)^j$ whenever $j\leq k$.

  Let now $x_0\in U$. Following Definition
  \ref{def:FIO_fixed_fibre}, let $U_0$ be a small neighbourhood of
  $x_0$ in $U$, let $\Gamma_0,V_0$ an open neighbourhood of
  $\{\im(\phi)(x_0,y,\theta)=0\}$, and let $v\in \mathcal{E}'(V_0)$,
  then for $t$ small, we are considering
  \[
    J(t):x\mapsto \int_{\Gamma_0\times V_0 }
    e^{i\phi(x,\xi_t(x,\theta,y),y)}b(t,x,y,\theta)\det(D_{\theta}\xi_t)v(y)\dd
    \theta\dd y,
  \]
  For some $t_0>0$,
  one has still
  $\im(\phi)\circ \xi_t>0$ on $U_0\times \partial \Gamma_0\times V_0$ for all $t\in [0,t_0]$.

  For every holomorphic function $f$,
  \[
    \frac{\dd}{\dd
      t}(f(x,\xi_t(x,\theta,y),y)\det(D_{\theta}\xi_t))={\rm div}_{\theta}\left(f(x,\xi_t(x,\theta,y))\partial_t\xi_t\right),
    \]
    and therefore, by Stokes' formula,
    \begin{multline}\label{eq:Jprime}
      \frac{\dd J(t)}{\dd t}(x)=\int_{\Gamma_0\times
        V_0}e^{i\phi(x,\xi_t(x,\theta,y),y)}\det(D_{\theta}\xi_t)v(y)\sum_{k\in
        \N}a_k\circ\xi_t(x,\theta,y) \partial_t\xi_t\cdot
      \nabla_{\theta}\chi_k(\tfrac{c\theta}{k})\dd
      \theta \dd y\\+\int_{\partial \Gamma_0\times V_0}e^{i\phi(x,\xi_t(x,\theta,y),y)}\det(D_{\theta}\xi_t)v(y)\sum_{k\in
        \N}b(t,x,y,\theta)\dd
      \theta \dd y.
    \end{multline}
    We can apply Proposition \ref{prop:pos_phase_reg_int} to the
    second term of the right-hand side, since $b$ is an analytic
    amplitude and $\phi\circ\xi_t$ has positive imaginary part on
    $\partial \Gamma_0\times V_0$; therefore this term is an analytic
    function of $x$. As for the first term, let us prove that there
    exists $C>0, \epsilon>0$ such that, 
    uniformly for $(x,y)$ in a complex neighbourhood,
    \[
\left|      \sum_{k\in
        \N}a_k\circ\xi_t(x,\theta,y) \partial_t\xi_t\cdot
      \nabla_{\theta}\chi_k(\tfrac{c\theta}{k})\right|\leq
    Ce^{-\epsilon|\theta|}.\]
  Since $(a_k)_{k\in \N}$ is a formal analytic amplitude, one has
  \[
    |a_k\circ\xi_t(x,\theta,y)|\leq CR^kk!|\theta|^{-k}.
  \]
  Moreover $\nabla_{\theta}\chi_k(\tfrac{c\theta}{k})$ is only nonzero
  whenever $\frac{c|\theta|}{2}\leq k \leq c\theta$. In particular,
  \[
    |a_k\circ\xi_t(x,\theta,y)\nabla_{\theta}(\chi_k(\tfrac{c\theta}{k})|\leq
    C(cR)^{\frac{c|\theta|}{2}}.\]
  Moreover for fixed $\theta$ the number of nonzero terms is of order
  $|\theta|$, so that we finally obtain, whenever $c<\frac{1}{K}$, for
  some $\epsilon>0$, the required estimate.

  In particular, we can apply Proposition \ref{prop:pos_phase_reg_int}
  to the first term of the right-hand-side of \eqref{eq:Jprime} as
  well; to conclude, for $t$ small, $\frac{\dd J(t)}{\dd t}$
  continuously sends $\mathcal{E}'(Y)$ into $\mathcal{O}(U)$.
  Integrating this fact
  on $[0,t_0]$ for $t_0$ small, we obtain the desired identity with $\xi_1$ replaced
  with $\xi_{t_0}$. However the statement is clearly of a local nature, and we can cover
  $[0,1]$ with a finite number of open sets on which we can apply the
  argument above. This concludes the proof.
\end{proof}

From now on we will only consider analytic Fourier Integral Operators
whose amplitudes are realisations of formal analytic amplitudes; by
Proposition \ref{prop:pos_phase_reg_int}, they do not depend on the
realisation, and we will simply pass formal amplitudes as arguments of $I_{\phi,\Omega,U}$.

Assembling the previous facts, we obtain the following general result.
\begin{theorem}\label{prop:FIOs}
  Let $X,Y$ be paracompact real-analytic manifolds. Let $E$ be a real-analytic
  cone bundle over $X\times Y$, and denote the base projection by $\pi=(\pi_X,\pi_Y)$. Let $U\subset X$ and $V\subset Y$ be
  open and relatively compact. Let $\Omega\subset \pi^{-1}(X\times V)$ be open and
  conical, and let $\phi$ be a positive phase function on a
  neighbourhood of $\Omega$. Let $a$ be a formal analytic amplitude on
  a neighbourhood of $\Omega$.

  \begin{enumerate}
    \item 
  Given $U\subset X$ such that $(\phi,\Omega,U)$ satisifes
  \eqref{eq:boundary_positivity}, any choice of local trivialisations
  of $E$ leads (via Definition \ref{def:FIO_fixed_fibre}) to the same, well-defined
  operator $I_{\phi,\Omega,U}(a)$ mapping $\mathcal{F}(V)$ to
  $\mathcal{F}(U)$. When further quotiented by smooth
  functions, $I_{\phi,\Omega,U}(a)$ coincides with the
  usual formal construction of the Fourier Integral operator with
  phase $\phi$ and formal amplitude $a$. Furthermore, for all $v\in
  \mathcal{E}'(Y)$,
  \[
    SS_a(I_{\phi,\Omega,U}(a)v)\subset \{x\in U, \exists \Theta \in \Omega,
    \pi_X (\Theta)=x, \pi_Y(\Theta)\in SS_a(v), \im(\phi(\Theta))=0\}.
  \]
  We call \emph{analytic Fourier Integral operator} such an
  $I_{\phi,\Omega,U}(a)$.
  
    \item
  If $U$ is relatively compact, $\Omega$ is a relatively compact
  open cone, and ${\rm rank} \nabla^2_{\theta}\phi\geq 1$ on a
  neighbourhood of $\Omega$, then
  there exists a finite number of real-analytic vector bundles
  $(E_j)_{1\leq j\leq J}$, such that ${\rm rank}(E_j)={\rm rank}(E)-1$, open conical sets $\Omega_j\subset E_j$,
  positive phase functions $\phi_j$  and formal analytic amplitudes $b_j$
  on neighbourhoods of $\Omega_j$, such that
  \[I_{\phi,\Omega,U}(a)=\sum_{j=1}^JI_{\phi_j,\Omega_j,U}(b_j).\] As
  a formal amplitude, $b_j$ is obtained from $a$ by
  stationary phase.

  \item Let $Z$ be a paracompact real-analytic manifold, $F$ a
  real-analytic cone bundle over $Y\times Z$, $W\subset Z$ open and
  conical, $\Omega'\subset \pi_F^{-1}(Y\times W)$
  open and conical, $\phi'$ a positive phase function on a
  neighbourhood of $\Omega'$  such
  that $(\phi',F,V)$ satisfies \eqref{eq:boundary_positivity}, $a'$ a formal analytic amplitude on a
  neighbourhood of $\Omega'$. Then the composition
  $I_{\phi,\Omega,U}(a)I_{\phi',\Omega',V}(a')$ is an analytic Fourier Integral
  operator (constructed as in Proposition \ref{prop:compo_well_def}).
\end{enumerate}
\end{theorem}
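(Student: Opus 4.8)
The plan is to assemble the three assertions from the fixed-fibre results of Section~\ref{sec:four-integr-oper}, upgrading each to the cone-bundle setting by means of Proposition~\ref{prop:change_phase_variables}. First I would note that, by Proposition~\ref{prop:pos_phase_reg_int} together with the last statement of Proposition~\ref{prop:real_formal_symb}, the operator built in Definition~\ref{def:FIO_fixed_fibre} from a realisation of the formal amplitude $a$ is independent of the chosen realisation, since two realisations differ by $O(e^{-c|\theta|})$ on a complex conical neighbourhood of $\Omega$ in the first variable; this justifies passing formal amplitudes directly as arguments of $I_{\phi,\Omega,U}$. For the trivialisation-independence in item~1, I would take two local trivialisations of $E$ over a piece of $X\times Y$: after refining the cover, the transition map is a one-homogeneous real-analytic fibrewise change of variables, isotopic to the identity through maps of the kind allowed in Proposition~\ref{prop:change_phase_variables}, and \eqref{eq:boundary_positivity} is preserved along the isotopy because it only constrains $\im(\phi)$ on $\partial\Omega$, a fibrewise condition insensitive to reparametrisations of $\theta$. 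Proposition~\ref{prop:change_phase_variables} then forces the two trivialisations to give the same element of $\mathcal{F}(U)$, so the operator patched together as in Definition~\ref{def:FIO_fixed_fibre} is well-defined; its agreement with the usual formal FIO modulo $C^\infty$ is inherited from the fixed-fibre local expressions (each a classical FIO), and the analytic wave front bound is item~4 of Definition~\ref{def:FIO_fixed_fibre} rewritten in bundle notation, with $SS_a$ of the patched object the union of the $SS_a$ of the pieces by Proposition~\ref{prop:patching_analytic_func_or_distros}.

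For item~2, I work locally: trivialising $E$ and selecting near each point of $\Omega$ a direction in which $\nabla^2_\theta\phi$ is nondegenerate, I apply a one-homogeneous change of variables (legitimate by Proposition~\ref{prop:change_phase_variables}, exactly as in item~1) straightening it to a last coordinate $\omega$, so that $\nabla^2_\omega\phi$ is nonsingular and $\phi$ acquires the product shape required by Proposition~\ref{prop:stat_phase_in_FIO}. That proposition produces finitely many reduced data $(\phi_j,\Omega_j,b_j)$ with one fewer phase variable, $b_j$ being obtained from $a$ by the formal stationary phase; positivity of the $\phi_j$ is not lost because, by the remark following Proposition~\ref{prop:excess}, on $\Sigma_\phi^{\R}$ the Hessian $\nabla^2_\omega\phi$ has positive imaginary part in the nondegenerate directions. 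Finally I patch the finitely many local reductions over $U$ and over the base; the $\Omega_j$ and $\phi_j$ assemble into cone bundles $E_j$ of rank $\mathrm{rank}(E)-1$ (one allows the index $j$ to range over both the critical components and the patches, so $J$ may be large), and the statement about Lagrangians follows from Proposition~\ref{prop:lagr} and the description of the stationary-phase canonical relation.

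For item~3, the substance is Proposition~\ref{prop:compo_well_def}: after trivialising $E$ and $F$ locally and invoking item~1 to make the constructions trivialisation-free, that proposition exhibits a contour deformation in $y$ and a conic neighbourhood $\Omega$ of $\{\nabla_{y,\theta,\tau}\Phi=0\}$ contained in $\{\epsilon|\theta|<|\tau|<\epsilon^{-1}|\theta|\}$ such that $I_{\phi,\Omega,U}(a)\,I_{\phi',\Omega',V}(a')=I_{\Phi,\Omega,U}(A)$, where $\Phi=\phi_1+\phi_2$ and $A$ realises the Cauchy product of $a$ and $a'$ (Proposition~\ref{prop:product_preparation}), and such that $(\Phi,\Omega,U)$ satisfies \eqref{eq:boundary_positivity}. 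By the definition adopted in item~1 this is an analytic Fourier Integral operator; if one wishes an expression attached to the composed Lagrangian $\Lambda\circ\Lambda'$ of Proposition~\ref{prop:compo_phase}, one applies item~2 to integrate out the $y$ variables, the transversality of composition guaranteeing that $\nabla^2_y\Phi$ is nonsingular.

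The step I expect to be the main obstacle is the trivialisation-independence underlying item~1 (and, equivalently, the coherent glueing of the bundles $E_j$ in item~2): one must verify that on a sufficiently fine cover the transition cocycle of $E$ is connected to the identity through isotopies satisfying \emph{all} the hypotheses of Proposition~\ref{prop:change_phase_variables} — in particular that \eqref{eq:boundary_positivity} survives the entire isotopy — and that the resulting \v{C}ech-type patching over both the base $X\times Y$ and the target $U$ is consistent; once this is in place, everything else is a bookkeeping reduction to the fixed-fibre statements already proved.
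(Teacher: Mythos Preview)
Your proposal is correct and follows essentially the same approach as the paper, which presents the theorem as a direct assembly of the preceding Propositions~\ref{prop:compo_well_def}, \ref{prop:stat_phase_in_FIO}, and \ref{prop:change_phase_variables} and of Definition~\ref{def:FIO_fixed_fibre}, introduced by the single sentence ``Assembling the previous facts, we obtain the following general result'' and offered without a separate proof. Your write-up is a faithful and somewhat more explicit unpacking of that assembly; in particular, your use of Proposition~\ref{prop:change_phase_variables} for trivialisation-independence and your identification of that step as the only nontrivial point are both on target.
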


To some extent, we have reproduced the $C^{\infty}$ theory in the
analytic case. One thing still missing, and a crucial property in the
$C^{\infty}$ case, is the fact that Fourier Integral operators only
depend on the positive Lagrangian associated with their phases: given
two phases $\phi_1$ and $\phi_2$ with same Lagrangian, we expect any analytic
Fourier Integral operator associated with $\phi_1$ to be equal to some
analytic Fourier Integral operator associated with $\phi_2$. The proof
in the $C^{\infty}$ case (see \cite{hormander_analysis_1985},
Proposition 25.1.5) relies on a construction of the amplitude
order by order, which is difficult to translate to the analytic case
(essentially, one would need to prove by hand that the constructed
formal amplitude is analytic). We can, however, prove that some
Lagrangians of interest possess a more or less canonical phase, so
that all Fourier Integral operators associated with said Lagrangians
can be rewritten with this specific case. The first of these
Lagrangians is the diagonal in $T^*\C^n$; we will prove in
Proposition \ref{prop:Kuranishi} the \emph{Kuranishi trick}: all
such Fourier Integral operators are pseudodifferential operators in
the sense that they can be rewritten using the usual phase and any good
contour.

The second family of Lagrangians contains the Szeg\H{o} kernel
parametrix on the boundary of any strictly pseudoconvex domain (the
Lagrangian, of course, depends on the domain). These Lagrangians are
idempotent, and Fourier Integral Operators with same Lagrangian are
called ``covariant Toeplitz operators'' and form an algebra. To
describe this algebra in practice, one writes all these Fourier
Integral Operators with the same phase. We can do so, more generally, for Lagrangians whose base projection has corank 1.

\begin{prop}\label{prop:FIO-rank1-phase}
  Let $X,Y$ be paracompact analytic manifolds. Let $\Lambda\subset
  (T^*\widetilde{X}\setminus \{0\})\times (T^*\widetilde{Y}\setminus
  \{0\})$ be a conical, real-analytic, positive
  Lagrangian and suppose that the projection onto the base
  $\pi:\Lambda\to \widetilde{X}\times \widetilde{Y}$ satisfies $\dim
  \ker \dd\pi=1$ everywhere (so that $\Lambda$ is a half-line bundle
  over its projection $\pi(\Lambda)=Z$).

  There exists a positive phase function on a half-line bundle over
  a neighbourhood of $Z$ such that $\Lambda_{\phi}=\Lambda$. For any such
  phase function, and any analytic Fourier Integral Operator with
  phase $\phi$, there exists a formal analytic amplitude $a$ on a neighbourhood
  of $Z$ such that $I_{\phi}(a)$ coincides with the analytic Fourier
  Integral Operator.
\end{prop}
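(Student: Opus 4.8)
The strategy is to build a phase with a single fibre variable directly from the Lagrangian, then reduce an arbitrary Fourier Integral Operator with that Lagrangian to this form using the reduction tools already available (Propositions \ref{prop:stat_phase_in_FIO}, \ref{prop:change_phase_variables}, \ref{prop:compo_well_def}). Since $\Lambda$ is a half-line bundle over $Z=\pi(\Lambda)\subset \widetilde X\times\widetilde Y$, one can pick a $1$-homogeneous fibre coordinate $t>0$ on $\Lambda$; concretely, one chooses a $0$-homogeneous analytic section $\ell$ of the (real-)cotangent directions over $Z$ and writes $\Lambda=\{t\ell(x,y):(x,y)\in Z,\ t>0\}$. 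The standard recipe for producing a phase with one phase variable from a half-line bundle (as recalled in the discussion of \cite{boutet_de_monvel_sur_1975} around the formula $S(x,y)=\int_0^\infty e^{it\psi(x,y)}s(x,y;t)\,dt$) then applies: one seeks $\phi(x,t,y)=t\psi(x,y)$ with $\psi$ a complex-valued, real-analytic function on a neighbourhood of the base such that $d_x\widetilde\psi$ and $-d_y\widetilde\psi$ reproduce the chosen covector field $\ell$ along $Z=\{\widetilde\psi=0\}$, that $\widetilde\psi$ vanishes to first order precisely on $Z$ (this is the nondegeneracy of $\phi$ in $\theta=t$, since $\nabla_t\phi=\psi$), and that $\im\psi\geq c\,\mathrm{dist}(\cdot,Z)^2$ (which is exactly condition (3) in Definition \ref{def:good_phase} with $n_\theta=1$ after absorbing $|\theta|=t$). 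The existence of such a $\psi$ is the analytic analogue of the classical construction: complexify, use that $\Lambda$ is positive to get the sign of the imaginary part, and solve the eikonal-type constraints by a Borel-Kashiwara/holomorphic Hamilton–Jacobi argument near $Z$; positivity of $\Lambda$ in the sense of \cite{melin_fourier_1975} is what guarantees $\im\psi\ge 0$ with the sharp quadratic lower bound. One must also check that the phase built this way satisfies the boundary positivity \eqref{eq:boundary_positivity}, which is automatic on a small enough conical neighbourhood of $Z$ since $\im\psi$ is bounded below on the complement of any neighbourhood of $Z$.

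For the second assertion, let $A$ be any analytic Fourier Integral Operator with $\Lambda_A=\Lambda$, say $A=I_{\phi_0,\Omega_0,U}(a_0)$ for some positive phase $\phi_0$ with $n_{\theta_0}$ fibre variables. By Proposition \ref{prop:FIOs}(2) (equivalently Proposition \ref{prop:stat_phase_in_FIO}), one repeatedly applies stationary phase in the directions where $\nabla^2_{\theta_0}\phi_0$ is nondegenerate — which by Proposition \ref{prop:excess} is exactly $n_{\theta_0}-(n_x+n_y-\mathrm{rank}\,d\pi)=n_{\theta_0}-1$ directions since $\dim\ker d\pi=1$ — until one is left with a single phase variable. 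Each such reduction step is legitimate because the imaginary part of $\phi_0$ is nonnegative and, in the nondegenerate directions at a real point of $\Sigma_{\phi_0}$, strictly positive (the remark following Proposition \ref{prop:excess}), so Proposition \ref{prop:stat_phase} applies and the formal amplitude transforms by formal stationary phase, staying analytic. The upshot is that $A$ is a finite sum of analytic Fourier Integral Operators, each with a single-variable positive phase whose Lagrangian is (a piece of) $\Lambda$.

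It remains to replace each of these single-variable phases by the chosen $\phi=t\psi$. Two positive nondegenerate phases with one phase variable and the same Lagrangian differ, locally, by a $1$-homogeneous analytic change of the fibre variable $t\mapsto t\,g(x,y)$ (with $g$ real-analytic, nonvanishing, and with the correct sign to keep $\im$ nonnegative) together with a contour deformation in any remaining genuinely-complex directions of the fibre; this is the content of the classical identification of $1$-variable phases with a given half-line bundle, and the deformation can be carried out within positive phases so that \eqref{eq:boundary_positivity} persists along the homotopy. Proposition \ref{prop:change_phase_variables} then says precisely that the Fourier Integral Operator is unchanged (modulo analytic kernels) when we pass from one such phase to the other, at the cost of composing the formal amplitude with the change of variables and multiplying by the Jacobian $\det(D_t\xi_1)=g$. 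Summing the finitely many local pieces — patched together via Proposition \ref{prop:patching_analytic_func_or_distros} as in Definition \ref{def:FIO_fixed_fibre} — yields a single formal analytic amplitude $a$ on a neighbourhood of $Z$ with $I_\phi(a)=A$.

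\textbf{Main obstacle.} The delicate point is the first one: constructing the analytic phase $\psi$ with the sharp positivity $\im\psi\gtrsim \mathrm{dist}(\cdot,Z)^2$ directly from the positive Lagrangian, rather than from an auxiliary defining function as in the strictly pseudoconvex case. In the $C^\infty$ setting this is \cite{hormander_analysis_1985}, and in the pseudoconvex case one cites the explicit construction via $\rho$; here one needs the real-analytic version, which amounts to solving a complex Hamilton–Jacobi problem near $Z$ and controlling the imaginary part using the positivity of $\Lambda$ in the sense of \cite{melin_fourier_1975}. I expect this to be handled by complexifying and invoking the analytic Hamilton–Jacobi / generating-function machinery for positive conic Lagrangians, but writing it carefully (and checking that the resulting $\phi=t\psi$ meets all three clauses of Definition \ref{def:good_phase}) is where the real work lies; the reduction and phase-change steps are then bookkeeping on top of Propositions \ref{prop:stat_phase_in_FIO}, \ref{prop:change_phase_variables} and \ref{prop:compo_well_def}.
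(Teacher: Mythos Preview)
Your reduction of an arbitrary analytic Fourier Integral Operator with Lagrangian $\Lambda$ to one with a single phase variable (via Proposition~\ref{prop:excess} and repeated application of Proposition~\ref{prop:stat_phase_in_FIO}) is correct and is exactly what the paper does. Likewise, the final step of passing between two single-variable phases via the linear change of variables $t\mapsto g(x,y)\,t$ and Proposition~\ref{prop:change_phase_variables} matches the paper.

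Where you diverge is in the existence of the phase. You propose to construct $\psi$ directly from $\Lambda$ by a complex Hamilton--Jacobi argument, and you flag this as the main obstacle. The paper sidesteps this entirely: the stationary phase reduction you already perform in part~2 \emph{produces} a one-variable phase with Lagrangian $\Lambda$ as a byproduct, so existence is free once you have any FIO (or any phase) with that Lagrangian to start from. Your ``main obstacle'' is therefore self-imposed.

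The paper also handles the comparison of two single-variable phases more directly than your ``classical identification'' remark. The point is that when $n_\theta=1$, the critical set $\Sigma_\phi=\{\phi=0\}$ projects diffeomorphically to $Z$, and since $Z$ has codimension~$1$ in $\widetilde X\times\widetilde Y$ while $\nabla_{x,y}\phi\neq 0$ on $Z$, the function $(x,y)\mapsto\phi(x,y,1)$ is a \emph{defining function} for $Z$. Any two defining functions for the same hypersurface differ by a nonvanishing analytic factor, so $\phi_2=f\phi$ with $f$ real-analytic and nonzero; this immediately gives the linear change of fibre variable $u\mapsto f(x,y)u$. Your mention of ``contour deformation in any remaining genuinely-complex directions of the fibre'' is unnecessary here, since the fibre is one-dimensional.
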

\begin{proof}
  By Theorem
  \ref{prop:FIOs} and Propositions \ref{prop:stat_phase_in_FIO} and \ref{prop:excess},
  possibly after application of stationary phase, an analytic Fourier
  Integral Operator with Lagrangian $\Lambda$ can be written with a
  number of phase variables equal to $1$, which means that the phase
  function is defined on a half-line bundle over a neighbourhood of $Z$. This proves the existence of such a phase.

  Any such phase $\phi$ satisfies $Z=\{\phi=0\}$ and near $Z$ one has
  $\nabla_x\phi\neq 0,\nabla_y\phi\neq 0$. Since $Z$ has codimension
  $1$, this means that $\phi$ is a defining function for $Z$. In particular, given another
  function $\phi_2$ satisfying the same properties, locally after
  identification of the half-line bundles over a neighbourhood of $Z$ on
  which the phases are defined, one has $\phi_2=f\phi$ for some
  non-vanishing function $f$. Thus, locally, Fourier Integral
  Operators with phase $\phi_2$ have integral kernels of the form
  \[
    (x,y)\mapsto \int_0^{+\infty}e^{iuf(x,y)\phi(x,y,1)}b(x,y,u)\dd
    u
  \]
  where we trivialised the half-line bundle and $b$ is a realisation
  of a formal analytic amplitude.

  Now $u\mapsto f(x,y)u$ is a well-defined change of variables, so that by
  Proposition \ref{prop:change_phase_variables}, there exists a
  formal analytic amplitude $a$ such that $I_{\phi}(a)$ is the same
  Fourier Integral Operator. This concludes the proof.
\end{proof}

\subsection{Pseudodifferential operators, FBI transforms, and Toeplitz
operators}
\label{sec:pseud-oper-1}

Pseudodifferential operators are a particular case of Fourier Integral
operators, with singular kernels of the form
\begin{equation}\label{eq:kernel_pseudos}
  U\times V\ni (x,y)\mapsto \int_{\Gamma(x,y)} e^{i(x-y)\cdot (\xi+i\eta(x,\xi,y))}a(x,\xi,y)\dd \xi,
\end{equation}
where $U,V$ are open sets of $\R^n$, $\Gamma(x,y)$ is
a conical contour, with real-analytic dependence in $(x,y)$, and which
is \emph{positive}, meaning that
\begin{equation}\label{eq:good_ctr_pseudo}
  \exists c, \forall (x,y)\in U\times V, \forall \xi\in \Gamma(x,y),\im((x-y)\cdot \xi)>c|\xi||x-y|^2;
\end{equation}
moreover $a$ realises a formal analytic amplitude on a neighbourhood of
$\{(x,\xi,x),x\in U\}$ in $U\times \R^n\times V$.

\begin{rem}\label{rem:pseudo_not_microlocal}
By the previous results, operators defined by \eqref{eq:kernel_pseudos} do not depend on the
realisation, nor do they depend on the choice of $\Gamma$. However, there is an important caveat at this stage: in order for condition
\eqref{eq:boundary_positivity} to hold, we need
$\Gamma(x,x)=\R^n$. Indeed, otherwise the phase vanishes
when $x=y\in U,\xi\in \partial \Gamma(x,x)$. This is, of course, a
severe hindrance because we would like to study the action of
real-analytic amplitudes defined near a point of phase space. This issue be
addressed later on through the FBI transform. For the moment, we make note of this limitation
and define pseudodifferential operators as follows.
\end{rem}

\begin{defn}\label{def:pseudos}
  Let $U\Subset V\subset \R^n$, and let $a$ be a formal analytic amplitude on a
  neighbourhood of $\{(x,\xi,x),(x,\xi)\in U\times \R^n\}$ in
  $U\times \R^n\times V$. The pseudodifferential operator
$Op(a)$ is defined as the operator with distributional kernel
\[
  U\times V\ni (x,y)\mapsto
  \int_{\Gamma(x,y)} e^{i(x-y)\cdot\xi}a^{\Gamma}(x,\xi,y)\dd \xi,
\]
where  $\Gamma(x,y)$ is any
contour satisfying \eqref{eq:good_ctr_pseudo} and $a^{\Gamma}$ is such
that $(x,\xi,y)\mapsto a^{\Gamma}(x,\theta(x,\xi,y),y)$ is a realisation of
$(a_k(x,\theta(x,\xi,y),y))_{k\in \N}$, for
a contour deformation $\theta$ which sends $\R^n$ to $\Gamma(x,y)$.

These operators
preserve analytic functions, and they are uniquely defined as
operators from $\mathcal{F}(V)$ to $\mathcal{F}(U)$.
\end{defn}
The Lagrangian associated with the phase of a pseudodifferential
operator is always the diagonal of $(T^*U\setminus \{0\})^2$. Our first
result is the \emph{Kuranishi trick}, which allows us to prove a
reciprocal statement.

\begin{prop}\label{prop:Kuranishi}
  Let $U\subset V\Subset \R^n$ and let $\pi:E\to \R^n\times \R^n$ be a
  cone bundle and let $\Omega\subset \pi^{-1}(E)$ be an open compact cone.. Let $\phi$
  be a positive phase function on a neighbourhood of  $\Omega$ such that $(\phi,\Omega,U)$
  satisfies \eqref{eq:boundary_positivity} and $\Lambda_{\phi}$ is the
  diagonal of $(T^*U\setminus \{0\})^2$. Let $a$ realise
  a formal analytic amplitude on a neighbourhood of $\Omega$

  There exists a formal analytic amplitude $b$
  on a neighbourhood of $\{(x,\xi,x),(x,\xi)\in U\times \R^n\}$ in $U\times \R^n\times V$, such that
  \[
    I_{\phi,\Omega,U}(a)=Op(b)
  \]
  (as always, in the sense of operators from
  $\mathcal{F}(V)$ to $\mathcal{F}(U)$).
\end{prop}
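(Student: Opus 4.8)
The plan is to reduce a general positive phase $\phi$ with diagonal Lagrangian to the standard pseudodifferential phase $(x-y)\cdot\xi$ by an analytic change of phase variables, and then invoke Proposition \ref{prop:change_phase_variables} together with Definition \ref{def:pseudos}. First I would use Proposition \ref{prop:stat_phase_in_FIO} (equivalently part (2) of Theorem \ref{prop:FIOs}) to reduce the number of phase variables: since $\Lambda_\phi$ is the diagonal, its base projection $\pi:\Lambda_\phi\to \widetilde U\times\widetilde U$ has full rank $2n$ onto a $2n$-dimensional target, so by Proposition \ref{prop:excess} one has $n_\theta-\operatorname{rank}\nabla^2_\theta\phi = 0$, i.e.\ $\nabla^2_\theta\phi$ is nonsingular on $\Sigma_\phi$. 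Applying stationary phase in all the excess phase variables we may therefore assume from the start that $\Omega\subset U\times(\R^n\setminus\{0\})\times V$ with $n_\theta=n$ and that $(\theta\mapsto\phi(x,\theta,y))$ has a nondegenerate critical point.

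Next comes the Kuranishi trick proper. Since $\Lambda_\phi$ is the diagonal, on $\Sigma_\phi$ we have $\nabla_x\widetilde\phi = -\nabla_y\widetilde\phi$ and the critical value map $(x,y)\mapsto \widetilde\phi(x,\theta^*(x,y),y)$ has differential $\nabla_x\widetilde\phi\,dx+\nabla_y\widetilde\phi\,dy$; combined with $x=y$ on the real locus of the Lagrangian, a Taylor expansion in $x-y$ gives $\widetilde\phi(x,\theta,y) = (x-y)\cdot\Psi(x,\theta,y)$ for a $1$-homogeneous, real-analytic $\C^n$-valued function $\Psi$ near $\Sigma_\phi$ (this is the content of Hadamard's lemma: $\phi$ vanishes on the diagonal $x=y$ because the Lagrangian's real locus forces $\operatorname{Im}\phi=\nabla_x\phi=\nabla_y\phi$ to be controlled there, and more precisely $\phi$ and $\nabla_y\phi$ agree with $-\nabla_x\phi$ so that $\phi$ itself, which is homogeneous and has vanishing value and first $x,y$-derivatives transversally to the diagonal... ) — more carefully, one writes $\widetilde\phi(x,\theta,y)=\int_0^1 \tfrac{d}{dt}\widetilde\phi(y+t(x-y),\theta,y)\,dt$ and checks using the diagonal structure that the boundary term vanishes, yielding $\widetilde\phi=(x-y)\cdot\Psi$. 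Then $\Psi(x,\theta,x)=\nabla_x\widetilde\phi(x,\theta,x)$, and the nondegeneracy of $\nabla^2_\theta\phi$ translates (via the phase-function conditions in Definition \ref{def:good_phase}) into $\det D_\theta\Psi\neq 0$ on $\Sigma_\phi$. Hence $\xi=\Psi(x,\theta,y)$ is, near $\Sigma_\phi$ and for $x$ close to $y$, a $1$-homogeneous analytic diffeomorphism in the fibre variable, acting as the identity on $(x,y)$; its image is a conical contour $\Gamma(x,y)$, and positivity of $\phi$ becomes exactly condition \eqref{eq:good_ctr_pseudo} for this contour.

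Then I would apply Proposition \ref{prop:change_phase_variables}: letting $\xi_t$ interpolate between the identity and the diffeomorphism $\theta\mapsto\Psi(x,\theta,y)$ (one can take $\xi_t$ a homotopy of contours, using that the set of admissible contours is connected once $\det D_\theta\Psi$ is bounded away from $0$), the operator $I_{\phi,\Omega,U}(a)$ equals $I_{(x-y)\cdot\xi,\,\Omega',U}(b\det(D_\theta\xi_1)^{-1})$ where $b$ realises $(a_k\circ\xi_1^{-1})_{k}$, a formal analytic amplitude on a neighbourhood of $\{(x,\xi,x)\}$. Finally, by Remark \ref{rem:pseudo_not_microlocal} and Definition \ref{def:pseudos}, the operator with phase $(x-y)\cdot\xi$ integrated over the contour $\Gamma(x,y)$ is exactly $Op(b')$ for the corresponding formal amplitude $b'$, independently of the contour; this gives the claimed identity $I_{\phi,\Omega,U}(a)=Op(b)$. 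The main obstacle I expect is the bookkeeping in the Kuranishi factorization $\widetilde\phi=(x-y)\cdot\Psi$: one must verify that the decomposition is genuinely real-analytic and $1$-homogeneous, that $D_\theta\Psi$ inherits invertibility from $\nabla^2_\theta\phi$, and that positivity of $\phi$ is preserved as the contour positivity \eqref{eq:good_ctr_pseudo} throughout the homotopy $\xi_t$ — the analytic-amplitude machinery (Proposition \ref{prop:change_phase_variables}) does the rest, but only once these geometric facts about the phase are nailed down.
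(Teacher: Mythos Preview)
Your overall strategy matches the paper's exactly: reduce to $n_\theta=n$ via stationary phase, factor $\phi=(x-y)\cdot\Psi$, then apply Proposition \ref{prop:change_phase_variables}. But the execution in the first two paragraphs contains a genuine error that propagates.

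The base projection $\pi:\Lambda_\phi\to\widetilde U\times\widetilde U$ sends $(x,\xi,x,\xi)\mapsto(x,x)$; its image is the $n$-dimensional diagonal, so $\operatorname{rank}\pi=n$, not $2n$. Proposition \ref{prop:excess} then gives $n_\theta-\operatorname{rank}\nabla^2_\theta\phi=2n-n=n$. Stationary phase removes the $n_\theta-n$ directions in which $\nabla^2_\theta\phi$ \emph{is} nondegenerate, leaving $n$ phase variables on which $\nabla^2_\theta\phi\equiv 0$ along $\Sigma_\phi$ --- the opposite of your claim that the reduced phase has a nondegenerate $\theta$-Hessian. This misreading matters twice downstream. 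First, the vanishing $\phi(x,\theta,x)=0$ for \emph{all} $\theta$ (which you need before invoking Hadamard, and where your argument visibly trails off into ``\ldots'') follows because $\Sigma_\phi$ is $2n$-dimensional and contained in the $2n$-dimensional set $\{x=y\}$, hence equal to it; Euler's identity then gives $\phi=\theta\cdot\nabla_\theta\phi=0$ there. Second, the invertibility of $D_\theta\Psi(x,\theta,x)=\nabla_\theta\nabla_x\phi$ cannot be deduced from $\nabla^2_\theta\phi$ (which vanishes); it comes instead from item 2 of Definition \ref{def:good_phase}: since $\nabla^2_\theta\phi=0$ and differentiating $\nabla_\theta\phi(x,\theta,x)=0$ in $x$ gives $\nabla_x\nabla_\theta\phi+\nabla_y\nabla_\theta\phi=0$ on the diagonal, the rank-$n$ condition on $\nabla_{x,\theta,y}\nabla_\theta\phi$ forces the $n\times n$ block $\nabla_x\nabla_\theta\phi$ itself to be invertible. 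With these corrections in place, the rest of your argument (the contour positivity check and the appeal to Definition \ref{def:pseudos}) goes through and coincides with the paper's proof.
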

\begin{proof}
  We first apply Propositions \ref{prop:excess} and
  \ref{prop:stat_phase_in_FIO} to reduce ourselves to the case where
  $N=n$, since the right-hand side of the formula in Proposition
  \ref{prop:excess} is equal to $n$. Note that when doing so, we obtain a finite sum of
  Fourier Integral Operators.

  Since
  $\Lambda_{\phi}$ is the diagonal of $(T^*U\setminus \{0\})^2$, then
  $\phi(x,\theta,y)$ vanishes when $x=y$, and $\theta\mapsto
  \partial_{\theta}\phi(x,\theta,x)\in \R^n$ is a surjective local
  diffeomorphism. Therefore it is a global diffeomorphism, and it
  extends to a neighbourhood of $x=y$ into a map $(x,\theta,y)\mapsto
  \zeta(x,\theta,y)$ which is a global diffeomorphism between the real domain
  of its second variable and a totally real contour of $\C^n$, and
  such that
  \[
    \phi(x,\theta,y)=(x-y)\cdot \zeta(x,\theta,y).
  \]
  The
  contour is $\R^n$ when $x=y$, and it satisfies
  \eqref{eq:good_ctr_pseudo}, because $\phi$ is positive. Removing a
  neighbourhood of $x=y$ (away from which the phase has positive
  imaginary part) and applying
  Proposition \ref{prop:change_phase_variables}, we obtain a
  pseudodifferential operator.
\end{proof}

Given a compact real-analytic manifold $M$, one can equivalently
define analytic pseudodifferential operators on $M$ as
\begin{itemize}
\item gluings of Definition \ref{def:pseudos} in coordinate charts;
\item Fourier Integral Operators on $M$ (or on open subsets of $M$)
  with Lagrangian ${\rm diag}(T^*M)$.
\item Fourier Integral Operators defined with a suitable global phase
  function. An example is given by the function $\phi:T^*M\times M\to
  \C$ defined as follows: $\iota:M\to \R^N$ is an embedding of $M$ in
  Euclidian space of sufficiently large dimension, $\pi_x:\R^n\to
  T_xM$ is the pull-back of the orthogonal projection from $\R^n$ to
  $\iota(T_xM)$, and
  \[
    \phi(x,\xi,y)=\xi(\pi_x(\iota(y)))+i|\xi|\dist(\iota(x),\iota(y))^2.
    \]
  \end{itemize}

  Pseudodifferential operators are not uniquely determined by their
  amplitudes; there is ``one variable too many'' among
  $(x,\xi,y)$. The notion of \emph{symbol} is better suited to their analysis.
\begin{prop}\label{prop:ampl_to_symb_pseudos}
  Let $U\subset \R^n$ open and relatively compact and let $a$ be a formal analytic symbol on a
  neighbourhood $\Omega$ of $\{(x,\xi,x)\in \R^{3n},x\in U\}$. Let $V$ be a
  small neighbourhood of $U$.

  Given $j\in \N$ define
  \[
    b_j:(x,\xi)=\sum_{|\mu|\leq j}\frac{i^{|\mu|}}{\mu!}(\partial_y^{\mu}\cdot
    \partial_{\xi}^{\mu}a_{j-|\mu|})(x,\xi,x).
  \]
  Then $(b_j)$ is a formal analytic amplitude and
  $Op((a_j))=Op((b_j))$. $(b_j)$ is called the (left)
  \emph{total symbol} of $(a_j)$.
\end{prop}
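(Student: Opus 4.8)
The plan is to prove the two assertions in turn: that $(b_j)_{j\in\N}$ is again a degree-$d$ formal analytic amplitude (a combinatorial estimate), and that $Op((a_j))=Op((b_j))$ (Taylor expansion in $y$, a holomorphic integration by parts in $\xi$, and a lowest-order truncation exactly as in the proof of Proposition \ref{prop:real_formal_symb}). Throughout I fix constants $C,\rho,R,m$ witnessing that $(a_j)$ is a degree-$d$ formal analytic amplitude near $\Omega$, so that $|\nabla_x^{i_1}\nabla_y^{i_2}\nabla_\xi^k a_\ell|\le C\rho^{i_1+i_2+k}R^\ell (i_1+i_2+k+\ell)!\,|\xi|^{d-\ell-k}$ on a neighbourhood of the diagonal $\{(x,\xi,x):(x,\xi)\in\Omega\}$.

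\emph{Step 1: $(b_j)$ is a formal analytic amplitude.} Each $b_\ell(x,\xi)=\sum_{|\mu|\le\ell}\tfrac{i^{|\mu|}}{\mu!}\partial_y^\mu\partial_\xi^\mu a_{\ell-|\mu|}(x,\xi,x)$ is homogeneous of degree $d-\ell$ in $\xi$, since $\partial_\xi^\mu$ lowers the degree by $|\mu|$ and $a_{\ell-|\mu|}$ is of degree $d-\ell+|\mu|$. To bound $\nabla_x^i\nabla_\xi^k b_\ell$ I differentiate under the finite sum; the chain rule for the substitution $y\mapsto x$ only produces further $x$- and $y$-derivatives of $a_{\ell-|\mu|}$, so the estimate on $(a_j)$ gives a bound by a constant times $\sum_{p=0}^\ell\rho^{i+k+2p}R^{\ell-p}(i+k+\ell+p)!\,|\xi|^{d-\ell-k}\sum_{|\mu|=p}\tfrac1{\mu!}$. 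The decisive point is the identity $\sum_{|\mu|=p}\tfrac1{\mu!}=\tfrac{n^p}{p!}$ (multinomial theorem), which tames the dangerous factorial: $\tfrac{(i+k+\ell+p)!}{p!}=\binom{i+k+\ell+p}{p}(i+k+\ell)!\le 2^{i+k+\ell+p}(i+k+\ell)!$, so that after summing the resulting geometric series in $p$ one obtains a bound of the form $C'\rho'^{\,i+k}R'^\ell(i+k+\ell)!\,|\xi|^{d-\ell-k}$. Finally this can be upgraded to the exact shape of Definition \ref{def:formal-amp}, with the denominator $(1+i+k+\ell)^m$ reinstated, at the cost of slightly enlarging $\rho'$ and $R'$, since a polynomial is dominated by any strictly larger geometric sequence. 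This is the one place where the combinatorial structure — and specifically the factor $\tfrac1{\mu!}$ in the definition of $b_j$ — genuinely matters, and I expect it to be the main obstacle; a naive bound would only give $(2\ell)!$-type growth.

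\emph{Step 2: $Op((a_j))=Op((b_j))$.} Realise $(a_j)$ and $(b_j)$ into analytic amplitudes $a$ and $b$ (Proposition \ref{prop:real_formal_symb}). Since $Op$ depends neither on the realisation nor on the contour (Definition \ref{def:pseudos}), it suffices to compute the kernels of $Op(a)$ and $Op(b)$ with one and the same positive contour $\Gamma(x,y)$ as in \eqref{eq:good_ctr_pseudo} (with $\Gamma(x,x)=\R^n$), and to show that the difference of kernels is real-analytic near the diagonal, whence $Op(a)-Op(b)$ maps $\mathcal{E}'(V)$ into $\mathcal{O}(U)$ by Proposition \ref{prop:pos_phase_reg_int}. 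For $x$ near $U$, $y\in V$, and $\xi\in\Gamma(x,y)$, apply Taylor's formula in $y$ at $y=x$ to order $N=N(\xi):=\lfloor\epsilon|\xi|\rfloor$ with $\epsilon>0$ small: using $(y-x)^\mu e^{i(x-y)\cdot\xi}=i^{|\mu|}\partial_\xi^\mu e^{i(x-y)\cdot\xi}$ and a holomorphic integration by parts in $\xi$ (there is no contribution at infinity since on the good contour the amplitude times $e^{i(x-y)\cdot\xi}$ decays there), the polynomial part contributes $\int_{\Gamma(x,y)}e^{i(x-y)\cdot\xi}\bigl[\sum_{k+|\mu|<N}\tfrac{i^{|\mu|}}{\mu!}\partial_\xi^\mu\partial_y^\mu a(x,\xi,x)\bigr]\dd\xi$, with precisely the coefficients of the statement. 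Comparing the bracket with $b(x,\xi)$ — replacing $a$ by its finite expansion $\sum_{k<M}a_k$ modulo $O(e^{-c|\xi|})$ for an optimal $M$, regrouping by total order $j=k+|\mu|$, and discarding the tails $\sum_{j\ge N'}|b_j(x,\xi)|=O(e^{-c|\xi|})$ — shows, exactly as in Proposition \ref{prop:real_formal_symb}, that the bracket equals $b(x,\xi)$ up to $O(e^{-c|\xi|})$ on a complex neighbourhood of the real locus. For the Taylor remainder, one more integration by parts in $\xi$ absorbs its $(y-x)^\mu$ factor with $|\mu|=N$, placing $2N$ derivatives on $a$; since $a$ is an analytic amplitude, $|\partial_\xi^\mu\partial_y^\mu a|\le C\rho^{2N}(2N)!\,|\xi|^{d-N}$ as soon as $2N\le|\xi|/R$, and combining once more with $\sum_{|\mu|=N}\tfrac1{\mu!}=n^N/N!$, $\binom{2N}{N}\le 4^N$, and Stirling, the remainder is $\le(4n\rho^2\epsilon/e)^{\epsilon|\xi|}|\xi|^d\le Ce^{-c|\xi|}$ for $\epsilon$ small. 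Thus the kernel of $Op(a)-Op(b)$ is $\int_{\Gamma(x,y)}e^{i(x-y)\cdot\xi}r(x,\xi,y)\dd\xi$ with $|\widetilde r(x,\xi,y)|\le Ce^{-c|\xi|}$ uniformly near the real locus, hence real-analytic in $(x,y)$, and $Op((a_j))=Op((b_j))$ as operators from $\mathcal{F}(V)$ to $\mathcal{F}(U)$.
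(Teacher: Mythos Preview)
Your Step 1 is correct and cleanly executed; the identity $\sum_{|\mu|=p}\tfrac{1}{\mu!}=\tfrac{n^p}{p!}$ is precisely what makes the factorials cooperate.

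Step 2 follows the same strategy as the paper (Taylor-expand $a(x,\xi,y)$ in $y$ about the diagonal, convert $(y-x)^\mu$ into $\partial_\xi^\mu$ by integration by parts, compare with a realisation of $(b_j)$), and your remainder estimate is fine in spirit, but there is a genuine gap in the integration by parts. The Taylor order $N(\xi)=\lfloor\epsilon|\xi|\rfloor$ depends on $\xi$, so the integrand of the polynomial part is effectively $\sum_\mu\tfrac{(y-x)^\mu}{\mu!}\partial_y^\mu a(x,\xi,x)\,\mathbf{1}_{|\mu|<N(\xi)}$; when you transfer $\partial_\xi^\mu$ onto this, the derivatives also land on the indicator (equivalently, you create boundary terms on the spheres where $N(\xi)$ jumps). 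These truncation-boundary contributions are precisely what must be controlled, and they are neither obviously nor automatically $O(e^{-c|\xi|})$. A related issue is the phrase ``holomorphic integration by parts in $\xi$'': the realisation $a$ is real-analytic in $(x,y)$ but carries cutoffs in $|\xi|$ and is therefore \emph{not} holomorphic in $\xi$; on a curved contour $\Gamma(x,y)$ you cannot integrate by parts without producing $\overline\partial$-terms. Finally, in comparing the bracket with $b$ you implicitly use $\partial_\xi$-derivative bounds on $a-\sum_{\ell<M}a_\ell$, which Proposition \ref{prop:real_formal_symb} does not directly supply.

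The paper handles exactly these points: it works with the explicit realisation $\sum_\ell a_\ell(1-\chi_{\ell+1})$, replaces the hard truncation at $N(\xi)$ by a second layer of Ehrenpreis cutoffs $\chi_{|\mu|}(c'\theta/|\mu|)$ (so that the polynomial part becomes $\sum_\mu\tfrac{(y-x)^\mu}{\mu!}\partial_y^\mu(a\circ\kappa)(x,\xi,x)\,\chi_{|\mu|}(\tfrac{c'\theta}{|\mu|})$), performs ordinary integration by parts on the straightened real contour, and then shows by direct estimation that every Leibniz term in which a $\partial_\xi$-derivative hits either family of cutoffs is $O(e^{-c|\theta|})$. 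That bookkeeping---tedious but essential---is the actual technical content of the proof; to repair your argument you would have to smooth the truncation in $|\mu|$ exactly as the paper does.
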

\begin{proof}It follows from a direct computation that $b_j$ is a formal analytic
  amplitude.

  Let $V$ be a small neighbourhood of $U$ such that $(b_j)_{j\in \N}$ is
  well-defined on a complex neighbourhood of $U\times \R^n\times
  V$. Let $\theta:U\times \R^n\times V\to \C^n$ be
  such that $\kappa:(x,\xi,y)\mapsto (x,\theta(x,\xi,y),y)$ extends to a
  biholomorphism with $\kappa(x,\xi,x)=\xi$ and $\phi:(x,\theta,y)\mapsto \phi\circ \kappa^{-1}$ is a
  positive phase function.

  Let $a$ and $b$ be respective realisations of $a_j\circ \kappa^{-1}$
  and $b_j\circ \kappa^{-1}$ as built in Proposition
  \ref{prop:real_formal_symb}. Namely, with $\chi_j\in
  C^{\infty}(\R^n,\R)$ radial and such that
  \begin{align*}
    &\1_{\R^n\setminus B(0,1)}\leq \chi_j\leq \1_{\R^n\setminus
      B(0,\frac 12)}\\
    &\exists \rho_0,\forall N\leq j,
      \|\nabla^N\chi_j\|_{L^{\infty}}\leq (\rho_0j)^N,
  \end{align*}
  we have defined, for $c>0$ small,
  \begin{align*}
    a(x,\theta,y)&=\sum_{j\in
                   \N}a_j(x,\xi(x,\theta,y),y)(1-\chi_{j+1}(\tfrac{c\theta}{j+1}))\\
    b(x,\theta,y)&=\sum_{j\in
                   \N}b_j(x,\xi(x,\theta,y),y)(1-\chi_{j+1}(\tfrac{c\theta}{j+1})),
  \end{align*}
  and then
  \begin{align*}
    Op((a_j))(x,y)&=\int e^{i\phi(x,\theta,y)}a(x,\theta,y)\dd \theta\\
    Op((b_j))(x,y)&=\int e^{i\phi(x,\theta,y)}b(x,\theta,y)\dd \theta
  \end{align*}
  modulo a regularising operator. Define the contour
  $\Gamma(x,y)=\{\xi\in \C^n, \theta(x,\xi,y)\in \R^n\}$, so that
  \begin{align*}
    Op((a_j))(x,y)&=\int_{\Gamma(x,y)} e^{i(x-y)\cdot \xi}a\circ \kappa(x,\xi,y)\dd \xi\\
    Op((b_j))(x,y)&=\int_{\Gamma(x,y)} e^{i(x-y)\cdot \xi}b\circ \kappa (x,\xi,y)\dd \xi.
  \end{align*}
  Now we use the standard manipulation: for every fixed $N\in \N$ and
  smooth function $f$ on $V$
  \[
    \forall x\in U, \forall y \in B(x,\epsilon), f(y)=\sum_{|\mu|<
      N}\frac{\partial^\mu
      f(x)}{\mu!}(y-x)^{\mu}+\sum_{|\mu|=N}R_\mu(f)(x,y)(y-x)^{\mu},
  \]
  where
  \[
    |R_\mu(x,y)|\leq C\frac{\|\partial^{\mu}f\|_{L^{\infty}}}{N!}.
    \]
  Let $\psi_j=1-\chi_j$, and let $c_1\ll c$. To apply the above
  formula at an order $N$ which grows like $c'|\xi|$, we write
  \begin{multline*}
    a\circ \kappa(x,\xi,y)=a\circ \kappa(x,\xi,y)\chi_1(c'\theta(x,\xi,y))\\+\sum_{N\in
      \N}\left(\sum_{|\mu|<N}\frac{\partial^{\mu}_y}{\mu!}a\circ
      \kappa(x,\xi,y)|_{y=x}(y-x)^{\mu}+\sum_{|\mu|=N}R_\mu(a\circ \kappa)(x,\xi,y)(y-x)^{\mu}\right)(\psi_{N+1}(\tfrac{c'\theta(x,\xi,y)}{N+1})-\psi_{N}(\tfrac{c'\theta(x,\xi,y)}{N})).
  \end{multline*}
  This amounts to
  \begin{multline*}
    a\circ \kappa(x,\xi,y)=\sum_{\mu\in
      \N^d}\frac{\partial_y^{\mu}}{\mu!}(a\circ
    \kappa(x,\xi,y))|_{y=x}(y-x)^{\mu}\chi_{|\mu|}(\tfrac{c'\theta(x,\xi,y)}{|\mu|})\\+\sum_{\mu\in
      \N^d}R_{\mu}(a\circ \kappa)(x,\xi,y)(y-x)^{\mu}(\psi_{|\mu|}(\tfrac{c'\theta(x,\xi,y)}{|\mu|})-\psi_{|\mu|-1}(\tfrac{c'\theta(x,\xi,y)}{|\mu|-1})).
  \end{multline*}
  Before proceeding further, we integrate by parts in $Op(a)$ to
  trade the powers of $x-y$ for derivatives in $\xi$:
  \begin{multline*}
    Op(a)(x,y)=\int_{\Gamma(x,y)}\sum_{\mu\in
      \N^d}\partial_{\xi}^{\mu}\left[\frac{i^{|\mu|}\partial_y^{\mu}}{\mu!}(a\circ
    \kappa(x,\xi,y))|_{y=x}\chi_{|\mu|}(\tfrac{c'\theta(x,\xi,y)}{|\mu|})\right]\dd
    \xi\\
    +\int_{\Gamma(x,y)}\sum_{\mu\in
      \N^d}|i|^{|\mu|}\partial_{\xi}^{\mu}\left[R_{\mu}(a\circ
    \kappa)(x,\xi,y)(\psi_{|\mu|}(\tfrac{c'\theta(x,\xi,y)}{|\mu|})-\psi_{|\mu|-1}(\tfrac{c'\theta(x,\xi,y)}{|\mu|-1}))\right]\dd \xi.
  \end{multline*}
  
  Each term in this expansion is real-analytic with respect to $(x,y)$
  in the variables $(x,\theta,y)$. Indeed, given $1\leq k\leq d$,
  \begin{align*}
    \partial_{y_j}(a\circ \kappa)=&\partial_{y_j}\kappa\cdot (\nabla
    a)\circ \kappa=(\partial_{y_j}a)\circ \kappa+\partial_{y_j}\theta\cdot
    (\nabla_{\xi}a)\circ \kappa\\
    \partial_{\xi_j}(a\circ \kappa)=&\partial_{\xi_j}\theta\cdot (\nabla_{\xi}
    a)\circ \kappa
  \end{align*}
  and both $(\nabla_{\xi,y}) \theta)\circ \kappa^{-1}$ and $a$ are
  real-analytic with respect to $(x,y)$. Therefore, by induction, for
  every $\mu\in \N^{d}$
  $[\partial_y^{\mu}(a\circ \kappa)|_{y=x}]\circ \kappa^{-1}$ is
  real-analytic with respect to $(x,y)$. The term $R_{\mu}$ only
  depends on these kind of derivatives, as evidenced by the general remainder
  formula for Taylor series
  \[
    R_{\mu}(f)(x,y)=\frac{|\mu|}{\mu!}\int_0^1\partial^{\mu}f(tx+(1-t)y)(1-t)^{N-1}\dd
    t.
  \]
  Let us now prove that the remainders are small:
  \[
    \left|\sum_{\mu\in
        \N^d}\partial_{\xi}^{\mu}\left[R_{\mu}(a\circ
      \kappa)(x,\xi,y)(\psi_{|\mu|}(\tfrac{c'\theta(x,\xi,y)}{|\mu|})-\psi_{|\mu|-1}(\tfrac{c'\theta(x,\xi,y)}{|\mu|-1}))\right]\right|\leq
    Ce^{-c_1|\theta(x,\xi,y)|},
  \]
  thereby obtaining (by Proposition \ref{prop:reg_to_reg}) that
  this term in $a\circ \kappa$ contributes to $Op(a)$ by a
  regularising operator. Given $\mu\in \N^d$, one has
  \[
    \psi_{|\mu|}(\tfrac{c'\theta}{|\mu|})-\psi_{|\mu|-1}(\tfrac{c'\theta}{|\mu|-1})\neq
    0 \Rightarrow \frac 12 (|\mu|-1)\leq c'|\theta|\leq |\mu|.
  \]
  Now, for some $\rho>0,R>0$,
  \[
    \forall j+k\leq \xi/R, |\nabla^j_y\nabla^k_{\xi}(a\circ \kappa)(x,\xi,y)|\leq
    C\rho^{j+k}(j+k)!|\xi|^{d-k},
  \]
  so that, given $\mu\in \N^d$ and $\theta$ such that $ \frac 12
  (|\mu|-1)\leq c'|\theta|\leq |\mu|$, given $\N^{d}\ni \nu\leq
  \mu$,
  \begin{multline*}
    \frac{\mu!}{\nu!(\mu-\nu)!}\partial_{\xi}^\nu R_{\mu}(a\circ
    \kappa)(x,\xi,y)\partial_{\xi}^{\mu-\nu}(\psi_{|\mu|}(\tfrac{c'\theta(x,\xi,y)}{|\mu|})-\psi_{|\mu|-1}(\tfrac{c'\theta(x,\xi,y)}{|\mu|-1}))\\
    \leq C(a)|\mu|\frac{|\mu|!}{(|\mu|-|\nu|)!}\rho^{2|\mu|}(c')^{|\mu|-|\nu|}|\xi|^{d-|\nu|}.
  \end{multline*}
  With respect to $|\nu|$, the right-hand side is a log-convex
  function; if $c'$ is small enough, at both endpoints $|\nu|=0$ and
  $|\nu|=|\mu|$, it is smaller than $Ce^{c_1|\theta|}$. Therefore
  \[
    \left|\partial_{\xi}^{\mu}\left[R_{\mu}(a\circ
        \kappa)(x,\xi,y)(\psi_{|\mu|}(\tfrac{c'\theta(x,\xi,y)}{|\mu|})-\psi_{|\mu|-1}(\tfrac{c'\theta(x,\xi,y)}{|\mu|-1}))\right]\right|\leq
    Ce^{-c_1|\theta|},
  \]
  and for $\theta$ fixed the number of non-zero terms in the sum over
  $\mu$ is polynomial in $|\theta|$.
  
  Similarly, within
  \[
    \sum_{\mu\in
      \N^d}\partial_{\xi}^{\mu}\left[\frac{i^{|\mu|}\partial_y^{\mu}}{\mu!}(a\circ
      \kappa(x,\xi,y))|_{y=x}\chi_{|\mu|}(\tfrac{c'\theta(x,\xi,y)}{|\mu|})\right],
  \]
  the terms where at least one derivative with respect to $\xi$ hits
  $\chi_{|\mu|}$ are exponentially small: indeed, given $\mu\in \N^d$, the support of $\nabla
  \chi_{|\mu|}(\tfrac{c'\theta(x,\xi,y)}{|\mu|})$ lies within $\{\frac
  12 |\mu|\leq c'|\theta|\leq |\mu|\}$, and the estimates on the
  derivatives are the same as previously.
  
  To conclude the proof, we inject the expression of $a\circ \kappa$
  in
  \[
    \sum_{\mu\in
      \N^d}\partial_{\xi}^{\mu}\frac{i^{|\mu|}\partial_y^{\mu}}{\mu!}(a\circ
    \kappa(x,\xi,y))|_{y=x}\chi_{|\mu|}(\tfrac{c'\theta(x,\xi,y)}{|\mu|})
  \]
  to prove that this is close to $b\circ \kappa$. We obtain
  \[
    \sum_{\mu\in \N^d}\sum_{j\in
      \N}\frac{i^{|\mu|}\partial_{\xi}^{\mu}\partial_y^{\mu}}{\mu!}[a_j(x,\xi,y)\chi_j(\tfrac{c\theta(x,\xi,y)}{j})]|_{y=x}\chi_{|\mu|}(\tfrac{c'\theta(x,\xi,y)}{|\mu|}),
  \]
  and again one can prove that this is exponentially small unless no
  derivative hits $\chi_j$: the support of
  $\nabla\chi_j(\tfrac{c\theta(x,\xi,y)}{j})$ lies in $\{\frac 12
  j\leq c|\theta|\leq j\}$, and one must also have
  $|\mu|<2c'|\theta|$; as long as $c'<\frac{c}{4}$, this ensures $2|\mu|\leq j$, hence for all
  $\nu_y\leq \mu$ and $\nu_{\xi}\leq \mu$,
  \begin{multline*}
    \left|\frac{\mu!}{\nu_y!(\mu-\nu_y)!\nu_{\xi}!(\mu-\nu_{\xi})!}\partial^{\nu_y}_y\partial^{\nu_{\xi}}_\xi
      a_j(x,\xi,y)\partial^{\mu-\nu_y}_y\partial^{\mu-\nu_{\xi}}_\xi\chi_j(\tfrac{c\theta(x,\xi,y)}{j})\right|\\
    \leq \rho^{j+2|\mu|}\frac{j!|\mu|!}{(|\mu|-|\nu_y|)!(|\mu|-|\nu_\xi|)!}c^{2|\mu|-|\nu_y|-|\nu_{\xi}|}|\xi|^{d-j-|\nu_{\xi}|};
  \end{multline*}
  for $c$ small this log-convex function of $|\nu_y|$ and $|\nu_{\xi}|$ is smaller
  than $e^{-c|\theta|}$ at all four endpoints
  $(0,0)$, $(0,|\mu|)$, $(|\mu|,0)$, $(|\mu|,|\mu|)$.

  It remains
  \[
    \sum_{\mu\in \N^d}\sum_{j\in
      \N}\frac{i^{|\mu|}\partial_{\xi}^{\mu}\partial_y^{\mu}a_j}{\mu!}(x,\xi,x)\chi_j(\tfrac{c\theta(x,\xi,x)}{j})\chi_{|\mu|}(\tfrac{c'\theta(x,\xi,y)}{|\mu|}),
  \]
  where we recall that
  \[
    b(x,\xi,y)=\sum_{\mu\in \N^d}\sum_{j\in
      \N}\frac{i^{|\mu|}\partial_{\xi}^{\mu}\partial_y^{\mu}a_j}{\mu!}(x,\xi,x)\chi_{j+|\mu|}(\tfrac{c\theta(x,\xi,x)}{j+|\mu|});
  \]
  therefore we want to prove that the following is exponentially small
  \[
    \sum_{\mu\in \N^d}\sum_{j\in
      \N}\frac{i^{|\mu|}\partial_{\xi}^{\mu}\partial_y^{\mu}a_j}{\mu!}(x,\xi,x)[\chi_j(\tfrac{c\theta(x,\xi,x)}{j})\chi_{|\mu|}(\tfrac{c'\theta(x,\xi,y)}{|\mu|})-\chi_{j+|\mu|}(\tfrac{c\theta(x,\xi,x)}{j+|\mu|})].
  \]
  We first observe that this is zero whenever $j\geq 2c|\theta|$, and
  also whenever $j+|\mu|\leq c|\theta|$ and $|\mu|\leq
  2c'|\theta|$. Therefore we are only interested in situations where
  $j\leq 2c|\theta|$ and either $|\mu|\leq 2c'|\theta|$, $j\geq
  (c-2c')|\theta|$, or $|\mu|\geq 2c'|\theta|$; since $c'\leq \frac
  c4$, this means that we are only considering cases where either $j$
  or $|\mu|$ is larger than $2c'|\theta|$. In this setting,
  \[
    \left|\frac{\partial_{\xi}^{\mu}\partial_y^{\mu}a_j}{\mu!}(x,\xi,x)\right|\leq
    C(a)\mu!j!|\xi|^{d-j-|\mu|}\leq Ce^{-c_1|\theta|}.
  \]
  This concludes the proof.
\end{proof}

\begin{prop}\label{prop:inversion_formal_pseudos}
  Define the Moyal product as follows: given two formal analytic symbols
  $a(x,\xi),b(x,\xi)$, let
  \[
    (a\sharp
    b)_k=\sum_{n=0}^k\sum_{l=0}^{k-n}\sum_{|\beta|=n}\frac{1}{\beta!}\partial^\alpha_xa_l\partial_{\xi}^{\beta}b_{k-n-l}.
  \]
  \begin{enumerate}
    \item The symbol product of two formal analytic symbols is a formal
  analytic symbol; in fact the topology of formal analytic symbols is
  induced by a countable family of Banach spaces of analytic
  symbols that are Banach algebras for the Moyal product.

  \item 
  If both $a$ and $b$ are defined on
  $U\times \R^n\times V$ where $U\Subset V$, then 
  \[
    Op(a)Op(b)-Op(a\sharp b)
  \]
  is regularising.

  \item
  Moreover, for every formal analytic amplitude $a$ such that $a_0$ is bounded
  away from $0$, there exists a formal analytic amplitude $b$ such that
  $(a\sharp b)=1$, and there exists a formal analytic amplitude $c$ such that
  $c\sharp c=a$.
  
\item Define the adjoint $a^*$ of a formal analytic amplitude $a$ as
  \[
    (a^*)_k:(x,\xi)=\sum_{|\mu|\leq k}\frac{i^{|\mu|}}{\mu!}\partial_x^{\mu}\partial_{\xi}^{\mu}\overline{a}_{k-|\mu|}(x,\xi).
  \]
  Then, given a formal analytic amplitude $a$ with $a^*=a$ and $a_0>0$, there
  exists a formal analytic amplitude $b$ such that
  $b^*\sharp b=a$.
\end{enumerate}
\end{prop}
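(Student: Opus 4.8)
The plan is to establish the four items in order, the first being the combinatorial core on which everything else rests and which I expect to be the main difficulty. For item (1), I would estimate each term $(a\sharp b)_k$ directly from Definition \ref{def:formal-amp}: applying $\nabla_x^j\nabla_\xi^{k'}$ to a typical summand $\tfrac{1}{\beta!}\partial_x^\beta a_l\,\partial_\xi^\beta b_{k-n-l}$ (with $0\le n\le k$, $0\le l\le k-n$, $|\beta|=n$) and expanding by Leibniz, each of the two factors is controlled by the defining bounds of $(a_l)$ and $(b_l)$. Summing over $\beta$, over $n$ and $l$, and over the Leibniz partitions, the convergence rests on three elementary ingredients: the bound $\sum_{|\beta|=n}\tfrac{1}{\beta!}\le e^{n}$; the factorial inequality $(p+q)!\le 2^{p+q}p!\,q!$, used repeatedly to distribute the combined factorial of Definition \ref{def:formal-amp} into one factor per symbol and one per derivative count; and the polynomial weight $(1+\cdot)^{-m}$ of Definition \ref{def:formal-amp}, which for $m$ large makes the sums over $n$ and $l$ converge while preserving a bound of the same shape. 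Keeping track of constants, one finds that for any $\rho,R,m$ there are $\rho',R',m'$ (with $R'$ controlled by $R$ and $\rho$) so that $\sharp\colon S^{\rho,R}_m(\Omega)\times S^{\rho,R}_m(\Omega)\to S^{\rho',R'}_{m'}(\Omega)$ is bounded; together with the purely combinatorial associativity of $\sharp$ and the fact that $1$ is a two-sided unit, this gives the Banach algebra statement after passing to the natural limit topology on formal analytic symbols.

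For item (2), $Op(a)Op(b)$ is a composition of two analytic Fourier Integral Operators with Lagrangian the diagonal of $(T^*U\setminus\{0\})^2$, hence by Proposition \ref{prop:compo_well_def} an analytic Fourier Integral Operator with Lagrangian $\mathrm{diag}\circ\mathrm{diag}=\mathrm{diag}$, and by Proposition \ref{prop:Kuranishi} a pseudodifferential operator. To identify its symbol I would write the composed kernel as $\int e^{i[(x-y)\cdot\xi+(y-z)\cdot\eta]}a^{\Gamma}(x,\xi,y)b^{\Gamma}(y,\eta,z)\,\mathrm{d}y\,\mathrm{d}\xi\,\mathrm{d}\eta$ and reduce the phase variables $(y,\eta)$ by Proposition \ref{prop:stat_phase_in_FIO}, the Hessian in $(y,\eta)$ being nondegenerate at the critical set $\{x=y=z,\ \eta=\xi\}$. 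After converting the amplitudes to total symbols via Proposition \ref{prop:ampl_to_symb_pseudos}, the formal stationary phase yields exactly their $\sharp$-product. The difference $Op(a)Op(b)-Op(a\sharp b)$ corresponds to the error terms of the stationary phase, which are $O(e^{-\epsilon|\xi|})$, hence regularising by Proposition \ref{prop:pos_phase_reg_int}.

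For item (3), since $a_0$ is bounded away from $0$ the reciprocal $a_0^{-1}$ is again analytic with comparable Cauchy estimates, and the equation $a\sharp b=1$ rewrites as the fixed-point identity $b=a_0^{-1}-a_0^{-1}\sharp(a\sharp b-a_0 b)$, in which the map $b\mapsto a_0^{-1}\sharp(a\sharp b-a_0 b)$ strictly lowers homogeneity (the zeroth-order terms of $a\sharp b$ and $a_0 b$ coincide). Hence it is solved by a Neumann series which, thanks to item (1) and this gain in homogeneity, converges in the scale $S^{\rho,R}_m$; equivalently one solves the triangular system $a_0 b_k=-[\text{polynomial in }b_0,\dots,b_{k-1},\,a_0,\dots,a_k\text{ and their derivatives}]$ term by term. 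For $c\sharp c=a$ one fixes $c_0$ to be a branch of $a_0^{1/2}$ (available, e.g., when $a_0>0$) and runs the same argument, or uses the binomial series $(1+s)^{\sharp 1/2}=\sum_j\binom{1/2}{j}s^{\sharp j}$ with $s$ of order $\le-1$; in either case the order-by-order recursion shows $c$ is \emph{unique} once $c_0$ is chosen. Finally, for item (4): given $a^{*}=a$ with $a_0>0$, let $c$ be the square root of item (3) with $c_0=a_0^{1/2}>0$. Applying $*$ to $c\sharp c=a$ and using the identity $(u\sharp v)^{*}=v^{*}\sharp u^{*}$ — a classical combinatorial identity of the $\sharp$-calculus, which also follows from $(Op(u)Op(v))^{*}=Op(v)^{*}Op(u)^{*}$ modulo regularising — gives $c^{*}\sharp c^{*}=a^{*}=a$; since $(c^{*})_0=\overline{a_0^{1/2}}=a_0^{1/2}$, uniqueness of the square root forces $c^{*}=c$, so $b:=c$ satisfies $b^{*}\sharp b=c\sharp c=a$.

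The step I expect to be the main obstacle is item (1): arranging the combinatorics of the Leibniz expansion of $(a\sharp b)_k$ so that $\sharp$ closes up inside the scale $S^{\rho,R}_m(\Omega)$, with the correct quantitative relations between the parameters, so that these spaces become honest Banach algebras. Everything downstream — in particular the convergence of the Neumann series in item (3) — is then essentially formal.
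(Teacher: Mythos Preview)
Your approach is essentially correct and largely parallel to the paper's, with one genuine difference in item~(4).

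For items (1)--(3) you follow the same strategy as the paper: direct combinatorial estimation for~(1) (the paper defers this to its Appendix, Proposition~\ref{prop:composition-formal}, and to Sj\"ostrand), stationary phase for~(2), and Neumann/binomial series for~(3). One technical point worth flagging: your conclusion for~(1), that $\sharp$ maps $S^{\rho,R}_m\times S^{\rho,R}_m\to S^{\rho',R'}_{m'}$ into a \emph{different} space, is weaker than what the paper actually proves and uses. The Appendix establishes the asymmetric bound $\|a\sharp b\|_{S^{\rho,R}_m}\le 12\,\|a\|_{S^{\rho,R}_m}\|b\|_{S^{\rho/2,R/4}_m}$ (for $m$ large and $R\ge 2^{d+2}\rho^2$), i.e.\ the product lands back in the \emph{same} space provided one factor lies in the smaller space $S^{\rho/2,R/4}_m$. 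This asymmetry is precisely what makes the Neumann and binomial series in (3)--(4) converge: the ``small'' factor (the degree $\le -1$ remainder) can be placed in $S^{\rho/2,R/4}_m$ with norm $<\tfrac{1}{12}$ by enlarging $R$, while the running product stays in $S^{\rho,R}_m$. Your formulation would force a change of space at each iteration, which does not obviously close up; you should aim for the asymmetric form.

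For item~(4) you take a genuinely different route. The paper first conjugates $a$ to $1+r$ via $(b_0^*)^{-1}\sharp a\sharp b_0^{-1}$ with $b_0=\sqrt{a_0}$, notes $r^*=r$ has degree $-1$, and then uses the explicit binomial series $\sqrt{1+r}=1+\tfrac{r}{2}-\tfrac{r^{\sharp 2}}{8}+\cdots$, self-adjoint term by term; the answer is $b=(\sqrt{1+r})\sharp b_0$. Your argument instead takes the square root $c$ from item~(3) with $c_0=a_0^{1/2}>0$ and deduces $c^*=c$ from the uniqueness of the $\sharp$-square root with prescribed principal part. This is correct and arguably cleaner; the paper's version has the advantage of being fully constructive and of exhibiting convergence in a fixed Banach space via the asymmetric estimate, which is the form needed later (e.g.\ in the Picard--Lindel\"of argument of Proposition~\ref{prop:correct_D0_error_formal}).
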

\begin{proof}
  Claims 1 and 3 only depend on the formal calculus and
  are well-known; see \cite{sjostrand_singularites_1982}. A related
  fact, proved in Appendix \ref{sec:algebra-symb-pseud}, is the following:
  with the symbol norms introduced in Definition \ref{def:formal-amp},
  for every $m$ large enough (depending on the dimension) and every
  $R\geq 2^{d+1}\rho^2$,
  \[
    \|a\sharp b\|_{S_m^{\rho,R}}\leq
    12\|a\|_{S_m^{\rho,R}}\|b\|_{S_m^{\frac{\rho}{2},\frac{R}{4}}}.
  \]

  Claim 2 is a particular case of
  Proposition \ref{prop:stat_phase_in_FIO}.

  To prove claim 4, fix a Banach space $S^{\rho,R}_m$ of symbols as in Definition \ref{def:formal-amp}, containing
  $a,\sqrt{a_0},(\sqrt{a_0})^{-1}$ (where we invert with respect to
  the Moyal product) and satisfying the requirements of Proposition \ref{prop:composition-formal}. Letting $b_0=\sqrt{a_0}$, by claim 3,
  \[
    (b_0^*)^{-1}\sharp a\sharp (b_0)^{-1}=1+r
  \]
  where the inverse is taken with respect to the Moyal product and
  where $r$ is a formal analytic symbol of degree $-1$.

  Up to increasing $R$, we can assume
  $\|r\|_{S^{\frac{\rho}{2},\frac{R}{4}_m}}<\frac{1}{12}$. Then the power
  series
  \[
    \sqrt{1+r}:=1+\frac{r}{2}-\frac{r\sharp r}{8}+\frac{r\sharp r
      \sharp r}{16}+\ldots
  \]
  converges to an element $c$ of $S^{\rho,R}_m$ such that $c^*=c$ and
  $c\sharp c = 1+r$. Setting $b=c\sharp b_0$ then concludes the proof.
\end{proof}

Our next move is to translate the pseudodifferential algebra into one
that is more suited to our microlocal needs, as a workaround to Remark
\ref{rem:pseudo_not_microlocal}. 
\begin{defn}\label{def:FBI}
  The FBI transform from $L^2(\R^n)$ to $L^2(\R^n\times S^{n-1})$ is the operator with singular kernel
  \[
    T:\R^n\times S^{n-1}\times \R^n\ni (x,\omega,y)\mapsto \int_0^{+\infty} e^{it((x-y)\cdot
      \omega+i\frac{|x-y|^2}{2})}t^{\frac{n+1}{4}}\dd t.
  \]
\end{defn}
The power of $t$ is chosen such that $T$ is continuous from
$L^2(\R^n)$ to $L^2(\R^n\times S^{n-1})$ and $T^*T$ is a degree $0$
pseudodifferential operator.

\begin{prop}\label{prop:FBI_holom_ext}
  The FBI transform of a compactly supported distribution extends into an holomorphic
  function of $x-i\omega$ on $\R^n\times B(0,1)$.
\end{prop}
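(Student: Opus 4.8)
The plan is to examine the integral kernel $T(x,\omega,y)=\int_0^{+\infty}e^{it((x-y)\cdot\omega+i|x-y|^2/2)}t^{(n+1)/4}\dd t$ and show that the phase function $\phi(x,\omega,y)=(x-y)\cdot\omega+i|x-y|^2/2$ extends holomorphically in the $x$--variable to a neighbourhood of $\R^n\times S^{n-1}$ inside $\C^n\times(\text{complexified }S^{n-1})$, in such a way that, after substituting $x\rightsquigarrow x-i\omega$, the imaginary part of the extended phase stays bounded below by a positive multiple of $|x-y|^2$ (or is simply positive away from $x=y$). Concretely, I would compute $\widetilde\phi(x-i\omega,\omega,y)=(x-i\omega-y)\cdot\omega+\tfrac{i}{2}(x-i\omega-y)^2$; expanding and using $|\omega|^2=1$ on $S^{n-1}$, the real part of the coefficient of $t$ should become strictly positive for $(x,y)$ in a neighbourhood of any compact set with $x\ne y$ (and for $x=y$ it degenerates only at $\omega$ aligned appropriately, but then the quadratic term saves us). This makes the $t$--integral an absolutely convergent integral of a function holomorphic in $x-i\omega$, locally uniformly, hence $T$ applied to a compactly supported distribution extends holomorphically.

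The key steps, in order, are: (1) write $Tu(x,\omega)=\langle u(y),\, T(x,\omega,y)\rangle$ for $u\in\mathcal E'(\R^n)$ and note it suffices to show that $(x,\omega)\mapsto T(x,\omega,y)$, together with all its $y$--derivatives, extends holomorphically in the combined variable $z=x-i\omega$ to an open set of $\C^n$, with bounds locally uniform in $y$ on the (compact) support of $u$; (2) perform the holomorphic extension of $\phi$ and substitute $z=x-i\omega$, obtaining an explicit quadratic expression in $(z,y)$ whose linear-in-$t$ part has real part $\geq c|z-y|^2$ locally — this is the positivity computation; (3) on the set where this real part is bounded below by a positive constant, the $t$--integral converges absolutely and defines a holomorphic function of $z$ by Morera/differentiation under the integral sign; (4) near the diagonal $z=y$ one uses instead the decay coming from $\re(i(z-y)^2/2)$, i.e. the Gaussian factor, which is still controlled since $|\im z|=|\omega|$ is bounded; (5) since $u$ has compact support, combine these estimates with the order of $u$ as a distribution (finitely many $y$--derivatives of the kernel, each still of the same exponential-integral form with extra polynomial-in-$t$ weights, which do not affect convergence) to conclude $Tu$ extends holomorphically to $\R^n\times B(0,1)$ in the variable $x-i\omega$.

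The main obstacle is step (2)--(4): verifying that after the substitution $z=x-i\omega$ the imaginary part of the extended phase is genuinely nonnegative and, more importantly, that it is bounded below by $c|z-y|^2$ so that the oscillatory $t$--integral is in fact an honestly convergent Laplace-type integral. One must be careful that $\omega$ ranges over the real sphere (so $|\omega|=1$ exactly), that the cross term $-i\omega\cdot\omega=-i$ contributes a genuine exponential decay $e^{-t/2}\cdot(\ldots)$ once combined with the quadratic piece, and that near $x=y$ the naive linear estimate degenerates but the quadratic term $\tfrac{i}{2}(x-i\omega-y)^2$ compensates. Getting the right open neighbourhood in $\C^n$ (matching the stated domain $\R^n\times B(0,1)$, i.e. $|\im z|=|\omega|<1$ after relaxing $\omega$ off the sphere, or exactly $|\omega|=1$ on the boundary) requires tracking these constants, but no conceptual difficulty beyond this bookkeeping; everything else is differentiation under a convergent integral.
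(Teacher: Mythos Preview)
Your plan contains a conceptual slip that makes the main computation fail. ``Holomorphic in $x-i\omega$'' does not mean: extend the phase holomorphically in the $x$ variable alone and then evaluate at the shifted point $x-i\omega$. That substitution computes a \emph{different} function, not the original $Tu$. And if you carry out your proposed calculation you get
\[
\widetilde\phi(x-i\omega,\omega,y)=2(x-y)\cdot\omega+\tfrac{i}{2}|x-y|^2-\tfrac{3i}{2}|\omega|^2,
\]
whose imaginary part on $|\omega|=1$ is $\tfrac12|x-y|^2-\tfrac32$, negative near the diagonal; the $t$-integral then diverges (your ``$-i\omega\cdot\omega$ gives decay $e^{-t/2}$'' has the sign backwards: $e^{it\cdot(-i)}=e^{t}$).

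What you actually need is to recognise that the \emph{original} phase on $\R^n\times S^{n-1}$ is already the restriction of a holomorphic function of $z=x-i\omega$. The paper's one-line trick is to add the term $\tfrac{i}{2}(1-|\omega|^2)$, which is identically zero on the sphere, to the phase; a short computation then shows
\[
(x-y)\cdot\omega+\tfrac{i}{2}|x-y|^2+\tfrac{i}{2}(1-|\omega|^2)=\tfrac{i}{2}\bigl(1+(z-y)^2\bigr),\qquad z=x-i\omega,
\]
which is manifestly holomorphic in $z$ and has imaginary part $\tfrac12(1-|\omega|^2+|x-y|^2)\ge 0$ throughout $\R^n\times \overline{B(0,1)}$. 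With this rewriting the oscillatory integral is well-defined and holomorphic on the whole tube; no case analysis near or away from the diagonal is needed, and the pairing with $u\in\mathcal E'$ is immediate. Once you have this identity, steps (1), (3) and (5) of your plan go through without difficulty.
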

\begin{proof}
  It suffices to remark that, writing the phase as
  \[
    t((x-y)\cdot \omega+i\tfrac{|x-y|^2}{2}+i\tfrac{(1-|\omega|^2)}{2}),
  \]
  we obtain a holomorphic function of $x-i\omega$ on $\R^n\times
  B(0,1)$, whose imaginary part is non-negative.
\end{proof}
Before proceeding further, we introduce a helpful notation: given
$z\in \C^n$ we set $z^2=\sum_{j=1}^nz_j^2$; this is the holomorphic
extension of $\R^n\ni x\mapsto |x|^2$. We will reserve $|z|^2$ for the
Hermitian norm of $z$.

\begin{prop}\label{prop:T*T}~
  \begin{enumerate}
    \item 
  The Lagrangian associated with $T$ is
  \[
    \Lambda_T=\{(x,t(\omega+i(x-y)),\omega,t(x-y),y,t(\omega+i(x-y))),t>0,(x-y)\cdot \omega=-i\tfrac{(x-y)^2}{2}.\}
  \]
  \item $\Lambda_{T^*}\circ
  \Lambda_T={\rm diag}(T^*\C^n\setminus \{0\})$, so that
  $T^*T$ is an elliptic pseudodifferential operator.

  \item The
  projection of 
  $\Lambda_{T}\circ\Lambda_{T^*}$ onto the
  base has corank $1$ everywhere. Moreover the intersection of
  $\Lambda_{T}\circ\Lambda_{T^*}$ and the real locus is a half-line
  bundle over $\diag(\R^{n}\times S^{n-1})$, so that the integral
  kernel of any operator of the form $T^*Op(a)T$,
  for a formal analytic amplitude $(a_j)_{j\in \N}$, or more generally of
  any Fourier Integral Operator with the same Lagrangian as $T^*T$, is
  real-analytic away from the diagonal, and near the diagonal takes the form,
  \[
    (x_1,\omega_1,x_2,\omega_2)\mapsto
    \int_0^{+\infty}e^{it\psi(x_1,\omega_1,x_2,\omega_2)}b(x_1,\omega_1,x_2,\omega_2,t)\dd
    t+r(x_1,\omega_1,x_2,\omega_2).
  \]
  In this formula,
  \begin{itemize}
   \item \begin{equation}\label{eq:phase_Toep}
    \psi(x_1,\omega_1,x_2,\omega_2):=i(1+(\tfrac{x_1-i\omega_1-x_2-i\omega_2}{2})^2),
  \end{equation}
  and in particular $\psi$ extends into a holomorphic function of
  $x_1-i\omega_1$ and $x_2+i\omega_2$ on $\R^n\times B(0,1)\times
  \R^n\times B(0,1)$, and $\im(\psi)$ is positive away from
  $\diag(\R^n\times S^{n-1})$.
\item $b$ is the realisation of a formal analytic amplitude.
  \item $r$ and $b$ extend into holomorphic functions of
  $x_1-i\omega_1$ and $x_2+i\omega_2$ on a neighbourhood of
  $\diag(\R^n\times S^{n-1})$ in $\R^{4n}$.
\item 
  $b$  realises a formal analytic amplitude obtained from $a$ by usual
  stationary phase. The first term $b_0$ is the only function
  such that $b_0(x,\omega,x,\omega)=a_0(x,\omega)$ and which extends
into 
  holomorphic function of $\omega_1+ix_1$ and $\omega_2-ix_2$. More
  generally, for any $j,\ell\in \N$, $b_j(x,\omega,x,\omega)$ only depends on $a_l$ and its
  derivatives up to degree $2j-\ell$ at $x,\omega$.
\end{itemize}
\end{enumerate}
\end{prop}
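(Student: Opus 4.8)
The plan is to reduce every assertion to explicit computations built around the single–phase–variable form of $T$. Writing $z_j=x_j-i\omega_j$, the identity
\[
  (x_j-y)\cdot\omega_j+\tfrac i2|x_j-y|^2=\tfrac i2\big((z_j-y)^2+1\big)\qquad(|\omega_j|=1)
\]
(already used for Proposition~\ref{prop:FBI_holom_ext}) shows that $\psi_0(x,\omega,y):=(x-y)\cdot\omega+\tfrac i2|x-y|^2$ is a positive nondegenerate phase, one–homogeneous in the fibre variable $t$, with $\im\psi_0\ge0$. For (1) I would apply Proposition~\ref{prop:lagr} to $t\psi_0$: on $\Sigma=\{\widetilde{\psi_0}=0\}=\{(x-y)\cdot\omega=-\tfrac i2(x-y)^2\}$ one computes $\dd_x\widetilde{\psi_0}=\omega+i(x-y)=-\dd_y\widetilde{\psi_0}$ and $\dd_\omega\widetilde{\psi_0}=x-y$, and multiplying by $t$ gives exactly the displayed $\Lambda_T$ (the $\omega$–slot being read in $T^*\widetilde{S^{n-1}}$, i.e.\ modulo $\R\omega$).

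For (2), I would compose $\Lambda_{T^*}$ and $\Lambda_T$ geometrically. Read as a relation from $T^*\C^n$, $\Lambda_T$ has the property that a point $((x,\omega),(\xi,\eta))$ in its image has a \emph{unique} preimage: from $\eta=t(x-y)$ and $\xi=t\omega+i\eta$ one gets $t\omega=\xi-i\eta$, so $t^2=(\xi-i\eta)^2$ (using $\omega^2=1$), then $y=x-\eta/t$ and $\zeta=\xi$. Hence $\Lambda_{T^*}\circ\Lambda_T\subset\diag(T^*\C^n\setminus\{0\})$; checking that this composition is transverse in the sense of Definition~\ref{def:composition} (equivalently that the Hessian of the composed phase is nondegenerate off its radial direction) lets me invoke Proposition~\ref{prop:compo_phase}, and $T^*T$ is then an analytic pseudodifferential operator by Proposition~\ref{prop:Kuranishi} and Theorem~\ref{prop:FIOs}. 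Its principal symbol, computed by the stationary phase of Proposition~\ref{prop:stat_phase_in_FIO}, is a nonzero constant—this is what the exponent $\tfrac{n+1}4$ in Definition~\ref{def:FBI} is designed to achieve—so $T^*T$ is elliptic.

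The geometric content of (3) comes again from composition. By Proposition~\ref{prop:excess} applied to the composed phase of $TT^*$ (phase variables $u,t_1,t_2$, whose $\theta$–Hessian drops rank by exactly one along the radial direction of $(t_1,t_2)$ by joint $1$–homogeneity) the base projection of $\Lambda_T\circ\Lambda_{T^*}$ has corank $1$; by the real–locus statement of Proposition~\ref{prop:compo_phase}, $(\Lambda_T\circ\Lambda_{T^*})_\R=(\Lambda_T)_\R\circ(\Lambda_{T^*})_\R$, and since on the real locus $\psi_0=0$ forces $x=y$, this is the half–line bundle $\{((x,\omega),t\omega,0,(x,\omega),t\omega,0):t>0\}$ over $\diag(\R^n\times S^{n-1})$. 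Proposition~\ref{prop:FIO-rank1-phase} then gives a one–phase–variable representation of $TOp(a)T^*$—and of any FIO with this Lagrangian—real–analytic off the diagonal by Proposition~\ref{prop:pos_phase_reg_int}. To identify the phase and amplitude I would compose the oscillatory integrals directly: with the holomorphic rewriting, the kernel of $TOp(a)T^*$ is
\[
  \int e^{i(u-v)\cdot\xi-\frac{t_1}2((z_1-u)^2+1)-\frac{t_2}2((\overline{z_2}-v)^2+1)}\,a(u,\xi,v)\,t_1^{\frac{n+1}4}t_2^{\frac{n+1}4}\,\dd\xi\,\dd u\,\dd v\,\dd t_1\,\dd t_2,
\]
where (after replacing $Op(a)$ by its total symbol $\widetilde a(u,\xi)$, Proposition~\ref{prop:ampl_to_symb_pseudos}) the $v$–, $u$– and $\xi$–integrals are \emph{exact} Gaussians producing a factor $\propto(t_1+t_2)^{-n/2}$ and the quadratic form $-\tfrac12\big(\tfrac{t_1t_2}{t_1+t_2}(z_1-w_2)^2+t_1+t_2\big)$ with $w_2=\overline{z_2}=x_2+i\omega_2$, the Gaussian moment expansion applying $u$– and $\xi$–derivatives to $\widetilde a$ evaluated at the holomorphic critical point $u^*=\tfrac{t_1z_1+t_2w_2}{t_1+t_2}$, $\xi^*=i\tfrac{t_1t_2}{t_1+t_2}(z_1-w_2)$; then setting $t_1=t\sigma$, $t_2=t(1-\sigma)$ (so the amplitude is homogeneous in $t$ and $\sigma\in(0,1)$ is a genuine stationary–phase variable) and applying the stationary phase of Proposition~\ref{prop:stat_phase} at the nondegenerate point $\sigma=\tfrac12$ leaves the phase $-\tfrac t2\big((\tfrac{z_1-w_2}2)^2+1\big)=\tfrac12\,it\psi$, which after the harmless rescaling $t\mapsto2t$ is $it\psi$ with $\psi$ as in~\eqref{eq:phase_Toep}. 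One checks directly that $\psi$ is holomorphic in $(z_1,w_2)$, defines $Z=\{\psi=0\}$ (with $\nabla_{x_1,\omega_1}\psi,\nabla_{x_2,\omega_2}\psi\neq0$ there, so $Z$ has codimension $1$), and satisfies $\im\psi=\tfrac14(|x_1-x_2|^2+|\omega_1-\omega_2|^2)\ge c|\psi|^2$ on the real locus, positive away from the diagonal.

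Finally, $b$ is obtained from $a$ by composing the passage to the total symbol, the exact $u$– and $\xi$–Gaussians, and the one–variable $\sigma$–stationary phase—all instances of the formal stationary phase of Proposition~\ref{prop:stat_phase}, which preserves formal analytic amplitudes and commutes with realisation—so $b$ realises a formal analytic amplitude, $r$ collecting the off–diagonal and exponentially small remainders. Holomorphy of $b$ and $r$ in $(x_1-i\omega_1,x_2+i\omega_2)$ follows from the manifest holomorphy of the kernel in these variables (the dependence on $(x_1,\omega_1)$, resp.\ $(x_2,\omega_2)$, enters only through $K_T(x_1,\omega_1;\cdot)$, resp.\ $\overline{K_T(x_2,\omega_2;\cdot)}$) together with that of $\psi$, or directly from the holomorphic dependence of $u^*,\xi^*$ on $(z_1,w_2)$, so that every term of the expansion does. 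Degree bookkeeping (two $u$– or $\sigma$–derivatives cost one power of $t^{-1}$; the passage to the total symbol costs one power per $(x,\xi)$–derivative pair) yields that $b_j(x,\omega,x,\omega)$ depends on $a_\ell$ only through derivatives of order $\le2j-\ell$; at leading order the several normalising constants combine, by the choice of $t^{\frac{n+1}4}$, to give $b_0$ equal to $a_0$ composed with the invertible linear change $(z_1,w_2)\mapsto(\tfrac{z_1+w_2}2,\tfrac{z_1-w_2}4)$, i.e.\ $b_0(x,\omega,x,\omega)=a_0(x,\omega)$ on the diagonal; this characterises $b_0$ uniquely among functions holomorphic in $(x_1-i\omega_1,x_2+i\omega_2)$, since if $b_0,b_0'$ are two such with the same diagonal value, then $h(x,\xi):=(b_0-b_0')(x+2\xi,x-2\xi)$ is holomorphic in $(x,\xi)$ and vanishes for all $\xi$ and all $x\in\R^n$, hence $h\equiv0$. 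The step I expect to be the main obstacle is the bookkeeping in this last reduction—carrying the Gaussian and stationary–phase constants through to verify that the leading constant is exactly $1$ and to establish the precise count $2j-\ell$; the clean–composition/transversality checks needed to invoke Propositions~\ref{prop:compo_phase},~\ref{prop:excess} and~\ref{prop:Kuranishi} are routine but must treat the sphere directions with care.
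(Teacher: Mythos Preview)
Your approach is correct and largely parallels the paper for parts (1) and (2): the paper also computes $\Lambda_T$ directly and then writes out the defining equations of $\Lambda_{T^*}\circ\Lambda_T$ and solves them (your injectivity-of-$\Lambda_T$ argument is a slick repackaging of the same computation; do remember to state the easy reverse inclusion $\diag\subset\Lambda_{T^*}\circ\Lambda_T$).

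For part (3) the two routes diverge. The paper stays geometric: it writes out the seven defining equations of $\Lambda_T\circ\Lambda_{T^*}$, solves for $y,t_1,t_2$ in terms of $(x_1,\omega_1,x_2,\omega_2)$, and reads off directly that the base projection is the codimension-one set $\{(\tfrac{x_1-i\omega_1-x_2-i\omega_2}{2})^2=-1\}$ with a half-line fibre; the form of the kernel and the properties of $b$ are then obtained by a single appeal to Propositions~\ref{prop:compo_well_def}, \ref{prop:stat_phase_in_FIO}, \ref{prop:FIO-rank1-phase}, and the holomorphy of $b,r$ is inherited from that of the kernel of $T$. You instead perform the $(u,v,\xi)$-Gaussian integrals explicitly and then a one-variable stationary phase in $\sigma=t_1/(t_1+t_2)$, which \emph{produces} $\psi$ rather than identifying it geometrically. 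Your route yields the explicit critical points $u^*,\xi^*$ and makes the $(z_1,w_2)$-holomorphy of each term of $b$ visible by construction, at the cost of the constant-tracking you flag; the paper's route is shorter and avoids those constants entirely but leaves the stationary-phase structure implicit.

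One small correction: your uniqueness argument for $b_0$ via the substitution $h(x,\xi)=(b_0-b_0')(x+2\xi,x-2\xi)$ is not quite well-posed, since $b_0$ is really a holomorphic function $g(z_1,w_2)$ and the diagonal condition is $w_2=\overline{z_1}$ with $z_1\in\R^n+iS^{n-1}$, a $(2n-1)$-dimensional set. The correct statement (which the paper invokes) is that this diagonal is totally real for the skew CR structure on $(\R^n\times S^{n-1})^2$ because $\R^n\times S^{n-1}$ is strongly pseudoconvex, so a function CR-holomorphic in the first factor and anti-CR-holomorphic in the second is determined by its diagonal restriction.
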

\begin{proof}
  The description of $\Lambda_T$ follows by an immediate computation. Now
  \begin{align*}
    \Lambda_{T^*}\circ \Lambda_T=\{(y_1,\eta_1,y_2,\eta_2),\exists
    (x,t_1,t_2,\omega),\,
    &\eta_2=t_2(\omega+i(x-y_2))\\
    &\eta_1=t_1(\omega-i(x-y_1))\\
    &t_2(\omega+i(x-y_2))=t_1(\omega-i(x-y_1))\\
    &t_2(x-y_2)=t_1(x-y_1)\\
    &(x-y_2)\cdot \omega=-i\tfrac{(x-y_2)^2}{2}\\
    &(y_1-x)\cdot \omega=-i\tfrac{(x-y_1)^2}{2}\}.
  \end{align*}
  The three first conditions ensure $\eta_1=\eta_2$. In particular,
  $\eta_1^2=\eta_2^2$, so that (using the last two conditions)
  $t_1=t_2$; finally the fourth condition gives
  $y_1=y_2$. Reciprocally, given $(y,\eta)\in T^*\R^d\setminus \{0\}$,
  setting $x=y,t_1=t_2=|\eta|,\omega=\eta/|\eta|$ yields
  $(y,\eta,y,\eta)\in \Lambda_{T^*}\circ \Lambda_T$.

  We now consider
  \begin{align*}
    \Lambda_{T}\circ \Lambda_{T^*}=\{(x_1,\xi_1,\omega_1,v_1,x_2,\xi_2,\omega_2,v_2),\exists
    (y,t_1,t_2),\,
    &\xi_1=t_1(\omega_1+i(x_1-y))\\
    &v_1=t_1(x_1-y)\\
    &\xi_2=t_2(\omega_2+i(y-x_2))\\
    &v_2=t_2(x_2-y)\\
    &t_1(\omega_1+i(x_1-y))=t_2(\omega_2+i(y-x_2))\\
    &(x_1-y)\cdot \omega_1=-i\tfrac{(x_1-y)^2}{2}\\
    &(y-x_2)\cdot \omega_2=-i\tfrac{(y-x_2)^2}{2}\}.
  \end{align*}
  Given $(x_1,\omega_1,x_2,\omega_2)$ fixed, conditions 1, 3, and 5 give $\xi_1=\xi_2$; together with the
  two last conditions, $\xi_1^2=\xi_2^2$ yields $t_1=t_2$. Using
  again condition 5, we can write
  $y=\frac{x_1-i\omega_1+x_2+i\omega_2}{2}$, and then either the
  last two conditions amounts to
  \[
    (\frac{x_1-i\omega_1-x_2-i\omega_2}{2})^2=-1;
  \]
  on this codimension 1 manifold, whose intersection with
  the real locus is exactly the diagonal, the space of solutions is a
  half-line described by $t_1$.

  The rest of the claim follows from there by applying
  Propositions \ref{prop:compo_well_def},
  \ref{prop:stat_phase_in_FIO}, and \ref{prop:FIO-rank1-phase}. The property of holomorphic extension
  of $b$ comes from that of the kernel of $T$ itself; the fact that
  holomorphic extension of real-analytic functions prescribed on the diagonal is unique comes from the fact that $\R^n\times
  B(0,1)$ is strongly pseudoconvex.
\end{proof}
The microlocal structure of operators of the form $TOp(a)T^*$ is
exactly that of the candidate for the Szeg\H{o} projector at the
boundary of $\R^n\times B(0,1)$, which is strongly pseudoconvex
since its natural defining function $|\omega|^2-1$ is strongly
p.s.h. Indeed,
\begin{align*}
  \psi(x_1,\omega_1,x_2,\omega_2)&=\frac{i}{4}\left((x_1-x_2)^2+(\omega_1-\omega_2)^2+2i(x_1-x_2)\cdot(\omega_1+\omega_2)\right)
  \\ &=-i\left(\frac
  14[(x_1+i\omega_1)^2+(x_2-i\omega_2)^2-2(x_1+i\omega_1)\cdot (x_2-i\omega_2)]-1\right)
\end{align*}
is the holomorphic extension of the function equal to
$i(1-|\omega|^2)$ on the diagonal $(x_1,\omega_1)=(x_2,\omega_2)$.

\begin{prop}\label{prop:Szego_is_FIO}
  The Szeg\H{o} kernel on $\R^n\times S^{n-1}$ is a Fourier Integral Operator with phase $\psi$.
\end{prop}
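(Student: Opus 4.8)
The plan is to recognize that the explicit Szegő kernel $K$ on $\R^n\times S^{n-1}$, given by Proposition \ref{prop:flat_Grauert}, has exactly the oscillatory-integral structure produced by the composition $T^*Op(a)T$ analyzed in Proposition \ref{prop:T*T}, and then to pin down which amplitude $a$ reproduces $K$. The starting observation is that $K(z,w)=\frac{1}{(2\pi)^n}\int_{\R^n}\frac{e^{i(z-\overline w)\cdot\xi}}{m_n(2\xi)}\,\dd\xi$, where $m_n(\xi)=\int_{S^{n-1}}e^{y\cdot\xi}\,\dd y$. First I would substitute $z=x_1-i\omega_1$, $\overline w=x_2+i\omega_2$ (the holomorphic-extension variables of the FBI picture), so that $(z-\overline w)\cdot\xi=((x_1-x_2)-i(\omega_1+\omega_2))\cdot\xi$, and pass to polar coordinates $\xi=t\eta$, $t>0$, $\eta\in S^{n-1}$. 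This rewrites $K$ as $\int_0^{+\infty}\int_{S^{n-1}}e^{it((x_1-x_2)-i(\omega_1+\omega_2))\cdot\eta}\,\frac{t^{n-1}}{m_n(2t\eta)}\,\dd\eta\,\dd t$ up to the constant. The inner integral over $\eta$ is itself an oscillatory integral in the single fibre variable $t$ after a stationary-phase reduction; alternatively, and more directly, I would use that $m_n(2t\eta)^{-1}e^{2t}$ is, by Proposition \ref{prop:m_n_amplitude}, a realisation of an analytic amplitude in the variable $2t\eta\in\Omega$, holomorphic in the appropriate cone, with $r(2t\eta)=2t$ on the real locus. The factor $e^{2t}$ exactly accounts for the shift bringing the phase into the form $it\psi$: collecting the exponentials, $e^{it(x_1-x_2)\cdot\eta+t(\omega_1+\omega_2)\cdot\eta}$ against the $e^{-2t}$ hidden in $1/m_n(2t\eta)=e^{-2t}\cdot(e^{2t}/m_n(2t\eta))$ produces, after the $\eta$-integration and a stationary-phase contraction of the sphere variable, a phase whose holomorphic extension is $it\psi$ with $\psi$ as in \eqref{eq:phase_Toep}; this is the content of the computation that $\psi$ is the holomorphic extension of $i(1-|\omega|^2)$ on the diagonal, already recorded in the excerpt.

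Concretely I would proceed as follows. Step one: write $K$ in the polar form above and insert the Borel-summation realisation from Proposition \ref{prop:m_n_amplitude}, so that $K$ becomes a finite-fibre-variable oscillatory integral with phase $it((x_1-x_2)-i(\omega_1+\omega_2))\cdot\eta$ over $(t,\eta)\in(0,\infty)\times S^{n-1}$ and an analytic amplitude realising an explicit formal amplitude. Step two: apply the stationary phase theorem, Proposition \ref{prop:stat_phase} (or its FIO form, Proposition \ref{prop:stat_phase_in_FIO}), in the variable $\eta$; the critical point equation is $(x_1-x_2)-i(\omega_1+\omega_2)$ parallel to $\eta$, giving a single critical $\eta^*$ on each component, and the critical value of the phase is precisely $t\psi(x_1,\omega_1,x_2,\omega_2)$ up to the constant $1$ coming from $|\eta^*|=1$ on the real diagonal — this is where the identity $\psi=i(1+((\tfrac{x_1-i\omega_1-x_2-i\omega_2}{2})^2))$ drops out, matching \eqref{eq:phase_Toep} after absorbing a factor of $2$ into $t$. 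Step three: conclude that $K$ has, away from the diagonal, a real-analytic kernel (already known from Proposition \ref{prop:flat_Grauert}) and near the diagonal the form $\int_0^{+\infty}e^{it\psi}b(x_1,\omega_1,x_2,\omega_2,t)\,\dd t + r$, with $b$ the realisation of a formal analytic amplitude and $\psi$ the phase of $T^*T$; by Proposition \ref{prop:FIO-rank1-phase} this is exactly the normal form of a Fourier Integral Operator with phase $\psi$, since the Lagrangian $\Lambda_\psi=\Lambda_T\circ\Lambda_{T^*}$ has corank-one base projection.

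I expect the main obstacle to be the stationary-phase bookkeeping in step two: the sphere $S^{n-1}$ is a genuine contour (not $\R^{n-1}$), so I must first apply the contour-deformation machinery of Proposition \ref{prop:contour} to reach the complex critical point $\eta^*$ — equivalently, use the $\widetilde{O(n)}$-invariance already exploited in the proof of Proposition \ref{prop:m_n_amplitude} to rotate to a standard position — and then verify that the resulting phase, after deformation, is a positive non-degenerate phase in the sense of Definition \ref{def:good_phase} with the correct Lagrangian. One must also check that the rescaling $t\mapsto 2t$ and the Jacobian of the $\eta$-stationary phase do not spoil the amplitude's membership in the class of realisations of formal analytic amplitudes; this follows from Propositions \ref{prop:Cauchy_product} and \ref{prop:change_phase_variables}, but requires care since the Gaussian weight $e^{-t\,(\text{quadratic in }\eta-\eta^*)}$ has $t$-dependent width, exactly the situation flagged at the end of the proof of Proposition \ref{prop:stat_phase}. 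Once these two points are handled, matching to the normal form of Proposition \ref{prop:T*T} and invoking Proposition \ref{prop:FIO-rank1-phase} finishes the proof; one may additionally identify $b_0$ via the uniqueness of holomorphic extension from the diagonal on the strongly pseudoconvex domain $\R^n\times B(0,1)$, recovering that the principal symbol of the Szegő projector equals $1$.
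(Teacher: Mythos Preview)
Your approach is correct and lands in the same place as the paper's, but the paper takes a slicker shortcut that you should notice: after passing to polar coordinates $\xi=r\nu$, the paper does \emph{not} perform stationary phase in the sphere variable $\nu$. Instead it observes that the angular integral $\int_{S^{n-1}}e^{ir(z-\overline{w})\cdot\nu}\dd\nu$ is \emph{itself} $m_n(ir(z-\overline{w}))$, so Proposition~\ref{prop:m_n_amplitude} applies a \emph{second} time (once to $1/m_n(2r)$, once to this angular integral). This immediately yields a one-variable oscillatory integral in $r$ with phase $r\bigl(-2+\sqrt{-\sum_j(z_j-\overline{w_j})^2}\bigr)$ and an explicit product amplitude $b(2r)\,a(ir(z-\overline{w}))\,r^{n-1}$, with $a,b$ the amplitudes already supplied by Proposition~\ref{prop:m_n_amplitude}. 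Your ``step two'' stationary phase in $\eta$ would reproduce exactly this expansion, since Proposition~\ref{prop:m_n_amplitude} \emph{is} the worked-out stationary phase for $m_n$; but the paper's recognition saves all the bookkeeping you flagged (contour deformation on the boundaryless $S^{n-1}$, handling the second critical point, Jacobians). Both routes then finish identically: the resulting phase is a defining function for the diagonal, so Proposition~\ref{prop:FIO-rank1-phase} converts it to $\psi$, and analyticity off the diagonal comes from Proposition~\ref{prop:flat_Grauert}. A minor slip: where you write ``$T^*Op(a)T$'' you mean operators with Lagrangian $\Lambda_T\circ\Lambda_{T^*}$, i.e.\ of the form $T\,Op(a)\,T^*$; this matches a typo in the statement of Proposition~\ref{prop:T*T} but the intended object is clear from the Lagrangian.
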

\begin{proof} Recall from Proposition \ref{prop:m_n_amplitude} that $a:\xi\mapsto
    e^{-|\xi|}\int_{S^{n-1}}e^{\xi\cdot y}\dd y$ is, up to an exponential factor,
    the realisation of an elliptic formal analytic amplitude. Thus
    \[
      b:\R^n\ni \xi \mapsto \frac{e^{-|\xi|}}{\int_{S^{n-1}}e^{\xi\cdot
          y}\dd y}
    \]
    is a radial function and the realisation of an elliptic formal analytic
    amplitude. Passing to spherical coordinates, the Szeg\H{o} kernel
    computed in Proposition \ref{prop:flat_Grauert} now reads
    \[
      K(z,w)=\frac{1}{(2\pi)^n}\int_0^{+\infty}b(2r)\left[e^{-2r}\int_{S^{n-1}}e^{ir(z-\overline{w})\cdot
          \nu}\dd \nu\right]r^{n-1}\dd r,
    \]
    which we simplify into
    \[
      K(z,w)=\frac{1}{(2\pi)^n}\int_0^{+\infty}e^{r\left(-2+\sqrt{-\sum_{j=1}^n(z_j-\overline{w_j})^2}\right)}b(2r)a(ir(z-\overline{w}))r^{n-1}\dd
      r.
    \]
    If $(z,w)$ belongs to a neighbourhood to $\{z=w\in \R^n\times
    S^{n-1}\}$, then $i(z-\overline{w})$ lies in a neighbourhood of
    $2S^{n-1}$ in $\C^n$, so that $ir(z-\overline{w})\in \Omega$.

    To conclude, near the diagonal, the Szeg\H{o} kernel
    takes the form of a Fourier integral operator whose phase is the
    holomorphic extension of 
    \[
      \psi_2(z,\overline{z})=-2+2|\im(z)|
    \]
    which is a defining function for $\R^n\times S^{n-1}$. By
    Proposition \ref{prop:FIO-rank1-phase}, one can rewrite it as a
    Fourier Integral Operator with phase $\psi$. Away from the
    diagonal we already know by Proposition \ref{prop:flat_Grauert}
    that the Szeg\H{o} kernel is real-analytic.
  \end{proof}

\begin{prop}\label{prop:invesion_Toeplitz}Given $U\subset \R^n\times
  S^{n-1}$, open and relatively compact, we let
  \[\mathcal{H}(U)=\ker_{\mathcal{E}'(\overline{U})}(\partial_b)/(C^{\omega}(\overline{U})+\mathcal{E}'(\partial
  U)),\]
  and $S(U)$ the space of formal analytic amplitudes on
  $\overline{U}\times \R_+$.
  \begin{enumerate}
    \item 
  Given $V\subset \R^n\times S^{n-1}$ and $a\in S(V)$, there exists a
  neighbourhood $W$ of $\overline{V\times V}$ and a unique formal analytic amplitude $\widetilde{a}$
  on $W\times \R_+$, holomorphic in the second variable and
  anti-holomorphic in the first variable.
\item 
  Given $U\Subset V$ open and
  $\epsilon>0$ small enough, letting $\Omega=\{(x,t,y)\in U\times \R^+\times V,
  \dist(x,y)<\epsilon\}$, then $(\phi,\Omega,U)$ satisfies
  \eqref{eq:boundary_positivity} and
  $I_{\psi,\Omega,U}(\widetilde{a})$ maps
  $\mathcal{F}(V)$ into $\mathcal{H}(U)$. 
 \item 
  There exists a sequence $(B_j)_{j\in \N}$ of bi-differential operators
  on $\R^n\times S^{n-1}$ such that for every $j$, $B_j$ has total
  degree $2j$, and for every relatively compact $V\subset \R^n\times
  S^{n-1}$, $S(V)$ is a unit algebra for the product
  \[
    (S(V))^2\ni (a,b)\mapsto (a\sharp
    b)_k=\left(\sum_{j+\ell=k}B_{k-j-\ell}(a_j,b_\ell)\right)_k.
  \]
  \item Invertible elements for the product above are exactly the formal amplitudes
    $a=(a_j)_{j\in \N}$ such that $a_0$ never vanishes.
    \item 
  For every $U_1\Subset U_2\Subset V$ and $a,b\in S(V)$,
  letting $\epsilon$ small enough and
  \begin{align*}
    \Omega_1&=\{(x,t,y)\in U_1\times \R^+\times U_2,
              \dist(x,y)<\epsilon\}\\
    \Omega_2&=\{(x,t,y)\in U_2\times \R^+\times V,
              \dist(x,y)<\epsilon\},
  \end{align*}
  one has
  \[
    I_{\psi,\Omega_1,U_1}(\widetilde{a})I_{\psi,\Omega_2,U_2}(\widetilde{b})=I_{\psi,\Omega_1,U_1}(\widetilde{a\sharp
      b}).
  \]
  \item 
  The unit for $\sharp$ acts as the identity on
  $\mathcal{H}$.
\end{enumerate}
\end{prop}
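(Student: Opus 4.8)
The plan is to match the unit Toeplitz operator with the Szeg\H{o} projector $S$ of $\R^{n}\times S^{n-1}$, up to the ambiguities encoded in $\mathcal{F}$ and $\mathcal{H}$, and to use that $S$ already fixes every boundary CR class. Write $\mathbf{1}\in S(V)$ for the unit of $\sharp$ and, for a chain $U_{1}\Subset U_{2}\Subset V$ of relatively compact open sets with the associated $\Omega_{1},\Omega_{2}$ as in item~5, write $P=I_{\psi,\Omega_{1},U_{1}}(\widetilde{\mathbf{1}})$ for the corresponding unit Toeplitz operator. The argument rests on three points: (a) the Szeg\H{o} projector $S$ on $\R^{n}\times S^{n-1}$ is itself of the form $I_{\psi,\Omega_{2},U_{2}}(\widetilde{s})$ for some $s\in S(V)$; (b) $S$ descends to the identity map on $\mathcal{H}$; (c) the unit law $\mathbf{1}\sharp s=s$ and the composition rule of item~5 then force $P$ to coincide with $S$ on the range of $S$, which by (b) is all of $\mathcal{H}$.

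For (a): Proposition~\ref{prop:Szego_is_FIO} realises $S$, near $\diag(\R^{n}\times S^{n-1})$, as a Fourier Integral Operator with phase $\psi$, while by Proposition~\ref{prop:flat_Grauert} its kernel is real-analytic off the diagonal and hence represents $0$ in $\mathcal{F}$; since the underlying Lagrangian has corank one on the base, Proposition~\ref{prop:FIO-rank1-phase} expresses it as $I_{\psi}(\widetilde{s})$ for a formal analytic amplitude $s$, and the holomorphy and anti-holomorphy of its amplitude recorded in Proposition~\ref{prop:T*T} identify $s$ with the canonical extension of its own diagonal restriction provided by item~1, so $s\in S(V)$. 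For (b): a class in $\mathcal{H}(U)$ has a representative $u$ with $\overline{\partial}_{b}u=0$ on $U$; this is in particular real-analytic there, so Proposition~\ref{prop:loc-anal-hypoell-Grauert} gives that $(1-S)u$ is real-analytic on $U$, which in the quotient defining $\mathcal{H}(U)$ says precisely $Su=u$; the same proposition together with the off-diagonal analyticity of Proposition~\ref{prop:flat_Grauert} is also what makes $S$ well defined on $\mathcal{H}$ in the first place.

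Granting (a) and (b), step (c) is short. As $\mathbf{1}$ is the (two-sided) unit of $(S(V),\sharp)$ by item~3, $\mathbf{1}\sharp s=s$, so the composition rule of item~5 gives $P\circ S=I_{\psi,\Omega_{1},U_{1}}(\widetilde{\mathbf{1}\sharp s})=I_{\psi,\Omega_{1},U_{1}}(\widetilde{s})$, i.e.\ the Szeg\H{o} projector again (restricted to output on $U_{1}$). Hence, for a class $u\in\mathcal{H}$, using $Su=u$ from (b) we get $Pu=P(Su)=(P\circ S)u=Su=u$; since this holds over every relatively compact subdomain and $\mathcal{H}$ is defined by local conditions, $P$ is the identity on $\mathcal{H}$. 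I expect the only genuinely delicate point to be (b) — ensuring that the Szeg\H{o} projector really descends to a well-defined operator on the quotient space $\mathcal{H}$ and equals the identity there — which is exactly where the analytic hypoellipticity of $\overline{\partial}_{b}$ on $\R^{n}\times S^{n-1}$ (Proposition~\ref{prop:loc-anal-hypoell-Grauert}) is indispensable; the rest is bookkeeping with the nested domains $U_{1}\Subset U_{2}\Subset V$. One could alternatively argue that $s$ is idempotent for $\sharp$ (from $S^{2}=S$ and item~5) and elliptic, hence equal to $\mathbf{1}$ by item~4, yielding the sharper $I_{\psi}(\widetilde{\mathbf{1}})=S$; but that route needs the injectivity of $a\mapsto I_{\psi}(\widetilde{a})$ modulo regularizing operators, which is why routing through $P\circ S=S$ is preferable here.
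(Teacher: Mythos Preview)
Your proof is correct and follows essentially the same strategy as the paper: both identify the Szeg\H{o} projector $S$ as $I_\psi(s)$ (via Propositions~\ref{prop:Szego_is_FIO}, \ref{prop:flat_Grauert}, and~\ref{prop:FIO-rank1-phase}) and use that $S$ acts as the identity on $\mathcal{H}$ through the analytic hypoellipticity of Proposition~\ref{prop:loc-anal-hypoell-Grauert}. The only difference is cosmetic: the paper simply declares that the amplitude of $S$ \emph{is} the unit of the algebra, while you route through the identity $P\circ S = I_\psi(\widetilde{\mathbf{1}\sharp s}) = I_\psi(\widetilde{s}) = S$ and then $Pu = PSu = Su = u$, which is a slightly more careful variant of the same argument and, as you note, sidesteps any appeal to injectivity of $a\mapsto I_\psi(\widetilde{a})$.
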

\begin{proof}
  Since $\R^n\times S^{n-1}$ is strongly pseudoconvex, the diagonal of
  $(\R^n\times S^{n-1})^2$ is a totally real submanifold, where we
  reverse the CR structure on the left factor. Therefore any
  real-analytic function on an open set $V\subset \R^n\times S^{n-1}$
  can be extended CR-holomorphically to a neighbourhood of $\diag(V)$ in
  $(\R^n\times S^{n-1})^2$.

  Given a formal analytic amplitude $\widetilde{a}$ near the diagonal such
  that $\partial_{b;1}\widetilde{a}=0$ (by which we mean: it is anti-CR-holomorphic in
  the first variable), since $\partial_{b;1}\psi=0$ as well, local
  realisations of $I_{\psi,\Omega,U}(\widetilde{a})$ as in Definition
  \ref{def:FIO_fixed_fibre} will indeed map $\mathcal{E}'(\overline{V})$ into
  functions $u$ such that $\partial_{b;1}u$ is real-analytic on a
  neighbourhood of $\overline{U}$. Therefore, by Proposition
  \ref{prop:loc-anal-hypoell-Grauert}, if $S$ denotes the
  (antiholomorphic) Szeg\H{o}
  projector one has indeed $u=(1-S)u+Su$ where
  $\partial_{b}Su=0$ and $(1-S)u$ is real-analytic on
  $\overline{U}$. In particular, $I_{\psi,\Omega,U}(\widetilde{a})$ indeed maps
  $\mathcal{F}(V)$ into $\mathcal{H}(U)$.

  Recalling from Proposition \ref{prop:T*T} that
  $\Lambda_{\psi}=\Lambda_{T}\circ \Lambda_{T^*}$ and that
  $\Lambda_{T^*}\circ \Lambda_T$ is the diagonal of $(T^*\C^n)^*$, we
  obtain that $\Lambda_{\psi}\circ \Lambda_{\psi}=\Lambda_{\psi}$. We
  also proved in Proposition \ref{prop:T*T} that any analytic Fourier Integral
  Operator with Lagrangian $\Lambda_{\psi}$ can be written as a
  Fourier Integral Operator with phase $\psi$. Therefore, given
  $V,a,b, U_1,U_2,\Omega_1,\Omega_2$ as in the claim, there exists a formal analytic
  amplitude $c$ such that
  $I_{\psi,\Omega_1,U_1}(\widetilde{a})I_{\psi,\Omega_2,U_2}(\widetilde{b})=I_{\psi,\Omega_1,U_1}(c)$. The
  amplitude $c$ is obtained from $a$, $b$ and $\psi$ by stationary phase and
  change of variables in the middle variables, so that $c$ is
  CR-holomorphic on $\Omega_1$. By the usual $C^{\infty}$ calculus of
  these operators (see
  for instance \cite{boutet_de_monvel_sur_1975}), as a formal amplitude, the restriction of
  $c$ to the diagonal is indeed of the form $a\sharp b$ as in the
  claim.

  We know by Proposition \ref{prop:Szego_is_FIO} that the Szeg\H{o}
  projector has a kernel real-analytic from the diagonal and of the
  form $I_{\psi,\Omega,U}(a)$, where $U=\R^n\times S^{n-1}$,
  $\Omega=(\R^n\times S^{n-1})^2$ and $a$ depends only on the fibre
  variable. By definition, $S$ acts as identity on $\mathcal{H}$; its
  amplitude is a unit for the algebra above.

  It remains to prove that elliptic amplitudes can be inverted, which is
  a claim of formal nature (by the product formula, it suffices to
  check it at the level of expansion of amplitudes, and not
  a priori as operators making sense from some function space to
  another). To this
  end, we first suppose that amplitudes are globally defined, so that for every $a\in
  S(\R^n\times S^{n-1})$, by Proposition \ref{prop:T*T},
  the operator $T^*I_{\psi,\Omega,V_1}(a)T$ is a pseudodifferential
  operator on $\R^n$. Moreover if $a$ is elliptic then so
  is this pseudodifferential operator, so that by Proposition
  \ref{prop:inversion_formal_pseudos} it admits an inverse with amplitude
  $b$. 

  Now, $M=I_{\psi}(a)TOp(b)T^*$ satisfies, for every $c\in
  S_{\mathcal{H}}(\R^n\times S^{n-1})$,
  \[
    T^*MI_{\psi}(c)T=TI_{\psi}(c)T^*,
  \]
  and since $T$ and $T^*$ are invertible, we conclude that $M$ acts as
  identity on $\mathcal{H}$. Moreover, by the previous considerations,
  $M$ can be written in the form $I_{\psi}(d)$ for some $d\in
  S_{\mathcal{H}}(\R^n\times S^{n-1})$.

  As a formal amplitude, $d$ is obtained from $a$ by a formula of the
  form $d=\sum_{k=0}^{+\infty}C_k(a,a_0^{-1})$, for some
  sequence of bidifferential operators $C_k$ (this is a consequence of
  Propositions \ref{prop:T*T} and \ref{prop:inversion_formal_pseudos};
  one can also directly prove it using the $C^{\infty}$ calculus). Applying this formula to
  a locally defined amplitude $a$ leads necessarily its inverse for
  the formal product.
\end{proof}

In the last proof and from now on, given $V_0\subset \R^n\times S^{n-1}$ and given a
formal analytic amplitude $a$ on $V_0\times (0,+\infty)$, we will
sometimes drop the domains and consider $I_{\psi}(a)$ as the collection of
operators of the form
$I_{\psi,\Omega,U}(\widetilde{a}):\mathcal{F}(V)\to \mathcal{H}(U)$ where $U\Subset
V\Subset V_0$ and $\Omega$ is a small neighbourhood of $U\times
V\times (0,+\infty)$ on which $a$ extends into $\widetilde{a}$,
holomorphic in the first variable and anti-holomorphic in the second
variable.
These operators will always strictly decrease the definition set of
the function on which they act.

The operator $T^*$ sends $\mathcal{O}(\R^n\times S^{n-1})$ to
$\mathcal{O}(\R^n)$, and therefore $T$ is well-defined from
$\mathcal{O}'(\R^n)$ to $\mathcal{O}'(\R^n\times S^{n-1})$. Thus we
can define the \emph{analytic wave front set} of an analytic
functional as follows.

\begin{defn}\label{def:WFa}
  The \emph{analytic wave front set} of $u\in \mathcal{O}'(\R^n)$ is
  defined as the analytic singular support of $Tu$.
\end{defn}
This is equivalent to the usual definition of analytic singular
support by means of the FBI transform, which asks for exponential
decay of a related quantity. In particular, it is equivalent to all
of the classical notions of analytic wave front set
\cite{bony_equivalence_1977}.

We conclude this section with a few comments. Our algebra of operators
defined in Proposition \ref{prop:invesion_Toeplitz} is inspired from
``covariant Toeplitz operators'', originally defined in the semiclassical
setting in \cite{charles_berezin-toeplitz_2003} as an alternative
description of operators of the ``contravariant Toeplitz''
form $SaS$, where $S$ is the Szeg\H{o} projector and $a$ acts by
multiplication; these operators also form an algebra in analytic
regularity \cite{deleporte_toeplitz_2018,rouby_analytic_2018}.

It it a bit awkward that
the natural range of $T$, and therefore the space on which our algebra
acts, extends into a space of \emph{anti}holomorphic functions on
$\R^n\times B(0,1)$. Alternative conventions, however, are not more
satisfactory; one can set the FBI phase as $(y-x)\cdot
\omega+i\frac{|x-y|^2}{2}$ and obtain holomorphic functions, but at
the end we
obtain a characterisation of the analytic wave front set of $u$ as the
analytic singular support of $(x,\omega)\mapsto Tu(x,-\omega)$. One
can also simply swap the variables $x$ and $\omega$, but this is at odds with the interpretation of $\R^n\times
S^{n-1}$ as the cosphere bundle over $\R^n$ with a natural CR
structure obtained by identifying $T^*\R^n$ and $\widetilde{\R^n}=\C^n$;
this ``Grauert tube'' vision of the FBI transform extends naturally to
manifolds with a real-analytic Riemannian structure
\cite{guillemin_grauert_1991,guillemin_grauert_1992,leichtnam_intrinsic_1996},
see also the semiclassical approach \cite{sjostrand_density_1996}.

The Grauert tube approach is
essentially contained in our toolbox; the FBI transform as introduced
in \cite{leichtnam_intrinsic_1996}, for instance, can be written as an
analytic Fourier Integral Operator whose
Lagrangian has similar properties to the flat case, and the definition
of the wave front set is equivalent to the one above, via analytic
charts. We will say more on this in Section \ref{sec:appl-grau-tubes}.

\subsection{Advanced properties of Fourier Integral Operators}
\label{sec:advanced-properties}

With help of the FBI transform and the structure of Toeplitz
operators, we can prove that general Fourier Integral Operators push
the analytic wave front set of a distribution as expected, and also an
microlocal ellipticity result for pseudodifferential operators.

\begin{prop}\label{prop:WF_Pu}
  Let $a$ be a formal analytic symbol on $\R^n$. Then
  \[
    WF_a(u)\subset \{a_0=0\}\cup WF_a(Op(a)u).
  \]
\end{prop}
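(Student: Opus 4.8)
The statement is the microlocal elliptic regularity for analytic pseudodifferential operators: if $x_0 \notin WF_a(Op(a)u)$ and $a_0(x_0,\xi_0) \neq 0$ for some $(x_0,\xi_0)$, then... but actually the statement as written is at the level of wave front sets in the base, so I will read it as: away from the characteristic variety $\{a_0 = 0\}$, $Op(a)$ is microlocally invertible, hence does not lose analytic singularities. The plan is to construct a microlocal parametrix. First I would invoke Proposition \ref{prop:inversion_formal_pseudos}, part 3: since we only claim a statement away from $\{a_0 = 0\}$, we may work microlocally near a point $(x_0,\xi_0)$ where $a_0$ is bounded away from $0$. Using a cutoff formal symbol $\chi$ (elliptic near $(x_0,\xi_0)$, supported in the elliptic region of $a$), the symbol $\chi \sharp a$ is elliptic, so it admits a Moyal inverse $b$ with $b \sharp (\chi \sharp a) = 1$ near $(x_0,\xi_0)$; equivalently there is a formal analytic symbol $b$ such that $b \sharp a = 1$ microlocally near $(x_0,\xi_0)$.

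The subtlety, as flagged in Remark \ref{rem:pseudo_not_microlocal}, is that pseudodifferential operators in the sense of Definition \ref{def:pseudos} are \emph{not} microlocal — condition \eqref{eq:boundary_positivity} forces the contour $\Gamma(x,x)$ to be all of $\R^n$, so one cannot directly localize in $\xi$. This is precisely why the FBI transform was introduced. So the real route is: apply $T$ to conjugate the problem to $\R^n \times S^{n-1}$, where $Op(a)$ becomes (by Proposition \ref{prop:T*T}) an operator $T\,Op(a)\,T^*$ with the Toeplitz structure of Proposition \ref{prop:invesion_Toeplitz}, acting on $\mathcal{H}$. There, microlocalization \emph{is} available because one may cut off in the base variable $(x,\omega)$ of $\R^n \times S^{n-1}$, which encodes the phase-space point. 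Concretely: suppose $x_0 \notin WF_a(Op(a)u)$; translating through $T$, on the corresponding open set $V \subset \R^n \times S^{n-1}$ the function $I_\psi(a_{\mathrm{Toep}})(Tu)$ is real-analytic, where $a_{\mathrm{Toep}}$ is the Toeplitz amplitude attached to $Op(a)$, elliptic wherever $a_0 \neq 0$. By Proposition \ref{prop:invesion_Toeplitz}, parts 3–6, an elliptic Toeplitz amplitude is invertible for the $\sharp$-product, and the inverse $I_\psi(b_{\mathrm{Toep}})$ composes with $I_\psi(a_{\mathrm{Toep}})$ to give the identity on $\mathcal{H}(U)$ for $U \Subset V$ (shrinking the domain, as these operators always do). Applying $I_\psi(b_{\mathrm{Toep}})$ to $I_\psi(a_{\mathrm{Toep}})(Tu)$ thus recovers $Tu$ modulo $C^\omega$ on $U$, showing $Tu$ is real-analytic there; by Definition \ref{def:WFa} this says $x_0 \notin WF_a(u)$.

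Assembling: $WF_a(u) \subset WF_a(Op(a)u) \cup \{a_0 = 0\}$, because at any point outside the right-hand side, the above parametrix argument applies. I expect the main obstacle to be bookkeeping the domain shrinkage and the passage between $Op(a)$ on $\R^n$ and the Toeplitz operator on $\R^n \times S^{n-1}$ — specifically, checking that $T\,Op(a)\,T^* = I_\psi(a_{\mathrm{Toep}})$ (up to real-analytic kernels) with $a_{\mathrm{Toep}}$ elliptic exactly where $a_0 \neq 0$, which follows from Proposition \ref{prop:T*T} since its fourth bullet gives $b_0(x,\omega,x,\omega)$ determined by $a_0$; and that $WF_a(Op(a)u)$ and the analytic singular support of $I_\psi(a_{\mathrm{Toep}})(Tu)$ correspond under $T$, which follows from $T$, $T^*$ being elliptic Fourier Integral Operators (Proposition \ref{prop:T*T}, part 2) together with the wave front inclusion in Theorem \ref{prop:FIOs}. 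The formal-algebra input (existence of the $\sharp$-inverse) is, as the paper stresses, purely symbolic and already granted by Proposition \ref{prop:inversion_formal_pseudos} and Proposition \ref{prop:invesion_Toeplitz}.
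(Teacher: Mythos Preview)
Your approach is exactly the paper's: conjugate by the FBI transform, rewrite as a Toeplitz operator with phase $\psi$, invert the elliptic Toeplitz amplitude via Proposition~\ref{prop:invesion_Toeplitz}, and apply the inverse to recover $Tu$ modulo $C^\omega$.

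One bookkeeping point deserves care. You write $T\,Op(a)\,T^* = I_\psi(a_{\mathrm{Toep}})$ and then compare the analytic singular support of $I_\psi(a_{\mathrm{Toep}})(Tu)$ with $WF_a(Op(a)u)$. But $I_\psi(a_{\mathrm{Toep}})(Tu) = T\,Op(a)\,T^*T\,u$, not $T\,Op(a)u$; identifying these two would require knowing that the elliptic pseudodifferential operator $T^*T$ preserves $WF_a$, which is precisely what you are proving. The paper avoids this circularity by inserting the Moyal inverse: letting $r$ be a formal analytic symbol with $Op(r)T^*T = I$ modulo analytic (this exists by Proposition~\ref{prop:inversion_formal_pseudos}, since $T^*T$ is elliptic of degree $0$), one has directly
\[
  T\,Op(a)u = T\,Op(a)\,Op(r)\,T^*\,(Tu) = I_\psi(b)\,(Tu),
\]
so the Toeplitz operator to invert is $T\,Op(a)\,Op(r)\,T^*$ rather than $T\,Op(a)\,T^*$. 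Its principal amplitude is still nonvanishing where $a_0\neq 0$, and the rest of your argument goes through verbatim.
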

\begin{proof}
  Let $(x,\omega)\in \R^n\times S^{n-1}$ and suppose that $TOp(a)u$ is
  real-analytic near $(x,\omega)$ and $a_0$ is bounded away from $0$
  near $(x,\omega)$. Modulo an analytic function, one has
  \[
    TOp(a)u=TOp(a)Op(r)T^*Tu=I_{\psi}(b)Tu,
  \]
  where $b$ is a formal analytic amplitude such that $b_0$ is bounded
  away from $0$ near $(x,\omega)$. By Proposition
  \ref{prop:invesion_Toeplitz}, on a neighbourhood $V$ of $(x,\omega)$,
  $b$ admits a formal inverse for the $I_{\psi}$ calculus. Letting $d$
  denote the formal analytic amplitude of its inverse, given $U\Subset V$, one has, modulo an
  analytic function,
  \[
    I_{\psi,U\times V,U}(d)(TOp(a)u) = I_{\psi,U}(d)I_{\psi}(b)Tu=u;
  \]
  now, by Proposition \ref{prop:reg_to_reg}, since $TOp(a)u$ is
  real-analytic on $V$, the left-hand side is real-analytic on $U$. This concludes the proof.
\end{proof}

\begin{prop}\label{prop:FIO-WF}
  Let $(\phi,\Omega,U)$ satisfy \eqref{eq:boundary_positivity} and let
  $a$ be a formal analytic amplitude on $\Omega$. Then
  \[
    WF_a(I_{\phi,\Omega,U}(a)u)\subset \{(x,\xi)\in T^*\R^{n_x}\setminus \{0\},
    \exists (y,\xi)\in WF_a(u), (x,\xi,y,\eta)\in (\Lambda_{\phi})_{\R}\}.
  \]
\end{prop}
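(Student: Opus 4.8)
The plan is to reduce the statement about a general Fourier Integral Operator $I_{\phi,\Omega,U}(a)$ to the already-established statements about pseudodifferential operators (Proposition~\ref{prop:WF_Pu}), about the FBI transform and Toeplitz operators (Propositions~\ref{prop:T*T} and~\ref{prop:invesion_Toeplitz}), and about the wave front set bound for $K_{\Gamma,\phi}$ coming from Definition~\ref{def:FIO_fixed_fibre}. First I would recall that $WF_a$ is by Definition~\ref{def:WFa} the analytic singular support of $T(\cdot)$ for the flat FBI transform $T$, and that by the last item of Theorem~\ref{prop:FIOs} (which combines Proposition~\ref{prop:reg_to_reg} with the patching of Definition~\ref{def:FIO_fixed_fibre}) one already has the \emph{analytic singular support} inclusion
\[
  SS_a(I_{\phi,\Omega,U}(a)v)\subset \{x\in U,\ \exists\Theta\in\Omega,\ \pi_X(\Theta)=x,\ \pi_Y(\Theta)\in SS_a(v),\ \im(\phi(\Theta))=0\}.
\]
The present proposition is the wave-front-set refinement of this. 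The natural strategy is the standard one: test against the FBI transform. One writes $I_{\phi,\Omega,U}(a)u$ and composes on the left with $T$. By Theorem~\ref{prop:FIOs}(3), $T\circ I_{\phi,\Omega,U}(a)$ is again an analytic Fourier Integral Operator, whose Lagrangian is $\Lambda_T\circ\Lambda_\phi$; a point $(x,\omega)\in\R^n\times S^{n-1}$ fails to lie in $WF_a(I_{\phi}(a)u)$ precisely when the kernel of $T\circ I_\phi(a)$, integrated against $u$, is real-analytic near $(x,\omega)$, and the analytic-singular-support bound just quoted, transported through $\Lambda_T$, gives exactly the condition that no $(y,\eta)\in WF_a(u)$ is related to $(x,\omega)$ through $\Lambda_T\circ(\Lambda_\phi)_\R$.

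The cleaner route, which I would actually carry out, is: fix $(x_0,\xi_0)\in T^*\R^{n_x}\setminus\{0\}$ such that for every $(y,\eta)$ with $(x_0,\xi_0,y,\eta)\in(\Lambda_\phi)_\R$ one has $(y,\eta)\notin WF_a(u)$; I must show $(x_0,\xi_0)\notin WF_a(I_\phi(a)u)$. The fibre $\{(y,\eta):(x_0,\xi_0,y,\eta)\in(\Lambda_\phi)_\R\}$ is compact in the cosphere direction (it is conical, and one works at fixed $|\eta|/|\xi_0|$ using the homogeneity built into Definition~\ref{def:good_phase} and the comparability of scales used throughout Section~\ref{sec:four-integr-oper}); since $WF_a(u)$ is closed, one can choose a pseudodifferential operator $Q=Op(q)$ with $q_0$ elliptic near each such $(y,\eta)$ and $q_0$ supported (microlocally) away from $WF_a(u)$, so that by Proposition~\ref{prop:WF_Pu} applied to $1-Q$ — more precisely, by the elliptic parametrix construction of Proposition~\ref{prop:inversion_formal_pseudos}(3) — we have $u=Qu+u'$ where $u'$ is real-analytic microlocally near the whole fibre and $WF_a(Qu)$ is contained in a small conic neighbourhood of that fibre intersected with $WF_a(u)$, hence empty. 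Then $I_\phi(a)u=I_\phi(a)Qu+I_\phi(a)u'$. The operator $I_\phi(a)Q$ is, by Theorem~\ref{prop:FIOs}(3) and Proposition~\ref{prop:compo_well_def}, an analytic FIO with amplitude whose principal symbol vanishes on the pre-image under $\pi_Y$ of the region where $q$ was not elliptic; by the geometry of $(\Lambda_\phi)_\R$ this means its amplitude is $\equiv 0$ microlocally over a conic neighbourhood of $x_0$ in the direction $\xi_0$, so that its contribution to $WF_a(I_\phi(a)u)$ near $(x_0,\xi_0)$ is empty by Proposition~\ref{prop:pos_phase_reg_int}. For the term $I_\phi(a)u'$, one uses that $u'$ has no analytic singularities in the relevant directions, combined with the smooth wave front set of $K_{\Gamma,\phi}$ (invoked in Remark~\ref{rem:EtoO}) to guarantee the composition makes sense; since $u'$ is microlocally analytic precisely where $I_\phi(a)$ can carry a singularity onto $(x_0,\xi_0)$, Proposition~\ref{prop:reg_to_reg} (in the $\mathcal{E}'$-to-$\mathcal{O}$ form of Definition~\ref{def:FIO_fixed_fibre}) or rather its microlocal version finishes the argument.

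The main obstacle, I expect, is making the microlocal cut-off argument fully rigorous in the analytic category: one cannot literally ``multiply by a cut-off in phase space supported near the fibre'', because genuine analytic cut-offs do not exist and the whole apparatus of Section~\ref{sec:four-integr-oper} is built to avoid them. The correct substitute is to run the argument through the FBI transform $T$ and the Toeplitz algebra: conjugate by $T$ so that $Op(q)$ becomes, up to regularising error, $T^*I_\psi(b)T$ with $b$ a formal analytic amplitude on $\R^n\times S^{n-1}\times\R_+$ (Proposition~\ref{prop:T*T}); an elliptic $b_0$ on an open subset $V$ has a formal inverse there by Proposition~\ref{prop:invesion_Toeplitz}(4), and real-analyticity statements become statements about ordinary real-analyticity of functions in $\mathcal{H}(U)$, to which Proposition~\ref{prop:reg_to_reg} applies directly. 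So in practice the proof is: transport everything to $\mathbb{R}^n\times S^{n-1}$ via $T$, where $WF_a$ is literally analytic singular support; there the desired inclusion follows from composing the analytic-singular-support bound for $T\circ I_\phi(a)$ (Theorem~\ref{prop:FIOs}) with the elementary fact that $\Lambda_T$ relates $(x_0,\xi_0)$ to $(x_0,\xi_0(\omega+i\cdot 0),\ldots)$ on the real locus, i.e.\ that $WF_a$ is faithfully detected by $T$ — which is exactly the content cited from \cite{bony_equivalence_1977} after Definition~\ref{def:WFa}. The bookkeeping of which conic neighbourhoods shrink at which stage, and verifying that the composition $I_\phi(a)\circ(1-Q)$ genuinely kills the bad directions, is the part that requires care but no new ideas.
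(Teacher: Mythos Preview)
Your overall strategy—reduce the $WF_a$ bound to the already-established $SS_a$ bound from Theorem~\ref{prop:FIOs} via the FBI transform—is exactly the paper's approach and is the right idea. However, your execution has a real gap: you only compose with $T$ on the \emph{left}. That converts $WF_a(I_\phi(a)u)$ into $SS_a(T I_\phi(a)u)$, and then you invoke the $SS_a$ bound for the FIO $T\circ I_\phi(a)$ acting on $u$. But that bound (Theorem~\ref{prop:FIOs}(1)) only controls the output in terms of $SS_a(u)\subset \R^{n_y}$, the base singular support, not in terms of $WF_a(u)\subset T^*\R^{n_y}\setminus\{0\}$. Your sentence ``the analytic-singular-support bound \ldots gives exactly the condition that no $(y,\eta)\in WF_a(u)$ is related \ldots'' is precisely where the argument jumps: nothing you have written produces an $\eta$.

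The paper closes this gap by conjugating on \emph{both} sides: it considers $T\circ I_\phi(a)\circ T^*$, whose Lagrangian is $\Lambda_T\circ\Lambda_\phi\circ\Lambda_{T^*}$, and observes that the real locus of this Lagrangian projects onto the base $(\R^{n_x}\times S^{n_x-1})\times(\R^{n_y}\times S^{n_y-1})$ exactly as the cosphere picture of $(\Lambda_\phi)_\R$. One then applies the $SS_a$ bound of Theorem~\ref{prop:FIOs} to this composite FIO acting on $Tu$ (inserting $(T^*T)^{-1}T^*T$ is harmless since $T^*T$ is an elliptic pseudodifferential operator by Proposition~\ref{prop:T*T}). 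Since $SS_a(Tu)=WF_a(u)$ by Definition~\ref{def:WFa}, the conclusion follows in one line. Once you see this, your entire detour through the pseudodifferential cutoff $Q$ and the discussion of the analytic-cutoff obstacle becomes unnecessary: the proof is three lines, not a microlocal partition argument.
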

\begin{proof}
  The operator $T\circ I_{\phi,\Omega,U}(a)\circ T^*$ is a Fourier
  Integral Operator whose real locus of the Lagrangian projects on the
  base onto
  \[
    \{(x,\omega,y,v)\in (\R^{n_x}\times S^{n_x-1}\times \R^{n_y}\times
    S^{n_y-1})\cap (\Lambda_{\phi})_{\R}\};
  \]
  thus, we are left with the claim
  \[
    SS_a(I_{\phi,\Omega,U}(a)u)\subset \{x\in \R^{n_x}, \exists
    \xi,\eta\neq 0, \exists y\in SS_a(u), (x,\xi,y,\eta)\in
    (\Lambda_{\phi})_{\R}\},
  \]
  which was proved in Theorem \ref{prop:FIOs}.
\end{proof}

Another important application of the FBI transform is the construction
of unitary Fourier Integral operators associated with arbitrary real
symplectic maps, also called ``quantized contact transformations''.

\begin{prop}\label{prop:quantized-contact}
  Let $\kappa$ be a one-homogeneous symplectic transformation
  between respective neighbourhoods of two open cones $U,V$ of $T^*\R^n$. Let $[U]=U\cap
  (\R^n\times S^{n-1})$ and $[V]=V\cap (\R^n\times S^{n-1})$. Then there
  exists $\hat{\kappa}:\mathcal{H}([V])\to \mathcal{H}([U])$ and
  $\widehat{\kappa^{-1}}:\mathcal{H}([U])\to \mathcal{H}([V])$ such that
  the following is true.
  \begin{itemize}
  \item for every formal analytic amplitude $a\in
    S([V])$, there exists $b\in S([U])$, such
    that, for every $U_0\Subset U$, there exists $V_0\Subset V$, a neighbourhood $\Omega$ of
    $\diag([U_0])$ in $(\R^n\times S^{n-1})\times [U]\times (0,+\infty)$, a neighbourhood
    $\Omega'$ of $\diag([V_0])$ in $(\R^n\times S^{n-1})\times
    [V]\times (0,+\infty)$ with
    \[
      \hat{\kappa}I_{\psi,\Omega',[V_0]}(b)\widehat{\kappa^{-1}}=I_{\psi,\Omega,[U_0]}(a):\mathcal{H}([U])\to \mathcal{H}([U_0]),
    \]
    where $b_0=a_0\circ \kappa$; if $a$ is the amplitude of the
    Szeg\H{o} projector then so is $b$.
    \item for every formal analytic amplitude $b\in
    S([U])$, there exists $a\in S([V])$ such
    that, for every $V_0\Subset V$, there exists $U_0\Subset U$, a neighbourhood $\Omega$ of
    $\diag([U_0])$ in $(\R^n\times S^{n-1})\times [U]\times (0,+\infty)$, a neighbourhood
    $\Omega'$ of $\diag([V_0])$ in $(\R^n\times S^{n-1})\times [V]\times (0,+\infty)$with
    \[
      \widehat{\kappa^{-1}}I_{\psi,\Omega,[U_0]}(a)\hat{\kappa}=I_{\psi,\Omega',[V_0]}(b):\mathcal{H}([V])\to \mathcal{H}([V_0]),
    \]
    where $a_0=b_0\circ \kappa^{-1}$; if $b$ is the amplitude of the
    Szeg\H{o} projector then so is $a$.
  \item For every $U_0\Subset U$ and $V_0\Subset V$, the distribution \[(x,y)\mapsto
    \widehat{\kappa^{-1}}(x,y)-\overline{\hat{\kappa}(y,x)}\] is
    real-analytic on $V_0\times U_0$.
  \end{itemize}
\end{prop}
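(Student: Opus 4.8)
The statement to prove (Proposition \ref{prop:quantized-contact}) asserts the existence of quantized contact transformations $\hat\kappa$ and $\widehat{\kappa^{-1}}$ intertwining the Toeplitz algebras on $[U]$ and $[V]$, with prescribed action on principal symbols, preservation of the Szeg\H{o} amplitude, and the near-adjointness relation between $\widehat{\kappa^{-1}}$ and $\hat\kappa$. The natural strategy is to \emph{construct} $\hat\kappa$ directly as an analytic Fourier Integral Operator between $\mathcal H([V])$ and $\mathcal H([U])$, attached to the Lagrangian obtained by composing the graph of $\kappa$ (sitting inside $\Sigma_+\times\Sigma_+$ after identifying the cosphere bundle over $\R^n$ with $\Sigma_+$) with the idempotent Lagrangian $\Lambda_\psi$ on both sides. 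Concretely, let $\lambda_\kappa\subset\Sigma_+\times\Sigma_+$ be the (conical, real-analytic, Lagrangian for the twisted form) graph of $\kappa$; by Proposition \ref{prop:completing_Lagrangians} — applied in the model geometry of $\R^n\times S^{n-1}$, whose $\Sigma$, $Z$, $\overline Z$ were described in Section \ref{sec:bound-cauchy-riem} — there is a unique positive Lagrangian $\Lambda_\kappa\subset Z\times\overline Z$ containing $\lambda_\kappa$. This $\Lambda_\kappa$ has the same corank-$1$ base projection as $\Lambda_\psi$, so by Proposition \ref{prop:FIO-rank1-phase} it admits an (essentially canonical) positive phase function with one phase variable, and any analytic FIO with that Lagrangian is $I_{\psi_\kappa}(c)$ for a formal analytic amplitude $c$. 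One then \emph{defines} $\hat\kappa$ to be such an operator with $c_0$ chosen to be the CR-holomorphic/anti-holomorphic extension of an elliptic symbol, renormalised (using Propositions \ref{prop:inversion_formal_pseudos} and \ref{prop:invesion_Toeplitz}) so that the composition $\hat\kappa\circ(\hat\kappa)^\sharp$ — where $(\hat\kappa)^\sharp$ is the candidate for $\widehat{\kappa^{-1}}$ — is the Szeg\H{o} projector on $[V]$, and dually $(\hat\kappa)^\sharp\circ\hat\kappa$ is the Szeg\H{o} projector on $[U]$.

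First I would set up $\widehat{\kappa^{-1}}$ as the analytic FIO attached to $\Lambda_{\kappa^{-1}}=\Lambda_\kappa'$ (the image of $\Lambda_\kappa$ under swapping the two factors and conjugating); its amplitude can be taken to be the formal adjoint-in-the-Toeplitz-sense of $c$, which automatically gives the third bullet point (near-adjointness of distributional kernels) up to an analytic error — this is exactly the statement that in the $I_\psi$-calculus the adjoint operation corresponds to $(x,y)\mapsto\overline{\,\cdot\,(y,x)}$ modulo $C^\omega$, which follows from the explicit form of $\psi$ in \eqref{eq:phase_Toep} and the anti-holomorphic/holomorphic symmetry of $b$ noted in Proposition \ref{prop:T*T}. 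Next, since $\Lambda_\kappa\circ\Lambda_{\kappa^{-1}}=\Lambda_\psi$ and $\Lambda_{\kappa^{-1}}\circ\Lambda_\kappa=\Lambda_\psi$ (because $\kappa\kappa^{-1}=\mathrm{id}$ and $\Lambda_\psi$ is idempotent), Proposition \ref{prop:compo_well_def} together with Proposition \ref{prop:FIO-rank1-phase} shows that both compositions are analytic FIOs with phase $\psi$, hence of the form $I_\psi(\text{amplitude})$; a computation of the leading symbol (via the stationary-phase formula of Proposition \ref{prop:stat_phase_in_FIO}, using the transverse-composition geometry guaranteed by positivity of all Lagrangians in play) shows the leading amplitude of $\hat\kappa\circ\widehat{\kappa^{-1}}$ is $|c_0|^2$ times a nonvanishing geometric factor, so by the ellipticity/inversion statement of Proposition \ref{prop:invesion_Toeplitz}(4) we may renormalise $c$ so that $\hat\kappa\circ\widehat{\kappa^{-1}}=I_\psi(\text{unit})=S_{[V]}$, which acts as the identity on $\mathcal H([V])$; the dual normalisation is handled simultaneously, and a small argument (as in the last paragraph of the proof of Proposition \ref{prop:invesion_Toeplitz}) shows the two normalisations are compatible.

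Then the intertwining relations are a formal consequence: given $a\in S([V])$, the operator $\hat\kappa\,I_\psi(\widetilde a)\,\widehat{\kappa^{-1}}$ has Lagrangian $\Lambda_\kappa\circ\Lambda_\psi\circ\Lambda_{\kappa^{-1}}=\Lambda_\psi$, so by Proposition \ref{prop:FIO-rank1-phase} it equals $I_\psi(\widetilde b)$ for some $b\in S([U])$; tracking leading symbols through the three stationary-phase compositions (and using that $\Lambda_\kappa$ is the graph of $\kappa$ over the diagonal part) gives $b_0=a_0\circ\kappa$, and taking $a$ to be the Szeg\H{o} amplitude gives $\hat\kappa\,S_{[V]}\,\widehat{\kappa^{-1}}=\hat\kappa\,\widehat{\kappa^{-1}}=S_{[U]}$ — wait, one must be careful here: $\hat\kappa$ maps into $\mathcal H([U])$ and $\widehat{\kappa^{-1}}$ into $\mathcal H([V])$, so the correct identity is $\hat\kappa\,I_\psi(a)\,\widehat{\kappa^{-1}}=I_\psi(b)$ as operators $\mathcal H([U])\to\mathcal H([U_0])$, and indeed for $a$ the Szeg\H{o} amplitude the composite is $\hat\kappa\circ\widehat{\kappa^{-1}}=S_{[U]}$, whose amplitude is again the Szeg\H{o} one. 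The domains $U_0\Subset U$, $V_0\Subset V$ and the neighbourhoods $\Omega,\Omega'$ are produced exactly as in Proposition \ref{prop:compo_well_def}, shrinking as needed at each composition.

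\textbf{Main obstacle.} The delicate point is not the formal algebra but making the \emph{existence} of a genuine (operator-level, not merely formal) analytic FIO with Lagrangian $\Lambda_\kappa$ rigorous, together with the verification that $\Lambda_\kappa$ is indeed positive, conical, real-analytic, and has corank-$1$ base projection — this is where Proposition \ref{prop:completing_Lagrangians} must be invoked in precisely the geometry of $\R^n\times S^{n-1}$, and where one must check that the contour-deformation and boundary-positivity conditions \eqref{eq:boundary_positivity} can be arranged uniformly on the relevant conical neighbourhoods so that Definition \ref{def:FIO_fixed_fibre} and Proposition \ref{prop:compo_well_def} actually apply (in particular, that the phase built in Proposition \ref{prop:FIO-rank1-phase} can be taken with $\im\psi_\kappa$ positive off the appropriate diagonal-type set). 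Once that geometric input is secured, the rest is bookkeeping with the $\sharp$-calculus of Proposition \ref{prop:invesion_Toeplitz} and the composition theorem.
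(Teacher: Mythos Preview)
Your proposal is correct and follows essentially the same route as the paper: build a preliminary $\widehat{\kappa_0}$ as an analytic FIO with phase coming from Proposition \ref{prop:completing_Lagrangians}, compose to land back in the $I_\psi$-algebra via Proposition \ref{prop:FIO-rank1-phase}, and then renormalise using square roots and adjoints in the Toeplitz calculus (Proposition \ref{prop:invesion_Toeplitz}). The one place where the paper is sharper than your ``small argument'' for the compatibility of the two normalisations is this: the paper only arranges $\widehat{\kappa^{-1}}\hat\kappa=S$ on $\mathcal H([V])$ directly (by correcting with $I_\psi(\sqrt{g})$ and $I_\psi(\sqrt{g}^{\,*})$, which also yields the near-adjointness of the kernels), and then observes that $\hat\kappa\,S\,\widehat{\kappa^{-1}}$ is simultaneously a projector (since $\widehat{\kappa^{-1}}\hat\kappa=S$) and an elliptic, hence invertible, Toeplitz operator on $[U]$ --- so it must be the identity, which gives the Szeg\H{o}-to-Szeg\H{o} statement and the other-sided inverse for free.
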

\begin{proof}
  For convenience we drop the bracket notations and identify open sets of
  $\R^n\times S^{n-1}$ with open cones in $T^*\R^n$. Thus $\kappa$ is
  a contact transformation on $\R^n\times S^{n-1}$. We
  let $\phi_{\kappa}$ be any function from a
  neighbourhood $W$ of the graph of $\kappa$ in $V_1\times U_1$,
  where $U_1\Subset U$ and $V_1\Subset V$, which is
  holomorphic in $x_1-i\omega_1$ and in $x_2+i\omega_2$, and such that
  $\im(\phi_{\kappa})\asymp
  \dist((x_1,\omega_1),\kappa(x_2,\omega_2))$. Such a
  function exists thanks to Proposition
  \ref{prop:completing_Lagrangians}: it is a phase function for the
  only Lagrangian containing the graph of $\kappa$.

  We now set
  \[
    \widehat{\kappa_0}=I_{\psi_{\kappa},W,V_1}\1_{(x_1,\omega_1,x_2,\omega_2)\in
      W}\int_0^{+\infty}e^{it\psi_{\kappa}(x_1,\omega_1,x_2,\omega_2)}t^{n-1}\dd
    t
  \]
  and we similarly define $\widehat{\kappa_0^{-1}}$ based on
  $\kappa^{-1}$.

  Given $a$ as in the claim, the operator
  $\widehat{\kappa_0^{-1}}I_{\psi,\Omega,U_0}(a)\widehat{\kappa_0}$ is, by
  Theorem \ref{prop:FIOs}, a Fourier integral operator with the
  same Lagrangian as the Szeg\H{o} projector (indeed, the real locus
  of the Lagrangian is the diagonal, and this Lagrangian is
  holomorphic for the skew CR structure on $(\R^n\times
  S^{n-1})^2$). Therefore it is of the form $I_{\psi,\Omega',[V_0]}(b)$
  for another analytic formal amplitude $b$ obtained from $a$ by stationary
  phase. In particular, letting $S$ denote the Szeg\H{o} projector,
  \[
    \widehat{\kappa_0^{-1}}\widehat{\kappa_0}=\widehat{\kappa_0^{-1}}S\widehat{\kappa_0}=I_{\psi,\Omega',V_1}(f),
  \]
  where $f_0\neq 0$ and $V_1$ is any relatively compact open set of
  $V$. By Proposition \ref{prop:invesion_Toeplitz}, $f$ admits an
  inverse for the product of analytic Fourier Integral Operators with
  phase $\psi$, and therefore there exists a formal analytic amplitude $g$
  on a neighbourhood of $V$, such that
  \[
    I_{\psi,\Omega',V_1}(g)\widehat{\kappa_0^{-1}}\widehat{\kappa_0}=S:\mathcal{H}(V)\to
    \mathcal{H}(V_1).
  \]
  We now set $\hat{\kappa}=\widehat{\kappa_0}I_{\psi,
    \Omega',V_1}(\sqrt{g})$ and
  $\widehat{\kappa^{-1}}=I_{\psi,\Omega',V_1}(\sqrt{g}^*)\widehat{\kappa_0^{-1}}$. In
  this algebra,
  the square root and adjoint of an amplitude are well-defined because, as
  in the proof of
  Proposition \ref{prop:invesion_Toeplitz}, the calculus of these
  amplitudes is conjugated via $T$ to the symbol calclulus of
  pseudo-differential operators. With this correction, we obtain the
  last claim.

  We also obtain that the second part of the claim holds with the quantifiers on $a$ and
  $b$ exchanged (for every $a$, there exists $b$, \ldots).

  Given $b$, the operator
  \[
    \hat{\kappa}I_{\psi,\Omega',V_0}(b)\widehat{\kappa^{-1}}
  \]
  is also an analytic Fourier Integral Operator whose Lagrangian is
  that of $\psi$, so that it is of the form
  $I_{\psi,\Omega,U_0}(a)$, and we obtain the first part of the claim
  with the quantifiers on $a$ and $b$ exchanged (for every $b$, there
  exists $a$, \ldots)

  The particular case
  $\hat{\kappa}S\widehat{\kappa^{-1}}$ is a projector, because we
  already made sure that $\widehat{\kappa^{-1}}\hat{\kappa}$ acts as
  identity; however it is also an invertible operator (because it is a
  Fourier Integral Operator with phase $\psi$ and nonvanishing
  principal amplitude). Therefore it is the identity.

  To conclude, given $a\in S(V)$, one recovers $b$ so that the first
  point of the proposition is true by letting $b$ be the amplitude of
  $\widehat{\kappa^{-1}}I_{\psi,\Omega,U_0}(a)\hat{\kappa}$, and
  conversely.

\end{proof}

\begin{rem}\label{rem:replacement}
To conclude this section, %
we can go back to Remark
\ref{rem:EtoO}. Now that we have characterised the analytic wave front
set of the singular kernel of an analytic Fourier Integral Operator
(Proposition \ref{prop:FIO-WF}), we observe that these operators are
well-defined when acting on singularity hyperfunctions, that is, elements of
$\mathcal{F}_{\omega}(V):=\mathcal{O}'(\overline{V})/(\mathcal{O}'(\partial V)+ \mathcal{O}(\overline{V}))$.
Indeed, as in the smooth case the wave front set conditions (see
\cite{hormander_analysis_2003}, Theorems 8.5.1 and 8.5.2) are
satisfied. Thus, in all of Section \ref{sec:four-integr-oper}, we can now replace $\mathcal{F}$ with
$\mathcal{F}_\omega$, and $\mathcal{H}$ with
$\mathcal{H}_\omega:U\mapsto
\ker_{\mathcal{O}'(\overline{U})}(\overline{\partial}_b)/(\mathcal{O}(\overline{U})+\mathcal{O}'(\partial
U))$. This replacement is crucial in the proof of Theorem \ref{prop:analytic-Szego}.
\end{rem}
\section{The Szeg\H{o} kernel and Toeplitz operators for general
  pseudoconvex open sets}
\label{sec:szegho-kern-toepl}

\subsection{Normal form for the $\overline{\partial}_b$ operator}
\label{sec:norm-form-overl}
The features of $\overline{\partial}_b$ on the boundary of a strongly
pseudoconvex manifold are the following: locally, it is a
vector-valued, degree $1$ analytic pseudodifferential operator
$(D_1,\ldots,D_d)$ on a $2d-1$-dimensional manifold, such that
$[D_j,D_k]=0$ for every $j\neq k$ and the matrix of principal symbols $\sigma(i[D_j,D_k^*])$
is either positive definite or negative definite on
$\Sigma=\{\sigma(D_j)=0\forall 1\leq j\leq d\}$.

All of this is also true of the following model operator $D_0$, acting
on $\R^{d-1}_x\times \R^{d}_y$:
\[
  (D_0)_j=-i\frac{\partial}{\partial x_j}-x_j\frac{\partial}{\partial
    y_1}.
\]

Our goal is to prove the following result attributed to Sato:
microlocally near any point of $\Sigma$, $\overline{\partial}_b$ is
conjugated with $D_0$ by an analytic Fourier Integral Operator (as
always, modulo
an operator continuously mapping $\mathcal{E}'$ into
real-analytic-functions). Here, as before, ``microlocally'' will mean
``locally after conjugation with the FBI transform''.

The first step (Proposition \ref{prop:canon_transf_to_local_model}) is to solve the classical problem, and find a
symplectic transformation which maps the symbol of
$\overline{\partial}_b$ to the symbol of $D_0$, multiplied by an
invertible matrix; this was described in Section
\ref{sec:class-norm-form}. W move on to an exact conjugation in the
algebra of analytic formal symbols of pseudodifferential operators
(Proposition \ref{prop:correct_D0_error_formal}), and we conclude by realising
all requested transformations on the FBI side (Proposition \ref{prop:Sato_conj}).

Given a unitary Fourier integral operator $\hat{\kappa}$ which quantizes $\kappa$, the
principal symbol of $\widehat{\kappa^{-1}}\overline{\partial}_b\hat{\kappa}$ is precisely that of
$Op(C)D_0$. It remains to correct the subprincipal (degree $0$) remainder, which we can do by
conjugating with an elliptic, diagonal matrix-valued pseudodifferential operator
\begin{prop}\label{prop:correct_D0_error_formal}
  Let $r=(r_1,\ldots,r_d)$ be a formal analytic degree 0 vector-valued symbol in the neighbourhood of a point
  where $\sigma(D_0)=0$. Then there exists a smaller neighbourhood of
  this point and a vector of degree 0 elliptic symbols $a=(a_1,\ldots,a_d)$ such
  that, on this neighbourhood,
  \[
    (\sigma(D_0)_j+r_j)\sharp a_j=a_j\sharp \sigma(D_0)_j \qquad
    \forall 1\leq j\leq d.
    \]
\end{prop}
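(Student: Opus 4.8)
The plan is to solve the $d$ equations recursively in the homogeneity degree, reducing each to a sequence of complex transport equations along the Hamilton flow of $\sigma(D_0)_j$, and then to check that the resulting sequence of coefficients is a formal analytic symbol. First I would note that the system \emph{decouples}: the $j$-th equation involves only $p:=\sigma(D_0)_j$, $r_j$ and $a_j$, so it suffices to treat, for fixed $j$, the scalar problem $(p+r)\sharp a=a\sharp p$ with $r:=r_j$ a formal analytic symbol of degree $0$ and $a:=\sum_{\ell\ge 0}a_\ell$ to be found, $a_\ell$ homogeneous of degree $-\ell$ and real-analytic. The reference point, where all $\sigma(D_0)_k$ vanish, lies over $\Sigma^{\pm}$, so there $\eta_1\neq 0$ and hence $x_j=\xi_j=0$; moreover the Hamilton field $H_p$ of $p$ has nonvanishing $\partial_{x_j}$-component near that point, so $\{x_j=0\}$ is a non-characteristic transversal (this is part of the transversality of $T\Sigma$ to the null-distribution already recorded in Propositions \ref{prop:Z_and_Sigma} and \ref{prop:canon_transf_to_local_model}). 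Because the total symbol of $D_0$ is a polynomial, affine in the covariables and affine in the base variables, the Kohn--Nirenberg expansions of $p\sharp a$ and $a\sharp p$ \emph{terminate after the first correction}, giving the exact identity $p\sharp a-a\sharp p=\tfrac1i H_p a$ — this is precisely the payoff of the polynomial choice of $D_0$. Writing $(p+r)\sharp a-a\sharp p=\tfrac1i H_p a+r\sharp a$ and expanding by homogeneity, the component of degree $1-\ell$ of the equation becomes
\[
  \tfrac1i H_p\,a_\ell+r_0\,a_\ell=G_{\ell+1}(a_0,\dots,a_{\ell-1};r),\qquad \ell\ge 0,
\]
with $G_1=0$ and $G_{\ell+1}$ a universal expression (coming from the Kohn--Nirenberg expansion of $r\sharp a$) depending polynomially on $r_0,\dots,r_\ell$ and on at most $\ell$ derivatives of $a_0,\dots,a_{\ell-1}$, and homogeneous of degree $-\ell$.

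Next I would solve these equations one after another. The operator $L:=\tfrac1i H_p+r_0$ is a first-order analytic operator, non-characteristic for $\{x_j=0\}$; by Cauchy--Kovalevskaya — or explicitly by integrating along the complex characteristic curves of $H_p$ — the Cauchy problem $Lu=f$, $u|_{\{x_j=0\}}=g$ has a unique holomorphic solution in a fixed complex conical neighbourhood of the reference ray, for holomorphic data $f,g$. I take $g=1$ for $\ell=0$ and $g=0$ for $\ell\ge 1$. Since $G_1=0$, along characteristics $a_0=\exp\!\big(-\!\int_0^{s}r_0\big)$ is nowhere vanishing, so $a=(a_\ell)_\ell$ is elliptic of degree $0$; and homogeneity of $H_p$, $r_0$, $G_{\ell+1}$ and of the chosen Cauchy data forces $a_\ell$ to be homogeneous of degree $-\ell$, as required. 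Note that since the $j$-th equation never involves $a_k$ for $k\neq j$, no compatibility condition on $(r_1,\dots,r_d)$ is needed.

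The delicate point — and the main obstacle — is to verify that the sequence $(a_\ell)_{\ell\ge 0}$ so constructed is a \emph{formal analytic symbol} in the sense of Definition \ref{def:formal-amp}, i.e.\ satisfies Gevrey-$1$ bounds on a \emph{fixed} conical neighbourhood. One cannot merely invoke Cauchy--Kovalevskaya qualitatively, because the order of the differential operators producing $G_{\ell+1}$ grows linearly in $\ell$. I would run the quantitative Cauchy--Kovalevskaya (Ovsyannikov) scheme on a scale of Banach spaces $E_s$ of holomorphic functions on shrinking neighbourhoods of the reference ray, $s_0/2<s\le s_0$: the solution operator of the above Cauchy problem satisfies $\|L^{-1}f\|_{E_{s'}}\le C(s-s')^{-1}\|f\|_{E_s}$ (one derivative lost, recovered by shrinking), while $G_{\ell+1}$ costs at most a bounded multiple of $\ell$ derivatives and, through the factorial bounds on $r$, contributes a factor of order $R^{\ell}\ell!$. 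Feeding these into an induction of Ovsyannikov type closes an estimate of the form $\|a_\ell\|_{E_s}\le C\,R_1^{\ell}\,\ell!\,(s_0-s)^{-\ell}$, which (after reinstating the $|\theta|^{-\ell}$ homogeneity factor) is exactly the formal-analytic-symbol bound; the bookkeeping matching derivative losses against $\ell!$ and the shrinking domain is the genuinely technical part, but it is identical in spirit to the estimates in the proof of Proposition \ref{prop:real_formal_symb} and in the symbol-algebra appendix, and to Sjöstrand's analytic symbol calculus. Assembling the $a_j$ for $j=1,\dots,d$ then yields the vector $a=(a_1,\dots,a_d)$ of elliptic degree-$0$ formal analytic symbols with the stated conjugation property.
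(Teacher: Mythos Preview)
Your strategy at the principal level coincides with the paper's first step: both solve the transport equation $\tfrac{1}{i}H_p a_0 + r_0 a_0 = 0$ along the complex Hamilton flow of $p=\sigma(D_0)_j$, with data $a_0=1$ on $\{x_j=0\}$, obtaining a nowhere-vanishing $a_0$ by an explicit exponential formula. Where you diverge from the paper is in handling the lower-order recursion, and here there is a genuine gap in your analytic estimate.

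The bound you state for the transport solution operator, $\|L^{-1}f\|_{E_{s'}}\le C(s-s')^{-1}\|f\|_{E_s}$, is not the right one: integrating along the complex characteristics of $H_p$ does not lose a derivative but rather shifts the $(y_1,\xi_j)$-arguments into the complex by an amount of order $|x_j|$ (see the explicit integral formula in the paper's proof), so $L^{-1}$ maps $E_s$ boundedly into $E_{s-\delta}$ for a \emph{fixed} $\delta$ proportional to the width of the $x_j$-window. Iterated over the recursion in $\ell$, this loss accumulates to $\ell\delta$ and exhausts any fixed neighbourhood; a standard scale-of-spaces argument cannot absorb a fixed shrinkage per step over an unbounded number of steps. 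The paper circumvents this by two devices your sketch lacks. First, after conjugating away the principal part, the remaining error $r'$ has degree $-1$, so $\|r'\|_{S^{\rho/2,R/4}_m}$ can be forced below $\tfrac{1}{24}$ by enlarging $R$; this smallness is what actually closes the induction. Second, rather than recurse directly on $a_\ell$, the paper deforms along $\sigma(D_0)_j+tr'_j$ and writes the conjugator as the time-one map of $\partial_t(1+a')=ib(t)\sharp(1+a')$; the transport equations for the homogeneous components $b_k$ of the generator are then estimated in a tailored jet norm $JS^{\rho,R}_m$ (suprema of all $x_j$-derivatives evaluated at $x_j=0$, on a set where $|\eta_1|\le\tfrac12$), for which the paper proves the transport solution map is \emph{norm non-increasing}: $C(b_k,k)\le C(g_k,k)$. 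Combined with the smallness of $\|r'\|$ and the algebra estimate of Proposition~\ref{prop:composition-formal}, the recursion for $b$ closes with a uniform constant, and Picard--Lindel\"of in the symbol Banach algebra then produces $a'$.
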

\begin{proof}We first remove the order 0 term in $r$, then all other terms.

  At order 0, the following equation links the principal symbol of $a_j$
to the principal symbol of $r_j$
\[
  \partial_{x_j}a_j^0+ix_j\partial_{y_1}a_j^0-i\eta_1\partial_{\xi_j}a_j^0=ir_j^0a_j^0.
\]
This transport equation admits a solution, together with the Cauchy data $a_j^0|_{x_j=0}=1$, written as
\[
  a_j^0(x,y,\xi,\eta)=\exp\left[i\int_0^{x_j}r_j^0(x-(1-t)e_j,y+i\tfrac{t^2}{2}e_1,\xi+i(1-t)\eta_1e_j,\eta)\dd
    t\right].
\]
Since $r_j^0$ is a real-analytic function, this solution is
well-defined for $x_j$ sufficiently close to $0$, and is again
real-analytic. Hence, we can solve for $a$ at principal
order. Thus
\[
  (a^0_j)^{\sharp -1}\sharp (\sigma(D_0)_j+r_j)\sharp
  a_j^0=\sigma(D_0)_j+r_j',
  \] where $r_j'$ is a degree $-1$ analytic symbol.

Consider now the family
\[
  [0,1]\ni t\mapsto \sigma(D_0)_j+tr_j'.
\]
We wish to find a degree $-1$ elliptic symbol $a_j'(t)$ such that
\begin{equation}\label{eq:rprimeaprime}
  (\sigma(D_0)_j+tr_j')\sharp(1+a_j'(t))=(1+a_j'(t))\sharp (D_0)_j.
\end{equation}

We will find $a_j'(t)$ as a solution of the differential equation
\[\partial_ta_j'(t)=ib_j(t)\sharp (1+a_j'(t)).\]
First, differentiating \eqref{eq:rprimeaprime} with respect to $t$ yields
\[
  r_j'=-i[\sigma(D_0)_j+tr_j',b_j(t)],
\]
and we want to solve this equation for $b_j(t)$.

One has
\begin{equation}\label{eq:exact_conj_D0}
  [(D_0)_j,Op(b_j(t))]=Op(-i\partial_{x_j}b_j(t)-x_j\partial_{y_1}b_j(t)+\eta_1\partial_{\xi_j}b_j(t)).
\end{equation}
From now on we drop the index $j$ and the parameter $t$ on $r'$ and $b$. We
write their full symbols as
\[
  r'=\sum_{k\geq 1} r_k'\qquad \qquad b=\sum_{k\geq 1}b_k,
\]
where $b_k$ and $r_k'$ are homogeneous of degree $-k$.

The equation above yields the following family of equations on $b_k$:
\[
  -i\partial_{x_j}b_k
  +\eta_1\partial_{\xi_j}b_k-x_j\partial_{y_{1}}b_k=ir_k'+t(r'\sharp
  b-b\sharp r')_k.
\]
Note that the right-hand-side only involves $b_l$ for
$1\leq l\leq k-1$. Thus, we can find the $b_k$ by induction,
recursively solving a transport equation of the form
\[
  i\partial_{x_j}u-\eta_{1}\partial_{\xi_j}u+x_j
  \partial_{y_{1}}u=g,
\]
using again the method of characteristics: the solution (with Cauchy data
$u=0$ on $x_j=0$) is
\[
  u:(x,y,\xi,\eta)\mapsto
  \int_0^{x_j}g(x-(1-t)e_j,y+i\tfrac{t^2}{2}e_1,\xi+i(1-t)\eta_1e_j,\eta)\dd t.
\]

We have to prove that the successive solutions $(b_k)_{k\in \N}$ form an
analytic symbol. It turns out that a convenient analytic class is
given by the infinite jet of the $S^{\rho,R}_m$ class at $x_j=0$; namely
we let
\[
  \|a\|_{JS^{\rho,R}_m}=\sup_{\alpha,k}\frac{\sup_\Omega(|\nabla^{\alpha}a_k|)(1+|\alpha|+k)^m}{(|\alpha|+k)!\rho^{|\alpha|}R^k},
\]
where
\[
  \Omega=\{|\eta_{1}|\in [\tfrac 14, \tfrac
  12],|x|+|y|+|\xi|+|(\eta_2,\ldots,\eta_{d-1})|<\epsilon,x_j=0\}.
\]
Since the product of symbols is an infinitesimal operation, we have
(see Proposition \ref{prop:composition-formal})
\[
  \|(b\sharp r'-r'\sharp b)\|_{JS^{\rho,R}_m}\leq
  24\|r'\|_{JS^{\frac{\rho}2, \frac R4}_m}\|b\|_{JS^{\rho,R}_m}.
\]

Suppose by induction that, for all $l\leq k-1$, one has
\[
  \sup_{\Omega}\|\nabla^{\alpha}b_l\|\leq
  C_b\frac{\rho^{|\alpha|}R^l(|\alpha|+l)!}{(|\alpha|+l+1)^m}.
\]
Then one has readily, for all $t\in [0,1]$, if $m\geq C(d)$,
\[
  \sup_{\Omega}|\nabla^\alpha(r'_k+t(r'\sharp b - b \sharp r')_k)|\leq
  (24C_b+1)\|r\|_{JS^{\frac{\rho}2,\frac
      R4}_m}\frac{\rho^{|\alpha|}R^k(|\alpha|+k)!}{(|\alpha|+k+1)^m}.
\]
Denote $g_k=ir'_k+t(r'\sharp b - b \sharp r')_k$, and let
\[
  C(g_k,k)=\sup_{\alpha}\frac{\sup_\Omega(|\nabla^{\alpha}g_k|)(1+|\alpha|+k)^m}{(|\alpha|+k)!\rho^{|\alpha|}R^k}
\]
and
\[
  C(b_k,k)=\sup_{\alpha}\frac{\sup_\Omega(|\nabla^{\alpha}b_k|)(1+|\alpha|+k)^m}{(|\alpha|+k)!\rho^{|\alpha|}R^k}.
\]
We just proved that
\[C(g_k,k)\leq \|r'\|_{JS^{\frac{\rho}2, \frac R4}_m}(1+24\sup_{ \ell
    \leq k-1}C(b_\ell,\ell)).
\]
Let us now prove that
\[
  C(b_k,k)\leq C(g_k,k).
\]
To this end, we write the transport equation
\[
  i\partial_{x_j}b_k-\eta_{1}\partial_{\xi_j}b_k+x_j
  \partial_{y_{1}}b_k=g_k
\]
in terms of the power series at $x_j=0$
\[
  b_k=\sum_{n=1}^{+\infty} \frac{b_k^{(n)}}{n!}x_j^n \qquad \qquad g_k=\sum_{n=1}^{+\infty}
  \frac{g_k^{(n)}}{n!}x_j^n.\] One has, for all $n\geq 1$,
\begin{equation}\label{eq:transport}
  ib_{k}^{(n+1)}-\eta_{1}\partial_{\xi_j}b_{k}^{(n)}+n\partial_{y_{1}}b_{k}^{(n-1)}=g_k^{(n)},
\end{equation}
and for $n=0$,
\[
  b_{k}^{(0)}=0.
\]

Let us prove by induction on $n$ that
\[
  |\nabla^{\alpha}b_k^{(n)}|\leq
  C(g_k,k)\frac{\rho^{n+|\alpha|}(n+|\alpha|+k)!}{(n+|\alpha|+k+1)^m}.
\]
This is true at $n=0$. If it is true at $n-1$ and $n$, then there are
three terms in
\begin{equation}\label{eq:ctrl_b_g}
  |\nabla^{\alpha}b_k^{(n+1)}|\leq|\nabla^{\alpha}g_k^{(n)}|+|\nabla^{\alpha}(\eta_{1}\partial_{\xi_j}b_k^{(n)})|-|n\nabla^{\alpha}\partial_{y_{1}}b_k^{(n-1)}|.
\end{equation}
First, one has directly
\begin{align*}
  |\nabla^{\alpha}g_k^{(n)}|&\leq
                          C(g_k,k)\frac{\rho^{(n+|\alpha|)}R^k(n+|\alpha|+k)!}{(n+|\alpha|+k+1)^m}\\
                        &\leq
                          C(g_k,k)\frac{\rho^{(n+1+|\alpha|)}R^k(n+1+|\alpha|+k)!}{(n+|\alpha|+k+2)^m}\times
                          \frac{\left(1+\frac{1}{n+|\alpha|+k+1}\right)^m}{\rho(n+1+|\alpha|+k)}.
\end{align*}
If $\rho\geq 6\left(\frac{3}{2}\right)^m$, then we obtain
\[
  |\nabla^{\alpha}g_k^{(n)}|\leq
  \frac{1}{6}C(g_k,k)\frac{r^{(n+1+|\alpha|)}R^k(n+1+|\alpha|+k)!}{(n+|\alpha|+k+2)^m}.
\]
We can also directly control the last term in \eqref{eq:ctrl_b_g} in the same way, using the
induction hypothesis:
\begin{align*}
  |n\nabla^{\alpha}\partial_{y_{d+1}}b_k^{(n-1)}|&\leq
                                                 nC(g_k,k)\frac{\rho^{n+|\alpha|}R^k(n+|\alpha|+k)!}{(n+|\alpha|+k+1)^m}\\
                                               &\leq
                                                 C(g_k,k)\frac{\rho^{n+1+|\alpha|}R^k(n+1+|\alpha|+k)!}{(n+|\alpha|+k+2)^m}\times
                                                 \frac{\left(1+\frac{1}{n+|\alpha|+k+1}\right)^m}{\rho}\frac{n}{n+1+|\alpha|+k}.
\end{align*}
As previously, if $\rho\geq 6\left(\frac 32\right)^m$, we obtain
\[
  |n\nabla^{\alpha}\partial_{y_{1}}b_k^{(n-1)}|\leq
  \frac{1}{6}C(g_k,k)\frac{\rho^{(n+1+|\alpha|)}R^k(n+1+|\alpha|+k)!}{(n+|\alpha|+k+2)^m}.\]

In the second term of \eqref{eq:ctrl_b_g}, we have to distinguish between two cases,
depending on whether or not one of the differentials has hit
$\eta_1$. Thus
\[
  |\nabla^{\alpha}(\eta_{1}\partial_{\xi_j}b_k^{(n)})|\leq
  |\alpha||\nabla^{\alpha-\gamma}\partial_{\xi_j}b_k^{(n)}|+|\eta_{1}||\nabla^{\alpha}\partial_{\xi_j}b_k^{(n)}|,
\]
where $\gamma$ is the length $1$ polyindex corresponding to the
direction of $\eta_{1}$.

As before,
\begin{align*}
  |\alpha||\nabla^{\alpha-\gamma}\partial_{\xi_j}b_k^{(n)}|&\leq
                                                            |\alpha|C(g_k,k)\frac{\rho^{n+|\alpha|}R^k(n+|\alpha|+k)!}{(n+|\alpha|+k+1)^m}\\
                                                          &\leq
                                                            C(g_k,k)\frac{\rho^{n+1+|\alpha|}R^k(n+1+|\alpha|+k)!}{(n+|\alpha|+k+2)^m}\times
                                                            \frac{\left(1+\frac{1}{n+|\alpha|+k+1}\right)^m}{\rho}\frac{|\alpha|}{n+1+|\alpha|+k},
\end{align*}
and if $\rho\geq 6\left(\frac 32\right)^m$, we obtain
\[
  |\alpha||\nabla^{\alpha-\gamma}\partial_{\xi_j}b_k^{(n)}|\leq
  \frac{1}{6}C(g_k,k)\frac{\rho^{(n+1+|\alpha|)}R^k(n+1+|\alpha|+k)!}{(n+|\alpha|+k+2)^m}.\]

The last term is
\[
  |\eta_{1}||\nabla^{\alpha}\partial_{\xi_j}b_k^{(n)}|\leq
  |\eta_{1}|C(g_k,k)\frac{\rho^{(n+1+|\alpha|)}R^k(n+1+|\alpha|+k)!}{(n+|\alpha|+k+2)^m}.
\]
Recall that we are controlling derivatives on the set $\Omega$ where
$|\eta_1|\leq \frac 12$. We obtain
\[
  |\eta_{1}||\nabla^{\alpha}\partial_{\xi_j}b_k^{(n)}|\leq
  \frac 12 C(g_k,k)\frac{\rho^{(n+1+|\alpha|)}R^k(n+1+|\alpha|+k)!}{(n+|\alpha|+k+2)^m}.\]
Altogether, the induction is complete, and we obtain that
\[
  C(b_k,k)\leq C(g_k,k)\leq (24\max_{\ell\leq
    k-1}C(b_{\ell},\ell)+1)\|r'\|_{JS^{\frac{\rho}2, \frac R4}_{m}},
\]
provided
\[
  m\geq C(d)\qquad \qquad \rho\geq 6\left(\frac 34\right)^m\qquad
  \qquad R\geq 2^{d+1}\rho.
\]
It remains to find $\rho,R,m$ as above such that
$\|r'\|_{S^{\rho,R}_m}<\frac{1}{24}$. Since $r'$ is of degree $-1$, this can
be done by fixing $m$ and $\rho$, then choosing $R$ large enough.

Having found $b$, it remains to build $a'$ such that
\[
  (1+a')(0)=0 \qquad \qquad \partial_t (1+a')(t) = b(t)\sharp (1+a')(t).
\]
We know that $b(t)\in JS^{\rho,R}_m$ for all $t\in [0,1]$, with uniformly
bounded norm. Thus, by Proposition \ref{prop:composition-formal}, this Cauchy problem satisfies
the hypotheses of the Picard-Lindelöf theorem on the Banach space
$JS^{2\rho,4R}_m$. In particular, we can find a solution in this space,
and $a'(1)$ is the requested operator.
\end{proof}

Grouping together Proposition \ref{prop:correct_D0_error_formal},
Proposition \ref{prop:canon_transf_to_local_model}, Proposition
\ref{prop:quantized-contact}, Theorem
\ref{prop:FIOs}, and Remark \ref{rem:replacement}, we obtain the following quantum normal
form.

\begin{prop}\label{prop:Sato_conj}
  Let $(z,\zeta)\in \Sigma^{\pm}$. Identify a neighbourhood of $z$ in
  $X$ with a neighbourhood of $0$ in $\R^{2d-1}$. There exists an open
  neighbourhood $U$ of $(z,\zeta)$ in $T^*\R^{2d-1}$ such that, for
  every pair of open sets $z\in U_2\Subset U_1 \Subset U$, there
  exists
  \begin{itemize}
  \item open neighbourhoods $V_2\Subset V_1$ of $\{x=y=\xi=0,\eta=(\pm 1, 0,\ldots, 0)\}$ in
    $T^*(\R^d_x\times \R^{d-1}_y)$
  \item an analytic Fourier Integral
    Operator $\hat{\kappa}:\mathcal{H}_{\omega}(U_1)\to\mathcal{H}_{\omega}(V_1)$
  \item an analytic Fourier Integral Operator
    $\widehat{\kappa^{-1}}:\mathcal{H}_{\omega}(V_2)\to \mathcal{H}_{\omega}(U_2)$
  \item an elliptic matrix-valued formal symbol
    $C\in S(V_1)$
  \item a neighbourhood $\Omega$ of $\diag(V_1)\times
    (0,+\infty)$ in $(\R^{2d-1}\times S^{2d-2})^2\times (0,+\infty)$
  \end{itemize}
  such that
  \[
    T\overline{\partial}_bT^*=\widehat{\kappa^{-1}}I_{\psi,\Omega,V_2}(C)TD_0T^*\widehat{\kappa} \qquad
    \qquad S=\widehat{\kappa^{-1}}\hat{\kappa}
  \]
  as operators mapping $\mathcal{F}_{\omega}(U_1)$ into $\mathcal{F}_{\omega}(U_2)$.

  Reciprocally, there exists an open neighbourhood $V$ of $\{x=y=\xi=0,\eta=(\pm 1, 0,\ldots, 0)\}$ in
    $T^*(\R^d_x\times \R^{d-1}_y)$ such that, for every pair of open
    sets $V_2\Subset V_1\Subset V$ containing this point, there exists
      \begin{itemize}
  \item open neighbourhoods $U_2\Subset U_1$ of $(z,\zeta)$ in
    $T^*R^{2d-1}$
  \item an analytic Fourier Integral
    Operator $\hat{\kappa}:\mathcal{H}_{\omega}(U_2)\to\mathcal{H}_{\omega}(V_2)$
  \item an analytic Fourier Integral Operator
    $\widehat{\kappa^{-1}}:\mathcal{H}_{\omega}(V_1)\to \mathcal{H}_{\omega}(U_1)$
  \item an elliptic matrix-valued formal symbol
    $C\in S(V_1)$
  \item a neighbourhood $\Omega$ of $\diag(V_1)\times
    (0,+\infty)$ in $(\R^{2d-1}\times S^{2d-2})^2\times (0,+\infty)$
  \end{itemize}
  such that
  \[
    \widehat{\kappa}T\overline{\partial}_bT^*\widehat{\kappa^{-1}}=I_{\psi,\Omega,V_2}(C)TD_0T^* \qquad
    \qquad S=\hat{\kappa}\widehat{\kappa^{-1}}
  \]
  as operators mapping $\mathcal{F}_{\omega}(V_1)$ into $\mathcal{F}_{\omega}(V_2)$.
\end{prop}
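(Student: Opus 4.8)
The plan is to assemble results already in place, in order: the classical normal form of Section~\ref{sec:class-norm-form}, the quantization of contact transformations, the composition calculus for analytic Fourier Integral Operators together with the structure of the $I_{\psi}$-Toeplitz algebra, the exact subprincipal correction of Proposition~\ref{prop:correct_D0_error_formal}, and Remark~\ref{rem:replacement} to pass from $\mathcal{F},\mathcal{H}$ to $\mathcal{F}_{\omega},\mathcal{H}_{\omega}$ at the end. First I would apply Proposition~\ref{prop:canon_transf_to_local_model} at $P_0=(z,\zeta)\in\Sigma^{\pm}$ to obtain a homogeneous real-analytic contact transformation $\kappa$ onto a conical neighbourhood of $(0,0,0,(\pm 1,0))$ and a $0$-homogeneous invertible matrix-valued $C_0$ with $\sigma(\overline{\partial}_b)=C_0\,(\sigma(D_0)\circ\kappa)$ there, then quantize $\kappa$ through Proposition~\ref{prop:quantized-contact}: on any prescribed pair of nested conical neighbourhoods one gets analytic Fourier Integral Operators $\hat{\kappa},\widehat{\kappa^{-1}}$ between the corresponding $\mathcal{H}$-spaces, both with Lagrangian $\Lambda_{\psi}$, satisfying $\widehat{\kappa^{-1}}\hat{\kappa}=S=\hat{\kappa}\,\widehat{\kappa^{-1}}$. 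Since each such Toeplitz operator strictly shrinks its definition set (as recorded after Proposition~\ref{prop:invesion_Toeplitz}), the construction automatically carries the chains $U_2\Subset U_1\Subset U$ (resp.\ $V_2\Subset V_1\Subset V$), and the side from which the shrinking is started distinguishes the two halves of the statement.

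Next I would conjugate $T\overline{\partial}_bT^*$ by the quantized contact transformation. Since $\overline{\partial}_b$ is a column of pseudodifferential operators, $T\overline{\partial}_bT^*$ has Lagrangian $\Lambda_T\circ\Lambda_{T^*}=\Lambda_{\psi}$ (Proposition~\ref{prop:T*T}), so by Theorem~\ref{prop:FIOs} the conjugate $\hat{\kappa}\,T\overline{\partial}_bT^*\,\widehat{\kappa^{-1}}$ is again a column of analytic Fourier Integral Operators with Lagrangian $\Lambda_{\psi}$, writable in the $I_{\psi}(\cdot)$ form by Proposition~\ref{prop:FIO-rank1-phase}, and by the classical normal form its principal symbol equals that of $I_{\psi}(C_0)\,TD_0T^*$. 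As $C_0$ is invertible, $I_{\psi}(C_0)$ is invertible in the matrix Toeplitz algebra (Proposition~\ref{prop:invesion_Toeplitz}), so one may factor $\hat{\kappa}\,T\overline{\partial}_bT^*\,\widehat{\kappa^{-1}}=I_{\psi}(\mathbf{C}_0)\,(TD_0T^*+R)$, where $\mathbf{C}_0$ is an elliptic matrix formal symbol and $R$ is a column of Toeplitz operators one order below $D_0$, with formal symbol $r=(r_1,\dots,r_d)$ a degree-$0$ vector symbol near $\{\sigma(D_0)=0\}$.

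The subprincipal error is then removed using Proposition~\ref{prop:correct_D0_error_formal}: it produces elliptic degree-$0$ scalar symbols $a_j$ with $(\sigma(D_0)_j+r_j)\sharp a_j=a_j\sharp\sigma(D_0)_j$, so that conjugating $(D_0)_j+R_j$ by $I_{\psi}(a_j)$ yields $(D_0)_j$ modulo a regularising operator (the symbol identity being \emph{exact}, the residual error is regularising by Proposition~\ref{prop:stat_phase_in_FIO}). Because $\overline{\partial}_b$ and $D_0$ both have pairwise commuting components, these component-wise corrections are mutually compatible, so after a further conjugation by an elliptic diagonal matrix-valued Toeplitz operator $D_0+R$ becomes $D_0$; folding this correction into $\hat{\kappa},\widehat{\kappa^{-1}}$ (a composition with further analytic Fourier Integral Operators of Lagrangian $\Lambda_{\psi}$, with controlled domains) and relabelling $\mathbf{C}_0$ into an elliptic matrix symbol $C$, one reaches the exact identities $T\overline{\partial}_bT^*=\widehat{\kappa^{-1}}\,I_{\psi,\Omega,V_2}(C)\,TD_0T^*\,\hat{\kappa}$ and $S=\widehat{\kappa^{-1}}\hat{\kappa}$. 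By Remark~\ref{rem:replacement} every operator above acts on singularity hyperfunctions, so the identities persist with $\mathcal{F},\mathcal{H}$ replaced by $\mathcal{F}_{\omega},\mathcal{H}_{\omega}$, which is the stated form; the reciprocal assertion is the same construction run with the conjugating operator in the other order and the $\Subset$-nesting begun on the model side, using that $\hat{\kappa}$ and $\widehat{\kappa^{-1}}$ are mutual inverses.

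The main difficulty is not in any single step but in the \emph{exactness} demanded throughout: every identity must hold modulo operators mapping $\mathcal{O}'$ continuously into real-analytic functions, not merely modulo lower-order symbols or smooth-kernel operators. This is precisely why the subprincipal correction must invoke the exact formal conjugation of Proposition~\ref{prop:correct_D0_error_formal}, which rests on the Banach-algebra structure of analytic formal symbols with its uniform $S^{\rho,R}_m$-estimates, rather than an order-by-order construction; and it forces careful tracking of which shrinking neighbourhood each composition lives on, so that the chains of $\Subset$'s in the statement close up. A secondary delicate point is the interplay between the vector-valued operator $\overline{\partial}_b$ and the component-wise, diagonal nature of the corrector of Proposition~\ref{prop:correct_D0_error_formal}, reconciled via the vanishing Poisson and commutator brackets common to $\overline{\partial}_b$ and $D_0$.
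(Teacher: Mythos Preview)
Your proposal is correct and follows exactly the approach the paper takes: the paper's own proof is a single sentence that simply cites Proposition~\ref{prop:canon_transf_to_local_model}, Proposition~\ref{prop:quantized-contact}, Theorem~\ref{prop:FIOs}, Proposition~\ref{prop:correct_D0_error_formal}, and Remark~\ref{rem:replacement}, and your write-up is a faithful fleshing-out of how these ingredients combine. One small inaccuracy: you say $\hat{\kappa}$ and $\widehat{\kappa^{-1}}$ have Lagrangian $\Lambda_{\psi}$, but in fact each has the CR-completion of the graph of $\kappa$ (resp.\ $\kappa^{-1}$) as Lagrangian; it is only their compositions $\widehat{\kappa^{-1}}\hat{\kappa}$ and $\hat{\kappa}\,\widehat{\kappa^{-1}}$ that have Lagrangian $\Lambda_{\psi}$ and equal $S$---this does not affect the argument.
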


\subsection{The Szeg\H{o} kernel parametrix and the algebra of Toeplitz operators}
\label{sec:szegho-kern-param}

In this section, we broadly follow the method proposed in
\cite{boutet_de_monvel_sur_1975} to obtain a parametrix for the
Szeg\H{o} kernel starting with the normal form of Proposition
\ref{prop:Sato_conj}.

In this section, the index $_0$ refers to the ``model'' case
$\R^d\times S^{2d-1}$. In particular $S_0$ is the Szeg\H{o} projector
of Proposition \ref{prop:flat_Grauert}, and $\Sigma^{\pm}_0$ is the
positive, resp. negative, characteristic set of
$(\overline{\partial}_b)_0$, the boundary Cauchy-Riemann operator on
$\R^d\times S^{2d-1}$. By contrast, the notations $\overline{\partial}_b, S,
\Sigma^{\pm},$ refer to the boundary of a general, relatively compact, strongly
pseudo-convex open set whose boundary is denoted by $X$.

\begin{theorem}\label{prop:analytic-Szego}The Szeg\H{o} projector
  admits a (singular) integral kernel which is real-analytic away from
  the diagonal. Given a defining function $\rho$
  and its polarisation $\psi$, well-defined on a small neighbourhood
  $\Omega$ of the diagonal in $X\times X$, there exists a realisation
  of a formal analytic
  amplitude $a$ on $X\times X\times (0,+\infty)$ (see Definitions
  \ref{def:formal-amp}, \ref{def:analytic_amp} and Proposition
  \ref{prop:real_formal_symb} for details about the nature of $a$) such that
  \[
    (x,y)\mapsto S(x,y)-\int_0^{+\infty}e^{it\psi(x,y)}a(x,y;t)\dd t
  \]
  is real-analytic.
\end{theorem}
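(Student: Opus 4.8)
The strategy is to transport everything to the model $\R^d\times S^{2d-1}$ via the Sato normal form of Proposition \ref{prop:Sato_conj}, compute the Szeg\H{o} projector there explicitly using Propositions \ref{prop:flat_Grauert} and \ref{prop:Szego_is_FIO}, and then push the resulting Fourier Integral Operator description back to $X$. First, I would establish that $S$ is real-analytic away from the diagonal: by Kohn's hypoellipticity estimate $\|(1-S)u\|_{L^2}\leq \|\overline{\partial}_bu\|_{L^2}$ (as in the proof of Proposition \ref{prop:loc-anal-hypoell-Grauert}), together with Proposition \ref{prop:Sato_conj} (which conjugates $T\overline{\partial}_bT^*$ to $I_{\psi,\Omega,V_2}(C)TD_0T^*$ microlocally and asserts $S = \widehat{\kappa^{-1}}\hat\kappa$), local analytic hypoellipticity of $\overline{\partial}_b$ follows exactly as in the model case. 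Since $\overline{\partial}_b u = 0$ for $u$ in the range of $S$, the off-diagonal smoothness of the kernel of $S$ upgrades to real-analyticity using Proposition \ref{prop:loc-anal-hypoell-Grauert} transported through $\hat\kappa$; alternatively, one notes that away from the diagonal $S$ is, microlocally after $T$-conjugation, an analytic Fourier Integral Operator whose Lagrangian's real locus is the diagonal (Proposition \ref{prop:T*T}), hence has a kernel analytic off the diagonal by Theorem \ref{prop:FIOs} and Proposition \ref{prop:pos_phase_reg_int}.

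Next, the core microlocal argument. Fix a point $(z,\zeta)\in\Sigma^{\pm}$. By Proposition \ref{prop:Sato_conj}, on suitable neighbourhoods there is a quantized contact transformation $\hat\kappa$ with $S=\widehat{\kappa^{-1}}\hat\kappa$ and $T\overline{\partial}_bT^* = \widehat{\kappa^{-1}}I_{\psi,\Omega,V_2}(C)TD_0T^*\hat\kappa$. Thus the Szeg\H{o} projector for $\overline{\partial}_b$ is conjugated, via $\hat\kappa$ and $T$, to the Szeg\H{o} projector $S_0$ for the model operator $D_0$ on $\R^d\times S^{2d-1}$ — more precisely, to $\hat\kappa S\widehat{\kappa^{-1}}$, which is the idempotent Fourier Integral Operator associated with the model Lagrangian. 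I would identify this with the genuine model Szeg\H{o} projector: the kernel of $D_0$, being annihilated by an elliptic multiple $I_{\psi}(C)TD_0T^*$ of the model $\overline{\partial}_b$, has the same range modulo analytic functions, and on $\R^d\times S^{2d-1}$ Proposition \ref{prop:Szego_is_FIO} gives $S_0 = I_\psi(a_0)$ for an explicit elliptic formal analytic amplitude $a_0$ coming from Proposition \ref{prop:m_n_amplitude}. So $\hat\kappa S\widehat{\kappa^{-1}}$ is an analytic Fourier Integral Operator with the model Lagrangian $\Lambda_\psi$; by uniqueness of the idempotent with given (analytic, positive, corank-1) Lagrangian — which follows because, by Proposition \ref{prop:invesion_Toeplitz}, the algebra of such operators has a unique unit acting as identity on $\mathcal{H}$, and both $S_0$ and $\hat\kappa S\widehat{\kappa^{-1}}$ are idempotents with nonvanishing principal amplitude and the same range — one gets $\hat\kappa S\widehat{\kappa^{-1}} = S_0$ modulo operators sending $\mathcal{E}'$ to analytic functions.

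Then, conjugating back, $S = \widehat{\kappa^{-1}}S_0\hat\kappa = \widehat{\kappa^{-1}}I_\psi(a_0)\hat\kappa$ microlocally near $(z,\zeta)$. By Theorem \ref{prop:FIOs} (composition), Proposition \ref{prop:compo_well_def}, Proposition \ref{prop:stat_phase_in_FIO}, and Proposition \ref{prop:FIO-rank1-phase}, this is an analytic Fourier Integral Operator whose Lagrangian is $\Lambda_\Psi$, the unique positive Lagrangian completing $\mathrm{diag}(\Sigma_+)$ as in Proposition \ref{prop:completing_Lagrangians}; since its base projection has corank $1$, Proposition \ref{prop:FIO-rank1-phase} lets us write it with any fixed phase function defining $Z=\pi(\Lambda_\Psi)$, in particular with the polarisation $\psi$ of the chosen defining function $\rho$ (the Lagrangian of $\psi$, constructed from almost-holomorphic/holomorphic extension of $\rho$ as in the smooth theory, Section \ref{sec:results-from-smooth}, is exactly $\Lambda_\Psi$). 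This yields, near each point of $\mathrm{diag}(X)$, a local expression $S(x,y)=\int_0^{+\infty}e^{it\psi(x,y)}a(x,y;t)\,\dd t$ with $a$ a realisation of a formal analytic amplitude, modulo analytic functions. Finally, I would globalise: cover $\mathrm{diag}(X)$ by finitely many such charts; on overlaps the two local formal amplitudes differ by one producing an analytic function, hence (using that $\psi$ is a genuine defining function, so that along $\mathrm{diag}(X)$ the formal amplitude is determined by $S$ modulo exponentially small terms) the formal amplitudes agree, so a partition-of-unity / patching argument via Proposition \ref{prop:patching_analytic_func_or_distros} glues them to a single formal analytic amplitude $a$ on $X\times X\times(0,+\infty)$ with the stated property. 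Combined with the off-diagonal analyticity, this gives that $(x,y)\mapsto S(x,y)-\int_0^{+\infty}e^{it\psi(x,y)}a(x,y;t)\,\dd t$ is real-analytic.

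\textbf{Main obstacle.} The delicate point is the uniqueness/identification step: showing that the idempotent Fourier Integral Operator $\hat\kappa S\widehat{\kappa^{-1}}$ (which a priori is only known to have the right Lagrangian and to be a projector onto the right range modulo analytic functions) actually equals the model Szeg\H{o} projector $S_0 = I_\psi(a_0)$ on the nose modulo analytic-regularizing operators, and that the Sato normal form — which only matches $\overline{\partial}_b$ with an elliptic multiple of $D_0$ — nonetheless forces the Szeg\H{o} projectors to match. This requires carefully using that $\overline{\partial}_b$ and $I_\psi(C)TD_0T^*$ generate, modulo analytic functions, the same left ideal (same kernel), invoking Kohn-type a priori estimates to control $(1-S)$, and the invertibility of elliptic amplitudes in the Toeplitz algebra (Proposition \ref{prop:invesion_Toeplitz}) to pin down the idempotent uniquely. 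The patching at the end is routine given the machinery of Section \ref{sec:four-integr-oper}, but one must check that the various local phase functions obtained from Proposition \ref{prop:FIO-rank1-phase} can all be simultaneously replaced by the single globally-defined $\psi$, which uses that any two defining functions of $Z$ differ by a nonvanishing factor and Proposition \ref{prop:change_phase_variables}.
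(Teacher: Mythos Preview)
Your overall architecture is right: build a local FIO candidate via the Sato normal form and the model Szeg\H{o} projector, glue, and compare to $S$. But there is a genuine logical gap in the comparison step, and it is precisely where you place your ``main obstacle''.

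You try to identify $S$ with the candidate by computing $\hat\kappa\,(TST^*)\,\widehat{\kappa^{-1}}$ and arguing it must equal $S_0$. This is circular: to even apply $\hat\kappa$, $\widehat{\kappa^{-1}}$ (which act between the spaces $\mathcal{H}_\omega$) to $TST^*$, you need to know that $S$ already preserves real-analytic functions and acts on $\mathcal{O}'(X)$. A priori $S$ is only an orthogonal projection on $L^2$; the fact that it acts on $\mathcal{O}'$ is part of what has to be proved. Similarly, your opening claim of off-diagonal analyticity of $S$ via analytic hypoellipticity of $\overline{\partial}_b$ is not available as input: the analytic hypoellipticity of $\overline{\partial}_b$ on $\Sigma_+$ is a \emph{consequence} of the theorem, not something you can invoke beforehand.

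The paper resolves this by inverting the logic. One first \emph{defines} the candidate $\widetilde{S}$ as $\widehat{\kappa^{-1}}TS_0T^*\hat\kappa$ locally (no reference to $S$), checks it is a self-adjoint projector FIO, and glues using that the amplitude of a self-adjoint projector with a fixed phase is uniquely determined. Then one compares $\widetilde{S}$ to $S$ by a bootstrap: first show both $(\widetilde{S}-1)S$ and $\widetilde{S}(1-S)$ map $L^2\to\mathcal{O}(X)$. The first uses that $WF_a(Su)\subset\Sigma$ and the normal form; the second uses Kohn's identity $I=S+F\overline{\partial}_b$ together with the fact that the amplitude of $\widetilde{S}$ lies in $\ker\partial_b$ in the second variable, so $\widetilde{S}(\overline{\partial}_b)^*$ is analytic-regularizing. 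These two facts give $\widetilde{S}-S:L^2\to\mathcal{O}(X)$, hence (by duality) $S$ acts on $\mathcal{O}'(X)$; only then can one rerun the argument with $u\in\mathcal{O}'$ and conclude $\widetilde{S}-S:\mathcal{O}'\to\mathcal{O}$. Your proposal is missing this $L^2\to\mathcal{O}'$ bootstrap, and in particular the separate handling of $(\widetilde{S}-1)S$ versus $\widetilde{S}(1-S)$ via Kohn's formula.
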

\begin{proof}The proof consists first in exhibiting a parametrix for
  $S$ in the form of a Fourier Integral Operator as above, then
  proving that this parametrix indeed approximates $S$ in the sense
  that the difference is a continuous operator from $\mathcal{O}'(X)$
  to $\mathcal{O}(X)$. This requires several steps, as the mere fact
  that $S$ acts nicely on $\mathcal{O}'(X)$ requires a proof.
\begin{enumerate}
\item 
Given $(z,\zeta)\in \Sigma^{\pm}$ and
$(z_0,\zeta_0)\in \Sigma^{\pm}_0$, applying Proposition
\ref{prop:Sato_conj} twice, we obtain a pair of Fourier integral
operators $\hat{\kappa}$ and $\widehat{\kappa^{-1}}$ between
neighbourhoods of $(z,\zeta)$ and $(z_0,\zeta_0)$ (such that
$\hat{\kappa}\widehat{\kappa^{-1}}$ and
$\widehat{\kappa^{-1}}\hat{\kappa}$ slightly decrease the definition
set and are equal to the restriction map) such that, for every
pair of small neighbourhoods $V_1\Subset V$ of $(z,\zeta)$ and every
pair of small neighbourhoods $U_1\Subset U$ of $(z_0,\zeta_0)$, 
\begin{align*}
  \hat{\kappa}T\overline{\partial}_bT^*\widehat{\kappa^{-1}}&=I_{\psi}(C)T(\overline{\partial}_b)_0T^*:\mathcal{F}_{\omega}(U)\to \mathcal{H}_{\omega}(U_1)\\
  T\overline{\partial}_bT^*&=\widehat{\kappa^{-1}}I_{\psi}(C)T(\overline{\partial}_b)_0T^*\hat{\kappa}:\mathcal{F}(V)\to \mathcal{H}(V_1).
\end{align*}
We now present the following local candidate for $TST^*$:
\[
  \mathcal{S}=\widehat{\kappa^{-1}}TS_0T^*\hat{\kappa}.
\]
One readily checks that
\[
  T\overline{\partial}_bT^*\mathcal{S}=0:\mathcal{F}_{\omega}(V)\to
  \mathcal{H}_{\omega}(V_1).
\]
Moreover $\mathcal{S}$ is self-adjoint, because of the constraints on
$\widehat{\kappa}$ and $\widehat{\kappa^{-1}}$ in Proposition
\ref{prop:quantized-contact} (modified by Remark \ref{rem:replacement}).

\item 
From the definition of $\mathcal{S}$, we see that $T^*\mathcal{S}T$ is an elliptic Fourier Integral Operator
whose Lagrangian $\Lambda$ is a half-line bundle over its projection onto the
base;%
it is also a projector. When writing $T^*\mathcal{S}T$ in a
form given by Proposition \ref{prop:FIO-rank1-phase}, once the phase
is chosen, the fact that it is a self-adjoint projector determines
every term of the formal amplitude. Therefore different
pieces of $T^*\mathcal{S}T$ can be glued together into a global
Fourier Integral operator $\widetilde{S}$ on $X$; in fact, this Fourier Integral
Operator takes the form proposed in the statement,
with a formal analytic amplitude.
\item Let us prove that $(\widetilde{S}-1)S$ is continuous from
  $L^2(X)$ to $\mathcal{O}(X)$. Given $u\in L^2(X)$, the wave front of
  $Su$ is included in $\Sigma$ by Proposition \ref{prop:WF_Pu} because
  $\overline{\partial}Su=0$. Now,
  given $(z,\zeta)\in \Sigma$ and small open neighbourhoods
  $V_1\Subset V$, letting $v$ be the restriction of $TSu$ to $V$, we can apply the previous technique and
  obtain that
  $\widehat{\kappa^{-1}}T(\overline{\partial}_b)_0T^*\hat{\kappa}v=0$
  as an element of $\mathcal{F}_{\omega}(V_1).$ By Proposition
    \ref{prop:loc-anal-hypoell-Grauert}, we deduce that
    $\widehat{\kappa^{-1}}T(S_0-1)T^*\hat{\kappa}v=0$ as an element of
    $\mathcal{F}_{\omega}(V_1)$; but this is equal to the restriction of
    $(\widetilde{S}-1)S u$ to $V_1$.
  \item Let us similarly prove that $\widetilde{S}(1-S)$ is continuous
    from $L^2(X)$ to $\mathcal{O}(X)$. By the results of Kohn \cite{kohn_harmonic_1963} and Boutet de
    Monvel \cite{boutet_de_monvel_hypoelliptic_1974}, there exists
    $F:L^2(X)\to L^2(X)$, such that
    $I_{L^2}=S+F\overline{\partial}_b$. Therefore, as operators on
    $L^2(X)$, one has
    \[
      \widetilde{S}=\widetilde{S}S+(F\overline{\partial}_b\widetilde{S})^*=\widetilde{S}S+\widetilde{S}(\overline{\partial}_b)^*F^*.
    \]
    As functions of the second variable, the phase and all the terms
    of the formal amplitude of $\widetilde{S}$ belong to the kernel
    of $\partial_b$, and therefore, integrating by parts, for every
    $u\in L^2(X)$, $\widetilde{S}(\overline{\partial}_b)^*u\in
    C^{\omega}(X)$.
  \item Since both $\widetilde{S}S-S$ and
    $\widetilde{S}-\widetilde{S}S$ are continuous from $L^2(X)$ to
    $\mathcal{O}(X)$, we deduce that $\widetilde{S}-S$ is continuous from
    $L^2(X)$ to $\mathcal{O}(X)$. In particular,
    $S=\widetilde{S}+(S-\widetilde{S})$ is continuous from
    $\mathcal{O}(X)$ to $\mathcal{O}(X)$, and therefore by duality $S$
    acts continuously on $\mathcal{O}'(X)$.
  \item Using item 5, we can repeat the proof of item 3 starting with $u\in
    \mathcal{O}'(X)$, and we now obtain that $(\widetilde{S}-1)S$ is
    continuous from $\mathcal{O}'(X)$ to $\mathcal{O}(X)$.
  \item Using item 5 again, we obtain that, by duality, $S-\widetilde{S}$ is
    continuous from $\mathcal{O}'(X)$ to $L^2(X)$, and therefore
    $(S-\widetilde{S})^2=S-\widetilde{S}S-S\widetilde{S}+\widetilde{S}$
    is continuous from $\mathcal{O}'(X)$ to $\mathcal{O}(X)$. Then, by
    item 6, both $\widetilde{S}S-S$ and its dual $S\widetilde{S}-S$
    are continuous from $\mathcal{O}'(X)$ to
    $\mathcal{O}(X)$. Therefore $S-\widetilde{S}$ is continuous from
    $\mathcal{O}'(X)$ to $\mathcal{O}(X)$, and the proof is complete.
\end{enumerate}
\end{proof}
An interesting consequence of Theorem \ref{prop:analytic-Szego} is the
\emph{analytic hypoellipticity} of $\overline{\partial}_b$: if $u\in
\mathcal{O}'(X)$ is such that $\overline{\partial}_bu$ is
real-analytic on an open set $V$, then $(1-S)u$ is real-analytic on
$V$; in fact, we obtain the much stronger property
\[
  WF_a((1-S)u)= WF_a(\overline{\partial}_bu).
\]
This seems to be a new result, interesting in its own right. Away from
the characteristic $\Sigma$ this property is of course the usual
ellipticity result (Proposition \ref{prop:WF_Pu}), and there are now
several proofs of microlocal analytic hypoellipticity on $\Sigma_-$ \cite{treves_analytic_1978,tartakoff_elementary_1981,sjostrand_analytic_1983}.

We are now able to generalise the construction of Section
\ref{sec:pseud-oper-1} to define the algebra of Toeplitz operators on
$X$.

\begin{prop}\label{prop:Toeplitz_general}
  Let $S(X)$ denote the space of formal analytic
  amplitudes on $X\times (0,+\infty)$. After a holomorphic extension,
  these amplitudes lead to Fourier Integral Operators acting on
  $\mathcal{F}_{\omega}(X)$, with singular kernel of the form
  \[
    I_{\psi}(a):(x,y)\mapsto
    \int_0^{+\infty}e^{-t\psi(x,y)}t^{n-1}\widetilde{a}(x,y;t)\dd
    t\qquad \qquad a\in S(X).
  \]
  \begin{enumerate}
    \item 
  These operators form an algebra under composition; the product law
  takes the form
  \[
    (a\sharp b)_{k}=\sum_{n+\ell=0}^kB_n(a_{\ell},b_{k-n-\ell}),
  \]
  where $(B_n)_{n\in \N}$ is a sequence of bilinear differential
  operator such that, for all $n\in \N$, $B_n$ is of total degree
  $2n$.

  \item 
  The principal symbol of $[I_{\psi}(a),I_{\psi}(b)]$ is $\{a,b\}$
  (using the natural symplectic structure on $X\times (0,+\infty)$).

  \item
  If the degree of $a$ is $d$, then $I_{\psi}(a)$ continuously sends
  $H^s(X)$ into $H^{s-d}(X)$ for every $s\in \R$.

  \item
    The Szeg\H{o}
    projector $S=I_{\psi}(s)$ is the unit for this algebra.
  \item Invertible
  elements are exactly those for which the principal symbol $a_0$ is bounded
  away from $0$.
  \item The space $S(X)$ is the union of Banach spaces on
    which $a,b\mapsto a\sharp b$ is continuous.
  \end{enumerate}
\end{prop}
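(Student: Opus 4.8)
The plan is to derive every assertion from the flat model $\R^{d}\times S^{2d-1}$ of Proposition \ref{prop:invesion_Toeplitz} together with the normal form of Proposition \ref{prop:Sato_conj}, exactly in the spirit in which Theorem \ref{prop:analytic-Szego} was proved. First, since $X$ is strongly pseudoconvex, $\diag(X)$ is totally real in $X\times X$ once the CR structure on the left factor is reversed; hence, as in the proof of Proposition \ref{prop:invesion_Toeplitz}, every formal analytic amplitude $a$ on $X\times(0,+\infty)$ extends uniquely to a formal analytic amplitude $\widetilde a$ near $\diag(X)\times(0,+\infty)$ which is holomorphic in the first and anti-holomorphic in the second variable, so that $I_{\psi}(a)$ is well-defined and, $\psi$ and $\widetilde a$ being CR-holomorphic in the first variable and $\overline{\partial}_{b}$ being analytic hypoelliptic (a consequence of Theorem \ref{prop:analytic-Szego}), $I_{\psi}(a)$ maps $\mathcal{F}_{\omega}(X)$ into $\mathcal{H}_{\omega}(X)$. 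For claim 1: by Theorem \ref{prop:analytic-Szego} the phase $\psi$ has Lagrangian $\Lambda_{\psi}$, a half-line bundle over $\diag(X)$ with corank-$1$ base projection, which is idempotent, $\Lambda_{\psi}\circ\Lambda_{\psi}=\Lambda_{\psi}$, since $S^{2}=S$. Given $a,b\in S(X)$, the composition $I_{\psi}(a)I_{\psi}(b)$ is an analytic Fourier Integral Operator (Theorem \ref{prop:FIOs}) with Lagrangian $\Lambda_{\psi}$, so by Proposition \ref{prop:FIO-rank1-phase} it equals $I_{\psi}(c)$ for a formal analytic amplitude $c=:a\sharp b$, independent of realisations by Proposition \ref{prop:pos_phase_reg_int}. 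By Propositions \ref{prop:compo_well_def}, \ref{prop:stat_phase_in_FIO} and \ref{prop:change_phase_variables}, $c$ is obtained from $a$, $b$ and $\psi$ by formal stationary phase in the single fibre variable followed by a change of variables; this is a universal local computation depending only on the $2$-jet of $\psi$ along $\diag(X)$ and matches term by term the $C^{\infty}$ Boutet de Monvel--Guillemin calculus of \cite{boutet_de_monvel_spectral_1981}, from which one reads off $(a\sharp b)_{k}=\sum_{n+\ell=0}^{k}B_{n}(a_{\ell},b_{k-n-\ell})$ with $B_{n}$ bilinear differential of total degree $2n$. Associativity follows from associativity of operator composition and injectivity of $a\mapsto I_{\psi}(a)$ modulo exponentially small amplitudes; stability under adjoints follows from $S^{*}=S$, which gives $\Lambda_{\psi}^{*}=\Lambda_{\psi}$ and hence $I_{\psi}(a)^{*}=I_{\psi}(a^{*})$ for a formal amplitude $a^{*}$.

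Claims 2--4 are then short. For claim 2, $[I_{\psi}(a),I_{\psi}(b)]=I_{\psi}(a\sharp b-b\sharp a)$; since $B_{0}$ is pointwise multiplication the principal symbols cancel, and the antisymmetric part of $B_{1}$ is the Poisson bracket for the natural symplectic structure on $X\times(0,+\infty)$, again a universal identity identical to the smooth one. For claim 3, $I_{\psi}(a)$ of degree $d$ is in particular a $C^{\infty}$ Fourier Integral Operator associated with the idempotent Lagrangian $\Lambda_{\psi}$, and such operators map $H^{s}(X)$ into $H^{s-d}(X)$ by the theory of \cite{boutet_de_monvel_spectral_1981}. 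For claim 4, $S=I_{\psi}(s)$ by Theorem \ref{prop:analytic-Szego}; for any $a$, $\overline{\partial}_{b}I_{\psi}(a)u$ is real-analytic, so analytic hypoellipticity gives $(1-S)I_{\psi}(a)u$ real-analytic, i.e.\ $SI_{\psi}(a)=I_{\psi}(a)$, whence $s\sharp a=a$; taking adjoints gives $a\sharp s=a$. Since $S$ is locally conjugated to $S_{0}$ by elliptic Fourier Integral Operators (Proposition \ref{prop:Sato_conj}), $s_{0}$ is bounded away from $0$.

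Claims 5 and 6 carry the real content. For claim 6, the $B_{n}$ are bilinear differential operators of total degree $2n$ whose coefficients are, by compactness of $X$, uniformly real-analytic in any fixed finite analytic atlas; they therefore obey on the Banach scale $S^{\rho,R}_{m}(X)$ of Definition \ref{def:formal-amp} the same combinatorial estimate as the Moyal product proved in Appendix \ref{sec:algebra-symb-pseud} and quoted in Proposition \ref{prop:inversion_formal_pseudos}, namely $\|a\sharp b\|_{S^{\rho,R}_{m}(X)}\leq C\|a\|_{S^{\rho,R}_{m}(X)}\|b\|_{S^{\rho/2,R/4}_{m}(X)}$ for $m$ large and $R$ large relative to $\rho$; hence each such space is stable under $\sharp$ with continuous product, and $S(X)=\bigcup_{\rho,R,m}S^{\rho,R}_{m}(X)$. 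For claim 5, if $a\sharp b=s$ then $a_{0}b_{0}=s_{0}$ is bounded away from $0$, so $a_{0}$ is too. Conversely, suppose $a_{0}$ is bounded away from $0$: invertibility is a formal identity which, the $\sharp$ product being local, may be verified microlocally; near any $(z,\zeta)\in\Sigma^{\pm}$ the algebra $\{I_{\psi}(a)\}$ is conjugated, via the FBI transform and a quantized contact transformation (Propositions \ref{prop:Sato_conj} and \ref{prop:quantized-contact}), to the corresponding algebra on $\R^{d}\times S^{2d-1}$, where Proposition \ref{prop:invesion_Toeplitz} produces an inverse with nonvanishing principal part. Moreover that inverse is given by a universal formula $b=\sum_{k}C_{k}(a,a_{0}^{-1})$ in bilinear differential operators (as at the end of the proof of Proposition \ref{prop:invesion_Toeplitz}), so the local right-inverses glue to a global $b$ with $a\sharp b=s$; applying the same construction to $a^{*}$ and taking adjoints yields a left inverse, which coincides with $b$ in the unital algebra.

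The step I expect to be the main obstacle is verifying that the stationary-phase output of the composition $I_{\psi}(a)I_{\psi}(b)$ is genuinely a formal \emph{analytic} amplitude carrying the universal bilinear product law $(B_{n})$ — this is precisely what legitimises writing inverses by a universal formula and gluing them, and what feeds the Banach estimates of claim 6. It requires carefully interlocking the rank-$1$ phase reduction (Proposition \ref{prop:FIO-rank1-phase}), the analytic stationary phase inside a Fourier Integral Operator (Proposition \ref{prop:stat_phase_in_FIO}), the change of fibre variables for realisations (Proposition \ref{prop:change_phase_variables}), the CR-holomorphy constraints along $\diag(X)$, and the term-by-term identification with the smooth Boutet de Monvel--Guillemin calculus; once this is established, everything else is bookkeeping or transfer from the flat model.
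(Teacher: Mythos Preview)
Your proposal is correct and follows essentially the same architecture as the paper: claims 1--4 are obtained from Theorem \ref{prop:FIOs}, Theorem \ref{prop:analytic-Szego}, Proposition \ref{prop:FIO-rank1-phase}, and the smooth Boutet de Monvel--Guillemin calculus, while claim 5 is handled by microlocal conjugation to the flat model via Proposition \ref{prop:Sato_conj} and Proposition \ref{prop:invesion_Toeplitz}, with the universal inversion formula ensuring that local inverses glue.

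There is one genuine divergence worth flagging. For claim 6 (continuity of $\sharp$ on Banach subspaces), the paper does \emph{not} attempt a direct estimate on the bidifferential operators $B_n$; instead it uses the same normal-form conjugation as for claim 5, transporting the question to the flat model where continuity is already known (ultimately via the $T$-conjugation to pseudodifferential operators and Proposition \ref{prop:composition-formal}). Your direct approach---asserting that the $B_n$, being bilinear differential of total degree $2n$ with uniformly analytic coefficients, satisfy the same combinatorial bound as the Moyal product---is plausible and more self-contained, but it is not an automatic consequence of Proposition \ref{prop:composition-formal}: that proof exploits the specific factorial structure $\frac{1}{\beta!}\partial^\beta_x\partial^\beta_\xi$ of the Moyal product, and a general family $(B_n)$ of total degree $2n$ would need its own estimate (one must control the analytic norms of the coefficients of $B_n$ as $n\to\infty$, which is not given by claim 1 alone). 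The paper's route sidesteps this by never estimating the $B_n$ directly. Either approach works in principle, but yours carries an extra verification you have not written out.
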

\begin{proof}
  Except for the last two items, all claims follow from Theorem
  \ref{prop:FIOs}, Theorem \ref{prop:analytic-Szego}, the usual
  properties of Fourier Integral operators in smooth regularity, and
  the results of \cite{boutet_de_monvel_spectral_1981}. In particular,
  if $P$ is a (smooth or analytic) pseudodifferential operator on $X$,
  then $SPS$ is a (smooth or analytic) Fourier Integral Operator with
  phase $\psi$ and its principal symbol is the restriction of the
  principal symbol of $P$ to $\Sigma_+$ (identified with
  $X\times (0,+\infty)$ by the section $x\mapsto \partial \psi(x,x)$),
  so we can apply \cite{boutet_de_monvel_spectral_1981}, Chapter 2,
  Corollary 2 to compute commutators at main order.

  To prove invertibility of elliptic amplitudes and continuity of the
  product in suitable spaces of formal analytic amplitudes, we use again Proposition
  \ref{prop:Sato_conj}: the wave front set of these operators is
  $\{(z,\zeta,z,\overline{\zeta}),(z,\zeta)\in \Sigma^+\}$ and
  microlocally near any point of $\Sigma^+$, a Fourier Integral
  Operators conjugates $I_{\psi}(a)$, where $a$ is elliptic, with an operator of the form of
  Proposition \ref{prop:invesion_Toeplitz} with elliptic symbol. The
  latter can be inverted, and conjugating back we obtain, locally, an
  amplitude for the inverse. Since the composition rule is local, 
\end{proof}

One can quantize homogeneous contact transformations as in the flat
case; an important subclass of these transformations are fundamental
solutions to the Schrödinger equation for degree $1$ Toeplitz operators.
\begin{prop}
  Given a one-homogeneous symplectic transformation $\kappa:X\times
  (0,+\infty)\to X\times (0,+\infty)$, there
  exists a neighbourhood $\Omega$ of the graph of $[\kappa]:X\to X$, a function
  $\psi:\Omega\to \C$, CR-holomorphic in the first variable,
  anti-CR-holomorphic in the second variable, such that
  $\im(\psi)\equiv \dist(\cdot,{\rm Graph}([\kappa]))^2$. In particular,
  the function $\Omega\times (0,+\infty)\ni (x,y,t)\mapsto t\psi(x,y)$
  is a positive phase function, whose Lagrangian $\Lambda$ is CR-holomorphic in
  the first variable, anti-CR-holomorphic in the second variable, and
  such that $\pi(\Lambda_{\R})={\rm Graph}([\kappa])$.
\end{prop}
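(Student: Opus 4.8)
The plan is to construct $\psi$ as a defining phase for the unique positive Lagrangian $\Lambda$ extending the graph of $\kappa$ inside $Z\times\overline{Z}$, exactly as in Proposition \ref{prop:completing_Lagrangians}, and then to verify that this $\psi$ has the announced imaginary-part behaviour. First I would observe that the graph of $\kappa$, viewed as a subset of $\Sigma_+\times\Sigma_+$ (after identifying $X\times(0,+\infty)$ with $\Sigma_+$ via the section $x\mapsto \partial\psi_0(x,x)$ associated to a defining function $\rho$ of $X$), is a real-analytic conical Lagrangian for the twisted symplectic form $\omega_1-\omega_2$, precisely because $\kappa$ is a one-homogeneous symplectomorphism. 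Proposition \ref{prop:completing_Lagrangians} then produces a unique complex Lagrangian $\Lambda\subset Z\times\overline{Z}$, Lagrangian in $T^*X\times T^*X$ for the twisted form, containing the graph of $\kappa$, and moreover $\Lambda$ is positive in the sense of Melin–Sjöstrand. By Proposition \ref{prop:Z_and_Sigma}, $Z$ is coisotropic of codimension $n-1$ and $Z\cap\overline{Z}=\widetilde{\Sigma}$, so $\Lambda_{\R}$ is a half-line bundle over $\mathrm{Graph}([\kappa])$, giving $\pi(\Lambda_{\R})=\mathrm{Graph}([\kappa])$.

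Next I would produce the phase $\psi$ from $\Lambda$. Since the base projection $\pi:\Lambda\to\widetilde X\times\widetilde X$ has $\dim\ker d\pi=1$ everywhere — indeed $\Lambda$ is a half-line bundle over its projection $Z$ — Proposition \ref{prop:FIO-rank1-phase} furnishes a positive phase function on a half-line bundle over a neighbourhood of $Z$ whose Lagrangian is $\Lambda$. Writing the fibre variable as $t\in(0,+\infty)$ and trivialising, this phase takes the form $(x,y,t)\mapsto t\,\psi(x,y)$ for a function $\psi:\Omega\to\C$ defined on a neighbourhood $\Omega$ of $\mathrm{Graph}([\kappa])$ in $X\times X$; the fact that $Z$ is a complex submanifold cut out by the $\overline{\partial}_b$-symbol in the first variable and by the $\partial_b$-symbol in the second variable forces $\psi$ to be CR-holomorphic in $x$ and anti-CR-holomorphic in $y$, as these are exactly the kernel conditions defining $Z$ and $\overline{Z}$. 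That $\{\psi=0\}$ coincides with $\mathrm{Graph}([\kappa])$, and that $\nabla_x\psi\ne 0,\nabla_y\psi\ne 0$ there, follows since $Z$ has codimension $1$ and $\psi$ is a defining function for it, exactly as in the proof of Proposition \ref{prop:FIO-rank1-phase}.

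It remains to upgrade the positivity statement $\im(\psi)\geq 0$, with equality on $\mathrm{Graph}([\kappa])$, to the quantitative comparison $\im(\psi)\asymp\dist(\cdot,\mathrm{Graph}([\kappa]))^2$. Here the input is the positivity of the matrix $\tfrac1i[\{p_j,p_k^*\}]_{j,k}$ on $\Sigma_+$ from Proposition \ref{prop:Z_and_Sigma}, which is what makes the leaf-by-leaf construction in Proposition \ref{prop:completing_Lagrangians} produce a strictly positive Lagrangian. Concretely, in local coordinates where $\kappa$ is straightened, $\psi$ agrees to second order with the polarisation of a strictly plurisubharmonic defining function, so its Hessian transverse to $\mathrm{Graph}([\kappa])$ is positive-definite (as a real form with values in $i\R_{\geq 0}$ on the real locus), giving the lower bound $\im(\psi)\gtrsim\dist^2$; the upper bound $\im(\psi)\lesssim\dist^2$ is automatic since $\im(\psi)$ vanishes to second order along $\mathrm{Graph}([\kappa])$. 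Combining these, $t\psi$ satisfies the third condition of Definition \ref{def:good_phase} (with the non-degeneracy conditions already checked, $\nabla_t(t\psi)=\psi$ being a defining function for $\Sigma_\psi=\{\psi=0\}$), so it is a positive phase function with $\Lambda_{t\psi}=\Lambda$.

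I expect the main obstacle to be the precise verification of the quantitative imaginary-part estimate, i.e.\ tracing through the leaf construction of Proposition \ref{prop:completing_Lagrangians} to extract non-degeneracy of the transverse Hessian of $\psi$ rather than mere non-negativity; in \cite{boutet_de_monvel_sur_1975} and \cite{melin_fourier_1975} this is the content of the positivity of $\tfrac1i[\{p_j,p_k^*\}]$, and the argument is local and essentially identical to the one already invoked for the Szeg\H{o} phase, so it should go through without new difficulty.
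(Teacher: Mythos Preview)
Your proposal is correct and follows essentially the same route as the paper: build the unique positive Lagrangian $\Lambda$ containing the graph of $\kappa$ via Proposition~\ref{prop:completing_Lagrangians}, observe that $\Lambda_{\R}$ is a half-line bundle over its base projection, and then invoke Proposition~\ref{prop:FIO-rank1-phase} to produce the phase $\psi$. You supply more detail than the paper does---in particular on the CR-holomorphicity of $\psi$ and the quantitative estimate $\im(\psi)\asymp\dist(\cdot,\mathrm{Graph}([\kappa]))^2$, both of which the paper leaves implicit in the word ``positive''---but the architecture is the same.
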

\begin{proof}
  We recover $\psi$ from its Lagrangian, which is unique by
  Proposition \ref{prop:completing_Lagrangians}. By design
  $\Lambda_{\R}$ is a half-line bundle over its projection, and this
  condition is open, so that $\Lambda$ satisfies the hypotheses of
  Proposition \ref{prop:FIO-rank1-phase}. This concludes the proof.
\end{proof}

\section{A few applications}
\label{sec:few-applications}

\subsection{Grauert tubes, FBI transforms, and quantum propagators}
\label{sec:appl-grau-tubes}

Let $(M,g)$ be a
compact, real-analytic Riemannian manifold. The holomorphic extension
of the exponential map gives a natural isomorphism between small
neighbourhoods of $M$ in $\widetilde{M}$ and small neighbourhoods of
the zero section in $T^*M$.

There exists \cite{guillemin_grauert_1991} a real-analytic
Kähler structure on a neighbourhood of the zero section in $T^*M$,
compatible with the complex structure (via the aforementioned
identification) and whose restriction to $M$ is $g$. The \emph{Grauert
  tube}
\[
  B_r=\{(x,\xi)\in T^*M, \|\xi\|_{g(x)}<r\}
\]
is then a strongly pseudoconvex open set for $r$ small, and its
boundary $X_r$ is a real-analytic submanifold of $T^*M$.

The Toeplitz operators on $X_r$ acquire a particular importance
because of the FBI transform (from $\mathcal{O}'(M)$ to $\mathcal{H}_{\omega}(X_r)$), defined as
follows: let $\Omega=\{(x,\xi,y)\in \overline{B_r}\times M, \dist(x,y)<r\}$; let
$\psi:\Omega\to \C$ be holomorphic in its
first factor and such that $\psi(x,\xi,x)=\tfrac{i}{2}(r-\|\xi\|_{g(x)}^2)$. This
function, when restricted to $(X_r\times M)\cap \Omega$, satisfies the following properties:
\begin{itemize}
\item $-C\dist(x,y)^2\leq \im(\psi)(x,\xi,y)\leq -c\dist(x,y)^2$ for
  some $0<c<C$.
\item $(\nabla_x,\nabla_\xi,\nabla_y)\psi(x,\xi,x)=(-\xi,-i\xi,\xi)$.
\end{itemize}
In particular, $X_r\times M\times (0,+\infty)\ni (x,\xi,y)\mapsto t\psi(x,\xi,y)$ is a positive phase function whose Lagrangian
$\Lambda$ is the only Lagrangian which is CR-holomorphic in the first
component and such that
\[
  \Lambda_\R=\{(x,-t\omega,\omega,0,y,-t\omega),t\in \R,(x,\omega)\in
  S^*M\}.
  \]
We can introduce our first guess for the FBI transform:
\[
  T_0(x,\xi,y)=\int e^{t\psi(x,\xi,y)}t^{\frac{n+1}{4}}\dd t.
\]
As in the flat case, the power of $t$ is chosen such that $T_0$ is
a bounded operator, with bounded inverse, between $L^2(M)$ and
$\ker_{L^2(X_r)}(\overline{\partial}_b)$; in other terms, $T_0^*T_0$ is an elliptic
pseudodifferential operator of degree $0$.

$T_0$ may not exactly be unitary, but we can remedy to this situation
in the following way: $T_0T_0^*$ has the same Lagrangian as $S$, and
therefore it is of the form $I_{\psi}(t_0)$ for some self-adjoint amplitude $t_0$;
there exists an inverse square root $r$ for $t_0$ (because the calculus of
these Toeplitz operators is, locally, conjugated to that of
pseudodifferential operators on flat space), and therefore, letting
\[
  T=I_{\psi}(r)T_0,
\]
then $T$ is a unitary transformation between $L^2(M)$ and
$\ker_{L^2(X_r)}(\overline{\partial}_b)$. Useful properties of $T$ are
as follows:
\begin{itemize}
\item If $A$ is an analytic pseudodifferential operator on $M$ %
  , then
  $TAT^*$ is a Toeplitz operator with principal symbol $(x,\omega,t)\mapsto
  \sigma(A)(x,-t\omega)$.
  
\item More generally, if $A$ is an analytic Fourier Integral operator
  on $M$, with Lagrangian $\Lambda_A$, then $TAT^*$ is a Fourier
  integral operator whose Lagrangian $\Lambda_{TAT^*}$ is a
  complex Lagrangian, CR-holomorphic in the first factor,
  anti-CR-holomorphic in the second factor, and such that
  \begin{align*}
    (\Lambda_{TAT^*})_{\R}=\{(x_1,-t_1\omega_1,\omega_1,0,x_2,-t_2\omega_2,\omega_2,0),&(x_1,\omega_1,x_2,\omega_2)\in
    (\Lambda_A)_\R, \\ &t_1>0, t_2>0, \\ &(x_1,\omega_1)\in S^*M, \\ &(x_2,\tfrac{t_2}{t_1}\omega_2)\in S^*M\}.
  \end{align*}
  Existence and uniqueness of this Lagrangian are guaranteed by
  Proposition \ref{prop:completing_Lagrangians}. In particular, in
  this case one has $\Sigma_+\simeq T^*M\setminus \{0\}$.
\end{itemize}

We can use this transformation to describe quantum
propagators as analytic Fourier Integral Operators.
\begin{prop}\label{prop:quantum_propag}
  Let $P$ be an analytic self-adjoint degree 1 pseudodifferential operator on
  $M$. For every $s\in \R$, the fundamental solution $e^{-isP}$ of the Schrödinger equation
  for $P$ is an analytic Fourier Integral Operator whose Lagrangian is
  the graph of the bicharacteristic flow $\phi$ of $P$ at time $s$. In particular,
  $Te^{-isP}T^*$ is a Fourier Integral operator with a singular kernel of the
  form
  \[
    (x,y)\mapsto \int_0^{+\infty}e^{it\psi(s,x,y)}a(s,x,y;t)\dd t.
  \]
  Here, $\psi(s,x,y)$ is CR-holomorphic with respect to $x$,
  anti-CR-holomorphic with respect to $y$, and satisfies $\im(\psi(x,y))\asymp
  -\dist(y,\phi_s(x))$; moreover $a$ is the realisation of a formal analytic amplitude.
\end{prop}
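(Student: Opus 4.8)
The strategy is the standard one for solving Schrödinger-type equations by geometric optics, carried out entirely within the analytic FIO calculus developed above. First I would observe that $e^{-isP}$ is a one-parameter family of unitary operators on $L^2(M)$, solving $\partial_s U(s) = -iP U(s)$, $U(0)=\mathrm{Id}$, and I would seek a parametrix $V(s)$ in the form of an analytic Fourier Integral Operator whose Lagrangian $\Lambda_s$ is the graph of $\phi_s$, the time-$s$ bicharacteristic flow of the principal symbol $p=\sigma(P)$. Since $P$ is one-homogeneous and self-adjoint, $\phi_s$ is a one-homogeneous symplectic transformation of $T^*M\setminus\{0\}$; by Proposition \ref{prop:FIO-rank1-phase} (the graph of a contact transformation has corank-$1$ base projection, so it is a half-line bundle over its projection), there is a positive phase function with exactly one phase variable generating $\Lambda_s$, and any FIO with Lagrangian $\Lambda_s$ can be written with a formal analytic amplitude against this phase. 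So the whole problem reduces to constructing, order by order, a formal analytic amplitude realizing the transport equations, and then checking that the resulting FIO is genuinely $e^{-isP}$ modulo operators sending $\mathcal{O}'$ into $\mathcal{O}$.

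The construction goes as follows. I would fix a phase $\Psi(s,x,\theta,y)$ generating $\Lambda_s$ (which one can build from the generating function of $\phi_s$, solving the Hamilton–Jacobi equation $\partial_s\Psi + p(x,\nabla_x\Psi)=0$ — this has an analytic solution for $|s|$ small because $p$ is analytic, and one globalizes in $s$ by composition of the flow, as $\phi_{s+s'}=\phi_{s'}\circ\phi_s$). Then I would look for $V(s)=I_{\Psi}(a(s))$ with $a(s)=(a_k(s))_{k\in\N}$ a formal analytic amplitude; plugging into $(\partial_s + iP)V(s)=0$ and applying the stationary phase / composition rules of Theorem \ref{prop:FIOs} and Proposition \ref{prop:stat_phase_in_FIO} produces a hierarchy of transport equations $\partial_s a_k + (\text{first-order transport operator along the flow})\,a_k = (\text{terms involving } a_0,\dots,a_{k-1})$, with initial data $a_0(0)\equiv 1$, $a_k(0)\equiv 0$ for $k\geq 1$. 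Each is solved by the method of characteristics along $\phi_s$, exactly as in the proof of Proposition \ref{prop:correct_D0_error_formal}: the solutions are analytic, and the key point is that the bounds propagate so that $(a_k(s))_{k\in\N}$ is, uniformly for $s$ in a compact interval, a formal analytic amplitude. This is the step I expect to be the main obstacle — one needs the same kind of careful inductive estimate on the jet-type norms $\|\cdot\|_{JS^{\rho,R}_m}$ as in Proposition \ref{prop:correct_D0_error_formal}, using that the symbol product and the transport-operator coefficients are ``infinitesimal'' in the sense of Proposition \ref{prop:composition-formal}, and then a Picard–Lindelöf argument on a suitable Banach space of amplitudes to integrate the $s$-evolution. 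Here the fact that $P$ has a polynomial-free analytic total symbol is what makes the remainders tractable.

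Once the parametrix $V(s)$ is built, I would show $V(s)-e^{-isP}$ maps $\mathcal{O}'(M)$ to $\mathcal{O}(M)$: by construction $(\partial_s + iP)V(s)$ has analytic kernel (its amplitude is $O(e^{-c|\theta|})$ after the formal equations are satisfied to all orders, invoking Proposition \ref{prop:pos_phase_reg_int} and Remark \ref{rem:replacement}), and $V(0)-\mathrm{Id}$ is likewise analytic; Duhamel's formula $e^{-isP}-V(s) = \int_0^s e^{-i(s-s')P}\big[(\partial_{s'}+iP)V(s')\big]\,\mathrm{d}s'$, together with the $L^2$-boundedness of $e^{-isP}$ and the analytic-regularization property of the bracket, gives the claim — one argues microlocally via the FBI transform $T$ and Proposition \ref{prop:FIO-WF} to control wave front sets, exactly as in the proof of Theorem \ref{prop:analytic-Szego}. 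Finally, to pass to the statement about $Te^{-isP}T^*$: since $e^{-isP}$ is an analytic FIO with Lagrangian $\mathrm{Graph}(\phi_s)$, by the properties of $T$ recorded just before the proposition, $Te^{-isP}T^*$ is an analytic FIO whose Lagrangian $\Lambda_{Te^{-isP}T^*}$ is CR-holomorphic in the first variable, anti-CR-holomorphic in the second, with real locus the lift of $\mathrm{Graph}(\phi_s)$ to $S^*M\times S^*M$; its base projection has corank $1$, so Proposition \ref{prop:FIO-rank1-phase} again lets us write it as $I_{\psi(s,\cdot,\cdot)}(a(s,\cdot,\cdot;t))$ with $\psi(s,x,y)$ the polarization of the function equal to $\tfrac{i}{2}\mathrm{dist}(y,\phi_s(x))^2$-type quantity on the diagonal-graph, $\im\psi\asymp -\mathrm{dist}(y,\phi_s(x))$ by positivity (Proposition \ref{prop:completing_Lagrangians}), and $a(s,\cdot,\cdot;t)$ the realization of a formal analytic amplitude obtained from $a(s)$ by stationary phase and change of fibre variables (Propositions \ref{prop:stat_phase_in_FIO}, \ref{prop:change_phase_variables}). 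This yields the displayed kernel and completes the proof.
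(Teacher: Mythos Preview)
Your approach is a valid classical route but diverges from the paper's in a way that creates a real gap. The paper does not solve Hamilton--Jacobi and transport equations on $M$; it works entirely on the Grauert-tube boundary $X_r$ and factors out the geometry first. Concretely, it takes $U_0(s)$ to be any elliptic analytic quantized contact transformation associated with $\phi_s$ (available as a black box from Proposition~\ref{prop:quantized-contact}), and then studies $R(s)=U_0(s)\,e^{-isTPT^*}$, which is a Toeplitz operator (its Lagrangian is that of the Szeg\H{o} projector). The evolution equation becomes $\tfrac{\dd}{\dd s}R(s)=A(s)R(s)$ with $A(s)=\bigl[\tfrac{\dd}{\dd s}U_0(s)-iU_0(s)TPT^*\bigr]V_0(s)$ a degree-$0$ Toeplitz operator; since the Toeplitz algebra carries Banach norms for which the product is continuous (Proposition~\ref{prop:Toeplitz_general}, item~6), this ODE is solved directly by Picard--Lindel\"of in a fixed Banach space of amplitudes, with no new transport-type estimates needed. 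Duhamel then compares $\widetilde{R(s)}^{-1}V_0(s)$ with the true propagator.

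Two specific problems with your route. First, the assertion that the graph of $\phi_s$ in $(T^*M\setminus\{0\})^2$ has corank-$1$ base projection is false at caustics (the fibre dimension jumps), so Proposition~\ref{prop:FIO-rank1-phase} does not apply globally on $M$; it is only on the Toeplitz side $X_r$ that the CR extension from Proposition~\ref{prop:completing_Lagrangians} always yields a half-line bundle, which is why the paper works there from the start. Second, and more seriously, the analytic estimates you need for the transport hierarchy along the flow of a \emph{general} analytic principal symbol $p$ are not supplied by the paper. Proposition~\ref{prop:correct_D0_error_formal} is tailored to the model operator $D_0$, whose linear coefficients give the transport equation a triangular structure in the Taylor jet at $x_j=0$ (equation~\eqref{eq:transport}); for a general $p$ that structure is lost, and showing that $(a_k(s))_{k}$ remains a formal analytic amplitude uniformly in $s$ would require a separate majorant-series or Cauchy--Kovalevskaya argument not present in the text. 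The paper's factoring trick is precisely what sidesteps this: once $U_0(s)$ absorbs the Lagrangian, the remaining unknown lives in an algebra where the ODE has bounded coefficients and existing continuity of $\sharp$ does all the work.
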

\begin{proof}
  We will prove the claimed description of $Te^{-isP}T^*$, from which
  the more abstract fact that $e^{-isP}=T^*(Te^{-isP}T^*)T$ is a
  Fourier Integral Operator will follow.

  By Proposition \ref{prop:repr_BG_smooth}, a pseudodifferential
  operator $Q$ on $X$ is such that $[Q,S]$ and 
  $SQS-TPT^*$ are continuous from $L^2$ to $L^2$, if and only if the
  principal symbol of $Q$, near $\Sigma_+$ is the CR-holomorphic extension of the
  principal symbol of $P$ (seen as a real-analytic function on the
  totally real manifold $\Sigma_+$).

  We know from the $C^{\infty}$ theory that $e^{-isQ}$ is, up to a
  smooth remainder, a Fourier
  Integral Operator whose Lagrangian is the graph of the
  bicharacteristic flow of the principal symbol of $Q$. Since the
  principal symbol Poisson-commutes with that of
  $(\overline{\partial}_b,\partial_b)$, the (smooth) Fourier Integral
  Operator $\Pi e^{-isQ}\Pi$ has exactly the requested
  Lagrangian. Letting $U_0(s)$ be any elliptic degree 0 analytic Fourier Integral Operator
  with the same Lagrangian (obtained by Proposition
  \ref{prop:quantized-contact}, for instance), the principal (order 1) symbols of
  $\frac{\dd}{\dd s}U_0(s)$ and $iU_0(s)TPT^*$ coincide
  (alternatively, one can compute principal symbols to show
  this). The phase of this Fourier Integral Operator is the same as
  that of $U_0(s)$.

  The operator $R(s)=U_0(s)e^{-isTPT^*}$ then solves the equation
  \[
    \frac{\dd}{\dd s}R(s)=\left[\tfrac{\dd}{\dd
        s}U_0(s)-iU_0(s)TPT^*\right]e^{-isTPT^*}.
  \]
  Introducing $V_0(s)$ an inverse for $U_0(s)$ (we already have an
  inverse if we applied Proposition \ref{prop:quantized-contact}),
  modulo a real-analytic remainder,
  \begin{equation}\label{eq:Rs}
    \frac{\dd}{\dd s}R(s)=\left[\tfrac{\dd}{\dd
        s}U_0(s)-iU_0(s)TPT^*\right]V_0(s).
  \end{equation}
  The right-hand side is a degree $0$ analytic Fourier Integral Operator with
  the same phase as $\Pi$; it is a degree $0$ Toeplitz operator in the
  sense of Proposition \ref{prop:Toeplitz_general}. Choosing a Banach
  norm of formal analytic amplitudes which contains the total symbol of $\left[\tfrac{\dd}{\dd
      s}U_0(s)-iU_0(s)TPT^*\right]V_0(s)$ and on which the formal
  product is continuous, one can give an approximate solution for
  \eqref{eq:Rs} by the Picard-Lindelöf
  theorem; thus there exists  an analytic
  Toeplitz operator $\widetilde{R(s)}$ such that, modulo an analytic remainder,
  \[
    \frac{\dd}{\dd s}\widetilde{R(s)}=\left[\tfrac{\dd}{\dd
        s}U_0(s)-iU_0(s)TPT^*\right]V_0(s).
  \]
  To conclude, we can apply the Duhamel formula (the true propagator
  $e^{isTPT^*}$ preserves real-analytic functions) and we obtain that,
  modulo an analytic remainder,
  \[
    e^{isTPT^*}=\widetilde{R(s)}^{-1}V_0(s).
  \]
  This proves the second part of the claim, and conjugating back, we
  obtain that
  $T^*\widetilde{R(s)}^{-1}V_0(s)T$ is a Fourier Integral Operator
  which coincides with $e^{isP}$ up to an analytic remainder.
\end{proof}

\subsection{From microlocal to semiclassical analysis}
\label{sec:from-micr-semicl}
In this section we broadly describe how one can obtain semiclassical results from
the description above.

In what follows, we let $X$ be the boundary of a compact, strongly pseudoconvex open set and suppose that
$X$ admits an $S^1$ action $(r_{\theta})_{\theta\in S^1}$ which preserves all the structure. Two
important examples are:
\begin{enumerate}
\item The case where $X$ is the Grauert tube around a compact
  Riemannian manifold of the form $S^1\times M$. The action of $S^1$
  on $S^1\times M$ (by rotation of the first factor) is an isometry
  and lifts to a symplectic transformation on
  $T^*(S^1\times M)$, which is also an isometry for the Kähler
  structure near the zero section.
\item The case where $X$ is the circle bundle of the dual of a
  positive line bundle over a Kähler manifold.
\end{enumerate}

Among Toeplitz operators, those whose formal amplitude is itself invariant
under the $S^1$ action commute with the $S^1$ action, and thus one can
decompose their action over the eigenmodes $\mathcal{H}_k,k\in \Z$ of
the $S^1$ action. Changing variables, the singular kernel of a
Toeplitz operator $I_{\psi}(a)$ restricted to $\mathcal{H}_k$ is then
\[
  (x,y)\mapsto
  k\int_{S^1}\int_{0}^{+\infty}e^{ik(s\psi(r_{\theta}x,y)-\theta)}a(x,y,ks)\dd
  \theta \dd s.
\]
To perform a stationary phase in the variables $(s,\theta)$, we need
to satisfy the condition
\[
  \alpha(V)\neq 0
\]
where $\alpha$ is the contact 1-form on $X$ of Proposition
\ref{prop:CR-structure} and $V$ is the tangent vector field
of the $S^1$ action. In case 2 above, this is always true because the
Reeb flow coincides with the $S^1$ action (in fact one can choose
$\psi$ with simple and explicit dependence on $\theta$, see for
instance the unlabelled equation following (24) in \cite{zelditch_pointwise_2018}). In case 1, the Reeb flow is
the geodesic flow on $S^1\times M$, so we restrict our attention to
the open set
\[
  X_+=\{(y,\eta,x,\xi)\in S^*(S^1\times M), \eta>0\}.
\]
Thus, in both cases, we are in position to apply the stationary phase lemma in the
variables $(s,\theta)$ (see for instance \cite{zelditch_szego_2000}),
and obtain a description modulo errors of size $e^{-ck}$ for some
$c>0$ (indeed, elements of analytic function spaces have exponentially
fast decaying Fourier modes). In case 2 above, we obtain semiclassical
Berezin--Toeplitz operators; in case 1, after a stereographic change
of variables $X_+\to S^1\times T^*M$, we obtain semiclassical
pseudodifferential operators.

With this point of view, one can for instance recover the expressions
of the semiclassical Szeg\H{o} kernel, and many properties of
semiclassical analytic pseudodifferential operators after conjugating
back by a semiclassical Bargmann transform, since we know that the
Toeplitz and pseudodifferential algebras are equivalent in the
analytic semiclassical case \cite{rouby_analytic_2018}.

This procedure can be generalised to other ``small parameter''
techniques, notably to study the action of (microlocal) analytic
Fourier Integral Operators on WKB states. We
hope to describe this in future work.

\appendix

\section{The algebra of symbols of pseudodifferential operators}
\label{sec:algebra-symb-pseud}

\begin{prop}\label{prop:composition-formal}
  Define the following formal analytic symbol norm on $\R^{d}_x\times \R^d_{\xi}$, for $m\in \R, \rho>0,R>0$:
  \[
    \|a\|_{S^{\rho,R}_m}=\sup_{k\in \N, \alpha\in \N^{2d}, (x,\xi)\in
      \R^{2d}}\frac{|\nabla^{\alpha}a_k(x,\xi)|(1+k+|\alpha|)^m}{\rho^{|\alpha|}R^k(|\alpha|+k)!}.
    \]
  For the star-product of left-quantization on $T^*\R^d$, there exists
  $m_0(d)$ such that
  \[
    \|a\sharp b\|_{S^{\rho,R}_m}\leq
    12\|a\|_{S^{\rho,R}_m}\|b\|_{S^{\frac{\rho}2,\frac R4}_m}
  \]
  if
  \[
    R\geq 2^{d+2}\rho^2\qquad \qquad m\geq m_0(d).
  \]
\end{prop}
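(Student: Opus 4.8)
The plan is to reduce everything to a pointwise Gevrey‑type estimate on $\nabla^\alpha(a\sharp b)_k(x,\xi)$ and then carry out the combinatorial bookkeeping. First I would write out the left‑quantization Moyal product explicitly, as in Proposition \ref{prop:inversion_formal_pseudos}, $(a\sharp b)_k=\sum_{n+p+q=k}\sum_{|\beta|=n}\tfrac1{\beta!}(\partial^\beta_x a_p)(\partial^\beta_\xi b_q)$, apply $\nabla^\alpha$ and use the Leibniz rule to split the $\alpha$‑derivatives between the two factors: for each $\beta$ and each $(n,p,q)$ there is a sum over $\gamma\le\alpha$ with weight $\binom\alpha\gamma$ of $(\nabla^{\alpha-\gamma}\partial^\beta_x a_p)(\nabla^\gamma\partial^\beta_\xi b_q)$. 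Substituting the definitions of $\|a\|_{S^{\rho,R}_m}$ and $\|b\|_{S^{\rho/2,R/4}_m}$ bounds the $a$‑factor by $\|a\|_{S^{\rho,R}_m}\rho^{A}R^p\langle A\rangle^{-m}A!$ with $A=|\alpha-\gamma|+n+p$, and the $b$‑factor by $\|b\|_{S^{\rho/2,R/4}_m}(\rho/2)^{B}(R/4)^q\langle B\rangle^{-m}B!$ with $B=|\gamma|+n+q$, where $\langle\cdot\rangle=1+\cdot$ and $A+B=|\alpha|+k+n$. Collecting powers, the claim becomes $\sum_{n+p+q=k}\tfrac{d^n}{n!}\big(\tfrac{\rho^2}{2R}\big)^n 4^{-q}\sum_{\gamma\le\alpha}\binom\alpha\gamma 2^{-|\gamma|}\tfrac{A!B!}{\langle A\rangle^m\langle B\rangle^m}\le 12\,\tfrac{(|\alpha|+k)!}{\langle|\alpha|+k\rangle^m}$.

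The combinatorial heart has two mechanisms. First, the sum over the split $\gamma$: since $\binom\alpha\gamma\le\binom{|\alpha|}{|\gamma|}$ and $\sum_{|\gamma|=g}\binom\alpha\gamma=\binom{|\alpha|}{g}$, the Leibniz binomials cancel once one combines them with the factorials $A!B!$ through $\binom{|\alpha|}{g}(|\alpha|-g)!\,g!=|\alpha|!$ and a Vandermonde‑type bound $\binom{A+B}{A}\ge\binom{|\alpha|}{|\gamma|}\binom{k+n}{n+p}$; what is left are only polynomial‑in‑$|\alpha|$ residues of shape $\prod_{1\le j\le n+p}\tfrac{1+|\alpha-\gamma|+j}{1+|\alpha-\gamma|}$, which are swallowed by the weights $\langle A\rangle^{-m}\langle B\rangle^{-m}\ge\langle|\alpha-\gamma|\rangle^{-m}\langle|\gamma|\rangle^{-m}$ once $m\ge m_0(d)$, the residual $\sum_g 2^{-g}$ being a constant; using also $\langle A\rangle\langle B\rangle\ge\langle A+B\rangle\ge\langle|\alpha|+k\rangle$ retains the overall factor $\langle|\alpha|+k\rangle^{-m}$ (for the warm‑up $n=0$ one recovers in this way the submultiplicativity of the pointwise product). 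Second, the cross‑derivative order $n$: the factor $d^n/n!$ coming from $\sum_{|\beta|=n}\beta!^{-1}$ cancels the factorial growth produced by the $n$ extra derivatives on each side — by log‑convexity of $\log\Gamma$ one has $A!B!\le n!\,(|\alpha|+k)!$, with the sharper $\gamma$‑dependent Vandermonde refinement used inside the $\gamma$‑sum — so that after the $\gamma$‑ and $(p,q)$‑sums one is left with $\sum_{n\le k}(d\rho^2/2R)^n$ times a bounded quantity; the ratio $d\rho^2/2R\le d/2^{d+3}<1$ precisely because $R\ge2^{d+2}\rho^2$, so the series converges, and the leftover $q$‑sum is geometric with ratio $\le 1/4$ coming from $R/4$. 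Tracking the absolute constants produces the stated bound $12$, with $m_0$ depending only on $d$.

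The hard part is exactly this bookkeeping. One must resist the crude inequalities $A!B!\le(A+B)!$ and $\binom Nj\le 2^N$ (with $N$ containing $|\alpha|$), which would turn the constant into $C^{|\alpha|}$, and instead use the exact cancellation $\binom{|\alpha|}{g}(|\alpha|-g)!\,g!=|\alpha|!$ together with log‑convexity of the Gamma function; one also has to split the weight exponent $m=m_1+m_2$ judiciously so that both the overall decay $\langle|\alpha|+k\rangle^{-m}$ and enough $\langle|\alpha-\gamma|\rangle$‑ and $\langle|\gamma|\rangle$‑decay survive to perform the $\gamma$‑sum, and so that $m_0$ ends up independent of $\rho,R$. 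Once the scheme is set up the estimate is of the classical Boutet de Monvel–Krée / Sjöstrand type; the only genuine friction is keeping every constant explicit so that the final bound is $12$ rather than some unspecified $C(d)$.
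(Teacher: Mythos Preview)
Your overall architecture (expand the Moyal product, Leibniz, bound each factor by the norm, then sum) matches the paper, but the mechanism you propose for the $\gamma$-sum does not close. Neither of your two tools suffices, and they cannot be combined in the way you sketch.

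With $A=|\alpha-\gamma|+n+p$, $B=|\gamma|+n+q$, $A+B=|\alpha|+k+n$, the pure log-convexity bound $A!B!\le n!\,(|\alpha|+k)!$ is correct (since $A,B\in[n,|\alpha|+k]$), but it is $\gamma$-independent, so the remaining $\sum_{\gamma\le\alpha}\binom{\alpha}{\gamma}2^{-|\gamma|}=(3/2)^{|\alpha|}$ blows up. Your Vandermonde alternative $\binom{A+B}{A}\ge\binom{|\alpha|}{|\gamma|}\binom{k+n}{n+p}$ does absorb $\binom{|\alpha|}{|\gamma|}$, but it leaves $(|\alpha|+k+n)!$ in place of $n!\,(|\alpha|+k)!$; after inserting $d^n/n!$ and summing the geometric weight in $n$ one gets
\[
\sum_{n\ge 0}\Big(\tfrac{d\rho^2}{2R}\Big)^{n}\binom{|\alpha|+k+n}{n}=\Big(1-\tfrac{d\rho^2}{2R}\Big)^{-(|\alpha|+k+1)},
\]
which is again exponential in $|\alpha|+k$. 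The ``polynomial-in-$|\alpha|$ residues'' you plan to feed to the weights $\langle A\rangle^{-m}\langle B\rangle^{-m}$ are of degree $n+p$ (up to $k$), not of a degree bounded by $d$; no fixed $m_0(d)$ can swallow them.

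The paper closes this gap by a genuinely different manoeuvre: it spends the factor $2^{B}$ (coming from the $\rho/2$ and $R/4$ in the $b$-norm) \emph{inside} the factorial inequality, proving directly
\[
\binom{|\alpha|}{|\gamma|}\,A!\,B!\ \le\ 2^{B}\,n!\,(|\alpha|+k)!
\]
by checking the two log-convex endpoints in the variable $l=p$ (this is where the asymmetry $\|b\|_{S^{\rho/2,R/4}}$ is essential, not merely convenient). After this, no geometric decay in $g=|\gamma|$ remains; instead the paper controls the double sum over $(j_1,\gamma)$ using only the weight ratio $\langle|\alpha|+k\rangle^{m}/(\langle A\rangle^{m}\langle B\rangle^{m})$ together with the counting bound $\#\{\gamma:|\gamma|=j_1,\gamma\le\alpha\}\le\min(j_1,|\alpha|-j_1)^{d-1}$, invoking a separate lemma to get the constant $3$. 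The $2\times 2$ from the geometric sums in $q$ and $n$ then gives $12$. Your sketch has the right skeleton but misses this one load-bearing inequality.
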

\begin{proof}
  We start with the formula
  \[
    (a\sharp
    b)_k=\sum_{n=0}^k\sum_{l=0}^{k-n}\sum_{|\beta|=n}\frac{1}{\beta!}\partial^\beta_xa_l\partial_{\xi}^{\beta}b_{k-n-l}.
  \]
  Since
  \[
    \sum_{|\beta|=n}\frac{n!}{\beta!}=n^d\leq 2^{nd}
  \]
  we obtain that, for every $\alpha$ with $|\alpha|=j$,
  \begin{multline*}
    |\nabla^{\alpha}(a\sharp b)_k|\leq
    \|a\|_{S^{\rho,R}_m}\|b\|_{S^{\frac{\rho}2,\frac
        R4}_m}\frac{\rho^jR^k(j+k)!}{(j+k+1)^m}\times \\
    \sum_{n=0}^k\left(\frac{2^{d+1}\rho}{R}\right)^n\sum_{l=0}^{n-k}\sum_{j_1=0}^j\sum_{\substack{|\gamma|=j_1\\
        \gamma\leq
      \alpha}}\frac{(n+j_1+l)!(k-l+j-j_1)!j!}{4^{k-l}2^{j-j_1-n}n!(k+j)!j_1!(j-j_1)!}\left(\frac{1+j+k}{(1+n+j_1+l)(1+k-l+j-j_1)}\right)^m.
  \end{multline*}
  Let us prove that, if $0\leq j_1\leq j$ and $0\leq l+n\leq k$, then
  \[
    \frac{(n+j_1+l)!(k-l+j-j_1)!j!}{2^{k-l+j-j_1}n!(k+j)!j_1!(j-j_1)!}\leq
    1.
  \]
  If the other parameters are fixed, then
  \[
    l\mapsto
    \frac{(n+j_1+l)!(k-l+j-j_1)!j!}{2^{k-l+j-j_1}n!(k+j)!j_1!(j-j_1)!}\]
  is log-convex; there are two extremal points.

  At \underline{$l=0$}, we obtain
  \[
    \frac{(n+j_1)!(k+j-j_1)!j!}{2^{k+j-j_1}n!(k+j)!j_1!(j-j_1)!}\leq
    \frac{(n+j_1)!j!k!}{n!(k+j)!j_1!}.
  \]
  This increasing function of $j_1$ reaches a maximum at $j_1=j$,
  where we obtain
  \[
    \frac{(n+j)!k!}{n!(k+j)!}\leq 1
  \]
  since $n\leq k$.

  In the other case \underline{$l=k-n$}, we obtain
  \[
    \frac{(k+j_1)!(n+j-j_1)!j!}{2^{n+j-j_1}n!(k+j)!j_1!(j-j_1)!}\leq
    \frac{(k+j_1)!j!}{(k+j)!j_1!}\leq 1
  \]
  since $j_1\leq j$.  It remains to bound, for fixed $l,n$ such that
  $l+n\leq k$,
  \[
    \sum_{j_1=0}^j\sum_{\substack{|\gamma|=j_1\\ \gamma\leq
      \alpha}}\left(\frac{1+j+k}{(1+n+j_1+l)(1+k-l+j-j_1)}\right)^m
  \]
  To this end we use the fact that
  \[
    \#(|\gamma|=j_1,\gamma\leq \alpha)\leq \min(j_1,(j-j_1))^{d-1};
  \]
  thus, this sum is bounded by
  \[
    \sum_{j_1=0}^j\frac{(1+j+n+k)^m\min(j_1,j-j_1)^{d-1}}{(1+n+j_1+l)^m(1+k-l+j-j_1)^m}.
  \]
  Now we use \cite{deleporte_toeplitz_2018}, Lemma 2.13; for $m$ large
  enough (depending only on $d$), this sum is smaller than $3$.  To
  conclude,
  \[
    \sum_{l=0}^{n-k}\frac{1}{2^{k-n-l}}\leq 2
  \]
  and, if $R\geq 2^{d+2}\rho^2$,
  \[
    \sum_{n=0}^k\left(\frac{2^{d+1}\rho^2}{R}\right)^n\leq 2.
  \]
\end{proof}

\section{Functional spaces in analytic regularity}
\label{sec:funct-spac-analyt}

This section collects the basic properties of the spaces of
analytic functions and their duals.%
Most technical facts claimed here are proved in
\cite{hormander_introduction_1973}; see also Chapter 6 of \cite{treves_analytic_2022}.

\begin{defn}\label{def:space_hol_func}
  Let $\Omega\subset \C^n$ (or any complex paracompact manifold) be an
  open set. We define $\mathcal{O}(\Omega)$ as the space of
  holomorphic functions from $\Omega$ to $\C$, endowed with the
  topology of local uniform convergence.
\end{defn}
The topology on $\mathcal{O}'(\Omega)$ coincides with that of compactly
supported Radon measures, of which it is a closed subspace. Elements
of $\mathcal{O}'(\Omega)$ will be called \emph{analytic functionals}.

One can generalise this definition into ``germs'' of holomorphic
functions and their duals.

\begin{defn}
  Let $E\subset \C^n$ (or any complex paracompact manifold) be an open
  set. Define $\mathcal{O}(E)$ as the colimit of the
  spaces $\mathcal{O}(\Omega)$ for all $\Omega$ open and containing
  $E$ (that is to say, an element of $\mathcal{O}(E)$ is
  a holomorphic function on some open set containing $E$).
\end{defn}
If $E$ is an open subset or $\R^n$, or more generally an open subset
of a real-analytic submanifold of $\R^n$, then $\mathcal{O}(E)$ is the
space of real-analytic functions on $E$. If $E$ is a compact subspace
of $\R^n$ with non-empty interior, then $\mathcal{O}(E)$ is the space
of functions on $E$ which are real-analytic up to the boundary (and, therefore, which
extend into real-analytic functions on some open neighbourhood of
$E$).

Given two sets $E\subset F\subset \C^n$ one can naturally define a
restriction map from
$\mathcal{O}(F)$ to $\mathcal{O}(E)$. By duality, this defines a
natural map $\mathcal{O}'(E)\to \mathcal{O}'(F)$. This map can only be
injective when the restriction $\mathcal{O}(F)\to \mathcal{O}(E)$ has
dense image; this is called the \emph{Runge property}. We will only be
interested in the case where $E$ is a compact subset of $\R^n$, in
which case the Runge property is always satisfied, by elementary
approximation theory.

\begin{prop}\label{prop:real_cpts_are_Runge}(\cite{treves_analytic_2022},
  Theorem 6.2.14) Let $K$ be a compact subset of $\R^n$. Then entire functions
  are dense in $\mathcal{O}(K)$; that is to say, $K$ is Runge.
\end{prop}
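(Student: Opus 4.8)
The plan is to exhibit the approximating entire functions explicitly, by complexified Gaussian (heat-kernel) mollification, which keeps the argument self-contained. Fix a representative of the given germ: a holomorphic function $f$ on an open set $\Omega\subset\C^n$ with $K\subset\Omega$, and choose $\chi\in C^\infty_c(\R^n)$ with $\chi\equiv 1$ on a neighbourhood $K_1$ of $K$ in $\R^n$ and $\supp\chi\subset\Omega\cap\R^n$. For $t>0$ set
\[
  f_t(z)=\Big(\tfrac t\pi\Big)^{n/2}\int_{\R^n}e^{-t(z-x)^2}\chi(x)f(x)\,\dd x,\qquad (z-x)^2:=\sum_{j=1}^n(z_j-x_j)^2 .
\]
First I would check that each $f_t$ is entire: the integrand is holomorphic in $z$ and, for $z$ in a bounded subset of $\C^n$, bounded by $Ce^{-t|\re z-x|^2}$ uniformly in $x\in\supp\chi$, so differentiation under the integral applies. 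It then remains to prove $f_t\to f$ in $\mathcal{O}(K)$, i.e.\ uniformly on some complex neighbourhood of $K$.

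The easy half is uniform convergence on $K$ itself: $\chi f\in C_c(\R^n)$ is uniformly continuous and $y\mapsto(t/\pi)^{n/2}e^{-t|y|^2}$ is an approximate identity on $\R^n$ as $t\to+\infty$, so $f_t\to\chi f$ uniformly on $\R^n$, hence $f_t\to f$ uniformly on $K$ since $\chi\equiv1$ there. To upgrade this to a complex neighbourhood, I would work near a fixed point $\xi_0\in K$: pick $r>0$ with $B(\xi_0,2r)\subset K_1\cap\Omega$ and split the $x$-integral into $B=B(\xi_0,2r)$ and its complement. For $z$ with $|z-\xi_0|<\delta$ and $\delta\ll r$, the contribution of $\R^n\setminus B$ is $O(e^{-ct})$, because there $|e^{-t(z-x)^2}|=e^{t|\im z|^2-t|\re z-x|^2}\le e^{t\delta^2-t(r-\delta)^2}$, which defeats the polynomial prefactor. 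On $B$ one has $\chi\equiv1$, so the integrand $x\mapsto e^{-t(z-x)^2}f(x)$ is holomorphic on a neighbourhood of $\overline B$ in $\C^n$; by Cauchy's theorem I would deform the real contour $B$ by the translation $x\mapsto x+i\im z$, after which $(z-x)^2=|\re z-\re x|^2\ge0$, so the Gaussian factor becomes genuinely real and the integral reduces to the real Gaussian convolution of $\xi\mapsto f(\xi+i\im z)$ evaluated near $\xi_0$, which tends to $f(\re z+i\im z)=f(z)$ uniformly for $z$ near $\xi_0$ — plus exponentially small tails and contour-homotopy side terms, the latter being controlled precisely because $\delta\ll r$. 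Covering the compact set $K$ by finitely many such neighbourhoods yields uniform convergence on a complex neighbourhood of $K$.

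I expect the contour-deformation estimate for the localized integral to be the only delicate step: the kernel $e^{-t(z-x)^2}$ carries the growing factor $e^{t|\im z|^2}$, and the whole point is to choose the complex neighbourhood of $K$ thin relative to the localization radius $r$ so that this factor never overcomes the Gaussian decay away from the diagonal — on the deformed contour it disappears entirely, while on the homotopy boundary and outside $B$ it is beaten by $e^{-tr^2}$. As an alternative route, one may instead invoke the Oka--Weil theorem once one knows that every compact $K\subset\R^n$ is polynomially convex; this last fact is elementary, since a point $w\in\R^n\setminus K$ is separated from $K$ by $p(z)=(1-\varepsilon\sum_j(z_j-w_j)^2)^N$ for $\varepsilon$ small, while a point $w$ with $\im w\neq0$ is separated by a high-degree Taylor polynomial of the entire function $z\mapsto e^{-i\sum_j(\im w)_j z_j}$ (whose modulus is $1$ on $K$ and $e^{|\im w|^2}>1$ at $w$), and Oka--Weil in its polynomial-polyhedron form then provides polynomials converging to $f$ uniformly on a neighbourhood of $K$.
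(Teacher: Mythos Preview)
Your argument is correct and follows the same route as the paper: the paper's sketch says precisely that one convolves a smooth extension of $f$ with the entire Gaussian kernel $c_nN^{n/2}e^{-N(\cdot)^2}$ and checks convergence in $\mathcal{O}(K)$, which is exactly your $f_t$ with $t=N$ and $\chi f$ playing the role of the extension. You supply the details the paper omits---in particular the contour shift $x\mapsto x+i\,\im z$ that kills the dangerous factor $e^{t|\im z|^2}$---and your alternative via polynomial convexity plus Oka--Weil is a legitimate second proof, not present in the paper.
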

The proof roughly consists in showing that, with
$\rho_N:x\mapsto=c_nN^{-n}\exp(-N|x|^2)$, given $f\in \mathcal{O}(K)$
and $g$ any smooth extension of $f$ to $\R^n$, the sequence of entire
functions $(\rho_N*g)_{N\in \N}$ converges towards $f$ in the topology
of $\mathcal{O}(K)$.

One of the main useful properties of Runge sets is the fact that one
can solve the $\overline{\partial}$ problem on them.
\begin{prop}\label{prop:solve_dbar_on_Runge}(\cite{hormander_introduction_1973}, Theorem 2.7.8) Let
  $K\Subset \C^n$ be compact and Runge. Then there exists a basis of
  neighbourhoods $(\Omega_j)_{j\in \N}$ of $K$ in $\C^n$ such that the
  following is true. For every $j\in \N$ and every
  $f=(f_1,\ldots,f_n)\in C^1(\Omega_j,\C^n)$ which is
  $\overline{\partial}$-closed, in the sense that
  \[
    \overline{\partial}_{k}f_\ell=\overline{\partial}_{\ell}f_k\qquad
    \forall 1\leq k,\ell\leq n,
  \]
  there exists $u\in C^1(\Omega_j,\C)$ such that $\overline{\partial}u=f$.
\end{prop}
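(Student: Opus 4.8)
The final statement to prove is Proposition~\ref{prop:solve_dbar_on_Runge}, which is cited as \cite{hormander_introduction_1973}, Theorem 2.7.8, so the ``proof'' is really a pointer plus a sketch of the classical argument. Here is the plan I would follow.

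\medskip

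\noindent\textbf{Plan.} First I would recall why Runge compacts admit a good neighbourhood basis: since $K$ is Runge, it is \emph{polynomially convex} (its polynomial hull equals $K$), and polynomially convex compacts in $\C^n$ possess a neighbourhood basis of \emph{pseudoconvex} (indeed, Runge) open sets $(\Omega_j)_{j\in\N}$ — one can take, e.g., sublevel sets $\{z : \phi(z) < 1/j\}$ of a smooth strictly plurisubharmonic exhaustion-type function $\phi$ adapted to $K$, shrinking each slightly so that it is relatively compact and still pseudoconvex. This is exactly the setup in which the Hörmander $L^2$ machinery applies. Then, given $j$ and a $\overline\partial$-closed $(0,1)$-form $f = \sum_k f_k \, \dd\overline{z_k}$ with $C^1$ coefficients on $\Omega_j$, I would invoke Hörmander's solution of the $\overline\partial$-equation on pseudoconvex domains: there is a weight $\psi$ (plurisubharmonic, e.g.\ $\psi(z) = |z|^2$, or a convex increasing function of an exhaustion) for which the a priori estimate
\[
  \int_{\Omega_j} |u|^2 e^{-\psi}\,\dd\lambda \;\leq\; C \int_{\Omega_j} |f|^2 e^{-\psi}\,\dd\lambda
\]
holds on the relevant Hilbert spaces, yielding a solution $u \in L^2_{\mathrm{loc}}(\Omega_j)$ of $\overline\partial u = f$. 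The right-hand side is finite because $f$ has continuous coefficients on a slightly larger pseudoconvex set and $\overline{\Omega_j}$ is compact. Finally, elliptic regularity for the (underdetermined-elliptic) $\overline\partial$ operator upgrades $u$: since $f$ has $C^1$ coefficients, $\overline\partial u = f$ and $\partial\overline\partial u = \partial f$ force, via interior elliptic estimates on $\overline\partial + \overline\partial^*$ (or simply the Cauchy integral representation in each variable), that $u \in C^1(\Omega_j)$, possibly after shrinking $\Omega_j$ a touch; this is harmless as we are free to re-index the neighbourhood basis.

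\medskip

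\noindent\textbf{Main obstacle.} The one genuine point that requires the Runge/polynomial-convexity hypothesis — as opposed to being automatic — is the \emph{existence of the pseudoconvex neighbourhood basis} $(\Omega_j)$: without polynomial convexity a compact set need not be approximable from outside by pseudoconvex (let alone Runge) domains, and then Hörmander's theorem has nothing to be applied to. Everything downstream (the $L^2$ estimate, the regularity bootstrap) is standard and domain-by-domain. So in the write-up I would spend the bulk of the (short) argument justifying that Runge $\Rightarrow$ polynomially convex $\Rightarrow$ admits a pseudoconvex exhausting neighbourhood basis, citing \cite{hormander_introduction_1973} for the $L^2$ solvability on each $\Omega_j$ and for the regularity statement, and note that the $C^1$ hypothesis on $f$ is exactly what makes the solution $C^1$. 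I would keep it to a paragraph, since this is an appendix of well-known facts gathered ``for the comfort of the reader,'' and the precise reference \cite{hormander_introduction_1973}, Theorem~2.7.8 already contains the full proof.
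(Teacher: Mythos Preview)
The paper does not actually prove this proposition: it is stated with the citation to \cite{hormander_introduction_1973}, Theorem~2.7.8, and no proof environment follows. So there is nothing in the paper to compare your proposal against; your instinct that ``the `proof' is really a pointer'' is exactly what the paper does, except that the paper omits even the sketch.

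Your sketch is a correct outline of one route to the result. One remark: Theorem~2.7.8 in H\"ormander's book sits in Chapter~2, before the $L^2$ theory of Chapter~4, and the argument there is more elementary than the weighted-$L^2$ approach you describe --- it proceeds by induction on the dimension using the one-variable Cauchy--Pompeiu formula and the polynomial convexity of $K$ to build the neighbourhood basis directly. Your $L^2$-based sketch is a perfectly valid alternative, but if you want to match the cited reference you would point to that more hands-on construction instead. Either way, since the paper itself gives no proof, your paragraph-length pointer with a sketch is already more than what appears there.
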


The ability to prove the $\overline{\partial}$-problem on some
neighbourhoods of real compact sets allow us to describe how the
spaces $\mathcal{O}$ and $\mathcal{O}'$ behave under natural
opeartions on their domains.

\begin{prop}\label{prop:inclusion_union_O_spaces}
  Let $K_1,K_2$ be compact subsets of $\R^n$.
  \begin{enumerate}
  \item $\mathcal{O}(K_1)\cap \mathcal{O}(K_2)=\mathcal{O}(K_1\cup
    K_2)$.
  \item $\mathcal{O}(K_1)+ \mathcal{O}(K_2)=\mathcal{O}(K_1\cap
    K_2)$.
  \item $\mathcal{O}(K_1)'\cap \mathcal{O}'(K_2)=\mathcal{O}'(K_1\cap
    K_2)$.
  \item $\mathcal{O}(K_1)'+ \mathcal{O}'(K_2)=\mathcal{O}'(K_1\cup
    K_2)$.
  \end{enumerate}
\end{prop}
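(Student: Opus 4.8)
The four statements of Proposition \ref{prop:inclusion_union_O_spaces} are dualities of one another, so the plan is to prove items 1 and 2 directly and then deduce items 3 and 4 by a Hahn--Banach/duality argument, using in an essential way the Runge property of real compact sets (Proposition \ref{prop:real_cpts_are_Runge}) and the solvability of the $\overline{\partial}$-problem on Runge neighbourhoods (Proposition \ref{prop:solve_dbar_on_Runge}).

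\emph{Items 1 and 2.} Item 1 is essentially formal: a holomorphic function defined on a neighbourhood of $K_1$ and (the same germ) on a neighbourhood of $K_2$ glues to a holomorphic function on a neighbourhood of $K_1\cup K_2$, because the two germs automatically agree on the overlap (they are restrictions of a single function near each point); the reverse inclusion is restriction. For item 2, the inclusion $\mathcal{O}(K_1)+\mathcal{O}(K_2)\subset \mathcal{O}(K_1\cap K_2)$ is again restriction. The content is the reverse inclusion: given $f$ holomorphic on a neighbourhood $\Omega$ of $K_1\cap K_2$, I would choose open sets $\Omega_1\supset K_1$, $\Omega_2\supset K_2$ with $\Omega_1\cap\Omega_2\Subset\Omega$, take a smooth cutoff $\chi$ equal to $1$ near $K_1\setminus\Omega_2$ and $0$ near $K_2\setminus\Omega_1$, and write the candidate decomposition $f=\chi f + (1-\chi)f$ on $\Omega_1\cap\Omega_2$. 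The obstruction to $\chi f$ being holomorphic is $g:=\overline{\partial}(\chi f)=f\,\overline{\partial}\chi$, which is a smooth $\overline{\partial}$-closed $(0,1)$-form supported in $\Omega_1\cap\Omega_2$ (where $f$ is defined), hence extends by zero to a $\overline{\partial}$-closed form on a full neighbourhood of $K_1\cup K_2$ in $\C^n$. Shrinking to a Runge neighbourhood and applying Proposition \ref{prop:solve_dbar_on_Runge}, I solve $\overline{\partial}u=g$ on a neighbourhood of $K_1\cup K_2$; then $f_1:=\chi f - u$ is holomorphic near $K_1$, $f_2:=(1-\chi)f + u$ is holomorphic near $K_2$, and $f_1+f_2=f$ near $K_1\cap K_2$. (One must check $K_1$, $K_2$, $K_1\cup K_2$ are Runge; real compact sets always are, by Proposition \ref{prop:real_cpts_are_Runge}.)

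\emph{Items 3 and 4 by duality.} Recall that for real compact sets all the restriction maps have dense image, so $\mathcal{O}'(E)\hookrightarrow\mathcal{O}'(F)$ is injective for $E\subset F$, and we may regard all these dual spaces as subspaces of, say, $\mathcal{O}'(K_1\cup K_2)$. For item 4, the inclusion $\mathcal{O}'(K_1)+\mathcal{O}'(K_2)\subset\mathcal{O}'(K_1\cup K_2)$ is clear. For the reverse, let $T\in\mathcal{O}'(K_1\cup K_2)$; I want to split it. Consider the linear map $\mathcal{O}(K_1)\oplus\mathcal{O}(K_2)\to\mathcal{O}(K_1\cap K_2)$, $(f_1,f_2)\mapsto f_1-f_2$: by item 2 it is surjective, and by item 1 its kernel is the diagonal copy of $\mathcal{O}(K_1\cup K_2)$. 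Thus $T$, viewed as a functional on the diagonal, is the restriction of a functional on $\mathcal{O}(K_1)\oplus\mathcal{O}(K_2)$ — this extension step is where continuity/topological-vector-space care is needed, and is the main technical obstacle: one needs the quotient map to be a topological homomorphism between (DFS/Fréchet-type) spaces so that Hahn--Banach applies, which follows from the open mapping theorem for these spaces once surjectivity (item 2) is known. Such an extension is a pair $(T_1,T_2)\in\mathcal{O}'(K_1)\oplus\mathcal{O}'(K_2)$ with $T_1+T_2=T$ on $\mathcal{O}(K_1\cup K_2)$, which is exactly the claim. For item 3, dually: $\mathcal{O}'(K_1\cap K_2)\subset\mathcal{O}'(K_1)\cap\mathcal{O}'(K_2)$ is restriction; conversely if $T$ lies in both $\mathcal{O}'(K_1)$ and $\mathcal{O}'(K_2)$ as subspaces of $\mathcal{O}'(K_1\cup K_2)$, then $T$ annihilates everything that is zero near $K_1$ and everything that is zero near $K_2$; using item 2 (every germ near $K_1\cap K_2$ is a sum of a germ near $K_1$ and a germ near $K_2$) one checks $T$ defines a continuous functional on $\mathcal{O}(K_1\cap K_2)$, i.e.\ $T\in\mathcal{O}'(K_1\cap K_2)$.

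\textbf{Main obstacle.} The genuinely non-trivial analytic input is the $\overline{\partial}$-cutoff argument for item 2 (producing a holomorphic decomposition on a neighbourhood of $K_1\cap K_2$ rather than merely a smooth one), together with verifying that the natural surjections of item 2 are open maps so that Hahn--Banach can be invoked for the duality items 3 and 4; the gluing in item 1 and the purely algebraic parts of the duality are routine. All of this is standard and can be found in \cite{hormander_introduction_1973}; I would cite it rather than reproduce the functional-analytic details.
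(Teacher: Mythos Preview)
Your approach is essentially identical to the paper's: item 1 is formal, item 2 is the $\overline{\partial}$-cutoff argument using Propositions \ref{prop:real_cpts_are_Runge} and \ref{prop:solve_dbar_on_Runge}, and items 3 and 4 are deduced by duality (the paper just writes ``this statement is the dual of item 2'' and ``dual of item 1'' without the Hahn--Banach detail you supply). One small slip: with your convention $\chi=1$ near $K_1\setminus\Omega_2$, the function $\chi f$ does \emph{not} extend across $K_1\setminus\Omega_2$ (there $\chi=1$ but $f$ is undefined); it is $(1-\chi)f$ that extends by zero there, so your $f_1$ and $f_2$ are swapped --- the paper correctly takes $h_1=(1-\chi)\widetilde f+u$ near $K_1$.
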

\begin{proof}~
  \begin{enumerate}
  \item Let $U_1,U_2$ be respective neighbourhoods of $K_1$ and $K_2$
    in $\C^n$. Then holomorphic functions on $U_1\cup U_2$ are exactly
    functions that are both holomorphic on $U_1$ and on $U_2$, because
    holomorphy is a local property.
  \item The
  inclusion from left to right is obvious and it remains to prove,
  given $f\in \mathcal{O}(K_1\cap K_2)$, that it is of the form
  $f_1+f_2$, where $f_1\in \mathcal{O}(K_1)$ and $f_2\in
  \mathcal{O}(K_2)$.

  Let $\Omega_1$ and $\Omega_2$ be small neighbourhoods in $\C^n$ of
  respectively $K_1$ and $K_2$, so that $f$ extends to
  $\widetilde{f}\in \mathcal{O}(\Omega_1\cap \Omega_2)$ (the function
  $\widetilde{f}$ may or may not correspond to the usual notion of
  holomorphic extension, depending on whether $K$ is the closure of
  its interior in $\R^n$).

  The compact sets $K_1\setminus \Omega_2$ and $K_2\setminus \Omega_1$
  do not intersect and therefore lie at positive distance from each
  other. We let $\chi\in C^{\infty}(\Omega_1\cup \Omega_2,\R)$ be such
  that $\chi=1$ on a neighbourhood $U_1$ of $K_1\setminus
  \Omega_2$ and $\chi=0$ on a neighbourhood $U_2$ of $K_2\setminus
  \Omega_1$. The one-form
  $\alpha=\overline{\partial}(\widetilde{f}\chi)$, well-defined on
  $\Omega_1\cap \Omega_2$, is equal to $0$ on $(U_1\cup U_2)\cap
  (\Omega_1\cap \Omega_2)$, and therefore can be smoothly extended by
  $0$ on $\Omega=U_1\cup U_2\cup (\Omega_1\cap \Omega_2)$.

  $\Omega$ is a neighbourhood of $K_1\cup K_2$, which is a Runge set
  by Proposition \ref{prop:real_cpts_are_Runge};
  by Proposition \ref{prop:solve_dbar_on_Runge}, this means that there
  exists a smaller neighbourhood $\Omega'\subset \Omega$ of $K_1\cup
  K_2$ and $u\in
  C^{\infty}(\Omega')$ such that $\overline{\partial}u=\alpha$.

  To conclude, the function $h_1=(1-\chi)\widetilde{f}+u$ is
  well-defined on $(U_1\cup (\Omega_1\cap \Omega_2))\cap \Omega'$,
  which is an open neighbourhood of $K_1$; it satisfies
  $\overline{\partial}h_1=\alpha-\overline{\partial}u=0$; therefore
  $h_1\in \mathcal{O}(K_1)$. In the same way,
  $h_2=\chi\widetilde{f}-u$ is well-defined and holomorphic on
  $(U_1\cup (\Omega_1\cap \Omega_2))\cap \Omega'$ which is an open
  neighbourhood of $K_2$; moreover, on $K_1\cap K_2$, one has
  $h_1+h_2=f$; this concludes this part of the proof.
\item This statement is the dual of item 2. %
\item This statement is the dual of item 1.
  \end{enumerate}
\end{proof}
Given two compacts $K_1\subset K_2$ of $\R^n$, there is no natural
restriction map from $\mathcal{O}'(K_2)$ to $\mathcal{O}'(K_1)$ (in
fact, we have a natural injective \emph{extension} map from $\mathcal{O}'(K_1)$
to $\mathcal{O}'(K_2)$). Nevertheless, there is a well-defined notion
of \emph{support} of an analytic functional, thanks to the injectivity
of this extension map and item 3 of
Proposition \ref{prop:inclusion_union_O_spaces}.

Let us remark that if we define $C^{\infty}(K)=\mathcal{E}(K)$, for $K$ any compact
of $\R^n$, following Whitney
\cite{whitney_analytic_1934}, then the equivalents of all items of
Proposition \ref{prop:inclusion_union_O_spaces} are also true, with
the small caveat that the dense maps $\mathcal{E}(\R^n)\to
\mathcal{E}(K)$ fail to be injective. We will use the following
restriction theorem on locally compact distributions: they can be
restricted to a smaller subset modulo a boundary indeterminacy.

\begin{prop}\label{prop:presheaf}
  Let $K_1\subset K$ be two compact sets. The inclusion map
  $\mathcal{E}'(K_1)\to \mathcal{E}'(K)$ has a left inverse modulo
  $\mathcal{E}'(\partial K_1)$. This right inverse does not depend on
  adding an element of $\mathcal{E}'(\partial K)$, and therefore
  defines a well-defined map
  \[
    \rho_{K,K_1}:\mathcal{E}'(K)/\mathcal{E}'(\partial K)\to
    \mathcal{E}'(K_1)/\mathcal{E}'(\partial K_1).\]
\end{prop}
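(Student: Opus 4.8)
The plan is to build the left inverse by composing with a continuous linear extension operator, in the spirit of the $\overline{\partial}$-free version of the proof of item 2 of Proposition \ref{prop:inclusion_union_O_spaces}. First I would reduce to the case $K_1\subset\operatorname{int}(K)$: a point of $K_1$ lying in $\partial K$ cannot lie in $\operatorname{int}(K_1)$ (as $\operatorname{int}(K_1)\subset\operatorname{int}(K)$), hence lies in $\partial K_1$, so the assumption $\partial K_1\cap\partial K=\emptyset$ — which holds in every application of the proposition — forces $K_1\subset\operatorname{int}(K)$; I would work under this hypothesis. Then pick an open $W$ with $K_1\subset W$ and $\overline W\subset\operatorname{int}(K)$ compact. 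By Whitney's extension theorem there is a continuous linear extension operator $\mathcal{E}(K_1)\to\mathcal{E}(\R^n)$; multiplying its output by a fixed cut-off equal to $1$ near $K_1$ and supported in $W$ yields a continuous linear $E\colon\mathcal{E}(K_1)\to\mathcal{E}(\R^n)$ with $Eg|_{K_1}=g$ and $\supp(Eg)\subset W$ for all $g$. Since $\supp(Eg)\subset W\subset\operatorname{int}(K)$, restriction to $K$ gives $\overline E\colon\mathcal{E}(K_1)\to\mathcal{E}(K)$, which is a section of the restriction map $r\colon\mathcal{E}(K)\twoheadrightarrow\mathcal{E}(K_1)$ whose transpose is the inclusion $\iota\colon\mathcal{E}'(K_1)\hookrightarrow\mathcal{E}'(K)$.

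Next I would set $\rho:=\overline E^{\,*}\colon\mathcal{E}'(K)\to\mathcal{E}'(K_1)$. From $r\circ\overline E=\operatorname{id}$ one gets $\rho\circ\iota=\operatorname{id}_{\mathcal{E}'(K_1)}$, so $\rho$ is an honest left inverse of $\iota$ (in particular a left inverse modulo $\mathcal{E}'(\partial K_1)$, which is all that is claimed). It then remains to check that $\rho$ annihilates $\mathcal{E}'(\partial K)$: if $v\in\mathcal{E}'(\partial K)\subset\mathcal{E}'(K)$ and $g\in\mathcal{E}(K_1)$, then $\langle\rho v,g\rangle=\langle v,\overline E g\rangle$, and $\overline E g$ vanishes on a neighbourhood of $\partial K$ because $\supp(Eg)\subset W$ and $\overline W\cap\partial K=\emptyset$; since $v$ annihilates every jet that is flat on $\partial K$, this pairing is $0$. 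Hence $\rho$ descends to a well-defined $\rho_{K,K_1}\colon\mathcal{E}'(K)/\mathcal{E}'(\partial K)\to\mathcal{E}'(K_1)\hookrightarrow\mathcal{E}'(K_1)/\mathcal{E}'(\partial K_1)$ whose composite with $\iota$ is the canonical projection, as required. Alternatively one can argue purely with cut-offs: take $\chi$ supported in $\operatorname{int}(K_1)$ and equal to $1$ off an arbitrarily small neighbourhood of $\partial K_1$, put $\rho(u)=[\chi u]$; the dependence on $\chi$, resp. on the representative of $u$, is then absorbed into $\mathcal{E}'(\partial K_1)$, resp. $\mathcal{E}'(\partial K)$, using the $\mathcal{E}$-analogues of items 2 and 4 of Proposition \ref{prop:inclusion_union_O_spaces}.

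The only genuinely delicate ingredient is the localisation of the extension operator so that $\overline E g$ stays uniformly away from $\partial K$ (equivalently, in the cut-off version, that the error $(1-\chi)u$ can be pushed into an arbitrarily thin collar of $\partial K_1$ and thus represents $0$ modulo $\mathcal{E}'(\partial K_1)$); everything else is formal bookkeeping with transposes. The mildly annoying case $\partial K_1\cap\partial K\neq\emptyset$, which does not occur in the uses of the proposition, would require localising the construction separately near $\partial K_1\cap\partial K$, but I would not treat it here.
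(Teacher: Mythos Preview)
Your Whitney-extension construction does give \emph{a} left inverse of $\iota$ that kills $\mathcal{E}'(\partial K)$, in the special case $K_1\subset\operatorname{int}(K)$, but it is not the map the paper constructs and it lacks the property that makes that map useful downstream. The paper sets $K_2=\overline{K\setminus K_1}$, uses the $\mathcal{E}'$-analogues of items~3--4 of Proposition~\ref{prop:inclusion_union_O_spaces} to write any $u\in\mathcal{E}'(K)$ as $u_1+u_2$ with $u_j\in\mathcal{E}'(K_j)$, and puts $\rho_{K,K_1}(u)=[u_1]$; uniqueness modulo $\mathcal{E}'(\partial K_1)$ comes from $\mathcal{E}'(K_1)\cap\mathcal{E}'(K_2)=\mathcal{E}'(K_1\cap K_2)\subset\mathcal{E}'(\partial K_1)$, and invariance under $\mathcal{E}'(\partial K)$ from $\partial K\subset\partial K_1\cup K_2$. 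No extension theorem and no separation hypothesis $\partial K_1\cap\partial K=\emptyset$ are needed. Crucially, the paper's $\rho_{K,K_1}$ vanishes on all of $\mathcal{E}'(\overline{K\setminus K_1})$, so it behaves like restriction of singular support. Your $\rho=\overline E^{\,*}$ does not: a Dirac mass $\delta_p$ with $p$ in the collar $W\setminus K_1$ is sent to the functional $g\mapsto(Eg)(p)$, which for a generic extension operator has support meeting $\operatorname{int}(K_1)$ and represents a nonzero class. A ``restriction'' that manufactures interior singularities from data supported off $K_1$ is not the presheaf map one wants for the gluing in Definition~\ref{def:FIO_fixed_fibre}.

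Your closing cut-off alternative has the same defect in sharper form: for $u\in\mathcal{E}'(K_1)$ the error $(1-\chi)u$ is supported in a collar of $\partial K_1$, not on $\partial K_1$, so $u\mapsto[\chi u]$ is not even a left inverse modulo $\mathcal{E}'(\partial K_1)$; shrinking the collar does not push the error into $\mathcal{E}'(\partial K_1)$. The fix in both cases is precisely the paper's move: work with the support decomposition $\mathcal{E}'(K)=\mathcal{E}'(K_1)+\mathcal{E}'(\overline{K\setminus K_1})$ rather than with extension operators or cut-offs.
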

\begin{proof}
  Let $K_2=\overline{K\setminus K_1}$. Recall that
  $\mathcal{E}'(K)=\mathcal{E}'(K_1)+\mathcal{E}'(K_2)$, and given
  $u\in \mathcal{E}'(K)$, choose $u_1\in \mathcal{E}'(K_1)$ such that
  $u-u_1\in \mathcal{E}'(K_2)$. The distribution $u_1$ is not unique,
  but given any other choice $v_1$, the difference $u_1-v_1\in
  \mathcal{E}'(K_1)$ is such that $(u-u_1)-(u-v_1)=v_1-u_1\in
  \mathcal{E}'(K_2)$. Therefore $u_1-v_1\in \mathcal{E}'(K_2\cap
  K_1)=\mathcal{E}'(\partial K_1)$, and the class of $u_1$ modulo
  $\mathcal{E}'(\partial K_1)$ is uniquely defined. If one had $u
  in \mathcal{E}'(K_1)$ to begin with, then one can choose $u_1=u$, so
  that this map is indeed a left inverse to the extension map.

  If $u\in \mathcal{E}'(\partial K)$, then, since $\partial K\subset
  (\partial K_1)\cup K_2$, one can choose $u_1\in
  \mathcal{E}'(\partial K_1)$
  in the lines above, and therefore $u$ is mapped to $0$; this
  concludes the proof.
\end{proof}
The same property holds when $\mathcal{E}'$ is replaced with
$\mathcal{O}'$.
The two families of spaces $\mathcal{E}'(K)/\mathcal{E}'(\partial K)$ and
$\mathcal{O}'(K)/\mathcal{O}'(\partial K)$ are quite
practical because of this well-defined restriction map, which mimics
the ability to restrict analytic or smooth functions to a smaller
set. Elements of the spaces $\mathcal{O}'(K)/\mathcal{O}'(\partial K)$ are
called hyperfunctions. Proposition \ref{prop:presheaf} and its
equivalent for $\mathcal{O}'$ means that these families of spaces (indexed by $\mathring{K}$) form \emph{pre-sheaves}.

\begin{defn}\label{def:support_analytic_functional}
  Let $K\Subset \R^n$ and let $f\in \mathcal{O}'(K)$. The support of
  $f$ is the smallest compact $K_1\subset K$ such that $f\in \mathcal{O}'(K_1)$.
\end{defn}
This notion is well-defined: first of all ``$f\in \mathcal{O}'(K_1)$''
makes sense because the extension $\mathcal{O}'(K_1)\to
\mathcal{O}'(K)$ is injective; second, if two compacts $K_1$ and $K_2$
are such that $f\in \mathcal{O}'(K_1)\cap \mathcal{O}'(K_2)$, then
$f\in \mathcal{O}'(K_1\cap K_2)$ by item 3 of
Proposition \ref{prop:inclusion_union_O_spaces}.

If $U$ is a relatively compact open set in $\R^n$ then
$\mathcal{O}(\overline{U})\subset C^{\infty}(\overline{U})$ and
therefore $\mathcal{E}'(\overline{U})\subset
\mathcal{O}'(\overline{U})$. In this case, the support of a
distribution $T\in \mathcal{E}'(\overline{U})$ coincides with its
support as an element of $\mathcal{O}'(\overline{U})$.

We now reach the main result of this appendix: one can patch together
analytic functionals defined on different compact sets which agree on
the intersection (meaning that the support of their difference lies
away from the intersection), and if they only agree on the
intersection modulo a
real-analytic function then we can patch them together modulo a
real-analytic function.

\begin{prop}\label{prop:patching_analytic_func_or_distros}
  Let $K_1,K_2$ be compact subsets of $\R^n$. Let $u_1\in
  \mathcal{O}'(K_1)$ and $u_2\in \mathcal{O}'(K_2)$ be such that ${\rm
    supp}(u_1-u_2)\cap K_1\cap K_2=\emptyset$. Then there exists $u\in
  \mathcal{O}'(K_1\cup K_2)$ such that ${\rm supp}(u-u_1)\cap K_1=\emptyset$
  and ${\rm supp}(u-u_2)\cap K_2=\emptyset$. If $u_1,u_2$ belong to
  $\mathcal{E}'$, then so does $u$.

  More generally, if there exists $f\in \mathcal{O}(K_1\cap K_2)$ such
  that ${\rm
    supp}(u_1-u_2-f)\cap K_1\cap K_2=\emptyset$, then there exists
  $u\in \mathcal{O}'(K_1\cup K_2)$, $g_1\in \mathcal{O}(K_1)$ and
  $g_2\in \mathcal{O}(K_2)$ such that ${\rm supp}(u-u_1-g_1)\cap
  K_1=\emptyset$ and ${\rm supp}(u-u_2-g_2)\cap
  K_2=\emptyset$. Again if $u_1,u_2$ belong to $\mathcal{E}'$, then so
  does $u$.
\end{prop}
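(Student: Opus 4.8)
The plan is to derive the entire statement from Proposition~\ref{prop:inclusion_union_O_spaces}: no new analytic input is required, and essentially all the work is bookkeeping with supports. First I would regard $u_1$ and $u_2$ as elements of $\mathcal{O}'(K_1\cup K_2)$ through the extension maps, which are injective by the Runge property (Proposition~\ref{prop:real_cpts_are_Runge}) together with item~3 of Proposition~\ref{prop:inclusion_union_O_spaces}; this makes the notion of support (Definition~\ref{def:support_analytic_functional}) available, and additive in the sense that $\supp(u_1-u_2)\subset K_1\cup K_2$ is compact.

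For the first assertion, set $w=u_1-u_2$ and $L:=\supp(w)$, a compact subset of $K_1\cup K_2$ with $L\cap K_1\cap K_2=\emptyset$ by hypothesis. Then $L_1:=L\cap K_1$ and $L_2:=L\cap K_2$ are disjoint compacts with $L=L_1\cup L_2$, $L_1\cap K_2=\emptyset$ and $L_2\cap K_1=\emptyset$; being disjoint and compact they lie at positive distance. Applying item~4 of Proposition~\ref{prop:inclusion_union_O_spaces} this time to the pair $L_1,L_2$, I would write $w=w_1+w_2$ with $\supp(w_i)\subset L_i$, and then set
\[
  u:=u_1-w_2=u_2+w_1 .
\]
Since $w_2\in\mathcal{O}'(L_2)\subset\mathcal{O}'(K_2)$ and $u_1\in\mathcal{O}'(K_1)$, one has $u\in\mathcal{O}'(K_1\cup K_2)$; moreover $\supp(u-u_1)=\supp(w_2)\subset L_2$ is disjoint from $K_1$, and $\supp(u-u_2)=\supp(w_1)\subset L_1$ is disjoint from $K_2$, as required. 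If $u_1,u_2\in\mathcal{E}'$ then $w\in\mathcal{E}'$, and the splitting $w=w_1+w_2$ can be carried out inside $\mathcal{E}'$ by multiplying $w$ by a smooth function equal to $1$ near $L_1$ and $0$ near $L_2$ (legitimate precisely because $L_1,L_2$ are disjoint compacts), so $u\in\mathcal{E}'$ as well.

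For the second assertion, I would first use item~2 of Proposition~\ref{prop:inclusion_union_O_spaces} to write $f=f_1+f_2$ with $f_i\in\mathcal{O}(K_i)$, each $f_i$ being holomorphic on a genuine neighbourhood of $K_i$ and hence identified with an element of $\mathcal{E}'\subset\mathcal{O}'$ after restriction to a compact neighbourhood of $K_i$; in particular $f_1+f_2=f$ on a neighbourhood of $K_1\cap K_2$. Then I would apply the first assertion to $u_1':=u_1-f_1$ and $u_2':=u_2+f_2$: since $u_1'-u_2'=u_1-u_2-(f_1+f_2)$ coincides with $u_1-u_2-f$ near $K_1\cap K_2$, we get $\supp(u_1'-u_2')\cap K_1\cap K_2=\emptyset$, so the first part produces $u\in\mathcal{O}'(K_1\cup K_2)$ with $\supp(u-u_1')\cap K_1=\emptyset$ and $\supp(u-u_2')\cap K_2=\emptyset$. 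Unwinding the definitions, this is exactly the claim with $g_1=-f_1\in\mathcal{O}(K_1)$ and $g_2=f_2\in\mathcal{O}(K_2)$, and the $\mathcal{E}'$ case is inherited from the first part.

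The closest thing to a difficulty is the identification between a real-analytic function defined near a compact set and the corresponding analytic functional, so that expressions such as $u_1-u_2-f$ and the support conditions involving $f_1,f_2$ are unambiguous — this is harmless since only the behaviour near $K_1\cap K_2$ (resp.\ near $K_i$) matters — together with checking that each splitting invoked above remains inside $\mathcal{E}'$ when the data do. All the genuine content, namely solving $\overline{\partial}$ on Runge neighbourhoods of real compacts, is already packaged into Proposition~\ref{prop:inclusion_union_O_spaces}.
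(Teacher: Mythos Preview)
Your proof is correct and follows essentially the same approach as the paper's: split $\supp(u_1-u_2)$ into two disjoint compacts $L_1\subset K_1\setminus K_2$ and $L_2\subset K_2\setminus K_1$, use item~4 of Proposition~\ref{prop:inclusion_union_O_spaces} to decompose $u_1-u_2$ accordingly, and set $u=u_1-w_2=u_2+w_1$; for the second part, use item~2 to split $f$ and reduce to the first case. Your write-up is in fact slightly more careful than the paper's (you define $L_1,L_2$ explicitly and spell out the $\mathcal{E}'$ splitting via a smooth cutoff).
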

\begin{proof}
  By hypothesis, there exists two compacts  $L_1\in K_1\setminus K_2$ and $L_2\in
  K_2\setminus K_1$ such that $u_1-u_2\in \mathcal{O}'(L_1\cup
  L_2)$. Using Proposition \ref{prop:inclusion_union_O_spaces}, there
  exists $f_1\in \mathcal{O}'(L_1)$ and $f_2\in \mathcal{O}'(L_2)$
  such that $u_1-u_2=f_1+f_2$.

  Then $u=u_1-f_2=u_2+f_1$ is an element of $\mathcal{O}'(K_1\cup K_2)$
  such that $u-u_1=f_2\in \mathcal{O}'(L_2)$ and $u-u_2=f_1\in
  \mathcal{O}'(L_1)$. This concludes this part of the proof, and we can
  seamlessly replace $\mathcal{O}'$ with $\mathcal{E}'$ in the lines above.

  If now $u_1-u_2\in \mathcal{O}'(L_1\cup
  L_2)+\mathcal{O}(K_1\cup K_2)$, then by writing $\mathcal{O}(K_1\cup
  K_2)=\mathcal{O}(K_1)+\mathcal{O}(K_2)$ we can correct $u_1$ and
  $u_2$ by respective elements of $\mathcal{O}(K_1)$ to reduce
  ourselves to the previous case.
\end{proof}

To conclude, we mention the generalisation of these results to
general paracompact real-analytic manifolds. Any paracompact
real-analytic manifold is an analytic submanifold of $\R^n$ for $n$
large enough \cite{grauert_levis_1958}, so that we only need to
restrict our attention to compact sets which belong to this
submanifold; thus there is no additional difficulty. Of course, when
considering the restriction map as in Proposition \ref{prop:presheaf},
the boundary is then taken with respect to the topology of the
submanifold, not with respect to the ambient $\R^n$ topology.

\nocite{*}
  
\bibliographystyle{alpha} \bibliography{main}
\end{document}